\DeclareMathOperator{\id}{id}
\newenvironment{smallarray}[1]
 {\null\,\vcenter\bgroup\scriptsize
  \arraycolsep=.13885em
  \hbox\bgroup$\array{@{}#1@{}}}
 {\endarray$\egroup\egroup\,\null}
\renewcommand{\textbf}[1]{\text{\fontseries{b}\selectfont{\upshape #1}}}
\renewcommand\hom{\mathrm{Hom}}
\def\Ch{\mathrm{Ch}}
\def\Der{\mathbf {D}}
\def\coker{\mathrm{CoKer}}
\def\t{\mathbf{t}}
\def\ker{\mathrm{Ker}}
\def\Ker{\mathrm{Ker}}
\def\Coker{\mathrm{Coker}}
\def\Ab{\mathrm{Ab}}
\def\Hom{\mathrm{Hom}}
\def\Ext{\mathrm{Ext}}
\def\Sub{\mathrm{Sub}}
\def\Gen{\mathrm{Gen}}
\def\Im{\mathrm{Im}}
\def\End{\mathrm{End}}
\def\pd{\mathrm{pd}}
\def\Proj#1{\mathrm{Proj}\text{-}#1}
\def\copres{\mathrm{copres}}
\def\cogen{\mathrm{cogen}}
\def\pres{\mathrm{pres}}
\def\N{\mathbb{N}}
\def\X{\mathcal{X}}
\def\Y{\mathcal{Y}}
\def\Tcal{\mathcal{T}}
\def\X{\mathcal X}
\def\Fcal{\mathcal{F}}
\def\T{\mathcal T}
\def\class#1{\mathcal{#1}}
	\def\A{\class{A}}
	\def\B{\class{B}}
	\def\K{\class{K}}
	\def\C{\class{C}}
	\def\Q{\mathcal Q}
	\def\G{\mathcal G}
\renewcommand\t{\mathbf{t}}
\def\fp{\mathrm{fp}}
\def\add{\mathrm{add}}
\def\Add{\mathrm{Add}}
\def\Gen{\mathrm{Gen}}
\def\subGen{\overline{\mathrm{Gen}}}
\def\Cogen{\mathrm{Cogen}}
\def\subCogen{\underline{\mathrm{Cogen}}}
\def\gen{\mathrm{gen}}
\def\Pres{\mathrm{Pres}}
\def\Copres{\mathrm{Copres}}
\def\Cogen{\mathrm{Cogen}}
\def\Ht{\mathcal{H}_{\t}}
\def\Quot{\mathrm{Quot}}
\def\tr{\mathrm{tr}}
\def\op{{\mathrm{op}}}
\def\lmod#1{#1\text{-}\mathrm{Mod}}
\def\lfpmod#1{#1\text{-}\mathrm{mod}}
\def\mod#1{\mathrm{Mod}\text{-}#1}
\def\fpmod#1{\mathrm{mod}\text{-}#1}
\def\proj#1{\mathrm{proj}\text{-}#1}
\newcommand{\modl}{\text{-}\mathrm{Mod}}
\theoremstyle{plain}
	\newtheorem{theorem}{Theorem}[section]
	\newtheorem{lemma}[theorem]{Lemma}
	\newtheorem{proposition}[theorem]{Proposition}
	\newtheorem{corollary}[theorem]{Corollary}
	\newtheorem{example}[theorem]{Example}
	\newtheorem{remark}[theorem]{Remark}
	\newtheorem{question}[theorem]{Question}
\theoremstyle{definition}
	\newtheorem{definition}[theorem]{Definition}
	\newtheorem{notation}[theorem]{Notation}
\begin{document}

\title[Tilting and cotilting in Abelian categories]{Tilting preenvelopes and cotilting precovers\\ in general Abelian categories}

\author{Carlos E. Parra}
\address{Instituto de Ciencias F\'\i sicas y Matem\'aticas, Edificio Emilio Pugin, Campus Isla Teja, Universidad Austral de Chile, 5090000 Valdivia, CHILE}
\email{carlos.parra@uach.cl}
\thanks{The first named author was supported by ANID+FONDECYT/REGULAR+1200090}

\author{Manuel Saor\'{\i}n} 
\address{Departamento de Matem\'{a}ticas,
Universidad de Murcia,  Aptdo.\,4021,
30100 Espinardo, Murcia,
SPAIN}
\email{msaorinc@um.es}
\thanks{The second named author was supported by the research projects from
the Ministerio de Econom\'{\i}a y Competitividad of Spain (MTM2016-77445-P) and the Fundaci\'on `S\'eneca' of
Murcia (19880/GERM/15), both with a part of FEDER funds.}

\author{Simone Virili$^{\text{\,(\Letter})}$}
\address{Dipartimento di Scienze Matematiche, Informatiche e Fisiche, Universit\`a degli studi di Udine, Via delle Scienze 206, 33100 Udine, ITALY}
\email[Corresponding author]{virili.simone@gmail.com}
\thanks{The third named author was supported by the
Fundaci\'on `S\'eneca' of Murcia (19880/GERM/15) with a part of FEDER funds.}

\begin{abstract}
{We consider an arbitrary Abelian category $\A$ and a subcategory $\T$ closed under extensions and direct summands, and  characterize those $\T$ that are (semi-)special preenveloping in $\A$; as a byproduct, we generalize to this  setting several classical results for categories of modules. For instance, we get that the special preenveloping subcategories $\T$ of $\A$ closed under extensions and direct summands are precisely those for which $(_{}^{\perp_1}\T,\T)$ is a right complete cotorsion pair, where  \mbox{$_{}^{\perp_1}\T:=\ker (\Ext_\A^1(-,\T))$}.   Particular cases  appear when $\T=V^{\perp_1}:=\Ker (\Ext_\A^1(V,-))$, for an $\Ext^1$-universal object $V$  such that $\Ext_\A^1(V,-)$ vanishes on all (existing) coproducts of copies of $V$. For many choices of $\A$, we show that these latter examples exhaust all the possibilities. }

{We then show that, when $\A$ has an epi-generator, the (semi-)special preenveloping torsion classes $\T$ given by (quasi-)tilting objects are exactly those for which any object $T\in\T$ is the epimorphic image of some object in $_{}^{\perp_1}\T$ (and the subcategory $\mathcal{B}:=\text{Sub}(\T)$ of subobjects of objects in $\T$ is reflective) and they are, in turn, the right constituents of complete cotorsion pairs in $\A$ (resp., $\mathcal{B}$).  } 

{In a final section, we apply the results  when $\A=\fpmod R$ is the category of finitely presented modules over a right coherent ring $R$, something that gives new results and raises new questions even at the level of classical tilting theory in categories of modules.}
\end{abstract}

\subjclass[2010]{18E10, 18E40, 16D90}
\keywords{tilting, cotilting, precovering, preenveloping, Abelian category, universal extension}

\maketitle

\setcounter{tocdepth}{1}
\tableofcontents

\section*{Introduction}

{Co/tilting modules} arose in the context of finitely generated modules over finite dimensional algebras, as an attempt to extend to notion of Morita bimodules (see \cite{BB80,happel1982tilted,Bo81}). In this context, equivalences of full module categories were replaced by suitable counter-equivalences of torsion pairs. These results were successively extended to categories of modules over arbitrary rings, but still considering tilting modules with strong finiteness conditions (see \cite{CF90,miyashita-tilt}). 
Few years after their introduction, it was noted that co/tilting modules and complexes could be used to construct equivalences  between derived categories (see \cite{CPS86,H87,happel1988triangulated,rickard1991derived,kel94-dg}).
The last step in this generalization process was that of introducing large (i.e., not {finitely generated}) counterparts of co/tilting modules and complexes (see \cite{hugel2001infinitely,CGM,colpi1995tilting,vstovivcek2014derived,hugel2015silting,wei2013semi}). 

\smallskip
In parallel to co/tilting theory, the idea of left and right approximations of modules originated in the realm of finitely generated modules over Artin algebras, with the work of Auslander, Smal\o\ and Reiten \cite{auslander1980preprojective,auslander1991applications}. On the other hand, the corresponding general notions (for arbitrary modules over general rings) were independently discovered by Enochs \cite{enochs1981injective} and named (pre)envelopes and (pre)covers, see also \cite{golan1973torsion,teply1976torsion}. This general theory is very useful to give a uniform treatment of, e.g., injective and pure injective hulls, or projective, injective and flat covers.

\smallskip
Our starting point in this note is the observation that, when we restrict our attention to categories of modules,  known characterizations relate \mbox{(quasi-)}tilting torsion pairs with (\mbox{semi-})special preenveloping torsion classes (see \cite{hugel2001tilting}) {and these with cotorsion pairs}. Dual results do hold for \mbox{(quasi-)}cotilting modules. Our main goal in this paper is {to understand this relation in the context of}  arbitrary Abelian categories.
 In this framework we study, {for an Abelian category $\A$ and a subcategory $\T$ closed under extensions and direct summands,}
  the relationships between the following properties (and their respective duals):
\begin{itemize} 
\item {$\T$ is the torsion class given by a (quasi-)tilting object};
\item $\mathcal T$ is  (semi-)special preenveloping;
\item {$\T$ is the right constituent of a cotorsion pair.} 
\end{itemize}
Our main results show that {the special preenveloping subcategories $\T$ closed under extensions and direct summands are  the right constituents of right complete cotorsion pairs and, when $\A$ has an epi-generator, the special preenveloping torsion classes are  of the form $\T=V^{\perp_1}:=\Ker (\Ext_\A^1(V,-))$ for a suitable object $V\in\T$ (see Theorem \ref{thm.specialpreenveloping-versus-cotorsionpair} and Corollary \ref{cor.special-preenveloping-arbitraryA}). Among them, the}
tilting torsion classes are precisely the special preenveloping torsion classes $\T$ such that $_{}^{\perp_1}\T$ generates $\T$ {(see Corollary \ref{cor.tilting-bijection}),} while quasi-tilting torsion classes $\T$ such that $\text{Sub}(\T)$ is a reflective subcategory of $\A$  are precisely the semi-special preenveloping torsion classes  $\T$ such that $_{}^{\perp_1}\T$ generates $\T$ {(see Proposition \ref{prop.lorthogonal-generates-torsion})}.

Let us note that, unlike in the context of module categories, our proofs do not rely on the existence of a projective generator nor on the presence of an injective cogenerator. Furthermore, we do not assume exactness of (co)products and this forces us to develop  new tools to control the exactness of certain coproducts (see Section~\ref{exact_coprod_sec}) and a general theory of universal extensions (see Section~\ref{unversal_ext_sec}). With these results we are able to give new proofs for the  correspondences {mentioned above,} that hold in our completely general setting.  

\medskip
The organization of the paper goes as follows: 

In Sections \ref{setting_sec} and  \ref{sec. background},  we introduce the needed concepts and tools in general Abelian categories, with special attention to those Abelian categories that have an epi-generator, a class that includes all (Ab.3) and $\Hom$-finite Abelian categories with a generator. Furthermore, we discuss $\hom$- and $\Ext^1$-finite Abelian categories. 

In Section \ref{sub_cogen_sec} we study (co)reflective subcategories in an Abelian category, with special emphasis on the subcategories of the form $\Sub(\X)$ and $\Quot (\X)$, consisting of the objects which are isomorphic, respectively,  to subobjects and quotients of objects in $\X$, for a subcategory  $\X\subseteq\A$.

In Section \ref{sec. semi-special preenveloping} we study universal extensions and introduce the notion of $\Ext^1$-universal object. 
These concepts are fundamental for the main result of the paper, Theorem \ref{thm.specialpreenveloping-versus-cotorsionpair},  and its Corollary~\ref{cor.special-preenveloping-arbitraryA}, from which most of the subsequent results are a consequence. 
 Let us remark that, to construct an effective theory of universal extensions in the case of ``big'' (i.e., cocomplete) Abelian categories, we need to have some control on the exactness of infinite coproducts. For this, we introduce and study a ``formal derived functor'' of the coproduct in Subsection~\ref{exact_coprod_sec}.

In Section \ref{sec_tilt_pree} we define (quasi-)tilting objects in {arbitrary} Abelian categories and show that tilting objects are   $\Ext^1$-universal (see  Proposition \ref{prop.Ext-universal tilting}). {We show that our general concept of (quasi-)tilting object coincides with the classical ones appearing in the literature in module categories and in Hom- and $\Ext^1$-finite Abelian categories}.  When the ambient Abelian category $\A$ has an epi-generator, Proposition \ref{prop.lorthogonal-generates-torsion} and Corollary {\ref{cor.tilting-bijection}} {identify the (semi-)special preenveloping torsion classes which are given by (quasi-)tilting objects,}
 thus extending known results in module categories.  In the final Subsection \ref{sec. duals} we just state the duals of Theorem \ref{thm.specialpreenveloping-versus-cotorsionpair} and Proposition \ref{prop.lorthogonal-generates-torsion}, that give as a consequence Proposition \ref{prop.}, a result that extends  \cite[Thm.\,3.5]{breaz2018torsion} and \cite[Thm.\,3.5]{zhang2017cosilting} from module categories to  Grothendieck categories.

{In the final section,} we focus on the case when  $\A=\fpmod R$ is the category of finitely presented (right) modules over a right coherent ring $R$. It turns out that the tilting objects $V$ of $\fpmod R$ are, up to equivalence, the classical (=finitely presented) ($1$-)tilting $R$-modules whose associated torsion pair in $\mod R$ restricts to $\fpmod R$, equivalently, such that $\End(V_R)$ is a right coherent ring (see Corollary \ref{cor.restriction of qt torsion pair}). We do not know if all classical tilting modules over a right coherent ring satisfy this latter property.

\subsection*{Data Availability Statement}
Data sharing not applicable to this article as no datasets were generated or analysed during the current study.

\subsection*{Acknowledgement}
We are grateful to Lidia Angeleri-H\"ugel for several comments, to Birge Huisgen-Zimmermann and Ken Goodearl for helping us with the first part of  Example \ref{ex.ring isomorphic to its product} and to Jan Saroch for pointing us to the relevant literature for that same example. We would also like to thank Jeremy Rickard for showing us, via a counterexample, that ``right coherence'' is not a derived invariant property for rings (see Remark \ref{rem. Rickard}).

\section{Basic definitions and notation}\label{setting_sec}

Throughout the paper, {all subcategories are implicitly assumed  to be full}.  Furthermore, we fix the following conventions:
\begin{itemize}
\item the symbol $\mathcal A$ denotes an Abelian category; 
\item given two objects $A$ and $B$ in $\A$, we let $\A(A,B):=\Hom_\A(A,B)$;
\item $\Proj \A$ denotes the subcategory of projective objects in $\A$;
\item the symbol $R$ always denotes a (unitary and associative) ring;
\item $\mod R$ and $\lmod R$ (resp., $\fpmod R$ and $\lfpmod R$) denote the categories of (finitely presented) right and left $R$-modules, respectively;
\item $\Proj R$ (resp., $\proj R$) denotes the category of (finitely presented) projective right $R$-modules;
\item given ring $R$ and two modules $M$ and $N\in\mod R$, we let $\hom_R(M,N):=\hom_{\mod R}(M,N)$ and $\End_R(M):=\End_{\mod R}(M)$, and we use similar notations for left modules.
\end{itemize}
Given a class of objects $\mathcal X$ in an Abelian category $\mathcal A$, we let
\begin{itemize}
\item $\mathcal X^\perp:=\{A\in \mathcal A:  {\mathcal A}(X,A)=0,\ \forall X\in \mathcal X\}$ and ${}^{\perp}\mathcal X:=\{A\in \mathcal A: {\mathcal A}(A,X)=0,\ \forall X\in \mathcal X\}$;
\item $\Add_\A(\X)$ (resp., $\add_\A(\X)$) denotes the class of direct summands of coproducts, taken in $\A$, of  (finite) families of objects taken from $\X$;
\item $\Gen_\A(\X)$ (resp., $\gen_\A(\X)$) is the class of quotients of objects in $\Add_\A(\X)$ ($\add_\A(\X)$);
\item $\Pres_\A(\X)$ (resp., $\pres_\A(\X)$) is the class of cokernels of morphisms  in $\Add_\A(\X)$ ($\add_\A(\X)$).
\end{itemize}
When the category $\A$ is clear from the context, we generally omit it as a subscript. Finally,  $\Cogen(\X)$, $\Copres(\X)$, $\cogen(\X)$ and $\copres(\X)$ are defined dually. 

\begin{remark} \label{rem.about (co)products}
Unless otherwise stated, in this paper Abelian categories are not assumed to have (all small) (co)products. So the (co)products that appear in the definitions of, say, $\Pres(\X)$ and $\Copres(\X)$ are just those that exist in $\A$. This same convention applies throughout the paper. 
\end{remark}

\subsection{Abelian exact subcategories}
A subcategory $\mathcal{B}$ of an Abelian category $\A$ is said to be an
 {\bf Abelian exact subcategory} of $\A$ when it is Abelian and the inclusion functor 
$\mathcal{B}\to \A$ is exact. Equivalently,  $\mathcal{B}$ is closed under taking finite co/products, kernels and cokernels in $\A$.
\begin{definition} \label{def.quotX and subX}
Given a subcategory $\mathcal{X}$ of an Abelian category $\A$, define:
\begin{itemize}
\item $\Sub(\mathcal{X})\subseteq \A$, the class of those objects that embed in some object in $\mathcal{X}$;
\item $\Quot(\mathcal{X})\subseteq \A$, the class of those objects that are epimorphic images of some object in $\mathcal{X}$.
\end{itemize}
Then, $\mathcal{X}$ is said to be {\bf generating} (resp., {\bf cogenerating}) if $\A=\Quot(\mathcal{X})$ (resp., $\A=\Sub(\mathcal{X})$).
\end{definition}

The classes of the form $\Sub(\X)$ and $\Quot(\X)$ can be used to construct natural examples of Abelian exact subcategories of $\A$:

\begin{example} \label{ex.Sub(X)-Gen(X) Abelian exact}
Let $\A$ be an Abelian category and $\mathcal X$  a subcategory closed under finite co/products, then $\Quot(\Sub(\mathcal X))=\Sub(\Quot(\mathcal{X}))$ is an Abelian exact subcategory of $\A$. Indeed, given $M\in \Sub(\Quot(\mathcal{X}))$, there is an embedding $M\to Q$ and an epimorphism $X\to Q$, for some $X\in \mathcal X$. Consider then the following pullback diagram:
\[
\xymatrix{
S\ar@{^(.>}[d]\ar@{.>>}[r]\ar@{}[rd]|{\text{\tiny P.B.}}&M\ar@{^(->}[d]\\
X\ar@{->>}[r]&Q
}
\]
that shows that $M\in \Quot(\Sub(\mathcal X))$, hence $\Sub(\Quot(\mathcal{X}))\subseteq \Quot(\Sub(\mathcal X))$. The converse inclusion follows by a dual argument, using a pushout square. Hence, $\Quot(\Sub(\mathcal X))$ is closed under taking quotients, subobjects and it is easily seen to be also closed under taking finite co/products. In particular, we obtain the following statements:
\begin{itemize}
\item if $\mathcal{X}$ is also closed under quotients,  $\Sub(\mathcal{X})$ is an Abelian exact subcategory of $\A$;
\item if $\mathcal{X}$ is also closed under subobjects,  $\Quot(\mathcal{X})$ is an Abelian exact subcategory of $\A$.
\end{itemize}
\end{example}

\subsection{$\Ext$-groups}

For each positive integer $n$, we denote by $\Ext^n_{\mathcal A}(-,-)$ the big group of (equivalence classes of) $n$-fold extensions, with the usual Baer sum, in the  category $\A$ (see \cite{McL}). Given a class of objects $\mathcal X$ in $\mathcal A$, we use the following notations:
\begin{align*}
\mathcal X^{\perp_n}=\{A\in \mathcal A: \Ext^n_{\mathcal A}(X,A)=0,\ \forall X\in \mathcal X\}\quad\text{and}\\
{}^{\perp_{n}}\mathcal X=\{A\in \mathcal A: \Ext^n_{\mathcal A}(A,X)=0,\ \forall X\in \mathcal X\}.
\end{align*}
When $\X=\{X\},$ for simplicity we will write $X^{\perp_n}$ (resp. ${}^{\perp_n}X$) instead of $\X^{\perp_n}$ (resp. ${}^{\perp_n}\X$). Let us start with the following elementary remark that will be often useful hereafter.

\begin{remark} \label{rem.monic-morphism-for-coproducts}
Let $\mathcal{A}$ be an Abelian category, $X\in \A$ and $\{Y_i\}_{I}$ a family of objects whose coproduct exists in $\mathcal{A}$. Then, the following canonical map is injective
\[
\xymatrix{
\phi\colon \coprod_{I}\mathcal{A}(X,Y_i)\longrightarrow\mathcal{A}\left(X,\coprod_{I}Y_i\right).
}
\]
In fact, if $(f_i)_{I}\in \coprod_{I}\mathcal{A}(X,Y_i)$ and we take  $J:=\{i\in I:f_i\neq 0\}$, which is a finite subset of $I$,  then $f:=\phi ((f_i)_{I})$   decomposes as  $\iota_J\circ f_J\colon X\stackrel{}{\rightarrow} \prod_{J}Y_j\cong (\coprod_{J}Y_j) \hookrightarrow \coprod_{I} Y_i $  where $f_J$ is given by the universal property of (finite) products and $\iota_J$  is the canonical section.  If $\phi ((f_i)_{I})=f=0$ then $f_J=0$, which implies that  $J=\emptyset$ and so $(f_i)_{I}=0$.
\end{remark}

A similar ``embedding property'' also holds for $\Ext^1$-groups, as the following lemma shows:

\begin{lemma} \label{lem.Ext-V-versus-Ext-VI}
Let $\A$ be an Abelian category and $\{V_i\}_{I}$ a non-empty family of objects whose coproduct  exists in $\mathcal{A}$. Then, for any $X\in\A$, the following canonical morphism of big Abelian groups is injective:
\[
\xymatrix{
\Phi\colon\Ext^1_\A\left(\coprod_{I}V_i,X\right)\longrightarrow\prod_{I}\Ext^1_\A(V_i,X).
}
\] 
In particular, the following assertions hold true:
\begin{enumerate}[\rm (1)]
\item given $V\in\mathcal{A}$ and a set $J\ne \emptyset$ such that $V^{(J)}$ exists in $\A$, then $V^{\perp_1}=(V^{(J)})^{\perp_1}$;
\item if $\mathcal{X}$ is a subcategory of $\mathcal{A}$, then $_{}^{\perp_1}\mathcal{X}$ is closed under taking coproducts in $\mathcal{A}$.
\end{enumerate}
\end{lemma}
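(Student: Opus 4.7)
The plan is to represent an element $\xi\in\Ext^1_\A(\coprod_I V_i,X)$ by a short exact sequence $0\to X\to E\stackrel{\pi}{\to}\coprod_I V_i\to 0$ and to identify $\Phi(\xi)$ with the family of pullback extensions $\xi_i$ along the canonical injections $\iota_i\colon V_i\to\coprod_I V_i$. Under this identification, injectivity of $\Phi$ reduces to the following statement: if every pullback extension $\xi_i$ splits, then $\xi$ itself splits.

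For the main step, I would spell out that each $\xi_i$ arises from a pullback square, whose projection $p_i\colon E_i\to E$ satisfies $\pi\circ p_i=\iota_i\circ(E_i\to V_i)$. Assuming $\xi_i=0$, choose a section $s_i\colon V_i\to E_i$ of the right-hand map, and set $t_i:=p_i\circ s_i\colon V_i\to E$; a routine diagram chase gives $\pi\circ t_i=\iota_i$. Assembling the $t_i$ via the universal property of the coproduct produces a morphism $t\colon\coprod_I V_i\to E$ with $t\circ\iota_i=t_i$ for every $i$, and then $\pi\circ t$ agrees with $\id_{\coprod_I V_i}$ on every component; by the uniqueness part of the universal property, $\pi\circ t=\id$. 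Hence $\pi$ splits and $\xi=0$.

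For the corollaries, I would argue as follows. In (1), pick any $j\in J$; the canonical injection $V\hookrightarrow V^{(J)}$ admits a retraction (the identity on the $j$-th summand, zero elsewhere), so applying the contravariant functor $\Ext^1_\A(-,X)$ yields a split epimorphism $\Ext^1_\A(V^{(J)},X)\twoheadrightarrow \Ext^1_\A(V,X)$. Combining this with the already-established injectivity of $\Phi$, one gets the equivalence $\Ext^1_\A(V,X)=0 \Longleftrightarrow \Ext^1_\A(V^{(J)},X)=0$, that is, $V^{\perp_1}=(V^{(J)})^{\perp_1}$. Statement (2) is then immediate: for a family $\{X_i\}_{I}\subseteq{}^{\perp_1}\mathcal{X}$ whose coproduct exists in $\A$ and for any $Y\in\mathcal{X}$, the injectivity of $\Phi$ embeds $\Ext^1_\A(\coprod_I X_i,Y)$ into $\prod_I\Ext^1_\A(X_i,Y)=0$.

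I do not anticipate any serious obstacle. The argument uses only the universal property of the coproduct together with the existence of pullbacks in $\A$; notably, no exactness of coproducts and no injectives or projectives are required, matching the paper's stated intention of avoiding such hypotheses. The only mildly delicate point is that the $\Ext^1$-groups are a priori proper classes (``big'' $\Ext$-groups), but the construction is carried out representative-wise and is manifestly natural, so the set-theoretic size plays no role in the proof.
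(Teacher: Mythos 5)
Your proof is correct and follows essentially the same route as the paper: represent a class in $\ker\Phi$ by a short exact sequence, observe that vanishing of each component means each $\iota_i$ lifts through the epimorphism onto $\coprod_I V_i$, and assemble the lifts by the universal property of the coproduct to produce a splitting; the deductions of (1) via the summand decomposition of $V^{(J)}$ and of (2) from injectivity of $\Phi$ also match the paper's argument.
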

\begin{proof}
Assertion (2) clearly follows from the initial statement of the lemma. As for assertion (1), let us  fix $j\in J$. Then we have a decomposition $V^{(J)}\cong V\oplus V^{(J\setminus\{j\})}$ which, in 
turn, gives a decomposition of big Abelian groups 
\[
\Ext^1_\A(V^{(J)},X)\cong \Ext^1_\A(V,X)\oplus\Ext^1_\A(V^{(J\setminus\{j\})},X).
\] 
This gives
the inclusion ``$\supseteq$'' of the desired equality. The inclusion ``$\subseteq$'' follows directly from the initial statement of the lemma.

To prove the initial statement, let $[\epsilon]$ be an element of the kernel of  $\Phi$, represented by the following short exact sequence  in $\mathcal{A}$:
\begin{equation}\label{ses_lem.Ext-V-versus-Ext-VI}
\xymatrix{
\epsilon:\quad 0\rightarrow X\stackrel{u}{\longrightarrow} Y\stackrel{p}{\longrightarrow} \coprod_{I}V_i\rightarrow 0.
}
\end{equation} 
For each $j\in I$, if $\iota_j\colon V_j\to \coprod_{I}V_i$ is the $j$-th inclusion into the coproduct,  the map $\iota_j^*:=\Ext^1_\mathcal{A}(\iota_j,X)$ decomposes as follows:

\[
\xymatrix{
\Ext^1_\A(\coprod_{I}V_i,X)\stackrel{\Phi}{\longrightarrow}\prod_{I}\Ext^1_\A(V_i,X)\stackrel{\pi_j}{\longrightarrow}\Ext^1_\mathcal{A}(V_j,X),
}
\] 
where $\pi_j$ is the $j$-th projection from the product in $\Ab$. Therefore,  $\iota_j^*([\epsilon])=0$, which implies that $\iota_j$ has the following factorization: 
\[
\xymatrix@C=30pt{
V_j\ar@/_15pt/[rr]_{\iota_j}\ar[r]^{\lambda_j}&Y\ar[r]^{p}&\coprod_{I}V_i,
}
\] 
for some morphism $\lambda_j\colon V_j\to Y$. We obtain a family of morphisms $\{\lambda_j\colon V_j\to Y\}_{I}$ in $\mathcal{A}$ which, by the universal property of coproducts, gives a unique morphism $\lambda\colon \coprod_{I}V_i\to Y$ such that $\lambda\circ\iota_j=\lambda_j$, for all $j\in I$. Thus $p\circ\lambda=\id_{\coprod_{I}V_i}$, showing that the sequence \eqref{ses_lem.Ext-V-versus-Ext-VI} splits, and so $[\epsilon] =0$.
\end{proof}

Recall now that, given $n\in\N$ and an object $X$ in an Abelian category $\A$, we say that the {\bf projective dimension of $X$} is less than or equal to $n$, in symbols $\pd_\A(X)\leq n$, if $X^{\perp_{n+1}}=\A$. 

\begin{corollary}\label{coro_proj_dim}
Let $\A$ be an Abelian category, $V\in\A$, and $I$ a set for which $V^{(I)}$ exists in $\A$. If there is a cogenerating class $\mathcal{X}\subseteq  V^{\perp_1}$ which is closed under taking quotients, then $\pd_\A(V^{(I)})\leq 1$.
\end{corollary}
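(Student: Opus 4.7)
First I would handle the trivial case $I=\emptyset$ (where $V^{(I)}=0$) separately, so from now on assume $I\neq\emptyset$. By Lemma~\ref{lem.Ext-V-versus-Ext-VI}(1) this upgrades the hypothesis to $\mathcal{X}\subseteq (V^{(I)})^{\perp_1}$. Write $W:=V^{(I)}$. I would prove $\Ext^2_\A(W,A)=0$ for arbitrary $A\in\A$ by starting with an arbitrary 2-fold extension $\epsilon\colon 0\to A\to E_1\to E_0\to W\to 0$, setting $K:=\ker(E_0\to W)\cong E_1/A$, and showing that $[\epsilon]=0$ in $\Ext^2_\A(W,A)$.

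Using that $\mathcal{X}$ is cogenerating, embed $E_1\hookrightarrow X$ for some $X\in\mathcal{X}$. This also yields $A\hookrightarrow X$; the quotient $X/A$ lies in $\mathcal{X}$ by closure under quotients, and the induced map $K\cong E_1/A\hookrightarrow X/A$ is a monomorphism. The main move is then to form the pushout of $0\to K\to E_0\to W\to 0$ along $K\hookrightarrow X/A$:
\[
\xymatrix{
0\ar[r] & K\ar[r]\ar@{^{(}->}[d] & E_0\ar[r]\ar[d] & W\ar[r]\ar@{=}[d] & 0\\
0\ar[r] & X/A\ar[r] & Y\ar[r] & W\ar[r] & 0.
}
\]
The bottom row represents a class in $\Ext^1_\A(W,X/A)$, which vanishes because $X/A\in\mathcal{X}\subseteq W^{\perp_1}$; hence that row splits. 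Splicing $0\to A\to X\to X/A\to 0$ with the split sequence $0\to X/A\to Y\to W\to 0$ produces a new 2-extension $\epsilon'\colon 0\to A\to X\to Y\to W\to 0$ whose middle map is the composite $X\twoheadrightarrow X/A\hookrightarrow Y$.

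A short diagram chase (both $E_1\to E_0\to Y$ and $E_1\to X\to Y$ factor through $E_1\twoheadrightarrow K\hookrightarrow X/A\to Y$) confirms that $(\mathrm{id}_A,E_1\to X,E_0\to Y,\mathrm{id}_W)$ defines a morphism of 2-extensions $\epsilon\to\epsilon'$, whence $[\epsilon]=[\epsilon']$ in $\Ext^2_\A(W,A)$. On the other hand $[\epsilon']$ is the Yoneda product of $[0\to A\to X\to X/A\to 0]\in\Ext^1_\A(X/A,A)$ with the zero class $[0\to X/A\to Y\to W\to 0]$, and therefore vanishes. The main (mild) obstacle is the choice of what to embed into $\mathcal{X}$: only embedding $E_1$—rather than $A$ or $K$—simultaneously makes $X/A$ lie in $\mathcal{X}$ (hence in $W^{\perp_1}$) and receive $K$ as a subobject, so that the pushout yields a split bottom row while remaining compatible with $\epsilon$; once this is in place the rest is a routine diagram chase plus the bilinearity of the Yoneda product.
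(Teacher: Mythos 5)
Your proof is correct and follows essentially the same route as the paper's: replace the given $2$-extension by one of the form $0\to A\to X\to Y\to V^{(I)}\to 0$ with $X\in\mathcal{X}$ via a pushout, observe that the image $X/A$ of the middle map lies in $\mathcal{X}\subseteq (V^{(I)})^{\perp_1}$ (using Lemma~\ref{lem.Ext-V-versus-Ext-VI}(1)), and conclude that the class is a Yoneda product with a vanishing factor. Your write-up merely makes explicit the ``suitable pushout construction'' that the paper leaves implicit, and correctly isolates the point that one must embed $E_1$ (not $A$ or $K$) into $\mathcal{X}$.
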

\begin{proof}
Given $A\in \A$ and using that $\mathcal{X}$ is cogenerating one gets, by a suitable pushout construction, that any element $[\epsilon]\in \Ext_\mathcal{A}^2(V^{(I)},A)$ is represented by an exact sequence of the form:
\[
\epsilon:\quad 0\rightarrow A\longrightarrow X\stackrel{f}{\longrightarrow} Y\longrightarrow V^{(I)}\rightarrow 0,
\] 
where $X$ is in $\mathcal{X}$. But $\Im(f)=:X'$ is also in $\mathcal{X}$ since this class is closed under taking quotients.  It follows that $[\epsilon]\in\Ext_\mathcal{A}^1(X',A)\circ\Ext_\mathcal{A}^1(V^{(I)},X')$, where $\circ$ denotes the Yoneda product. Then, $\Ext_\mathcal{A}^1(V^{(I)},X')=0$ (see Lemma~\ref{lem.Ext-V-versus-Ext-VI}(1)), and so $\Ext_\mathcal{A}^2(V^{(I)},A)=0$.
\end{proof}

\subsection{Complexes, homotopy category and derived category}\label{subs_complexes}

For an Abelian category $\A$ and { $\B$ an additive subcategory of $\A$}, we denote by $\Ch(\A)$, {$\Ch(\B)$}, $\K(\A)$ and {$\K(\B)$}, the category of, cohomologically graded, {\bf cochain complexes}, {and} the {\bf homotopy category} { over $\A$ and $\B$, respectively}. {Moreover, we denote by $\Der(\A)$, the {\bf derived category} of $\A$. We use the suffixes $b$, $+$, \mbox{and $-$} to denote the corresponding subcategories consisting of bounded, bounded below and bounded above complexes. For instance, $\K^+(\A)$ denotes the homotopy category of bounded below complexes. {We also use the symbol} { $\K^{-,b}(\B)$ to denote the subcategory of $\K^{-}(\B)$ whose objects have almost all cohomology groups equal to zero.}


Recall that the bounded above derived category $\Der^-(\mathcal{A})$ is well-defined whenever $\mathcal{A}$ has enough projectives and, in fact, the canonical composition 
\[
\mathcal{K}^-(\text{Proj-}\mathcal{A})\hookrightarrow\mathcal{K}^{-}(\mathcal{A})\stackrel{q}{\to}\Der^-(\mathcal{A})
\] 
is a triangulated equivalence (see \cite{verdier1996categories}). Dual considerations hold for the bounded below derived category of an Abelian category with enough injectives.  
Finally, let us recall that, when $\A$ has enough projectives, $I$ is a set such that $I$-coproducts exist in $\mathcal{A}$ and $\{V_i\}_{I}$ is  a family of objects in $\mathcal{A}$, then we can compute the coproduct of the stalk complexes $V_i[0]$ in $\Der^{-}(\A)$ as follows. We start by fixing, for each $j\in I$, a {\bf projective resolution} $s_j\colon P^\bullet_j{\to}V_j[0]$, i.e., a quasi-isomorphism of complexes, where $P^\bullet_j$ is a degree-wise projective complex concentrated in degrees $\leq 0$. Then the coproduct  $\coprod_IV_i[0]$ in $\Der^{-}(\A)$ exists and it is precisely the coproduct of complexes $\coprod_{I}P^\bullet_i$ (computed as in $\Ch(\mathcal{A})$ or in $\mathcal{K}(\mathcal{A})$).

\subsection{Torsion pairs}
A\textbf{ torsion pair} $\t=(\mathcal T,\mathcal F)$ in an Abelian category $\A$ is a pair of subcategories, called the \textbf{torsion} and the \textbf{torsion-free class}, respectively, such that $\mathcal T^{\perp}=\mathcal F$,  $\mathcal T={}^{\perp}\mathcal F$  and such that,   for any $A\in\A$, there is a (necessarily functorial) short exact sequence
\[
0\to T_A\longrightarrow A\longrightarrow F_A\to 0,\quad\text{with $T_A\in \mathcal T$ and $F_A\in\mathcal F$.}
\]  
As a consequence, the class $\mathcal T$ (resp., $\mathcal F$) is closed under taking quotients (resp., subobjects), extensions and all small coproducts (resp., small products) that exist in $\A$. 
We let 
\[
t\colon \mathcal{A}\longrightarrow\mathcal{T}\quad\text{and}\quad(1:t)\colon \mathcal{A}\longrightarrow\mathcal{F}
\]
be the functors acting on objects by  $A\mapsto T_A$ and $A\mapsto F_A$, respectively. They are, respectively, right and left adjoints to the corresponding inclusions. 

Recall that a torsion pair $\mathbf{t}=(\mathcal{T},\mathcal{F})$ in $\A$ is said to be {\bf hereditary} (resp., {\bf cohereditary}, of {\bf finite type}) provided $\mathcal{T}$ is closed under taking subobjects (resp., $\mathcal F$ is closed under taking quotients, $\mathcal F$ is closed under taking direct limits). Furthermore, $\mathcal{T}$ is said to be a {\bf TTF class} if it is both a torsion and a torsion-free class; in this last case, $({}^{\perp}\mathcal{T},\mathcal{T},\mathcal{F})$ is said to be a {\bf TTF triple}.

\textcolor{black}{If $\A$ is a Grothendieck category (see Subsection~\ref{complete_abelian_subs}) and $\t=(\T,\mathcal F)$ is a hereditary torsion pair, then we denote by $Q_\t\colon \A\to \A/\T$ the {\bf Gabriel quotient} of $\A$ over $\T$ (which is always an exact functor), and we let $S_\t\colon \A/\T\to \A$ be the fully faithful right adjoint of $Q_\t$. If $\A=\mod R$ is the category of right $R$-modules over a ring $R$, then we can  consider the {\bf ring of quotients} $\psi\colon R\to R_\t$ of $R$ with respect to the Gabriel topology associated with $\t$ (see \cite[Chapter IX]{S75}). We get the following diagram:
\begin{equation}\label{loc_tor_pair_diagram}
\xymatrix@C=50pt{
\mod R\ar@<.5ex>[rr]^-{Q_\t}\ar@<-.5ex>[dr]_(.4){\psi^!} &&(\mod R)/\T \ar@<.5ex>[ll]^-{S_\t} \ar@<.5ex>[dl]^(.4){ \psi^!\circ S_\t} \\
&\mod {R_\t}\ar@<-.5ex>[lu]_(.4){\psi_*} \ar@<.5ex>[ur]^(.4){Q_\t\circ\psi_*} 
}
\end{equation}
where $\psi_*:=-\otimes_{R_\t}R_\t$ is the restriction of scalars along $\psi$, and $\psi^!:=\hom_R(R_\t,-)$ its right adjoint. This shows that $Q_\t\circ \psi_*$ is  exact  (as it is a composition of two exact functors) and that $\psi^!\circ S_\t$ is its left adjoint, which is fully faithful (see \cite[pages 199 and 217]{S75}, where this functor is denoted by $j$). Hence, letting $\T':=\ker(Q_\t\circ\psi_*)=(\psi_*)^{-1}(\T)$ and $\t':=(\T',(\T')^{\perp})$, we obtain a hereditary torsion pair in $\mod {R_\t}$, an equivalence of categories $(\mod R)/\T\cong (\mod{R_\t})/\T'$ and, identifying these two categories, we also get two natural isomorphisms of functors $Q_{\t'}\cong Q_\t\circ \psi_*$ and $S_{\t'}\cong \psi^!\circ S_\t$. Our original torsion pair $\t$ is said to be {\bf perfect} if $\Q_{\t'}$ is an equivalence of categories or, equivalently, if $\T'=0$ (see \cite[Proposition XI.3.4]{S75} for other equivalent conditions).}

\subsection{Cotorsion pairs}
Cotorsion pairs were first introduced by Salce \cite{Salce_cotorsion} in categories of modules.
 A pair $(\mathcal{X},\mathcal{Y})$ of subcategories of an Abelian category $\A$ is said to be a \textbf{cotorsion pair} when $\mathcal{Y}=\mathcal{X}^{\perp_1}$ and $\mathcal{X}={}^{\perp_1}\mathcal{Y}$. A cotorsion pair is called \textbf{right} (resp., {\bf left}) {\bf complete} when, for each object $A\in\mathcal{A}$, there is a short exact sequence $0\rightarrow A\to Y\to X\rightarrow 0$ (resp.,  $0\rightarrow Y\to X\to A\rightarrow 0$), where $X\in\mathcal{X}$ and $Y\in\mathcal{Y}$. Some authors say instead that  $(\mathcal{X},\mathcal{Y})$ has {\bf enough injectives} (resp., {\bf projectives}) when  $(\mathcal{X},\mathcal{Y})$ is right (resp., left) complete (see \cite{hovey}).    A cotorsion pair is \textbf{complete} if it is both left and right complete.

\subsection{Covers and envelopes}
Let $\A$ be an Abelian category and recall that a morphism $\phi\colon X\to Y$ in $\A$ is said to be {\bf left minimal} if each morphism $\psi \colon Y\to Y$ such that $\psi\circ \phi=\phi$ is an isomorphism. The notion of a morphism being {\bf right minimal} is defined dually. 

Let now $\X$ be a subcategory of $\A$. A morphism
$\phi \colon  M \rightarrow X_M$ with $X_M\in \mathcal{X}$ is:
\begin{itemize}
\item an {\bf $\mathcal{X}$-preenvelope} if, for any
$Y\in \mathcal{X}$, the induced map $\A(X_M,Y)\rightarrow \A (M,Y)$ is surjective;
\item a {\bf semi-special $\X$-preenvelope} if it is a $\X$-preenvelope and $\Coker(\phi)\in{}^{\perp_1}\mathcal{X}$;
\item a {\bf special $\X$-preenvelope} if it is a monomorphic semi-special  $\X$-preenvelope;
\item an {\bf $\X$-envelope} if it is a left minimal $\X$-preenvelope.
\end{itemize}
If  $\mathcal{X}$ is such that
every object has an $\mathcal{X}$-preenvelope (resp., semi-special preenvelope, special preenvelope, envelope), then $\mathcal{X}$ is
said to be {\bf preenveloping} (resp., {\bf semi-special preenveloping}, {\bf special preenveloping}, {\bf enveloping}). The dual notions are those of $\mathcal{X}$-{\bf precover}, {\bf semi-special $\X$-precover}, {\bf special $\mathcal{X}$-precover}, {\bf $\mathcal{X}$-cover} and any subcategory $\X$ for which those notions exist for all objects in $\A$ is called {\bf precovering}, {\bf semi-special precovering}, {\bf special precovering}, and {\bf covering}.

(Pre)envelopes and (pre)covers usually take the name of the classes over which they are constructed. Thus, the notions of injective (pre)envelopes, pure-injective (pre)envelopes, flat (pre)covers, etc.\ appear naturally in the categories where the corresponding classes can be defined.

\begin{example}\label{ex_cotorsion_implies_cover}
Let $(\mathcal X,\mathcal Y)$ be a right (resp., left) complete cotorsion pair in an Abelian category $\A$. For each $A\in \mathcal A$, consider an exact sequence  $0\rightarrow A\stackrel{u}{\to} Y\to X\rightarrow 0$ (resp.,  $0\rightarrow Y\to X\stackrel{p}{\to} A\rightarrow 0$) as above. Then, $u$ (resp., $p$) is a special $\mathcal Y$-preenvelope (resp., special $\X$-precover). 
\end{example}

Let us conclude this subsection with a technical but useful lemma:

\begin{lemma}\label{injective_preenvelope}
Let $\A$ be an Abelian category, $\T\subseteq \A$ a subcategory, and $X\in \A$.
\begin{enumerate}[\rm (1)]
\item If $X$ embeds in a product of objects of $\T$, then any $\T$-preenvelope of $X$ is a monomorphism;
\item when $\T$ is closed under taking coproducts, any coproduct of $\T$-preenvelopes, when it exists, is a $\T$-preenvelope;
\item suppose that $\Cogen(\T)=\A$, then the following assertions hold:
 \begin{enumerate}[\rm {(3.}1)]
 \item if  $\phi\colon A\to B$ is a morphism in $\A$ such that the map
\begin{equation}\label{cog_mono_eq}
\phi^*\colon \A(B,T)\to \A(A,T)
\end{equation}
is surjective for all $T\in \T$, then $\phi$ is a monomorphism. 
\item any coproduct of $\T$-preenvelopes, when it exists, is a monomorphism. 
\end{enumerate} 
\end{enumerate}
\end{lemma}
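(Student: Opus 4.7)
My plan is to prove all three parts by straightforward universal property arguments, using the embedding into a (co)product that exists in $\A$ without assuming that $\A$ has all small (co)products. The mild subtlety to keep track of is only using (co)products whose existence is guaranteed by hypothesis.

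For part (1), fix a monomorphism $\iota\colon X\hookrightarrow\prod_I T_i$ with $T_i\in\T$ (so the product $\prod_I T_i$ exists in $\A$) and a $\T$-preenvelope $\phi\colon X\to X_\T$. For each $i\in I$, the preenveloping property of $\phi$ applied to the composition $\pi_i\circ\iota\colon X\to T_i$ gives a factorization $\pi_i\circ\iota=f_i\circ\phi$ for some $f_i\colon X_\T\to T_i$. The universal property of the product assembles the family $\{f_i\}_I$ into a unique morphism $f\colon X_\T\to\prod_I T_i$ with $\pi_i\circ f=f_i$, and a componentwise check shows $f\circ\phi=\iota$. Since $\iota$ is monic, $\phi$ is monic as well.

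For part (2), let $\{\phi_j\colon X_j\to T_j\}_J$ be $\T$-preenvelopes whose coproduct $\coprod_J\phi_j\colon\coprod_J X_j\to\coprod_J T_j$ exists in $\A$. Closure of $\T$ under coproducts gives $\coprod_J T_j\in\T$. Given any $Y\in\T$ and any $g\colon\coprod_J X_j\to Y$, decompose $g$ via the coproduct inclusions $\iota_j^X$ into components $g_j:=g\circ\iota_j^X\colon X_j\to Y$. Each $g_j$ factors as $h_j\circ\phi_j$ for some $h_j\colon T_j\to Y$ by the preenveloping property of $\phi_j$. The universal property of $\coprod_J T_j$ then assembles $\{h_j\}_J$ into a map $h\colon\coprod_J T_j\to Y$, and by comparing post-compositions with each $\iota_j^X$ one sees $h\circ(\coprod_J\phi_j)=g$, proving that $\coprod_J\phi_j$ is a $\T$-preenvelope.

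For part (3.1), assume $\Cogen(\T)=\A$ and choose an embedding $\iota\colon A\hookrightarrow\prod_I T_i$ with $T_i\in\T$. For each $i$, the surjectivity of $\phi^*$ in \eqref{cog_mono_eq} gives $f_i\colon B\to T_i$ with $f_i\circ\phi=\pi_i\circ\iota$, and these assemble to $f\colon B\to\prod_I T_i$ with $f\circ\phi=\iota$ by the universal property of the product; monicness of $\iota$ then forces $\phi$ to be monic. For part (3.2), I will reduce to (3.1) applied to $\phi:=\coprod_J\phi_j$: repeating the coproduct argument of part (2) shows that for every $T\in\T$ and every $g\colon\coprod_J X_j\to T$ there is an $h\colon\coprod_J T_j\to T$ with $h\circ\coprod_J\phi_j=g$, so the surjectivity hypothesis of (3.1) is met and $\coprod_J\phi_j$ is a monomorphism. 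The point worth stressing is that this second reduction does \emph{not} require $\T$ to be closed under coproducts, nor that $\coprod_J T_j$ belong to $\T$---it is only the cogenerating hypothesis that is used. Since all steps are applications of universal properties of (co)products that are explicitly assumed to exist, no serious obstacle is expected.
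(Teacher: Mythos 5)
Your proof is correct and follows essentially the same route as the paper's: parts (1) and (2) are the identical universal-property arguments, part (3.1) is the paper's argument phrased componentwise instead of via exactness of products in $\mathrm{Ab}$, and your reduction of (3.2) to (3.1) by re-running the factorization argument of (2) is exactly what the paper intends (and you correctly observe that closure of $\T$ under coproducts is not needed there).
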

\begin{proof}
(1). Let $\phi\colon X\to T$ be a $\T$-preenvelope and $\psi\colon X\to \prod_{I}T_i$  an embedding, for some family $(T_i)_{I}$ in $\T$. Put $\psi_j:=\pi_j\circ\psi$, where $\pi_j\colon \prod_{I}T_i\to T_j$ is the $j$-th projection, for each $j\in I$. Then,  there is a map $\alpha_j\colon T\to T_j$ such that $\psi_j=\alpha_j\circ\phi$, for all $j\in I$. By the universal property of products, we get a map $\alpha \colon T\to \prod_{I}T_i$ such that $\pi_j\circ\alpha =\alpha_j$, for all $j\in I$. Hence, $\alpha\circ\phi =\psi$, since $\pi_j\circ\alpha\circ\phi=\alpha_j\circ\phi=\psi_j=\pi_j\circ\psi$, for all $j\in I$. Therefore,  $\phi$ is a monomorphism as  so is $\psi$ . 

\smallskip\noindent
(2). Let $(f_i\colon X_i \to T_i)_{I}$ be a family of $\T$-preenvelopes, such that $\coprod_{I}X_i$ and $\coprod_{I}T_i$ exist in $\A$. For each $i$ in $I$, let $\epsilon_{X_i}\colon X_i\to \coprod_{I}X_i$ and $\epsilon_{T_i}\colon T_i\to \coprod_{I}T_i$ be the inclusions in the respective coproducts. Given a morphism $\phi\colon \coprod_{I}X_i\to T'$ with $T'\in\T$, there exists, for each $i\in I$, a morphism $\psi_i\colon T_i\to T'$ such that $\phi\circ\epsilon_{X_i}=\psi_i \circ f_i$. Hence, there is a unique map $\psi\colon \coprod_{I}T_i\to T'$ such that $\psi\circ \epsilon_{T_i}=\psi_i$ for all $i\in I$, and so $\psi\circ (\coprod_{I}f_i)=\phi$. Assertion (2) then  follows, as  $\coprod_{I}T_i\in \T$.


\smallskip\noindent
(3).  We just need to prove (3.1), for then (3.2) is clearly a consequence of assertions (1) and (3.1).  Fix a monomorphism $\mu \colon A\hookrightarrow \prod_{I}T_i$ for some family $(T_i)_{I}$ in $\T$. Due to the hypothesis on $\phi$, we then have that the morphism 
\[
\xymatrix{
\A(\phi,\prod_{I}T_i)\colon \A(B,\prod_{I}T_i)\cong\prod_{I}\A(B,T_i)\longrightarrow\prod_{I}\A(A,T_i)\cong\A(A,\prod_{I}T_i)
}
\]
 is surjective since products are exact in $\Ab$. We  get a morphism $\widehat {\mu}\colon B\to \prod_{I}T_i$ such that $\widehat {\mu}\circ \phi=\mu$. Then, $\phi$ is a monomorphism since so is $\mu$. 
\end{proof}

\section{Background on Abelian categories}\label{sec. background}

Throughout this paper we try to work in Abelian categories which are as general as possible but, from time to time, a few additional hypotheses will be needed. In this section we introduce some of these hypotheses and  describe the main classes of examples we are interested in.
The hypotheses we introduce will go in two, essentially opposite, directions: on the one hand we will study hypotheses for ``big'' categories (e.g., (co)completeness, exactness of infinite (co)products, etc.) and, on the other hand, we will study some finiteness conditions (e.g., $\hom$- and $\Ext^1$-finiteness, generators, etc.). Let us start with the following definition:

\begin{definition}
An Abelian category $\A$ is said to be 
\begin{itemize}
\item {\bf $\Ext^1$-small} provided $\Ext^1_{\mathcal A}(A,B)$ is a set (as opposed to a proper class) for each pair of objects $A$ and $B$ in $\A$;
\item {\bf well-powered} if the lattice $\mathcal{L}_{\A}(A):=\{\text{subobjects of $A$}\}$ is a set for all $A\in\A$. 
\end{itemize}
\end{definition}
The notion of well-powered category is classical (see, e.g., \cite{McL}). Note that both notions introduced in the above definition are self-dual. For this reason, when working in categories satisfying these hypotheses, some concepts and statements will have  equivalent dual versions that, unless necessary for our purposes, will not be explicitly given. 


\subsection{Categories with a generator}
In this first subsection we study some notions of ``generator'' for an Abelian category. Usually (say, in categories of modules) these notions coincide but, when we work in full generality, some care is needed.

\begin{definition}\label{def. epi-generator}
A set of objects $\G$ in a category $\A$ is said to be a {\bf set of generators} when the functors $\{\A(G,-)\colon \A\to \Ab\}_{\G}$ are jointly faithful. Furthermore, $\G$ is a {\bf set of} ({\bf finite}) {\bf epi-generators} when $\Gen(\G)=\A$ (resp., $\gen(\G)=\A$). An object $G$ is a ({\bf finite}) ({\bf epi-}){\bf generator} if so is the set $\{G\}$.
The concepts of ({\bf finite}, {\bf mono-}){\bf cogenerator} and of {\bf set of }\mbox{({\bf finite}, {\bf mono-})}{\bf cogenerators} are defined dually. 
\end{definition}

It is not difficult to verify that an epi-generator is also a generator, but the converse might not be  true in general. Anyway, we will see that the two notions coincide in  our main cases of interest (see Lemmas \ref{lem.AB3-are-AB3*} and \ref{hom-finite-generator}).
Note also that, if $G$ is an epi-generator of an Abelian category $\A$, then $\A=\Gen(G)=\Pres(G)$, so the following lemma applies to categories with an epi-generator.

\begin{lemma}\label{existence_of_coproducts_in_pres_lemma}
Let $\A$ be an Abelian category, $G\in \A$ and $I$  an infinite set for which the coproduct $G^{(I)}$ exists in $\A$. The following assertions hold:

\begin{enumerate}[\rm (1)]
\item  if $X\in\Pres (G)$, then the coproduct $X^{(I)}$ exists in $\A$;
\item  if $\Pres_I(G)$ denotes the subcategory of those objects $X$ that fit into an exact sequence of the form $G^{(I)}\to G^{(I)}\to X\to 0$, then any $I$-indexed family $(X_i)_{I}$ in $\Pres_I(G)$ has a coproduct in $\A$ that belongs to $\Pres_I(G)$;
\item  if $G$ is a finite epi-generator of $\A$, then $\A$ has $I$-coproducts.
\end{enumerate}
\end{lemma}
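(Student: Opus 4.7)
The plan is to prove part (2) first, where the key constructions appear, and then deduce (1) and (3) with relatively minor additional work. The decisive cardinal-arithmetic observation is that, for an infinite set $I$, $|I \times I| = |I|$, so that any bijection $I \cong I \times I$ induces an isomorphism of coproducts $G^{(I)} \cong G^{(I \times I)}$; in particular $G^{(I \times I)}$ exists in $\A$.

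For (2), given a family $(X_i)_{i \in I}$ in $\Pres_I(G)$ with presentations $f_i : G^{(I)} \to G^{(I)}$ (so $\coker(f_i) = X_i$), the universal property of coproducts assembles the $f_i$ into a single morphism $\bar f : G^{(I \times I)} \to G^{(I \times I)}$. Its cokernel $C$ exists in $\A$ since cokernels exist; I claim $C \cong \coprod_i X_i$. Applying $\A(-,A)$ to the presentation $G^{(I \times I)} \to G^{(I \times I)} \to C \to 0$ yields the exact sequence
\[
0 \to \A(C,A) \to \A(G^{(I \times I)}, A) \to \A(G^{(I \times I)}, A).
\]
Identifying $\A(G^{(I \times I)}, A) \cong \prod_i \A(G^{(I)}, A)$ and using exactness of products in $\Ab$, the left-hand kernel becomes $\prod_i \A(X_i, A)$, which is exactly the universal property of $\coprod_i X_i$; hence $C \cong \coprod_i X_i \in \Pres_I(G)$.

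For (1), let $X \in \Pres(G)$ with presentation $B_1 \xrightarrow{f} B_0 \to X \to 0$, where each $B_j$ is a direct summand of some $G^{(K_j)}$ existing in $\A$. I would first verify the auxiliary claim that $G^{(K_j \times I)}$ exists: since $I$ is infinite, $|K_j \times I| = \max(|K_j|, |I|, \aleph_0)$, and $G^{(K_j \times I)}$ is obtained by transporting either $G^{(K_j)}$ or $G^{(I)}$ along a bijection, depending on which side realizes the maximum. The splitting idempotent of $B_j$ inside $G^{(K_j)}$ then induces an idempotent on $G^{(K_j \times I)}$, and since idempotents split in Abelian categories, its image serves as $B_j^{(I)}$ (a summand of $G^{(K_j \times I)}$). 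A Yoneda computation analogous to the one in (2) shows $X^{(I)} \cong \coker(f^{(I)} : B_1^{(I)} \to B_0^{(I)})$.

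For (3), any $X \in \A = \gen(G)$ is a quotient of $G^n$ for some finite $n$; since the kernel again lies in $\gen(G) = \A$, $X$ admits a finite presentation $G^m \xrightarrow{f} G^n \to X \to 0$. Using a bijection $I \setminus \{1,\dots,m\} \cong I \setminus \{1,\dots,n\}$, available since $I$ is infinite, I extend $f$ to $\tilde f : G^{(I)} \to G^{(I)}$ by $f$ on the first $m$ summands and an isomorphism on the complementary $I$-indexed part; since the added piece is an isomorphism, $\coker(\tilde f) \cong X$, placing $X$ in $\Pres_I(G)$. Hence $\A = \Pres_I(G)$, and (2) yields all $I$-coproducts. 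The principal obstacle is the cardinality bookkeeping in (1)---guaranteeing that $G^{(K_j \times I)}$ exists when only $G^{(I)}$ is assumed present---and handling it becomes cleanest once (2) has pinned down the Yoneda technique.
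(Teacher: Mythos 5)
Your proof is correct and follows essentially the same route as the paper: reduce everything to cokernels of morphisms between copies of $G^{(I)}$ using $|I\times I|=|I|$, and verify the coproduct's universal property by applying $\A(-,A)$ and using exactness of products in $\Ab$ (the computation the paper delegates to the cited Lemma 3.1 of the HRS paper, and which you spell out). The only cosmetic differences are the order of the parts and your explicit idempotent-splitting treatment of direct summands in (1), where the paper instead tacitly replaces a presentation by summands of $G^{(J)},G^{(K)}$ with one by the full coproducts.
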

\begin{proof}
(1).  Consider a presentation 
\[
G^{(J)}\overset{p}{\longrightarrow} G^{(K)}\longrightarrow X\to 0
\] 
and let $\Lambda$ be a set with cardinality $|\Lambda|=\max\{|I|,|J|,|K|\}$ so, by construction, the coproduct $G^{(\Lambda)}$ exists in $\A$. Assume, without loss of generality, that $J=\Lambda=K$. Furthermore, note that $|I\times \Lambda|=|\Lambda|$, so the coproducts in the following exact sequence exist in $\A$:
\[
(G^{(\Lambda)})^{(I)}\overset{p^{(I)}}{\longrightarrow}(G^{(\Lambda)})^{(I)}\longrightarrow \coker(p^{(I)})\to 0.
\]
Finally, the same argument used in the proof of \cite[Lem.\,3.1]{parra2020hrs} shows that $\coker(p^{(I)})$ represents a coproduct of $|I|$-many copies of $X$ in $\A$.

\smallskip\noindent
(2). Let $(G^{(I)}\stackrel{f_i}{\longrightarrow}G^{(I)}\stackrel{p_i}{\longrightarrow}X_i\to 0)_{I}$ be an associated family of presentations. Then  we get a morphism $\coprod_{I}f_i:(G^{(I)})^{(I)}\to(G^{(I)})^{(I)}$. This map is well-defined since $(G^{(I)})^{(I)}\cong G^{(I\times I)}\cong G^{(I)}$. By the mentioned argument of  \cite[Lem.\,3.1]{parra2020hrs}, we get that $\coker (\coprod_{I}f_i)$ is a coproduct of the family $(X_i)_{I}$ in $\A$, and obviously it belongs to $\Pres_I(G)$.
 
 \smallskip\noindent
(3). It is a consequence of the fact that,  when $G$ is a finite epi-generator, $\Pres_I(G)=\A$.
\end{proof}

When an Abelian category has an epi-generator, then there is the following useful characterization of some preenveloping classes:

\begin{lemma} \label{lem.special-preenvelope-generator}
Let $\A$ be an Abelian category with an epi-generator $G$ and  $\T$  a subcategory closed under taking coproducts and quotients (e.g., a torsion class) in  $\A$. Take $A\in\A$ and suppose that  $\mu\colon G\to T_G$ is a (semi-special) $\T$-preenvelope. Fix also an epimorphism $\pi\colon G^{(I)}\twoheadrightarrow A$, for some set $I$, and consider the following pushout diagram (see Lemma \ref{existence_of_coproducts_in_pres_lemma}):
\begin{equation}\label{PO_diagram_lem.special-preenvelope-generator}
\xymatrix@C=35pt{
G^{(I)}\ar@{}[dr]|{\text{P.O.}}\ar[d]_\pi\ar[r]^{\mu^{(I)}}&T^{(I)}_G\ar[r]\ar[d]&\Coker(\mu)^{(I)}\ar[r]\ar@{=}[d]&0\\
A\ar[r]_{\mu_A}&T_A\ar[r]&\Coker(\mu)^{(I)}\ar[r]&0
}
\end{equation}
Then $\mu_A$ is a (semi-special) {$\T$-}preenvelope, so the following assertions are equivalent:
\begin{enumerate}[\rm (1)]
\item $G$ has a (semi-special) $\T$-preenvelope;
\item $\T$ is (semi-special) preenveloping in $\A$.
\end{enumerate}
Moreover, $\T$ is special preenveloping if, and only if, it is  cogenerating  (see Definition~\ref{def.quotX and subX}) and $G$ has a special $\T$-preenvelope. 
\end{lemma}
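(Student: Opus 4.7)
The plan is to analyze the pushout diagram~\eqref{PO_diagram_lem.special-preenvelope-generator}. First I would verify $T_A\in\T$: since $\pi$ is epi, the pushout forces $T_G^{(I)}\twoheadrightarrow T_A$ to be epi as well, and $T_G^{(I)}\in\T$ by closure under coproducts, so $T_A\in\T$ by closure under quotients. Reading off the right-hand portion of the diagram shows simultaneously that $\Coker(\mu_A)\cong\Coker(\mu)^{(I)}$.

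To show $\mu_A$ is a $\T$-preenvelope, I would take an arbitrary morphism $f\colon A\to T$ with $T\in\T$. For each $i\in I$, the composite $f\circ\pi\circ\iota_i\colon G\to T$ (with $\iota_i$ the $i$-th inclusion into $G^{(I)}$) factors through $\mu$ because $\mu$ is a preenvelope, producing $h_i\colon T_G\to T$. The universal property of the coproduct assembles these into a single $h\colon T_G^{(I)}\to T$ with $h\circ\mu^{(I)}=f\circ\pi$, and then the universal property of the pushout delivers the desired $g\colon T_A\to T$ with $g\circ\mu_A=f$. When $\mu$ is semi-special, $\Coker(\mu)\in{}^{\perp_1}\T$, so Lemma~\ref{lem.Ext-V-versus-Ext-VI}(2) gives $\Coker(\mu_A)\cong\Coker(\mu)^{(I)}\in{}^{\perp_1}\T$, showing $\mu_A$ is semi-special. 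The equivalence (1)$\Leftrightarrow$(2) follows at once: (2)$\Rightarrow$(1) is trivial, while for (1)$\Rightarrow$(2) the hypothesis that $G$ is an epi-generator supplies an epimorphism $\pi\colon G^{(I)}\twoheadrightarrow A$ for every $A\in\A$, so the construction applies uniformly.

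The ``only if'' direction of the final claim on special preenvelopes is immediate: if each object admits a monic $\T$-preenvelope then $\A=\Sub(\T)$, so $\T$ is cogenerating (and, in particular, $G$ has a special $\T$-preenvelope). For ``if'', the pushout construction already produces a semi-special $\T$-preenvelope $\mu_A$, and the main obstacle is upgrading it to a monomorphism. Here I would invoke Lemma~\ref{injective_preenvelope}(3.1): since $\T$ is cogenerating and $\mu_A\colon A\to T_A$ is a $\T$-preenvelope into an object $T_A\in\T$, the induced maps $\A(T_A,T)\to\A(A,T)$ are surjective for every $T\in\T$, which forces $\mu_A$ to be monic. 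Hence $\mu_A$ is a special $\T$-preenvelope, concluding the argument.
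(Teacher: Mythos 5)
Your proposal is correct and follows essentially the same route as the paper: establish that $\mu^{(I)}$ is a (semi-special) $\T$-preenvelope (your inline argument is precisely the content of Lemma~\ref{injective_preenvelope}(2), combined with Lemma~\ref{lem.Ext-V-versus-Ext-VI}(2) for the semi-special case), pass through the universal property of the pushout to get $\mu_A$, and then use Lemma~\ref{injective_preenvelope}(3) together with the cogenerating hypothesis to upgrade to a monomorphism in the special case. The only detail you leave implicit that the paper flags is the existence of the coproduct $T_G^{(I)}$, which is supplied by Lemma~\ref{existence_of_coproducts_in_pres_lemma}.
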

\begin{proof}
{Note that $T_G^{(I)}$ exists by Lemma \ref{existence_of_coproducts_in_pres_lemma}}. The fact that $\mu_A$ is a (semi-special) $\T$-preenvelope follows from the fact that $\mu^{(I)}$ is a (semi-special) $\T$-preenvelope by Lemma~\ref{injective_preenvelope}(2) (and Lemma~\ref{lem.Ext-V-versus-Ext-VI}(2)), and by the universal property of pushouts. 
Finally, the special preenveloping case now follows by Lemma~\ref{injective_preenvelope}(3). 
\end{proof}

Let us conclude with the following lemma, that shows that having a finite epi-generator which is furthermore a projective object is quite a strong requirement:

\begin{lemma} \label{lem.only-finite-copr-project-epigener}
An Abelian category $\A$ has a projective finite epi-generator if, and only if, it is equivalent to $\fpmod R$ for some right coherent ring $R$. 
\end{lemma}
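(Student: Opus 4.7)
The plan is to establish both implications separately. For the ``if'' direction, when $R$ is right coherent $\fpmod R$ is an Abelian exact subcategory of $\mod R$, and the regular module $R_R$ is a projective finite epi-generator: it is projective in $\mod R$ (hence in $\fpmod R$), and by definition every finitely presented right $R$-module is a quotient of some $R^n$ with $n<\omega$.

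For the ``only if'' direction, assume $G$ is a projective finite epi-generator of $\A$ and set $R:=\End_\A(G)^{\op}$. I will analyze the functor
\[
F:=\A(G,-)\colon \A\longrightarrow \mod R.
\]
Since $G$ is projective and (as noted just after Definition~\ref{def. epi-generator}) also a generator, $F$ is exact and faithful. Using $\gen(G)=\A$ twice, each $A\in\A$ admits a presentation $P_1\to P_0\twoheadrightarrow A$ in $\A$ with $P_0,P_1\in\add(G)$, and $F$ turns this into a presentation of $F(A)$ by finitely generated free $R$-modules; hence $F$ factors through $\fpmod R$. The Yoneda isomorphism $\A(G^n,G^m)\cong\hom_R(R^n,R^m)$ further identifies $\add(G)$ with $\add(R_R)=\proj R$, and fullness on all of $\A$ and essential surjectivity onto $\fpmod R$ then follow from the usual diagram chase: any morphism $\varphi\colon F(A)\to F(B)$ in $\fpmod R$ lifts along the presentation of $A$ using Yoneda on $\add(G)$ together with the exactness of $F$, and any presentation $R^m\to R^n\to M\to 0$ of $M\in\fpmod R$ arises, via Yoneda, from a morphism $G^m\to G^n$ in $\A$ whose cokernel $A\in\A$ satisfies $F(A)\cong M$ by exactness of $F$.

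Once $F\colon \A\xrightarrow{\sim}\fpmod R$ is established, the Abelian structure of $\A$ transports to $\fpmod R$, so $\fpmod R$ is an Abelian subcategory of $\mod R$, which is the standard characterization of right coherence of $R$. The only mildly technical point in the plan is to make sure that the finite coproducts $G^n$ and the kernels and cokernels used to build the presentations in the previous paragraph actually live in $\A$; this is automatic from $\A$ being Abelian together with the finite-epi-generator hypothesis on $G$ (cf.\ Lemma~\ref{existence_of_coproducts_in_pres_lemma}(3) applied to finite index sets), so no serious obstacle is anticipated.
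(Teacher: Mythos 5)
Your proof is correct. The paper's argument for the ``only if'' direction runs through the restricted Yoneda functor $\A\to[\add(P)^{\op},\Ab]$, citing \cite{saorin2020t} both for the equivalence $\A\simeq\fpmod{\add(P)}$ and for the fact that this functor category is Abelian precisely when $\add(P)\simeq\proj R$ has weak kernels, which is how right coherence of $R=\End_\A(P)$ enters; you instead verify by hand that $\A(G,-)$ is an exact, fully faithful functor with essential image $\fpmod R$ and then read off coherence from the Abelianness of $\fpmod R$. The two arguments are the same in substance --- both identify $\A$ with finitely presented modules over the endomorphism ring of the projective finite epi-generator --- but yours is self-contained where the paper leans on citations, and it extracts coherence at the end rather than at the start; you also record the easy ``if'' direction, which the paper leaves implicit. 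Two small points. First, with the usual convention that $\A(G,A)$ is a right $\End_\A(G)$-module under precomposition, you should take $R:=\End_\A(G)$ rather than its opposite, as the paper does; this is only a convention slip. Second, the final step ``$\fpmod R$ Abelian $\Rightarrow R$ right coherent'' deserves one more line, since transport of structure only gives that $\fpmod R$ is abstractly Abelian, not immediately that it is an Abelian exact subcategory of $\mod R$; in your setup this is cleanest by noting that $F=\A(G,-)$ is exact as a functor into $\mod R$ (because $G$ is projective), so $\Ker_{\mod R}(F(g))\cong F(\Ker_\A(g))$ lies in $\fpmod R$ for every morphism $g$ of $\A$, and since $F$ is essentially surjective onto $\fpmod R$ and full, this shows $\fpmod R$ is closed under kernels in $\mod R$, which is exactly right coherence.
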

\begin{proof}
Let $P$ be a projective {finite}  epi-generator of $\A$, then the category $\proj\A$ coincides with $\add(P)$. Consider now the restricted Yoneda functor
\[
\A\longrightarrow [\add(P)^{\op},\Ab]\quad\text{such that}\quad A\mapsto \A(-,A)_{\restriction\add(P)}.
\]
This functor induces an equivalence of categories from $\A$ to the category $\fpmod{\add(P)}$ of finitely presented additive functors $\add(P)^{\op}\to\Ab$, that is, the functors $F$ that admit an exact sequence 
\[
\xymatrix{
\add(P)(-,P_1)\ar[r]&\add(P)(-,P_0)\ar[r]& F\ar[r]& 0,
}
\]
with $P_1,\, P_0\in\add(P)$ (see \cite[Prop.\,2.2]{saorin2020t}). The fact that  $\fpmod{\add(P)}$ is Abelian is equivalent to say that the (skeletally small) additive category $\add(P)$ has weak kernels (see \cite[Lem.\,2.1]{saorin2020t}).  Bearing in mind that $\A(P,-)$ induces an equivalence of categories $\add(P)\cong\add(R_R)=\proj R$  for the ring $R:=\End_\A(P)$, it follows that $\proj R$ has weak kernels, which is equivalent to say that $R$ is right coherent. Since the categories of projectives in $\A$ and in the Abelian category $\fpmod R$ are equivalent it follows, again by \cite[Prop\,2.2]{saorin2020t}, that $\A$ and $\fpmod R$ are equivalent.
\end{proof}

\textcolor{black}{
A strategy to construct non-trivial examples of finite epi-generators is the following:
\begin{example}
Let $R$ be a right coherent ring, $\t=(\T,\mathcal F)$ a hereditary torsion pair of finite type in $\mod R$ and $Q_\t\colon\mod R\to\G:=(\mod R)/\T$ the corresponding Gabriel quotient. Then, the following statements hold true:
\begin{enumerate}[\rm (1)]
\item the category $\A:=\fp(\G)$ of finitely presented objects in $\G$ is Abelian (that is, $\G$ is locally coherent);
\item $Q_\t(R)$ is a finite epi-generator in $\A$;
\item $Q_\t(R)$ is projective in $\A$ if, and only if, $\t$ is perfect.
\end{enumerate}
\end{example}
\begin{proof}
Part (1) follows by \cite[Proposition A.6]{Henning_spectrum_module}, while (2) follows by \cite[Proposition 2.10]{saorin2020t}. For (3), consider the notation of diagram \eqref{loc_tor_pair_diagram}. If $\t$ is perfect, then $S_\t$ is an exact functor, in fact, it 
has both a left and a right adjoint (see \cite[Proposition XI.3.4]{S75}). Therefore, its left adjoint $Q_\t$ sends projectives to projectives, showing that $Q_\t(R)$ is projective. On the other hand, suppose that $Q_\t(R)$ is projective and let us verify that $\T'=0$. Indeed, consider $T\in \T'$ and choose a free presentation of $T$ in $\mod {R_\t}$:
\[
\xymatrix{
R_\t^{(I)}\ar[r]^-{\phi}& R_\t^{(J)}\ar[r] &T\ar[r]& 0.
}
\] 
Since $Q_{\t'}$ is exact and $Q_{\t'}(T)=0$, we have that $Q_{\t'}(\phi)\colon Q_{\t'}(R_\t)^{(I)}\to Q_{\t'}(R_\t)^{(J)}$ is an epimorphism and, since $Q_{\t'}(R_\t)\cong Q_\t(R)$ is projective, $Q_{\t'}(\phi)$ is a split epimorphism. Hence, $\phi\cong (S_{\t'}\circ Q_{\t'})(\phi)$ is a (split) epimorphism, showing that $T=\coker(\phi)=0$.
\end{proof}
}

\subsection{(Co)Complete Abelian categories}\label{complete_abelian_subs}

Recall that an Abelian category $\A$ is said to be {\bf (Ab.3)}, {\bf (Ab.4)}, {\bf (Ab.5)}  (and dually, {\bf(Ab.3*)}, {\bf (Ab.4*)}, {\bf (Ab.5*)}) if, respectively, it is cocomplete, it {is (Ab.3) and it} has exact coproducts, and it {is (Ab.3) and} it has exact directed colimits (it is complete, it {is (Ab.3*) and} it has exact products, and it {is (Ab.3*) and} it has exact inverse limits). Let us remind the reader of the following result in category theory.

\begin{lemma} \label{lem.AB3-are-AB3*}
Let $\A$ be an (Ab.3) Abelian category. Then,
\begin{enumerate}[\rm (1)]
\item an object $G$ in $\A$ is a generator if and only if it is an epi-generator;
\item if $\A$ has a generator, then it is well-powered and (Ab.3*).
\end{enumerate}
\end{lemma}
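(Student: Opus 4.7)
For part (1), I would first observe that any epi-generator is a generator (this direction does not use (Ab.3)): given distinct parallel morphisms $f,g\colon A\to B$, a choice of epimorphism $\pi\colon G^{(I)}\twoheadrightarrow A$ witnessing $A\in\Gen(G)$ forces $(f-g)\pi\neq 0$, so by the universal property of the coproduct some component $(f-g)\pi\iota_i\colon G\to B$ is nonzero, giving a morphism $G\to A$ that separates $f$ and $g$. For the converse, given $A\in\A$, the (Ab.3) hypothesis lets me form $G^{(\A(G,A))}$ together with the canonical morphism $\phi\colon G^{(\A(G,A))}\to A$ whose component indexed by $f\in\A(G,A)$ is $f$ itself. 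Denoting by $q\colon A\to A/\Im(\phi)$ the canonical projection, each $q\circ f$ vanishes because $f$ factors through $\Im(\phi)$; the zero morphism $A\to A/\Im(\phi)$ has the same property, so the faithfulness of $\A(G,-)$ forces $q=0$, and hence $\phi$ is epi, i.e., $A\in\Gen(G)$.

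For the well-powered part of (2), my plan is to send a subobject $B\leq A$ to the subset $\A(G,B)\subseteq\A(G,A)$ (identifying $\A(G,B)$ with its image under the inclusion $B\hookrightarrow A$) and to show this assignment is injective. Given $B,B'\leq A$ with $\A(G,B)=\A(G,B')$, I would pick an epimorphism $p\colon G^{(I)}\twoheadrightarrow B$ supplied by part (1): each $p\iota_i$ lies in $\A(G,B')$, so the composite $G^{(I)}\to B\hookrightarrow A$ factors through $B'\hookrightarrow A$. Since $p$ is epi, its image in $A$ equals $B$, whence $B\leq B'$; by symmetry $B=B'$. Thus the subobjects of $A$ inject into the power set of the set $\A(G,A)$, and so they form a set.

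For the (Ab.3*) statement, the plan is to apply the dual of the Special Adjoint Functor Theorem to the diagonal $\Delta\colon\A\to\A^I$ for each set $I$. This functor trivially preserves small colimits, since colimits in $\A^I$ are computed componentwise. The source $\A$ is cocomplete by hypothesis, locally small, well-powered by the previous paragraph, and has a generator; moreover, in an Abelian category the kernel induces an antiisomorphism between quotient objects and subobjects of any given object, so well-poweredness yields well-copoweredness as well. Dual SAFT then supplies $\Delta$ with a right adjoint, which is necessarily the $I$-fold product functor, showing that $\A$ admits all small products and is thus (Ab.3*). The main obstacle is arranging the hypotheses of dual SAFT correctly; the well-powered step and the passage to well-copoweredness are designed precisely to discharge them.
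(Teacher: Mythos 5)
Your proposal is correct and follows essentially the same route as the paper: the canonical morphism $G^{(\A(G,X))}\to X$ for part (1), and the dual Special Adjoint Functor Theorem applied to the diagonal functor for (Ab.3*). The only difference is that you prove well-poweredness directly via the injection of subobjects of $A$ into the power set of $\A(G,A)$ (and explicitly note that well-poweredness gives well-copoweredness via kernels), whereas the paper delegates this to a citation of Stenstr\"om; the underlying argument is the same.
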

\begin{proof}
(1). Given a generator $G$ and an object $X\in \A$, the natural homomorphism 
\[
G^{(\A(G,X))}\longrightarrow X
\] 
is an epimorphism. Hence, $G$ is also an epi-generator.

\smallskip\noindent
(2). By \cite[Prop.\,IV.6.6]{S75}, we know that $\A$ is well-powered.  On the other hand, given a set $I$, viewed as a small discrete category, consider the constant diagram functor $\kappa_I\colon\A\to\A^I$. Applying a version of Freyd's Special Adjoint Functor Theorem, e.g.\ the dual of \cite[Thm.\,3.3.4]{borceux1994handbook1}, one can see that $\kappa_I$ has a right adjoint, which is precisely the $I$-product functor $\prod_I \colon\A^I\to\A$. 
\end{proof}

An Abelian category is said to be {\bf Grothendieck} if it is (Ab.5) and it has a generator or, equivalently, a set of generators. Note that, by Lemma~\ref{lem.AB3-are-AB3*}, any generator in a Grothendieck category $\A$ is also an epi-generator and {then }$\A$ is bicomplete {and well-powered}. Furthermore, it is known that Grothendieck categories have enough injectives. 

\subsection{$\Hom$- and $\Ext^1$-finite categories}

There are several classes of natural examples of Grothendieck categories, e.g.\ categories of modules (over unitary rings, or even small preadditive categories) or of quasi-coherent sheaves over a scheme. These are, in general, ``big'' categories. On the other hand, in this paper we are also interested in other classes of ``smaller'' Abelian categories like, e.g., those of finitely generated modules over Artin algebras. The finiteness properties introduced in this subsection are designed to mimic the properties of this kind of ``smaller'' examples.

\begin{definition}\label{def_hom_ext_fin}
An Abelian category $\A$ is said to be {\bf $\Hom$-finite} (resp., {\bf $\Ext^1$-finite}) when there is a commutative ring $R$ such that $\A$ is $R$-linear and $\A(A,B)$ (resp., $\Ext^1_\A(A,B)$) is finitely generated as an $R$-module, for all objects $A,\, B \in \A$.
\end{definition}

Of course, all $\Ext^1$-finite categories are also $\Ext^1$-small. As it happens in cocomplete Abelian categories, in $\hom$-finite Abelian categories any generator is also an epi-generator:

\begin{lemma} \label{hom-finite-generator}
Let $\A$ be a $\Hom$-finite Abelian category. Then, 
\begin{enumerate}[\rm (1)]
\item an object $G$ in $\A$ is a generator if and only if it is a finite epi-generator;
\item if $\A$ has a projective generator $P$, then $\A$ is $\Ext^1$-finite.
\end{enumerate}
\end{lemma}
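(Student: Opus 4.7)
The plan for (1) is to exploit $\Hom$-finiteness to bound the cardinality needed for epi-generation. One implication is formal: any epi-generator is a generator, as pointed out right after Definition~\ref{def. epi-generator}. For the converse, fix $X\in\A$ and consider the $R$-module $M:=\A(G,X)$, which is finitely generated by hypothesis. Choose $R$-module generators $g_1,\dots,g_n\in M$ and assemble them into a single morphism $\phi\colon G^n\to X$ (the finite coproduct exists in any Abelian category). Let $\pi\colon X\to C=\coker\phi$ denote the cokernel projection. Each composite $\pi\circ g_i=\pi\circ\phi\circ\iota_i$ vanishes, and since $\A$ is $R$-linear, every $g\in M$ can be written as $\sum_i r_ig_i$ with $r_i\in R$, so $\pi\circ g=\sum_i r_i(\pi\circ g_i)=0$. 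Hence $\A(G,\pi)=0$, and the generator property of $G$ forces $\pi=0$, so $C=0$ and $\phi$ is epic. Thus $G$ is a finite epi-generator.

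For (2), I would use the long-exact-sequence trick enabled by the projective generator. Given $A,B\in\A$, part (1) supplies an epimorphism $P^n\twoheadrightarrow A$ with kernel $K$, giving a short exact sequence $0\to K\to P^n\to A\to 0$. Applying $\A(-,B)$ produces an exact sequence of $R$-modules
\[
\A(P^n,B)\longrightarrow\A(K,B)\longrightarrow\Ext^1_\A(A,B)\longrightarrow\Ext^1_\A(P^n,B)=0,
\]
where the rightmost vanishing is by projectivity of $P^n$. Therefore $\Ext^1_\A(A,B)$ is a quotient of $\A(K,B)$, which is finitely generated over $R$ by $\Hom$-finiteness, and is thus itself finitely generated.

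The only point requiring care is the $R$-linearity step in (1): one needs the $R$-module structure on $\A(G,X)$ to be compatible with left composition by $\pi$, i.e., $\pi\circ(\sum_i r_ig_i)=\sum_i r_i(\pi\circ g_i)$. This is exactly the statement that the $\hom$-bifunctor is $R$-bilinear, which is built into the definition of an $R$-linear category, so the argument goes through without extra hypotheses. Notably, the proof requires neither (co)completeness of $\A$ nor any exactness of infinite coproducts, only the existence of finite coproducts, kernels, and cokernels that come with the Abelian structure.
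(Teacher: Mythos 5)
Your proof is correct and follows essentially the same route as the paper: for (1), assemble a finite $R$-module generating set of $\A(G,X)$ into $\phi\colon G^n\to X$ and use $R$-bilinearity of composition plus faithfulness of $\A(G,-)$ to kill the cokernel; for (2), apply $\A(-,B)$ to a short exact sequence $0\to K\to P^n\to A\to 0$ and read off $\Ext^1_\A(A,B)$ as a quotient of the finitely generated module $\A(K,B)$. No gaps.
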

\begin{proof}
Fix a ground commutative ring $R$ with respect to which $\A$ is $\hom$-finite. 

\smallskip\noindent
(1). Given a generator $G$ and an object $X\in \A$, the $R$-module $\A(G,X)$ is finitely generated, so there is a finite subset $\{f_1,\dots, f_n\} \subseteq \A(G,X)$ such that every morphism  $f\in\A(G,X)$ can be written as $f=a_1f_1+\dots +a_nf_n$, for some $a_i\in R$ and $i=1,\dots,n$. Consider the coproduct $G^{(n)}$, and denote by $\iota_i\colon G\to G^{(n)}$ the $i$-th inclusion ($i=1,\dots,n$). Then there is a unique morphism $\phi\colon G^{(n)}\to X$ such that $\phi\circ\iota_i=f_i$, for all $i=1,\dots,n$, giving an exact sequence
\begin{equation}\label{eq_homfin_gen}
\xymatrix{
G^{(n)}\ar[r]^-{\phi}& X\ar[r]^-p&\coker(\phi)\ar[r]&0.
}
\end{equation}
To show that $\phi$ is an epimorphism, we have to verify that $\coker(\phi)=0$ or, equivalently, that $p=0$. Since $G$ is a generator, this is the same as showing that $\A(G,p)\colon \A(G,X)\to \A(G,\coker(\phi))$ is the trivial map. Apply $\A(G,-)$ to the sequence \eqref{eq_homfin_gen} to get the following two maps in $\mod R$:
\[
\xymatrix@C=35pt{\A(G,G^{(n)})\ar[r]^-{\A(G,\phi)}& \A(G,X)\ar[r]^-{\A(G,p)}& \A(G,\coker(\phi)).}
\]
To conclude, note that $\A(G,p)\circ \A(G,\phi)=\A(G,p\circ \phi)=0$, while $\A(G,\phi)$ is surjective by construction. Hence, $\A(G,p)=0$, as desired.


\smallskip\noindent
(2). Consider two objects $A$ and $B$ in $\A$, and  a short exact sequence
\[
\xymatrix{
0\ar[r]&\ker(\pi)\ar[r]&P^{(\alpha)}\ar[r]^-\pi& A\ar[r]&0.
}
\]
Applying the functor $\A(-,B)$, we get the following long exact sequence in $\mod R$:
\[
\xymatrix{
\cdots\ar[r]&\A(P^{(\alpha)},B)\ar[r]&\A(\ker(\pi),B)\ar[r]&\Ext^1_\A(A,B)\ar[r]&\Ext^1_\A(P^{(\alpha)},B)=0,
}
\]
where $\Ext^1_\A(P^{(\alpha)},B)=0$ since $P^{(\alpha)}$ is projective. Now, $\A(\ker(\pi),B)$ is finitely generated over $R$, so also its epimorphic image $\Ext^1_\A(A,B)$ has to be a finitely generated $R$-module.
\end{proof}

Our next goal is to show that $\Hom$-finite categories  are very special (see Proposition~\ref{prop.Hom-finite-are-coherent}), but let us first remind the reader of the following result, for which it is essential to assume that the ring is commutative (see \cite{Bergman}):

\begin{lemma} \label{lem.no self-reproductive product over commutative}
Let $R$ be a non-zero commutative ring, $M$ a non-zero finitely generated $R$-module and $I$ an infinite set. Then, $M$ is not isomorphic to $M^I$ in $\mod R$.
\end{lemma}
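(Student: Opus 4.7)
\bigskip

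\noindent\textbf{Proof plan.} The idea is to reduce modulo a suitable maximal ideal in order to get a finite-dimensional invariant that behaves badly under taking an infinite power. Concretely, I would proceed as follows.

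First, since $M$ is finitely generated and non-zero, by a standard consequence of Zorn's lemma $M$ admits a maximal proper submodule $N$, and $M/N$ is then simple, hence isomorphic to $R/\mathfrak{m}$ for some maximal ideal $\mathfrak{m}\subset R$. In particular $\mathfrak{m}M\subseteq N\subsetneq M$, so $M/\mathfrak{m}M$ is a non-zero, finitely generated module over the field $k:=R/\mathfrak{m}$; let $d:=\dim_k(M/\mathfrak{m}M)\geq 1$, which is finite because $M$ is finitely generated over $R$.

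Next, suppose towards a contradiction that there is an isomorphism $M\cong M^I$. Applying $-/\mathfrak{m}(-)$ (equivalently, $-\otimes_R k$) to both sides yields an isomorphism of $k$-vector spaces
\[
M/\mathfrak{m}M\;\cong\;M^I/\mathfrak{m}M^I.
\]
The left-hand side has dimension $d$, a finite integer. For the right-hand side, I would use the obvious inclusion $\mathfrak{m}M^I\subseteq(\mathfrak{m}M)^I$ (an element $\sum_k \alpha_k(m^{(k)}_i)_i$ with $\alpha_k\in\mathfrak{m}$ has each coordinate in $\mathfrak{m}M$), which yields a canonical surjection
\[
M^I/\mathfrak{m}M^I\;\twoheadrightarrow\;(M/\mathfrak{m}M)^I.
\]
Hence $\dim_k\bigl(M^I/\mathfrak{m}M^I\bigr)\geq\dim_k\bigl((M/\mathfrak{m}M)^I\bigr)$, and the latter is at least $|I|$ because the ``standard unit vectors'' $(\delta_{ij}\bar m)_{i\in I}$ (for a fixed non-zero $\bar m\in M/\mathfrak{m}M$) are $k$-linearly independent. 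Since $I$ is infinite this dimension is infinite, contradicting finiteness of $d$.

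The main point to get right is the verification that $\mathfrak{m}M^{I}\subseteq (\mathfrak{m}M)^{I}$, which is immediate, and that this is enough to produce the surjection onto $(M/\mathfrak{m}M)^I$ of provably larger $k$-dimension; note that the reverse inclusion $(\mathfrak{m}M)^I\subseteq\mathfrak{m}M^I$ can fail in general, so it is essential to work with the surjection rather than an isomorphism. No hypothesis on $R$ beyond commutativity and non-triviality is used (commutativity enters only to ensure existence of a maximal ideal and that $R/\mathfrak{m}$ is a field, so that the dimension argument makes sense).
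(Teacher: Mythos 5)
Your proof is correct and follows essentially the same route as the paper's: both pass to a maximal ideal $\mathfrak{m}$ via a maximal submodule of $M$, use the inclusion $\mathfrak{m}M^{I}\subseteq(\mathfrak{m}M)^{I}$ together with exactness of products in $\mod R$ to obtain a surjection from $M^{I}/\mathfrak{m}M^{I}$ onto $(M/\mathfrak{m}M)^{I}$, and then contradict the finite-dimensionality of $M/\mathfrak{m}M$ over $R/\mathfrak{m}$.
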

\begin{proof}
$M$ has a maximal submodule, say $N$ (see \cite[Coro.\,10.5]{AF92}). Then,  there   is a  maximal ideal $\frak m$ of $R$, together with an isomorphism $M/N\tilde{\rightarrow}R/\frak m$. It follows that $M\frak m\subseteq N\subsetneq M$.  This implies that $M^I\frak m\subseteq (M\frak m)^I\subsetneq M^I$, so  we have an epimorphism $\pi\colon M^I/M^I\frak{m}  \rightarrow M^I/(M\frak m)^I\cong (M/M\frak m)^I$, where the last isomorphism is due to the (Ab.4$^*$) condition of $\mod R$.
 Suppose, looking for a contradiction, that $M\cong M^I$ in $\mod R$. Then $\pi$ induces an epimorphism $\pi'\colon M/M\frak m \rightarrow (M/M\frak m)^I$ of $R/\frak m$-vector spaces, which is absurd since $M/M\frak m$ is finite dimensional over $R/\frak m$.
\end{proof}


\begin{proposition} \label{prop.Hom-finite-are-coherent}
Let $\A$ be a $\Hom$-finite Abelian category. The following assertions hold true:
\begin{enumerate}[\rm (1)]
\item $\A(A,A)$ is a left and right coherent ring, for every object $A\in\A$;
\item $\A(A,B)$ is finitely presented both as a left $\A(B,B)$- and right $\A(A,A)$-module, for all $A,\, B\in\A$;  
\item given an infinite set $I$, there is no non-zero $X\in \A$ such that the coproduct $X^{(I)}$ exists in $\A$. 
\end{enumerate}
\end{proposition}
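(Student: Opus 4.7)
The plan is to exploit the $R$-linearity of $\A$: since $R$ maps into the centre of every endomorphism ring $\A(C,C)$, any family of elements of a $\hom$-set that $R$-generates it will automatically generate it over $\A(A,A)$ (or $\A(B,B)$) as well. Coupled with the restricted Yoneda embedding $\A(A,-)\colon\add(A)\to\mod{\A(A,A)}$, which identifies $\A(A,A^n)\cong\A(A,A)^n$ and realises every right $S$-linear map $S^n\to S^m$ as $\A(A,f)$ for a unique $f\colon A^n\to A^m$ in $\A$, this reduces all three parts to clean applications of $\Hom$-finiteness.

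For part~(1), set $S:=\A(A,A)$ and let $\phi\colon S^n\to S^m$ be a right $S$-linear map, arising as $\phi=\A(A,f)$ for some $f\colon A^n\to A^m$. Left exactness of $\A(A,-)$ gives $\ker(\phi)\cong\A(A,\ker f)$, which is finitely generated over $R$ by $\Hom$-finiteness and therefore also over $S$ by the remark above. This is exactly right coherence of $S$. Left coherence follows dually from the contravariant Yoneda functor $\A(-,A)$: every left $S$-linear map $S^n\to S^m$ comes from a $g\colon A^m\to A^n$, and its kernel is $\A(\coker g,A)$, again finitely generated over $R$ and hence over $S$.

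Part~(2) follows by the same recipe. I would pick an $R$-generating family $\phi_1,\dots,\phi_m\in\A(A,B)$, use the universal property of coproducts to assemble it into a morphism $g\colon A^m\to B$, and observe that the induced map $\A(A,g)\colon S_A^m\to\A(A,B)$ is surjective because the $\phi_i$ already $R$-generate, hence $S_A$-generate, $\A(A,B)$. The kernel of $\A(A,g)$ equals $\A(A,\ker g)$ by left exactness, which is again $R$-finitely generated and therefore $S_A$-finitely generated, yielding a finite presentation of $\A(A,B)$ as a right $S_A$-module. The left $S_B$-module case is obtained by the dual argument with $\A(-,B)$.

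For part~(3), I argue by contradiction: assume $X\ne 0$ and that $Y:=X^{(I)}$ exists in $\A$ for some infinite set $I$. The coproduct inclusion $X\hookrightarrow Y$ shows $Y\ne 0$, and from $|I\times I|=|I|$ one gets a reindexing isomorphism $Y\cong Y^{(I)}$ in $\A$. Combining this with the defining property of the coproduct $\A(\coprod_I Z,W)\cong\prod_I\A(Z,W)$ yields
\[
\End(Y)\cong\A(Y^{(I)},Y)\cong\prod_I\A(Y,Y)\cong\End(Y)^I,
\]
where $\End(Y)$ is a non-zero finitely generated $R$-module (non-zero since $\id_Y\ne 0$, and finitely generated by $\Hom$-finiteness). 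This contradicts Lemma~\ref{lem.no self-reproductive product over commutative}. I expect the main subtlety to lie precisely here: spotting that the correct self-reproductive identity is $Y\cong Y^{(I)}$, rather than the weaker $Y\cong Y\oplus Y$, is what is needed to feed the Lemma directly.
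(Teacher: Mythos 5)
Your proposal is correct and follows essentially the same route as the paper: finite $R$-generation passes to generation over endomorphism rings via $R$-linearity, the restricted Yoneda equivalence $\mathrm{sum}(A)\cong\mathrm{free}\text{-}\A(A,A)$ reduces coherence to the $R$-finiteness of $\A(A,\Ker f)$ (and dually $\A(\Coker g,A)$ for left coherence, which the paper merely calls "symmetric"), and part (3) is exactly the paper's argument of replacing $X$ by $Y:=X^{(I)}$ so that $Y\cong Y^{(I)}$ and then contradicting Lemma~\ref{lem.no self-reproductive product over commutative} via $\End(Y)\cong\End(Y)^I$.
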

\begin{proof}
Fix a commutative ring $R$ with respect to which $\A$ is $\hom$-finite. \textcolor{black}{For  $A,\, B,\, C\in \A$ and $E:=\A(A,A)$,  the right $E$-module $\A(A,B)$ is finitely generated  (as any set of generators over $R$ also generates $\A(A,B)$ over $E$). Thus, given $f\colon B\to C$ and denoting $f_*:=\A(A,f)\colon\A(A,B) \to \A(A,C)$, the right $E$-module $\ker(f_*)\cong \A(A,\Ker(f))$ is finitely generated.
%
} 

\smallskip\noindent
\textcolor{black}{(1). We just prove that $E$ is right coherent, the proof of left coherence is symmetric. It is well-known (and easily checked) that the functor $\A(A,-)\colon \A\to\mod R$ induces an equivalence $\mathrm{sum}(A)\cong\mathrm{free}\text{-}E$, where $\mathrm{sum}(A)\subseteq \A$ is the subcategory of finite coproducts of copies of $A$ and $\mathrm{free}\text{-}E$ denotes the category of finitely generated free $E$-modules. Hence, any morphism $\varphi \colon E^n\to E$ in $\mod E$ is of the form $\varphi =f_*$, for a unique morphism $f\colon A^n\to A$ in $\A$. By the initial discussion, $\ker(\varphi)$ is a finitely generated right $E$-module. Thus, each finitely generated right ideal of $E$ is finitely presented, that is, $E$ is right coherent.}
%

\smallskip\noindent
\textcolor{black}{(2). We just verify that $\A(A,B)$ is a finitely presented right $E$-module, the rest can be proved similarly. Let $\{f_1,\dots,f_t\}$ be a set of generators of $\A(A,B)$ as an $R$-module and consider the induced map $f:=\begin{pmatrix}f_1,\dots ,f_t \end{pmatrix}\colon A^n\to B$. 
By the initial discussion, $\ker(f_*)$ is a finitely generated right $E$-module and, therefore, $\A(A,B)\cong\mathrm{Im}(f_*)$ is finitely presented over $E$.
}

\smallskip\noindent
(3). Suppose that $X^{(I)}$ exists. Then, replacing $X$ by $X^{(I)}$ if necessary, we can assume that $X\cong X^{(I)}$. This gives  isomorphisms $\A(X,X)\cong\A(X^{(I)},X)\cong\A(X,X)^I$ in ${\A(X,X)}\modl$, which are also isomorphisms of $R$-modules. Since $R$ is commutative, this contradicts Lemma \ref{lem.no self-reproductive product over commutative}.
\end{proof}



\subsection{Krull-{Schmidt} categories}
Recall that an additive category with splitting idempotents $\C$ is {\bf Krull-Schmidt} when  each of its objects is a finite coproduct of (indecomposable) objects whose endomorphism rings are local. Equivalently, when $\End_\C(X)$ is a semiperfect ring for each object $X\in\C$ (see \cite[Thm.\,A.1]{chen2008algebras}).
 The following is the key result that makes Krull-Schmidt Abelian categories particularly convenient for our study of preenveloping torsion classes.

\begin{lemma} \label{lem.precovering=covering}
Let $\A$ be a Krull-Schmidt Abelian category and $\T\subseteq \A$ a subcategory closed under direct summands. Then, $\T$ is preenveloping if and only if it is enveloping.
\end{lemma}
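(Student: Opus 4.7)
The direction ``enveloping implies preenveloping'' is immediate from the definitions. For the converse, fix $X \in \A$ and take any $\T$-preenvelope $\phi \colon X \to T$ (which exists by hypothesis). The strategy is to trim $\phi$ down to a $\T$-envelope by exploiting the Krull-Schmidt decomposition of $T$; since $\T$ is closed under summands, the trimmed codomain will still lie in $\T$.

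More precisely, among all $\T$-preenvelopes of $X$ I choose one $\phi_0 \colon X \to T_0$ whose codomain $T_0$ has the least possible number $n$ of indecomposable summands in its (essentially unique) Krull-Schmidt decomposition. This is well defined by the well-ordering of $\mathbb{N}$. I claim $\phi_0$ is automatically an envelope. Suppose not: some $\psi \in E := \End_\A(T_0)$ satisfies $\psi\phi_0 = \phi_0$ with $\psi$ not an automorphism. I will then produce a $\T$-preenvelope of length $<n$, contradicting minimality.

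Because $T_0$ is a finite coproduct of objects with local endomorphism rings, the ring $E$ is semiperfect. Consider the left ideal $I := \{\beta \in E : \beta \phi_0 = 0\}$ and note that $\{\alpha \in E : \alpha\phi_0 = \phi_0\} = 1+I$. Using the classical characterization $J(E) = \{x \in E : 1+Ex \subseteq E^\times\}$, the condition ``every $\alpha \in 1+I$ is an automorphism'' (i.e., $\phi_0$ is left minimal) is equivalent to $I \subseteq J(E)$: the forward direction applies the characterization with $r=1$, the converse uses that $I$ is closed under left multiplication by elements of $E$. The existence of the non-iso $\psi$ therefore yields $I \not\subseteq J(E)$. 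A standard property of semiperfect rings (using that $E/J(E)$ is semisimple Artinian and idempotents lift compatibly with one-sided ideals) then guarantees that $I$ contains a non-zero idempotent $e$. The resulting decomposition $T_0 = (1-e)T_0 \oplus eT_0$ has $eT_0 \neq 0$ and, since $e\phi_0 = 0$, $\phi_0$ factors as $X \xrightarrow{\phi_0'} (1-e)T_0 \xhookrightarrow{\iota} T_0$. The map $\phi_0'$ is itself a $\T$-preenvelope: given any $g \colon X \to Y$ with $Y \in \T$, the preenvelope property of $\phi_0$ supplies $h \colon T_0 \to Y$ with $h\phi_0 = g$, and then $h \circ \iota \colon (1-e)T_0 \to Y$ factors $g$ through $\phi_0'$. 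Its codomain $(1-e)T_0$ lies in $\T$ (closed under summands) and has strictly fewer than $n$ indecomposable summands, contradicting the choice of $\phi_0$.

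The main obstacle is the semiperfect-ring input that every left ideal $I \not\subseteq J(E)$ contains a non-zero idempotent. The proof picks $a \in I \setminus J(E)$, uses that $E/J(E)$ is semisimple to produce $r \in E$ with $\overline{ra}$ a non-zero idempotent of $E/J(E)$, and then lifts this idempotent to a genuine idempotent sitting inside the right ideal $E\cdot(ra) \subseteq I$. This lifting is the classical but somewhat delicate feature of semiperfect rings and constitutes the main technical step; everything else in the argument is a bookkeeping exercise on top of the Krull-Schmidt decomposition.
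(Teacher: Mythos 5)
Your proof is correct, but it takes a more self-contained route than the paper, which disposes of the lemma in two lines by citing Krause--Saor\'{\i}n (Prop.\ 1.2 of their paper on minimal approximations): in a Krull-Schmidt category every morphism $f\colon A\to B$ admits a decomposition $B\cong B_1\oplus B_2$ with $f=(f_1,0)^{t}$ and $f_1$ left minimal, so a preenvelope is simply replaced by its left minimal component, whose codomain still lies in $\T$ because $\T$ is closed under summands. You instead run a minimal-counterexample argument on the number of indecomposable summands of the codomain, reducing failure of left minimality to the classical fact that in a semiperfect ring a left ideal not contained in the Jacobson radical contains a non-zero idempotent. Your reduction to that fact is clean: the identification of $\{\alpha:\alpha\phi_0=\phi_0\}$ with $1+I$, the equivalence of left minimality with $I\subseteq J(E)$ (using that $I$ is a left ideal, so $1+I\subseteq E^{\times}$ iff $I\subseteq J(E)$), and the splitting off of $eT_0$ to contradict minimality are all sound, and the same ring-theoretic engine underlies the cited decomposition result, so the two proofs are cousins rather than strangers. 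Two small points: $E\cdot(ra)$ is a left ideal, not a right ideal; and the ``lifting of an idempotent inside a prescribed one-sided ideal'' that you flag is precisely where the work is hidden, so a written-up version should either prove it (e.g.\ via the exchange property of semiperfect rings: for $a\in I$ with $\bar a$ a non-zero idempotent of $E/J(E)$ there is an idempotent $e\in Ea\subseteq I$ with $1-e\in E(1-a)$, and $e\neq 0$ since $1-\bar a$ is a zero divisor in the semisimple quotient) or cite it precisely. What your approach buys is independence from the external reference; what the paper's buys is brevity and a slightly stronger conclusion, namely a canonical splitting of any given preenvelope into an envelope plus a zero map.
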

\begin{proof}
Under our hypotheses, it is well-known that, for any morphism $f\colon A\to B$ in $\A$, there is a decomposition $B\cong B_1\oplus B_2$ such that $f$ decomposes matricially as 
\[
f=(f_1,0)^{t}\colon A\longrightarrow B_1\oplus B_2,
\]
and such that $f_1\colon A\to B_1$ is left minimal (see, e.g., \cite[Prop.\,1.2]{krause1997minimal}). Hence, if $f\colon A\to B$ is a $\T$-preenvelope, then $f_1$ is a $\T$-envelope.
\end{proof}

Recall that a ring $R$ is {\bf Krull-Schmidt} when its category of finitely presented modules $\fpmod R$ is Krull-Schmidt. This is a two-sided notion by a classical duality, due to Auslander and Gruson-Jensen,  between the categories $\fp(\fpmod{(R^{\op})},\Ab)$ and $\fp(\fpmod R,\Ab)$ of finitely presented functors to $\Ab$ from finitely presented left and right $R$-modules, respectively (see \cite[Sec.\,5]{zbMATH01032187}).

\begin{example}\label{Examples_KS_rings}
 The following are examples of Krull-Schmidt rings (we address the reader to the cited references for the definitions of each of the following types of rings).
 \begin{enumerate}[\rm (1)]
\item Perfect and, more generally, semiperfect rings $R$ such that the descending sequence 
\[
aR\supseteq a^2R\supseteq\ldots\supseteq a^nR\supseteq \ldots
\] 
(or, equivalently, its left version) is stationary, for all $a\in R$ (see \cite[Thm.\,8]{rowen1986finitely}). This includes all one-sided Artinian rings so, in particular, all Artin algebras.
\item Complete semiperfect (two-sided) Noetherian rings (see \cite[Thm.\,B]{rowen1987finitely}).
\item Rings that are finitely generated as modules over complete Noetherian local domains (see \cite{swan1960induced}). 
\end{enumerate}
\end{example}

\section{Co/reflective subcategories of Abelian categories}\label{sub_cogen_sec}

In this section we study some criteria for a subcategory of an Abelian category $\A$ to be reflective, also relating this property with that of being (pre)enveloping. In the last part of the section we apply these results to classes of the form $\Sub(\Gen(V))$, for some object $V\in \A$. As a motivation, let us anticipate that the results of this section will be fundamental for our study of quasi-tilting objects (see, e.g., Proposition \ref{prop.lorthogonal-generates-torsion}).

\subsection{Generalities on co/reflective subcategories}\label{co/reflective_subs}

A subcategory $\B$ of a category $\A$ is said to be \textbf{reflective} (resp., \textbf{coreflective}) when the inclusion $\B\to\A$ has a left (resp., right) adjoint. 

\begin{lemma}\label{reflector_is_epi_lemma}
Let $\B$ be a reflective subcategory of an Abelian category $\A$, and let 
\[
\iota\colon \B\longrightarrow \A,\qquad\tau\colon \A\longrightarrow \B\quad\text{and}\quad\rho\colon \id_\A\Rightarrow \iota\circ\tau
\] 
denote the inclusion, its left adjoint, and the unit of the adjunction, respectively. If $\B$ is closed under quotients in $\A$, then $\rho$ is epimorphic, that is, $\rho_A\colon A\to \iota(\tau(A))$ is an epimorphism for all $A\in\A$.
\end{lemma}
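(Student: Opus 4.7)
The plan is to show that the cokernel of $\rho_A$ must vanish, exploiting the uniqueness clause in the universal property of the reflection together with the closure of $\B$ under quotients.

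First, I would form the canonical exact sequence
\[
A\xrightarrow{\;\rho_A\;}\iota(\tau(A))\xrightarrow{\;q\;} C\to 0,
\]
where $C:=\Coker(\rho_A)$. Since $\iota(\tau(A))$ lies in $\B$ and $\B$ is closed under taking quotients in $\A$, we have $C\in \B$ as well.

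Second, since $\B\subseteq \A$ is a full subcategory, the inclusion $\iota$ is fully faithful, so the cokernel map $q\colon \iota(\tau(A))\to C=\iota(C)$ is of the form $\iota(q')$ for a unique morphism $q'\colon \tau(A)\to C$ in $\B$. Now apply the natural bijection
\[
\Phi\colon \Hom_\B(\tau(A),C)\xrightarrow{\;\sim\;}\Hom_\A(A,\iota(C)),\qquad g\mapsto \iota(g)\circ \rho_A,
\]
coming from the adjunction $\tau\dashv \iota$. On one hand, $\Phi(q')=\iota(q')\circ\rho_A=q\circ\rho_A=0$, since $q$ is the cokernel map of $\rho_A$. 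On the other hand, $\Phi(0)=0$ as well. By injectivity (uniqueness) of $\Phi$, it follows that $q'=0$, hence $q=\iota(q')=0$.

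Finally, $q$ is an epimorphism because it is a cokernel projection; combined with $q=0$, this forces $C=0$. Therefore $\rho_A$ is an epimorphism, as claimed. There is no real obstacle here beyond correctly invoking fullness of $\iota$ to view $q$ as a morphism of $\B$, so that the uniqueness part of the adjunction isomorphism can be applied.
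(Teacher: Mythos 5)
Your proof is correct and follows essentially the same route as the paper: form the cokernel $C$ of $\rho_A$, use closure under quotients to place $C$ in $\B$, and then use the adjunction to force $C=0$. The only cosmetic difference is that the paper kills $C$ by applying the right-exact functor $\tau$ and using $\iota\tau(C)\cong C$, whereas you invoke the uniqueness clause of the hom-set bijection $\Hom_\B(\tau(A),C)\cong\Hom_\A(A,\iota(C))$ directly; both arguments are equivalent instances of the universal property of the unit.
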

\begin{proof}
Let $A\in\A$ and consider the following exact sequence:
\[
\xymatrix{
A\ar[r]^-{\rho_A}&\iota(\tau(A))\ar[r]&\coker(\rho_A)\ar[r]&0.
}
\]
Since $\tau$ is a left adjoint, it is right exact and  the fact that $\tau(\rho_A)$ is an isomorphism  implies that $\tau(\coker(\rho_A))=0$. On the other hand, since $\B$ is closed under taking quotients, $\coker(\rho_A)\in \B$, implying that $\coker(\rho_A)\cong \iota(\tau(\coker(\rho_A)))=0$.
\end{proof}

Let us recall from \cite[Prop.\,3.1.3]{borceux1994handbook1}, the following useful criterion: a subcategory $\B\subseteq \A$ is reflective if, and only if, any object $A\in \A$ has a $\B$-{\bf reflection} $\rho_A\colon A\to B_A$ in $\B$, that is, $B_A\in \B$ and the following map is an isomorphism for all $B\in\B$:
\[
{\A}(\rho_A,B)\colon \A(B_A,B)\longrightarrow{\A}(A,B).
\]
A $\B$-{\bf coreflection} $B_A\to A$ is just a $\B^{\op}$-reflection in $\A^{\op}$.

\begin{proposition} \label{prop.coreflective}
Let $\A$ be an (Ab.3), well-powered Abelian category and $\iota\colon \B\to \A$ the embedding of a subcategory closed under quotients. Then, $\B$ is coreflective if, and only if, it is closed under taking coproducts in $\A$. In this case, denote  
\[
\sigma \colon\A\longrightarrow\B\quad\text{and}\quad\eta\colon\iota\circ\sigma\Rightarrow \id_\A
\]
the right adjoint to the inclusion and the corresponding counit. The following statements hold true:
\begin{enumerate}[\rm (1)]
\item $\B$ is (Ab.3) and $\eta$ is monomorphic, i.e., $\eta_X$ is a monomorphism for all $X\in\A$;
\item if $\B$ is also closed under taking kernels (or, equivalently, under subobjects), then $\B$ is an (Ab.4) (resp., (Ab.5), Grothendieck) Abelian category whenever $\A$ is so.
\end{enumerate}
\end{proposition}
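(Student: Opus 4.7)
The plan is to establish the equivalence ``$\B$ coreflective'' $\Leftrightarrow$ ``$\B$ closed under coproducts,'' and then deduce (1) and (2). For the forward direction, assuming $\B$ is coreflective with counit $\eta$, I would take a family $\{B_i\}_I\subseteq \B$ and its coproduct $C:=\coprod_\A B_i$ in $\A$; using the adjunction $\iota\dashv\sigma$, each canonical inclusion $B_i\hookrightarrow C$ corresponds to a map $\tilde{\iota}_i\colon B_i\to\sigma(C)$, and the universal property of $C$ assembles these into a morphism $\phi\colon C\to \iota\sigma(C)$ with $\eta_C\circ\phi=\id_C$. Thus $\eta_C$ is a split epimorphism, and since $\iota\sigma(C)\in\B$, closure under quotients forces $C\in\B$. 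For the converse, I will use well-poweredness to define $\sigma(A)$ as the sum inside $A$ of the (set of) subobjects $M\subseteq A$ lying in $\B$: closure under coproducts gives $\coprod M\in\B$, and closure under quotients of the image map gives $\sigma(A)\in\B$. The canonical monomorphism $\eta_A\colon\sigma(A)\hookrightarrow A$ is the desired coreflection, for if $f\colon B\to A$ with $B\in\B$, then $\Im(f)\in\B$ is a subobject of $A$ and is thus contained in $\sigma(A)$, yielding a unique (by monicity of $\eta_A$) factorization.

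Statement (1) should then follow quickly: coproducts in $\B$ coincide with those in $\A$ via the inclusion $\iota$, so $\B$ has all small coproducts; the monomorphicity of $\eta$ is immediate from the explicit construction above and transfers to any other right adjoint by uniqueness up to natural isomorphism.

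For (2), I will first observe that, under the standing closure under quotients, closure under kernels is equivalent to closure under subobjects: any subobject $K\subseteq B$ with $B\in \B$ is the kernel of the epimorphism $B\twoheadrightarrow B/K$ onto an object of $\B$. Hence $\B$ becomes an Abelian exact subcategory of $\A$ as in Example \ref{ex.Sub(X)-Gen(X) Abelian exact}, meaning that $\iota$ is exact and faithful. This makes coproducts and directed colimits in $\B$ coincide with those in $\A$ and makes $\iota$ preserve and reflect exact sequences, so the (Ab.4) and (Ab.5) properties transfer routinely from $\A$ to $\B$. Finally, if $G$ is a generator of $\A$, then $\sigma(G)\in\B$ serves as a generator in $\B$, via the natural bijection $\B(\sigma(G),B)\cong\A(G,\iota(B))$ which transfers faithfulness of $\A(G,-)$ on $\B$ to faithfulness of $\B(\sigma(G),-)$.

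The main obstacle I anticipate is the careful bookkeeping in the forward direction of the equivalence: one must correctly invoke the adjunction to lift each $B_i\hookrightarrow C$ to $\sigma(C)$ and then use the uniqueness from the universal property of $C$ to conclude $\eta_C\circ\phi=\id_C$. This last step is delicate because closure under quotients, rather than under direct summands, is precisely what converts the resulting split epimorphism $\eta_C$ into the required membership $C\in\B$. The remainder of the argument is essentially a routine transfer of Grothendieck-style axioms along the exact faithful embedding $\iota$.
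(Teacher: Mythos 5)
Most of your argument is sound and follows essentially the same route as the paper: the forward direction of the equivalence (lifting the coproduct inclusions through the adjunction to split $\eta_C$ and then invoking closure under quotients), the construction of $\sigma(X)$ as the sum of the subobjects of $X$ lying in $\B$ using well-poweredness, part (1), and the transfer of (Ab.4) and (Ab.5) along the exact, colimit-preserving embedding all go through as you describe.

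The Grothendieck part of (2), however, contains a genuine error. You invoke a ``natural bijection $\B(\sigma(G),B)\cong\A(G,\iota(B))$'', but $\sigma$ is the \emph{right} adjoint of $\iota$, so the adjunction reads $\B(X,\sigma(A))\cong\A(\iota(X),A)$: it controls morphisms \emph{into} $\sigma(G)$, not out of it. The formula you wrote would be available if $\sigma$ were a reflector (left adjoint), i.e.\ in the dual situation. Worse, the conclusion itself is false: $\sigma(G)$ is the largest subobject of $G$ lying in $\B$, and this can vanish while $\B$ is nontrivial. For instance, take $\A=\Ab$ with generator $G=\Z$ and let $\B$ be the class of torsion abelian groups (closed under subobjects, quotients and coproducts, hence coreflective); then $\sigma(\Z)=t(\Z)=0$, which generates nothing. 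The correct argument produces a \emph{set} of generators rather than a single object: for $B\in\B$ and each finite subset $F\subseteq\A(G,B)$, the image $B_F$ of the induced map $G^{(F)}\to B$ lies in $\B$ (closure under subobjects) and in $\Quot\{G^{n}:n\in\N\}$; by the (Ab.5) condition, $B=\sum_{F}B_F$ is a quotient of $\coprod_{F}B_F$, so the skeletally small (by well-poweredness) class $\Quot\{G^{n}:n\in\N\}\cap\B$ is a set of generators of $\B$.
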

\begin{proof}
It is a general fact that coreflective subcategories are closed under coproducts. On the other hand, suppose that $\B$ is closed under 
taking coproducts and let us verify that any given object $X\in\A$ admits a coreflection $\sigma(X)\to X$. Indeed, define 
\[
\xymatrix{
\sigma(X):=\sum\{Y:{Y\in \B\cap \mathcal L_\A(X)} \}
}
\] 
(where $\mathcal L_\A(X)$ is a set since $\A$ is well-powered). Then, $\sigma(X)$ is the image of the canonical morphism 
\[
\xymatrix{
\coprod_{Y\in \B\cap \mathcal L_\A(X)}Y\longrightarrow X
}
\] 
induced by the inclusions $Y\hookrightarrow X$, so $\sigma(X)\in\B$. We now verify that the inclusion $\eta_X\colon\sigma(X)\to X$ is the desired coreflection:
let $B\in\B$ and consider the following map
\[
\xymatrix{
\B(B,\sigma(X))=\A(B,\sigma(X))\ar[rr]^(.52){\A(B,\eta_X)}&&\A(B,X)=\A(\iota(B),X).
}
\]
Then, $\A(B,\eta_X)$ is injective by the left exactness of $\A(B,-)$, while it is surjective since, given a morphism $f\colon B\to X$, $f(B)$ is clearly a subobject of $\sigma(X)$, so $f$ factors through $\eta_X$.

\smallskip\noindent
(1). As $\B$ is closed under coproducts in $\A$, and $\A$ is (Ab.3), coproducts do exist in $\B$ and they are computed as in $\A$, so  $\B$ is (Ab.3).  Moreover, the counit $\eta\colon \iota\circ\sigma\Rightarrow \id_\A$ is defined by the above inclusions $\eta_X\colon\sigma(X)\hookrightarrow X$ (for all $X\in \A$), so it is clearly monomorphic.

\smallskip\noindent
(2). Suppose now that $\B$ is also closed under taking kernels. Then each subobject $B'\subseteq B$ of an object $B\in\B$ is the kernel of the projection $p\colon B\to B/B'$, which is a morphism in $\B$ since this subcategory is closed under taking quotients. Therefore, $\B$ is also closed under taking subobjects. $\B$ is clearly an Abelian exact subcategory of $\A$. Furthermore, since the inclusion $\B\to\A$ is a left adjoint, it preserves all colimits. Hence, whenever coproducts (resp., directed colimits) are exact in $\A$, these colimits are also exact in $\B$, hence $\B$ is (Ab.4) (resp., (Ab.5)) {whenever $\A$ is (Ab.4) (resp., (Ab.5))}.
It remains to prove that, if $\A$ is Grothendieck, with a generator $G$, then also $\B$ has a generator. For each $B\in\B$, there is a canonical epimorphism $\epsilon \colon G^{(\A(G,B))}\to B$. For each finite subset $F\subset\A(G,B)$, the image $B_F:=\Im (\epsilon_F)$ of the 
 composition 
$\epsilon_F\colon G^{(F)}{\to}G^{(\A(G,B))}{\to} B$, is an object of $\B$. The corresponding family $\{B_F\}_F$ is then directed and, by the (Ab.5) condition, $B=\sum_FB_F$, {and} so  $B$ is a quotient of $\coprod_FB_F$. Let 
\[
\mathcal{S}:=\Quot\{G^{n}:n\in\N\}\cap \B.
\]
Being $\A$ well-powered, $\mathcal S$ is skeletally small, and it generates $\B$ by the above argument.
\end{proof}


We now apply the above proposition and its dual in a situation that we will encounter frequently through the paper.

\begin{corollary} \label{cor.subX}
Let $\A$ be a well-powered and $\B$ be a subcategory closed under taking subobjects, quotients and finite coproducts, e.g.\  $\B=\Sub(\X)$, for $\X$ closed under taking quotients and finite coproducts, or $\B=\Quot (\Y)$ for $\Y$ closed under subobjects and finite coproducts. Then:
\begin{enumerate}[\rm (1)]
\item if $\A$ is (Ab.3), then $\B$ is coreflective if, and only if, $\B$ is closed under coproducts. In this case, if $\sigma\colon\A\rightarrow\B$ is right adjoint to the inclusion functor, then the counit $\eta \colon\iota\circ\sigma\Rightarrow\id_\A$ is monomorphic and $\B$ is also (Ab.3). Moreover, $\B$ is (Ab.4), (Ab.5) or Grothendieck whenever $\A$ is so;
\item if $\A$ is (Ab.3*),  then $\B$ is reflective if, and only if, $\B$ is closed under products. In this case, if $\tau \colon\A\rightarrow\B$ is left adjoint to the inclusion functor, then the unit $\rho \colon\id_\A\Rightarrow\iota\circ\tau$ is epimorphic and $\B$ is (Ab.3*).  Moreover, $\B$ is (Ab.4*), (Ab.5*) or (Ab.5*) with an injective cogenerator whenever $\A$ is so;
\item if $\A$ is bicomplete, i.e.\ (Ab.3) and (Ab.3*),  and $\B$ is coreflective (resp., reflective), then $\B$ is bicomplete, but products (resp., coproducts) in $\B$ need not be computed as in $\A$. 
\end{enumerate}
\end{corollary}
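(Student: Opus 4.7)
The plan is to deduce each part from Proposition \ref{prop.coreflective} and its dual, together with standard adjoint-functor manipulations; the closure hypotheses on the sample classes $\Sub(\X)$ and $\Quot(\Y)$ are verified in Example \ref{ex.Sub(X)-Gen(X) Abelian exact}, so I may work with the abstract closure assumptions on $\B$.

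For part (1), closure of $\B$ under quotients is precisely the hypothesis of Proposition \ref{prop.coreflective}, which gives the equivalence between coreflectivity and closure under coproducts, the monomorphicity of the counit $\eta$, and (Ab.3)-ness of $\B$. The additional closure under subobjects (equivalently, under kernels, since $\B$ already contains cokernels) activates Proposition \ref{prop.coreflective}(2), transferring (Ab.4), (Ab.5), and the Grothendieck property from $\A$ to $\B$.

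For part (2), I would mirror the argument of Proposition \ref{prop.coreflective} in the dual direction rather than passing formally to $\A^{\op}$. Given $A\in\A$, the family $\mathcal{K}:=\{K\in\mathcal{L}_\A(A):A/K\in\B\}$ is a set by well-poweredness, and the intersection $K_0:=\bigcap_{K\in\mathcal{K}}K$ exists by (Ab.3*). Closure of $\B$ under products and subobjects forces $A/K_0$, being a subobject of $\prod_{K\in\mathcal{K}}(A/K)\in\B$, to lie in $\B$; a direct verification shows that the induced epimorphism $\rho_A\colon A\twoheadrightarrow A/K_0$ is a $\B$-reflection (any $f\colon A\to B$ with $B\in\B$ has $\ker f\in\mathcal{K}$, hence $K_0\subseteq\ker f$). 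This simultaneously yields reflectivity, epimorphicity of $\rho$, and (Ab.3*)-ness of $\B$ (the inclusion $\iota$ becomes a right adjoint, so it preserves limits, and products in $\B$ agree with those in $\A$). The transfers of (Ab.4*), (Ab.5*), and ``(Ab.5*) with an injective cogenerator'' are then formal: the first two again exploit that $\iota$ preserves limits, while for the third one uses that an injective object of $\A$ lying in $\B$ remains injective in $\B$, since $\iota$ is fully faithful and limit-preserving.

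For part (3), bicompleteness is formal. If $\B$ is coreflective with right adjoint $\sigma$, then $\iota$ preserves all colimits, so coproducts in $\B$ agree with those in $\A$, while $\sigma$ preserves all limits, whence the product of a family $(B_i)_{I}$ in $\B$ is represented by $\sigma\bigl(\prod_{I}\iota(B_i)\bigr)\in\B$; the reflective case is dual. The observation that these ``adjoint-produced'' limits (resp., colimits) need not coincide with those computed in $\A$ reflects the fact that $\eta$ (resp., $\rho$) need not be an isomorphism. I anticipate no serious obstacle, since the substance lies in Proposition \ref{prop.coreflective} and all the remaining verifications are routine adjoint-functor considerations.
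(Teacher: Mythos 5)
Your proposal is correct and follows essentially the same route as the paper: part (1) is Proposition \ref{prop.coreflective}, part (2) is its dual (you simply spell out the dual reflection $A\mapsto A/K_0$ explicitly instead of invoking duality formally), and part (3) is the standard observation that $\sigma\bigl(\prod_{I}\iota(B_i)\bigr)$, with the maps $\sigma(\pi_i)$, computes the product in $\B$. The one thin spot is the ``(Ab.5*) with an injective cogenerator'' clause in (2): the fact that an injective object of $\A$ lying in $\B$ stays injective in $\B$ does not by itself produce an injective cogenerator of $\B$, since the injective cogenerator of $\A$ need not belong to $\B$ and the dualized Grothendieck argument only yields a (not necessarily injective) cogenerator --- though the paper's own proof (``assertion (2) is just its dual'') is no more explicit on this point.
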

\begin{proof}
Assertion (1) follows by Proposition \ref{prop.coreflective} and assertion (2) is just its dual. As for assertion (3), the ``in-brackets'' statement is dual to the one ``not-in-brackets''. We just prove the latter, for which it is enough to prove that if $\B$ is coreflective, then $\B$ has products. Indeed, if $(B_\lambda )_{\lambda\in\Lambda}$ is a family of objects in $\B$ and we consider the product $[P,(\pi_\lambda \colon P\rightarrow\iota (B_\lambda))_{\lambda\in\Lambda}]$ of the family $(\iota (B_\lambda))_{\lambda\in\Lambda}$ in $\A$, then it is well-known, and easy to prove, that $[\sigma (P),(\sigma (\pi_\lambda )\colon \sigma (P)\rightarrow (\sigma\circ\iota)(B_\lambda )\cong B_\lambda)_{\lambda\in\Lambda}]$ is a product  in $\B$.
\end{proof}

\begin{corollary} \label{coro.reflective-versus-productclosedness}
Let $\A$ be a well-powered (Ab.3*) Abelian category, and $\T\subseteq \A$ a subcategory {such that $\mathcal{B}:=\Sub(\mathcal{T})$ is closed} under taking products in $\mathcal{A}$ {(e.g., if $\T$ is closed under taking products)}. Then, $\mathcal{B}$   is reflective in $\A$.
\end{corollary}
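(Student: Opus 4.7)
The plan is to construct a $\mathcal{B}$-reflection of each object $A\in\mathcal{A}$ by hand, as Corollary~\ref{cor.subX}(2) cannot be invoked directly here: while $\Sub(\mathcal{T})$ is automatically closed under subobjects and, under our hypothesis, under products (hence under finite coproducts, since these agree with finite products in an Abelian category), it need not be closed under quotients, which is the third closure requirement of that corollary. So I would work from first principles, leveraging the criterion recalled at the start of Subsection~\ref{co/reflective_subs}, namely that $\mathcal{B}$ is reflective as soon as every object of $\mathcal{A}$ admits a $\mathcal{B}$-reflection.

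Fix $A\in\mathcal{A}$. First I would form the collection
\[
\mathcal{S}_A:=\{K\subseteq A:A/K\in\mathcal{B}\},
\]
which is a set thanks to well-poweredness. By (Ab.3*) the product $\prod_{K\in\mathcal{S}_A}A/K$ exists in $\mathcal{A}$; let $K_A$ denote the kernel of the canonical morphism $A\to\prod_{K\in\mathcal{S}_A}A/K$, i.e., the intersection $\bigcap_{K\in\mathcal{S}_A}K$ computed in the subobject lattice of $A$.

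Next I would argue that $A/K_A\in\mathcal{B}$: factoring through the image produces a monomorphism $A/K_A\hookrightarrow\prod_{K\in\mathcal{S}_A}A/K$; the right-hand product lies in $\mathcal{B}=\Sub(\mathcal{T})$ by the hypothesis that $\mathcal{B}$ is closed under products; and since a composition of monomorphisms is a monomorphism we have $\Sub(\Sub(\mathcal{T}))=\Sub(\mathcal{T})$, so $A/K_A\in\mathcal{B}$. To verify that the projection $\rho_A\colon A\twoheadrightarrow A/K_A$ is a $\mathcal{B}$-reflection, I would take any morphism $f\colon A\to B$ with $B\in\mathcal{B}$: then $A/\ker(f)$ embeds in $B\in\Sub(\mathcal{T})$, so $\ker(f)\in\mathcal{S}_A$ and therefore $K_A\subseteq\ker(f)$; this yields a factorization $f=\bar f\circ\rho_A$, unique because $\rho_A$ is epic.

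The main obstacle is really just bookkeeping: one has to be sure that $\mathcal{S}_A$ is a genuine set and that the intersection of its members is available as a subobject of $A$. Both points are secured by the well-powered and (Ab.3*) assumptions, so no further technical input is needed. Once the construction of $\rho_A$ is in place for every $A\in\mathcal{A}$, the aforementioned criterion immediately yields that $\mathcal{B}$ is reflective in $\mathcal{A}$.
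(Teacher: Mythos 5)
Your construction is correct and is in substance the same as the paper's: the paper simply cites the dual of Proposition~\ref{prop.coreflective} (which applies here because $\Sub(\T)$ is automatically closed under subobjects --- the only closure hypothesis needed in that dual besides closure under products), and the proof of that proposition, dualized, produces exactly your reflection $A\twoheadrightarrow A/K_A$ with $K_A$ the intersection of all kernels whose quotient lies in $\mathcal{B}$. You are right that Corollary~\ref{cor.subX}(2) is not applicable for lack of closure under quotients, but the more basic Proposition~\ref{prop.coreflective} is, so the explicit hand construction, while perfectly valid, was not strictly necessary.
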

\begin{proof}
It is a direct consequence of {the dual of Proposition \ref{prop.coreflective}.}
\end{proof}

Let us conclude this subsection with the following example showing that, if instead of completeness and well-powerdness of $\A$, we assume a strong enough finiteness condition on the objects, then we can still reach the same conclusion of Corollary \ref{coro.reflective-versus-productclosedness}.

\begin{example}\label{example_artinian_reflective}
Let $\A$ be an Abelian category and suppose that all the objects of $\mathcal{A}$ are Artinian. If $\T\subseteq \A$ is a subcategory closed under finite co/products, then $\B:=\Sub(\T)$ is reflective.
\end{example}
\begin{proof}
Let $A\in \A$ and consider the following class:
\[
\mathcal{S}_A:=\{K\leq A:K=\ker(\phi) \text{ for some $\phi\colon A\to B$, $B\in\B$}\}.
\]
By the DCC on subobjects of $A$, $\mathcal{S}_A$ contains a minimal element. If $S_1$ and $S_2$ are two such minimal elements, and we fix monomorphisms $A/S_1\hookrightarrow T_1$ and  $A/S_2\hookrightarrow T_2$ with $T_1,\, T_2\in \T$, then we get a monomorphism ${A}/({S_1\cap S_2})\hookrightarrow T_1\oplus T_2$, with $T_1\oplus T_2\in\mathcal T$. It follows that $S_1\cap S_2\in\mathcal{S}_A$ and, by minimality, $S_1=S_1\cap S_2=S_2$, so there is a unique minimal element $S_A$ in $\mathcal{S}_A$ which is then its minimum. Let $\pi\colon A\to B_A:=A/S_A$ be the natural projection, it is routine to check that $\A(\pi,B)\colon\A(B_A,B)\to\A(A,B)$ is an isomorphism for all $B\in\B$, so this is the desired reflection.
\end{proof}

\subsection{The category of subobjects of a (pre)enveloping class}

In this subsection we continue the study of the reflective condition of subcategories of the form $\Sub(\T)$, but specializing to the case when $\T$ is (pre)enveloping.

\begin{proposition} \label{ex.reflective-versus-productclosedness}
Let $\A$ be an Abelian category and $\T\subseteq \A$ a preenveloping subcategory. Then, $\mathcal{B}:=\Sub(\mathcal{T})$ is reflective in $\A$.
\end{proposition}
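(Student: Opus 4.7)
The plan is to produce, for each object $A\in\A$, an explicit $\mathcal{B}$-reflection built from a $\T$-preenvelope of $A$, and then invoke the criterion of \cite[Prop.\,3.1.3]{borceux1994handbook1} that was recalled in Subsection~\ref{co/reflective_subs}. Concretely, given $A\in\A$, I would fix a $\T$-preenvelope $\phi_A\colon A\to T_A$, take its image factorization in $\A$,
\[
\phi_A\colon A\xrightarrow{\ \rho_A\ }B_A\xhookrightarrow{\ \mu_A\ }T_A,
\]
and define $B_A:=\Im(\phi_A)$. Since $\mu_A$ is a monomorphism and $T_A\in\T$, the object $B_A$ lies in $\Sub(\T)=\mathcal{B}$. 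I propose $\rho_A\colon A\to B_A$ as the candidate $\mathcal{B}$-reflection of $A$.

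To verify this, I must check that for every $B\in\mathcal{B}$ the induced map
\[
\A(\rho_A,B)\colon\A(B_A,B)\longrightarrow\A(A,B)
\]
is bijective. Injectivity is immediate: $\rho_A$ is an epimorphism (it is the coimage-side of an image factorization in an Abelian category), so precomposition with $\rho_A$ is a monomorphism of $\Ab$-groups. For surjectivity, given any $f\colon A\to B$ with $B\in\mathcal{B}$, I would fix a monomorphism $\iota\colon B\hookrightarrow T$ with $T\in\T$ (which exists because $B\in\Sub(\T)$). Applying the preenveloping property of $\phi_A$ to $\iota\circ f\colon A\to T$, I obtain a morphism $g\colon T_A\to T$ with $g\circ\phi_A=\iota\circ f$.

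The key remaining step is to show that $g\circ\mu_A\colon B_A\to T$ factors through $\iota$. The cleanest way is via the image: since $\rho_A$ is epic, $\Im(g\circ\mu_A)=\Im(g\circ\mu_A\circ\rho_A)=\Im(\iota\circ f)\subseteq \iota(B)$, so $g\circ\mu_A$ factors uniquely as $\iota\circ\bar f$ for some $\bar f\colon B_A\to B$. Then $\iota\circ\bar f\circ\rho_A=g\circ\mu_A\circ\rho_A=g\circ\phi_A=\iota\circ f$ and, $\iota$ being monic, $\bar f\circ\rho_A=f$, which gives surjectivity. The main subtlety (and the only non-formal point) is this factorization through $\iota$; everything else is a routine use of image factorizations and the universal property of the preenvelope. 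An equivalent pullback argument—forming the pullback of $g\circ\mu_A$ along $\iota$ and observing that the induced map from this pullback to $B_A$ is a mono-epi, hence an isomorphism—would serve equally well if one prefers to avoid explicit image manipulations.
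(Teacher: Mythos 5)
Your proof is correct and follows essentially the same route as the paper: both take the epi--mono factorization of a $\T$-preenvelope of $A$ and verify that its epimorphic part is a $\mathcal{B}$-reflection, with the only (cosmetic) difference being that you justify the factorization of $g\circ\mu_A$ through $\iota$ by an image computation, whereas the paper runs the equivalent kernel--cokernel diagram chase.
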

\begin{proof}
For any object $A\in\A$, we have to construct a $\B$-reflection $\rho_A\colon A\to B_A$. Indeed, take a $\mathcal{T}$-preenvelope $\mu_A\colon A\to T_A$ and consider its epi-mono factorization
\[
\xymatrix@C=40pt@R=8pt{
A\ar@{->>}[dr]_{\rho_A}\ar[rr]^{\mu_A}&&T_A.\\
&B_A\ar@{^(->}[ur]_{\iota_A}
}
\]
Clearly, $B_A\in\B$ and, since $\A(-,B)$ is left exact, ${\A(\rho_A,B)}$ is injective for all $B\in \B$. To prove that it is also surjective, fix an object $B\in\B$, an embedding $\iota\colon B\to T$ with $T\in\mathcal T$ and a morphism $f\colon A\to B$. Since $\mu_A$ is a  $\mathcal T$-preenvelope, there exists a morphism $g\colon T_A\to T$ such that $\iota\circ f=g\circ \mu_A$. Consider then the following diagram:
\[
\xymatrix@R=8pt@C=18pt{
&&&A\ar@{->>}[ld]_{\rho_A}\ar[dr]^{\mu_A}\ar@{-}[d]\\
0\ar[rr]&&B_A\ar[rr]_(.7){\iota_A}\ar@{.>}[dd]_{\bar f}&\ar@/_-8pt/[ddl]^(.35)f&T_A\ar[rr]^(.4){\pi_A}\ar[dd]^g&&\Coker(\mu_A)\ar[dd]^{h}\ar[rr]&&0\\
\\
0\ar[rr]&&B\ar[rr]_{\iota}&&T\ar[rr]_(.4){\pi}&&\Coker(\iota)\ar[rr]&&0
}
\]
which is commutative, and where the two rows are exact. Here the morphism $h$ is constructed noting that $0=\pi\circ \iota\circ f=\pi\circ g\circ \mu_A=\pi\circ g\circ \iota_A\circ \rho_A$, so $\pi\circ g\circ \iota_A=0$ (since $\rho_A$ is an epimorphism) and then, by the universal property of the cokernel, $\pi\circ g$ factors uniquely through $\pi_A$. Once we have $h$, the universal property of kernels gives the unique morphism $\bar{f}$ such that $\iota\circ\bar{f}=g\circ\iota_A$. We then have that $\iota\circ\bar{f}\circ\rho_A=g\circ\iota_A\circ\rho_A=g\circ\mu_A=\iota\circ f$, from which we get that $f=\bar{f}\circ\rho_A$ since $\iota$ is a monomorphism. Therefore, $\A(\rho_A,B)$ is also a surjective map. 
\end{proof}

The following corollary is especially useful in Krull-Schmidt categories, where all the preenveloping classes are enveloping (see Lemma \ref{lem.precovering=covering}).

\begin{corollary} \label{cor.enveloping-implies-semispecialpreenv}
Let $\A$ be an Abelian category and $\T$ a torsion class in $\A$. If $\T$ is enveloping (and cogenerating) then it is semi-special (resp., special) preenveloping.
\end{corollary}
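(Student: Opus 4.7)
\smallskip

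\noindent\textbf{Proof plan.} The plan is to show that, for every $A\in\A$, any $\T$-envelope $\mu\colon A\to T_A$ is automatically a semi-special preenvelope, i.e.\ that $C:=\coker(\mu)\in{}^{\perp_1}\T$. Once this is established, the parenthetical special case follows at once: when $\T$ is cogenerating, Lemma~\ref{injective_preenvelope}(1) forces every $\T$-preenvelope to be a monomorphism, and a monomorphic semi-special preenvelope is by definition a special preenvelope. The main tool will be a Wakamatsu-style argument that exploits the extension-closedness of $\T$.

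For the semi-special conclusion, I would fix $T\in\T$ and an arbitrary extension $[\epsilon]\colon 0\to T\to X\to C\to 0$, and form its pullback along the canonical projection $\pi\colon T_A\twoheadrightarrow C$:
\[
\xymatrix{
0\ar[r]&T\ar[r]\ar@{=}[d]&Y\ar[r]^{p}\ar[d]&T_A\ar[r]\ar[d]^{\pi}&0\\
0\ar[r]&T\ar[r]&X\ar[r]&C\ar[r]&0
}
\]
Extension-closedness of $\T$ forces $Y\in\T$. Since $\pi\mu=0$, the pullback property produces a lift $\tilde\mu\colon A\to Y$ with $p\tilde\mu=\mu$; the preenvelope property then yields $\phi\colon T_A\to Y$ with $\phi\mu=\tilde\mu$, whence $(p\phi)\mu=\mu$. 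Left minimality of the envelope $\mu$ makes $p\phi$ an automorphism, so $\phi\cdot(p\phi)^{-1}$ is a section of $p$ and the top row splits; equivalently $\pi^*[\epsilon]=0$ in $\Ext^1_\A(T_A,T)$.

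The delicate step will be to transfer this vanishing back down to $[\epsilon]=0$ in $\Ext^1_\A(C,T)$. Applying $\A(-,T)$ to $0\to\Im(\mu)\to T_A\stackrel{\pi}{\to} C\to 0$ (using $\Im(\mu)=\ker\pi$) identifies the kernel of $\pi^*$ with the cokernel of the restriction map $\A(T_A,T)\to\A(\Im(\mu),T)$. I would check this restriction is surjective by a short diagram chase: given $f\colon\Im(\mu)\to T$, precomposing with the canonical epimorphism $e\colon A\twoheadrightarrow\Im(\mu)$ gives a morphism $A\to T$ which the preenvelope property factors as $h\mu$ for some $h\colon T_A\to T$; cancelling $e$ on the right then shows that $h$ restricts to $f$ on $\Im(\mu)$. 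Hence $\pi^*$ is injective, $[\epsilon]=0$, and $C\in{}^{\perp_1}\T$, completing the semi-special case and, together with Lemma~\ref{injective_preenvelope}(1), the special one.
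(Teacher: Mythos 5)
Your proof is correct. Both your argument and the paper's rest on the Wakamatsu lemma, and your pullback--lift--minimality step showing $\pi^*[\epsilon]=0$ in $\Ext^1_\A(T_A,T)$ is exactly the classical one. Where you genuinely diverge is in handling the fact that the envelope $\mu$ need not be a monomorphism. The paper sidesteps this by first passing to the reflective subcategory $\B:=\Sub(\T)$ (Proposition \ref{ex.reflective-versus-productclosedness}), inside which $\T$ is cogenerating, so that every envelope is monic and the module-category proof of \cite[Lem.\,2.1.13]{gobel2006approximations} applies verbatim; semi-specialness in $\A$ is then recovered via Lemma \ref{lem.extension-in-reflective-subcats}(4). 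You instead run the descent step directly in $\A$: factoring $\mu$ through its image, you note that the preenvelope property makes the restriction $\A(T_A,T)\to\A(\Im(\mu),T)$ surjective (the canonical epimorphism $A\twoheadrightarrow\Im(\mu)$ can be cancelled on the right), so the connecting map into $\Ext^1_\A(C,T)$ vanishes, $\pi^*$ is injective, and $[\epsilon]=0$. Your route is self-contained and avoids invoking the reflectivity of $\Sub(\T)$, which itself requires proof; the paper's route, in exchange, reuses machinery it has already built and needs elsewhere in the section. The reduction of the special case to the semi-special one via Lemma \ref{injective_preenvelope}(1) is also fine.
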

\begin{proof}
Bearing in mind that, when $\T$ is preenveloping, $\mathcal{B}:=\Sub (\T)$ is reflective (see Proposition~\ref{ex.reflective-versus-productclosedness}),   the proof reduces to the case when $\T$ is cogenerating. In this latter case the proof goes as for categories of modules (see \cite[Lem.\,2.1.13]{gobel2006approximations}).
 \end{proof}

\subsection{The subcategory of objects sub(co)generated by a given  object}

Recall from Section 1 the definition of the subcategories $\Gen(M)$ and $\Cogen(M)$, for a given object $M$ in an Abelian category $\A$. We enlarge these classes to form new subcategories to which 
 we apply the results of the previous subsections.

\begin{definition}
Let $X$ and $M$ be two objects in an Abelian category $\A$. We say that  $X$ is {\bf subgenerated} by $M$ if it embeds in an object of $\Gen(M)$. We denote by $\subGen_\A(M):=\Sub(\Gen_\A(M))$ the subcategory of objects subgenerated by $M$.

The objects {\bf  subcogenerated} by $M$ are defined dually. The subcategory of objects  subcogenerated by $M$ is denoted by  $\subCogen_\A(M):=\Quot(\Cogen_A(M))$. When the ambient category $\A$ is clear from the context, the subscript will be generally avoided.
\end{definition}


\begin{lemma}\label{example_4_lemma}
Let $\A$ be an Abelian category with a projective epi-generator $P$ and  $\T\subseteq \A$  a subcategory closed under quotients, coproducts and extensions. If $\T=\Gen(V)$,  for an object $V\in \A$ such that $P$ has an $\Add(V)$-preenvelope, then $\B:=\Sub(\T)=\subGen(V)$ is reflective in $\A$. 
\end{lemma}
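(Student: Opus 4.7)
My plan is to reduce the statement to the preenveloping framework already developed in the excerpt, via two main steps: upgrade the given $\Add(V)$-preenvelope of $P$ to a $\T$-preenvelope (using the projectivity of $P$), and then invoke Lemma~\ref{lem.special-preenvelope-generator} followed by Proposition~\ref{ex.reflective-versus-productclosedness}.

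First, I would observe that $\Add(V)\subseteq\Gen(V)=\T$: any object in $\Add(V)$ is a direct summand, hence a quotient, of some coproduct $V^{(I)}$, and $\T$ is closed under coproducts and quotients. I would also note, as a routine preliminary, that $P$ being a projective epi-generator forces $\A=\Gen(P)=\Pres(P)$ (lift an epimorphism $P^{(J)}\twoheadrightarrow\ker(P^{(I)}\twoheadrightarrow X)$ using projectivity), so by Lemma~\ref{existence_of_coproducts_in_pres_lemma}(1) every object of $\A$ admits arbitrary set-indexed coproducts.

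The main step is the following claim: if $\mu\colon P\to W$ is an $\Add(V)$-preenvelope, then $\mu$ is actually a $\T$-preenvelope. For this, fix any $T\in\T=\Gen(V)$ and any $f\colon P\to T$. Choose an epimorphism $\pi\colon V^{(I)}\twoheadrightarrow T$; since $P$ is projective, $f$ lifts through $\pi$ to some $\tilde f\colon P\to V^{(I)}$. Because $V^{(I)}\in\Add(V)$, the $\Add(V)$-preenveloping property of $\mu$ yields $g\colon W\to V^{(I)}$ with $g\circ\mu=\tilde f$. Then $(\pi\circ g)\circ\mu=\pi\circ\tilde f=f$, so $f$ factors through $\mu$, proving the claim. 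This is the only nontrivial step, and the obstacle—accessing arbitrary $T\in\T$ from a preenvelope inside the smaller class $\Add(V)$—is handled entirely by the projectivity of $P$.

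With the claim established, $P$ admits a $\T$-preenvelope, $\T$ is closed under coproducts and quotients, and $P$ is an epi-generator; so Lemma~\ref{lem.special-preenvelope-generator} applies and yields that $\T$ is preenveloping in $\A$. Finally, Proposition~\ref{ex.reflective-versus-productclosedness} gives that $\mathcal{B}=\Sub(\T)$ is reflective in $\A$; the identification $\Sub(\T)=\Sub(\Gen(V))=\subGen(V)$ is just the definition.
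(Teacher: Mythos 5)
Your proof is correct and takes essentially the same route as the paper: show the $\Add(V)$-preenvelope of $P$ is already a $\T$-preenvelope by lifting any $f\colon P\to T$ through an epimorphism $V^{(I)}\twoheadrightarrow T$ using projectivity of $P$, then conclude via Lemma~\ref{lem.special-preenvelope-generator} and Proposition~\ref{ex.reflective-versus-productclosedness}. (Your unused preliminary remark that \emph{every} object admits \emph{arbitrary} set-indexed coproducts overstates Lemma~\ref{existence_of_coproducts_in_pres_lemma}(1), which only covers sets $I$ for which $P^{(I)}$ exists, but nothing in your main argument depends on it.)
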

\begin{proof}
Let $f\colon P\to V^{(I)}$ be an $\Add(V)$-preenvelope. It is enough to prove that it is also a \mbox{$\T$-preenvelope.} In fact, if that happens, then  $\T$ is a preenveloping class in $\A$ by Lemma \ref{lem.special-preenvelope-generator} and so Proposition \ref{ex.reflective-versus-productclosedness} applies. Let $u\colon P\to T$ be any morphism, where $T\in\T=\Gen(V)$. Fix an epimorphism $p\colon V^{(J)}\twoheadrightarrow T$. By the projectivity of $P$, we get a morphism $v\colon P\to V^{(J)}$ such that $p\circ v=u$. Since $f$ is an $\Add(V)$-preenvelope, we then get a morphism $w\colon V^{(I)}\to V^{(J)}$ such that $w\circ f=v$. It follows that $u=p\circ v=p\circ w\circ f$, thus showing that $f$ is a \mbox{$\T$-preenvelope.}
\end{proof}

Note that, putting $\mathcal{X}=\Gen(M)$ (resp., $\mathcal{X}=\Cogen(M)$) in Corollary~\ref{cor.subX}, one gets assertions about $\subGen(M)$ (resp., $\subCogen(M)$) whose statement is left to the reader. Nevertheless, we want to emphasize the following  consequence.

\begin{corollary} \label{cor.subGen-subCogen-Grothendieck}
Let $\A$ be a well-powered Abelian category and $M\in\A$. Then:
\begin{enumerate}[\rm (1)]
\item if $\A$ is (Ab.5), then $\subGen(M)$ is a Grothendieck category;
\item if $\A$ is (Ab.5) and (Ab.3*), then  $\subCogen(M)$ is (Ab.5);
\item if $\A$ is a Grothendieck category, so is $\subCogen(M)$.  
\end{enumerate}
\end{corollary}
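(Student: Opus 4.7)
The plan is to attack all three parts uniformly via Corollary \ref{cor.subX}(1), which requires verifying that $\subGen(M)$ and $\subCogen(M)$ are closed under subobjects, quotients, finite coproducts, and arbitrary coproducts in $\A$. Closure under the first three operations will follow directly from Example \ref{ex.Sub(X)-Gen(X) Abelian exact} applied with $\mathcal{X} = \Gen(M)$ (respectively $\mathcal{X} = \Cogen(M)$), so the real work concentrates on closure under arbitrary coproducts and, in part~(1), on producing a set of generators.

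For part~(1), since $\A$ is (Ab.5) and in particular (Ab.4), given any family of embeddings $X_i \hookrightarrow G_i$ with $G_i \in \Gen(M)$ the induced map $\coprod X_i \hookrightarrow \coprod G_i$ is a monomorphism and $\coprod G_i \in \Gen(M)$, so $\subGen(M)$ is closed under coproducts. Corollary \ref{cor.subX}(1) then yields that $\subGen(M)$ is coreflective in $\A$ and inherits (Ab.5). To upgrade to Grothendieck, I would take $\mathcal{S}$ to be the set of isomorphism classes of subobjects of $M^n/K$ for $n \in \N$ and $K \subseteq M^n$, which is a set by well-poweredness. For any $X \in \subGen(M)$, fixing an embedding $X \hookrightarrow G$ and an epimorphism $\pi\colon M^{(I)} \twoheadrightarrow G$ and setting $G_F := \pi(M^{(F)})$ for finite $F \subseteq I$, the (Ab.5) condition gives $G = \sum_F G_F$ and hence $X = \sum_F (X \cap G_F)$, with each $X \cap G_F$ isomorphic to an element of $\mathcal{S}$; this will show that $\mathcal{S}$ generates $\subGen(M)$.

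For part~(2) the key technical step, and the step I expect to be the main obstacle, is the following lemma: in any (Ab.5) and (Ab.3*) Abelian category the canonical morphism $\phi\colon \coprod_I X_i \to \prod_I X_i$ is a monomorphism. To prove it, I note that for each finite $F \subseteq I$ the composition $\prod_F X_i = \coprod_F X_i \hookrightarrow \coprod_I X_i \xrightarrow{\phi} \prod_I X_i$ coincides with the split inclusion of a finite product into the full product, so $\ker(\phi) \cap \coprod_F X_i = 0$; since the subobjects $\coprod_F X_i$ form a directed family whose supremum inside $\coprod_I X_i$ is the whole object, (Ab.5) forces $\ker(\phi) = 0$. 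Granted this lemma, closure of $\subCogen(M)$ under coproducts follows: for $Y_i \in \subCogen(M)$ choose epimorphisms $C_i \twoheadrightarrow Y_i$ with $C_i \hookrightarrow M^{J_i}$; then $\coprod C_i \twoheadrightarrow \coprod Y_i$ is epi (by (Ab.4)) and $\coprod C_i \hookrightarrow \coprod M^{J_i} \hookrightarrow \prod M^{J_i} \cong M^{\bigsqcup J_i}$, showing $\coprod C_i \in \Cogen(M)$ and hence $\coprod Y_i \in \subCogen(M)$. Corollary \ref{cor.subX}(1) then concludes that $\subCogen(M)$ is coreflective and (Ab.5).

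Part~(3) will follow as a corollary of part~(2): if $\A$ is Grothendieck, Lemma \ref{lem.AB3-are-AB3*}(2) ensures that $\A$ is well-powered and (Ab.3*), so the argument of part~(2) applies and, via Corollary \ref{cor.subX}(1), shows that $\subCogen(M)$ is coreflective in $\A$; since $\A$ itself is Grothendieck, the Grothendieck clause of the same corollary immediately yields that $\subCogen(M)$ is Grothendieck.
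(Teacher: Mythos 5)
Your proof is correct and follows essentially the same route as the paper: Corollary \ref{cor.subX}(1) (equivalently Proposition \ref{prop.coreflective}) does the structural work once closure under coproducts is checked, part (1)'s generating set comes from the same directed-union argument using well-poweredness and (Ab.5), and parts (2)--(3) reduce to showing $\Cogen(M)$ is closed under coproducts via the monomorphism $\coprod_I X_i\to\prod_I X_i$. The only difference is that the paper cites this last fact from Popescu (Coro.\,8.10) whereas you give a correct self-contained proof of it via the split inclusions $\coprod_F X_i\hookrightarrow\prod_I X_i$ over finite $F$ and the (Ab.5) condition.
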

\begin{proof}
(1). By Corollary~\ref{cor.subX}(1), we know that $\subGen(M)$ is (Ab.5). We just need to exhibit a set of generators. The strategy is completely analogous to the case when $\A$ is a category of modules (see \cite[Sec.\,15]{wisbauer2018foundations}). Indeed, consider the following class:
\[ 
\mathcal{S}:=\Sub\{M^n:n\in\N\}
\]
which is skeletally small since $\A$ is supposed to be well-powered. Given $T\in\subGen(M)$, fix a monomorphism $\lambda\colon T\to M^{(I)}/K$. Furthermore, let $\lambda_F\colon M^{(F)}/(K\cap M^{(F)})\to M^{(I)}/K$ be the induced monomorphism, for any finite subset $F\subseteq I$. Then $M^{(I)}/K$ is the directed union of the objects $L_F:=\Im(\lambda_F)$. Since $\A$ is supposed to be (Ab.5), we obtain that 
$T$ is the directed union of the  $\lambda^{-1}(L_F)$ (see \cite[Prop.\,V.1.1]{S75}). But each 
$\lambda^{-1}(L_F)$ is isomorphic (via $\lambda$) to an object of $\mathcal{S}$. Therefore $T$ is a quotient of a 
coproduct of 
objects in $\mathcal{S}$, showing that $\mathcal{S}$ is a skeletally small class of generators of $\subGen(M)$. 

\smallskip\noindent
(2) {\em and} (3). Note that $\B:=\subCogen(M)$ is closed under taking subobjects and quotients in $\A$.  By Proposition~\ref{prop.coreflective} it is enough to check that $\B$ is closed under taking coproducts and, since coproducts are exact in $\A$, it is clearly sufficient to prove that $\Cogen(M)$ is closed under taking coproducts. For that, note first that, given a family $\{X_i\}_{I}$ of objects in $\A$, the canonical morphism $\coprod_{I}X_i\to\prod_{I}X_i$ is a monomorphism (see \cite[Coro.\,8.10]{popescu}). 
Let then $\{T_i\}_{I}$ be a family of objects in $\Cogen(M)$ and fix monomorphisms $\lambda_i\colon T_i\hookrightarrow M^{J_i}$, for suitable sets $J_i$ ($i\in I$). We obtain a monomorphism 
\[
\xymatrix@C=12pt{
\coprod_{I}T_i\hookrightarrow\coprod_{I}M^{J_i}\ar[rr]^(.6){\text{can}}&&\prod_{I}M^{J_i},
}
\] 
showing that $\coprod_{I}T_i\in\Cogen(M)$.  
\end{proof}

\section{Semi-special preenveloping classes}\label{sec. semi-special preenveloping}

In this section we introduce the main tools that will be necessary in Section \ref{sec_tilt_pree}. In particular, in Subsection~\ref{unversal_ext_sec}, we develop a theory of universal extensions and $\Ext^1$-universal objects in an Abelian category $\A$. This can be done with relative ease when we impose sufficiently strong finiteness conditions on $\A$ (see, e.g., Proposition \ref{prop.characteriz.Ext-universal}) but, to treat the case of ``big'' Abelian categories (see, e.g., Proposition \ref{prop.derivcoprod=0-implies-projdim=1}), we need a preliminary study of some exactness properties of infinite coproducts, that we carry on in Subsection~\ref{exact_coprod_sec}. Finally, we characterize the semi-special preenveloping classes in an arbitrary Abelian category $\A$ in Subsection~\ref{preenvelopes_main_subs} (see Theorem \ref{thm.specialpreenveloping-versus-cotorsionpair} and Corollary \ref{cor.special-preenveloping-arbitraryA} for the main results in this direction).

\subsection{A formal derived functor of coproducts}\label{exact_coprod_sec}

In a cocomplete Abelian category, all coproducts exist but, in general, their derived product may fail to exist. Nevertheless, we introduce the following notation:

\begin{notation} \label{notation.derived coproduct}
Let $\A$ be an Abelian category and let $I$ be a set such that $I$-coproducts exist in $\mathcal{A}$. Let 
$(V_i)_{I}$ be a family of objects and  $\mathcal{S}$  a subcategory  of $\mathcal{A}$. We shall write 
\[
\xymatrix{
\coprod_{I}^1V_i\in\mathcal{S}
}
\] 
when,  given any family $\{0\rightarrow X_i\stackrel{u_i}{\longrightarrow}Y_i\to V_i\rightarrow 0\}_{I}$ of short exact sequences in $\mathcal{A}$, the kernel of the induced morphism $\coprod_I u_i\colon\coprod_{I}X_i\to\coprod_{I}Y_i$ lies in $\mathcal{S}$. When $\mathcal{S}=\{0\}$, we  write $\coprod_{I}^1V_i=0$. This applies to the case when $V_i=V$, for all $i\in I$ and $V\in \A$, in which case we write  $\coprod_{I}^1V\in\mathcal{S}$ or  $\coprod_{I}^1V=0$, when $\mathcal{S}=\{0\}$.
\end{notation}

We now want to show that, when the first derived functor of the coproduct exists in $\A$, then Notation~\ref{notation.derived coproduct} means exactly what one expects (see Proposition \ref{prop.derived coproduct} for details). Before proceeding, let us fix some terminology. Indeed, let $\A$ be an Abelian category with enough projectives. Given an object $V\in \A$, a {\bf projective presentation} of $V$ is a short exact sequence in $\A$
\[
0\rightarrow\Omega{\longrightarrow} P\longrightarrow V\rightarrow 0\quad\text{with $P\in\Proj(\A)$.}
\]

\begin{proposition} \label{prop.derived coproduct}
Let $\mathcal{A}$ be an Abelian category with enough projectives, $I$ a set for which \mbox{$I$-c}oproducts exist in $\mathcal{A}$,  and let $\{V_i\}_{I}$ and $\mathcal{S}$ be a family of objects and a subcategory  of $\mathcal{A}$, respectively. Consider the following assertions:
\begin{enumerate}[\rm (1)]
\item $\coprod_{I}^1V_i\in\mathcal{S}$;
\item $\ker(\coprod_I u_i)\in \mathcal{S}$, for any family of projective presentations $\{0\rightarrow\Omega_i\stackrel{u_i}{\longrightarrow} P_i\to V_i\rightarrow 0\}_{I}$;
\item $\Ker(\coprod_I u_i)\in\mathcal{S}$, for some family of projective presentations  $\{0\rightarrow\Omega_i\stackrel{u_i}{\longrightarrow} P_i\to V_i\rightarrow 0\}_{I}$;
\item $H^{-1}(\coprod_{I}V_i[0])\in\mathcal{S}$, where $\coprod_{I}V_i[0]$ denotes the coproduct  in $\Der^-(\A)$ (see Section~\ref{subs_complexes}). 
\end{enumerate}
Then the implications ``(1)$\Rightarrow$(2)$\Leftrightarrow$(3)$\Leftrightarrow$(4)'' hold true. Furthermore, when $\mathcal{S}$ is closed under taking quotients in $\mathcal{A}$, all  assertions are equivalent. 
\end{proposition}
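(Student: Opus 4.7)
The plan is to prove the implications in the order ``(1) $\Rightarrow$ (2) $\Rightarrow$ (3)'', then ``(3) $\Leftrightarrow$ (4)'' by a direct homological computation, then derive ``(3) $\Rightarrow$ (2)'' from the intrinsic nature of (4), and finally ``(2) $\Rightarrow$ (1)'' under the quotient-closure hypothesis via a pullback construction. The two trivial implications (1) $\Rightarrow$ (2) and (2) $\Rightarrow$ (3) are immediate: the former holds because any projective presentation is a special instance of an SES with quotient $V_i$, the latter because ``for any'' implies ``for some'' (and such presentations exist since $\A$ has enough projectives).

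The key computation is (3) $\Leftrightarrow$ (4). I would fix projective resolutions $P_i^\bullet \to V_i[0]$ (obtained by extending any chosen projective presentation), with differentials $d_i^n$, and set $\Omega_i := \ker(P_i^0 \to V_i) = \mathrm{Im}(d_i^{-1})$, giving a projective presentation $0 \to \Omega_i \xrightarrow{u_i} P_i^0 \to V_i \to 0$ via the factorization $d_i^{-1} = u_i \circ \pi_i$, where $\pi_i\colon P_i^{-1}\twoheadrightarrow \Omega_i$ is epi. Since $\coprod$ preserves epimorphisms, $\coprod \pi_i$ is epi and a basic diagram chase yields a short exact sequence
\[
0 \to \ker(\coprod \pi_i) \to \ker(\coprod d_i^{-1}) \to \ker(\coprod u_i) \to 0.
\]
By exactness of the resolution $\ker(\pi_i) = \mathrm{Im}(d_i^{-2})$; applying $\coprod$ (right exact) to $0\to \ker(\pi_i) \to P_i^{-1} \to \Omega_i \to 0$ and factoring $d_i^{-2}$ through its image, one identifies $\ker(\coprod \pi_i) = \mathrm{Im}(\coprod d_i^{-2})$. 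This yields the isomorphism $H^{-1}(\coprod P_i^\bullet) \cong \ker(\coprod u_i)$, from which (3) $\Leftrightarrow$ (4) follows at once.

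The implication (3) $\Rightarrow$ (2) then comes for free: the object $H^{-1}(\coprod V_i[0])$ in (4) is intrinsic to $\Der^-(\A)$, because any two projective resolutions of $V_i$ are homotopy equivalent and coproducts in $\mathcal{K}^-(\A)$ preserve homotopy equivalences; hence the isomorphism $\ker(\coprod u_i) \cong H^{-1}(\coprod P_i^\bullet)$ makes $\ker(\coprod u_i)$ independent of the chosen projective presentation, so (3) for \emph{some} presentation upgrades to the statement for \emph{any} presentation.

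For the final implication (2) $\Rightarrow$ (1) under quotient-closure of $\mathcal{S}$: given any family of SES $0 \to X_i \to Y_i \to V_i \to 0$, I would choose epimorphisms $\pi_i\colon Q_i \twoheadrightarrow Y_i$ from projectives and set $R_i := \pi_i^{-1}(X_i)$, so that $0 \to R_i \to Q_i \to V_i \to 0$ is a projective presentation of $V_i$ and $\pi_i$ restricts to an epi $R_i \twoheadrightarrow X_i$. A diagram chase---exploiting that $\ker(\pi_i)\subseteq R_i$ maps to zero in $X_i$---shows that $\ker(\coprod X_i \to \coprod Y_i)$ is the image of $\ker(\coprod R_i \to \coprod Q_i)$ under the epimorphism $\coprod R_i \twoheadrightarrow \coprod X_i$. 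The hypothesis (2) places the latter kernel in $\mathcal{S}$, and quotient-closure concludes. The main technical obstacle I anticipate is the careful verification of the $H^{-1}$-identification in (3) $\Leftrightarrow$ (4), which hinges on tracking how the various (co)kernels interact with the right-exact functor $\coprod$; the remaining implications are then routine diagram chasing.
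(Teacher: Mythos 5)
Your proposal is correct and follows essentially the same route as the paper's proof: the identification $\ker(\coprod_I u_i)\cong H^{-1}(\coprod_I P_i^\bullet)$ via the factorization $d_i^{-1}=u_i\circ\pi_i$ and right-exactness of coproducts, the independence of this object from the chosen resolution to get (2)$\Leftrightarrow$(3)$\Leftrightarrow$(4), and for (2)$\Rightarrow$(1) the pullback $R_i=\pi_i^{-1}(X_i)$ followed by the fact that coproducts preserve pushouts. The only step you compress is the final ``diagram chase'' showing that kernels surject along a pushout square whose parallel legs are epimorphisms; this is exactly the dual of \cite[Lem.\,2.5.3]{popescu} that the paper invokes (after factoring through the image), and it does hold, so there is no gap.
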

\begin{proof}
Consider a family of projective presentations $\{0\rightarrow\Omega_i\stackrel{u_i}{\longrightarrow} P_i\to V_i\rightarrow 0\}_{I}$. For any $j\in I$, one can choose a projective resolution $s_j\colon P^\bullet_j{\to}V_j[0]$, where
\[
\xymatrix@C=20pt@R=0pt{
\cdots\ar[r]&P_{j}^{-2}\ar[rr]^{d_j^{-2}}&&P_j^{-1}\ar[drr]_{\pi_j}\ar[rrrr]^{d_j^{-1}}&&&&P_{j}^0\cong P_j\\
&&&&&\Omega_j\ar[dr]\ar[urr]_{u_j}\\
&&&&0\ar[ur]&&0
}
\]
such that $P_j^0\cong P_j$ and with $\Omega_j$ (necessarily) isomorphic to the $0$-th boundary of  $P^\bullet_j$. As coproducts are right-exact, it follows that $\coprod_I \pi_i$ is an epimorphism and, as $\coprod_Iu_i\circ\coprod_{I}\pi_i=\coprod_Id_{i}^{-1}$, we can deduce that $\ker(\coprod_Iu_i)\cong \ker(\coprod_Id_{i}^{-1})/\ker(\coprod_I\pi_{i})$ and $\Im(\coprod_Iu_i)=\Im(\coprod_Id_{i}^{-1})$. Therefore, the  short exact sequence $0\to \ker(\coprod_Iu_i)\to \coprod_I\Omega_i\to \Im(\coprod_Iu_i)\to 0$, can be rewritten as follows:
\[
\xymatrix{
0\ar[r]& \ker(\coprod_Id_{i}^{-1})/\ker(\coprod_I\pi_{i})\ar[r]& \coprod_I\Omega_i\ar[r]& \Im(\coprod_Id_{i}^{-1})\ar[r]& 0
}
\]
Using again the right exactness of coproducts, we get that $\Im(\coprod_Id_i^{-2})=\ker(\coprod_I\pi_i)$, so we have the following short exact sequence:
\[
\xymatrix@R=4pt{
0\ar[r]& H^{-1}\left(\coprod_{I}P_i^\bullet\right)\ar[r]&\coprod_{I}\Omega_i\ar[r]&\Im \left(\coprod_I d_i^{-1}\right)\ar[r]& 0,
}
\]
where $\coprod_{I}P^\bullet_i$ is computed as in $\Ch(\mathcal{A})$. Using the inclusion $\Im(\coprod_I d_i^{-1})\hookrightarrow\coprod_{I}P_i^0$, we get  
\[
\xymatrix@R=3pt{
0\ar[r]& H^{-1}(\coprod_{I}P^\bullet_i)\ar[r]&\coprod_{I}\Omega_i\ar[r]^{\coprod_I u_i}&\coprod_{I}P_i.
}
\]
Since $H^{-1}(\coprod_{I}P^\bullet_i)\cong H^{-1}(\coprod_{I}V_i[0])$ does not depend on the choice of the projective resolutions, the equivalence of assertions (2), (3) and (4) is clear. Moreover, they are clearly implied by (1).

\smallskip
We next prove the implication ``(2)$\Rightarrow$(1)'', assuming that $\mathcal{S}$ is closed under taking quotients in $\mathcal{A}$. Let $\{0\rightarrow X_i\stackrel{u_i}{\longrightarrow}Y_i\stackrel{p_i}{\longrightarrow}V_i\rightarrow 0\}_{ I}$ be a family of short exact sequences in $\mathcal{A}$, and fix epimorphisms $\pi_i\colon P_i\twoheadrightarrow Y_i$, where each $P_i$ is a projective object. For each $j\in I$,  we get the following commutative diagram with exact rows, where the left square is bicartesian:
\[
\xymatrix@R=20pt@C=45pt{ 
0\ar[r] & \Omega_j \ar[r]^{\widetilde{u}_j} \ar@{->>}[d]_{\rho_j} & P_j \ar[r]^{\widetilde{p}_j} \ar@{->>}[d]^{\pi_j}  & V_j \ar[r] \ar@{=}[d] & 0 \\ 
0 \ar[r] & X_j \ar[r]_{u_j} & Y_j \ar[r] \ar@{}[ul]|{\text{\tiny P.O.\quad P.B.}} & V_j \ar[r] & 0.}
\]
Bearing in mind that the coproduct functor preserves pushouts, we also  get the following commutative diagram with exact rows, where the central square is cocartesian:
\[
\xymatrix@R=20pt@C=45pt{
0 \ar[r] & K \ar[r] \ar[d]_{f} & {\coprod}_{I} \Omega_i \ar[r]^{\coprod_I \widetilde{u}_i} \ar@{>>}[d]_{\coprod_I \rho_i}& {\coprod}_{I} P_i \ar[r] \ar@{>>}[d]^{\coprod_I \pi_i}& {\coprod}_{I} V_i \ar[r] \ar@{=}[d]& 0 \\ 
0 \ar[r] & L \ar[r] & {\coprod}_{I} X_i \ar[r]_{\coprod_I u_i} &\ar@{}[ul]|{\text{\tiny P.O.}}  {\coprod}_{I} Y_i \ar[r] & {\coprod}_{I} V_i \ar[r] & 0
}
\]
By (2) we know that $K\in\mathcal{S}$ so we should prove that the map $f$ in the above diagram is an epimorphism, since then one concludes that $L\in\mathcal{S}$, being $\mathcal S$ closed under quotients. For this, consider the following diagram
\[
\xymatrix@R=20pt@C=45pt{
 {\coprod}_{I} \ar@{->>}[r]\Omega_i \ar@/_-15pt/[rr]|{\coprod_I \widetilde{u}_i} \ar@{->>}[d]_{\coprod_I \rho_i}&M\ \ar@{->>}[d]\ar@{>->}[r]&{\coprod}_{I} P_i \ar@{->>}[d]^{\coprod_I \pi_i}\\
 {\coprod}_{I} X_i\ar[r] \ar@/_15pt/[rr]|{\coprod_I u_i} &N\ar@{}[ul]|{\text{\tiny P.O.}}\ar[r]&\ar@{}[ul]|{\text{\tiny P.O.}}  {\coprod}_{I} Y_i 
}
\]
where the first line is the epi-mono factorization of $\coprod_I\tilde{u}_i$, the rest of the diagram is constructed by two successive pushouts, using that the juxtaposition of two cocartesian squares is cocartesian. Note that square on the right-hand side is bicartesian, so the map $N\to {\coprod}_{I} Y_i $ is a monomorphism. Hence,  the kernel of the map $ {\coprod}_{I} X_i\to N$ is isomorphic to $\ker(\coprod_I u_i)$. Therefore, we obtain the the following commutative diagram with exact rows:
\[
\xymatrix@C=45pt@R=20pt{
0 \ar[r] & K \ar[r] \ar[d]_{f} & {\coprod}_{I} \Omega_i \ar[r] \ar@{>>}[d]_{\coprod_I \rho_i}& M\ar@{>>}[d]\ar[r]&0 \\ 
0 \ar[r] & L \ar[r] & {\coprod}_{I} X_i \ar[r] &\ar@{}[ul]|{\text{\tiny P.O.}} N  \ar[r] & 0 }
\]
We can now conclude by the dual of \cite[Lem.\,2.5.3]{popescu}.
\end{proof}

To further support the intuition that $\coprod^1_I(-)$ is a good substitute for the derived functor of $I$-coproducts, we show in the following example that, if coproducts are ``exact enough'', then $\coprod^1_I(-)$ vanishes.

\begin{example}\label{ex_exact_copr}
Let $\mathcal{A}$ be an Abelian category  and $V\in \A$. If either one of the following two conditions hold, then $\coprod^1_IV=0$, for any set $I$ for which all $I$-coproducts of exact sequences ending at $V$ exist:
\begin{enumerate}[\rm (1)]
\item if $\mathcal{A}$ is (Ab.4);
\item if $V^{\perp_1}$ is a cogenerating subcategory of $\A$.
\end{enumerate}
\end{example}
\begin{proof}
Consider a family  $\{0\rightarrow A_i\to B_i\to V\rightarrow 0\}_{I}$ of short exact sequences in $\mathcal{A}$ whose coproduct exists in $\A$. If $\A$ is (Ab.4), then the coproduct morphism $u\colon\coprod_{I}A_i\to\coprod_{I}B_i$ is a monomorphism and we are done. 
Suppose now that $V^{\perp_1}$ is cogenerating. Then, for each $X\in V^{\perp_1}$, and each $j\in I$, the induced map $\mathcal{A}(B_j,X)\to\mathcal{A}(A_j,X)$ is surjective. Hence, the morphism 
\begin{align}\label{sequence_final_univ_ext}
\xymatrix{
\mathcal{A}\left(\coprod_{I}B_i,X\right)\cong\prod_{I}\mathcal{A}(B_i,X)\longrightarrow\prod_{I}\mathcal{A}(A_i,X)\cong\mathcal{A}\left(\coprod_{I}A_i,X\right)
}
\end{align}
is surjective because products are exact in $\mathrm{Ab}$. Since $ V^{\perp_1}$ is a cogenerating class, the coproduct morphism $u\colon\coprod_{I}A_i\to\coprod_{I}B_i$ is a monomorphism by Lemma \ref{injective_preenvelope}(3.1). 
\end{proof}

\begin{remark} \label{rem.Coupek-Stovicek}
It is not at all uncommon to find cocomplete non-(Ab.4) Abelian categories $\mathcal{A}$ with an object $V$ satisfying condition (2) of the above example. For instance, if $\mathcal{G}$ is any non-(Ab.4$^*$) Grothendieck category, then, for any torsion pair $\mathbf{t}=(\T,\mathcal{F})$ of finite type in $\G$ such that $\mathcal{F}$ is generating, this latter class is of the form $\mathcal{F}=\Cogen(Q)={}^{\perp_1}Q$, for some $(1\text{-})$cotilting object (see \cite[Thm.\,3.10]{coupek-stovicek} and the dual of Definition \ref{def.quasi-tilting torsion pair}). We refer the reader to [Op.Cit.] for a good supply of examples when $\G=\mathrm{Qcoh}\text{-}\mathbb{X}$ is the category of quasi-coherent sheaves over a Noetherian scheme with an ample family of line bundles, that is generally not (Ab.4$^*$).   Choosing $\A:=\G^{\op}$ and taking $V:=Q$ as an object of $\A$, condition (2) of Example \ref{ex_exact_copr} is clearly satisfied. 
\end{remark}

Finally we can show that, when we assume that $\coprod_I^1V_i=0$, then we can say more about the morphism $\Phi$ introduced  in Lemma \ref{lem.Ext-V-versus-Ext-VI}.

\begin{proposition} \label{prop.Ext-contrav-preserves-products}
Let $\A$ be an Abelian category, $X\in \A$, $I$ a set such that $I$-coproducts exist in $\A$, and $(V_i)_{I}\subseteq \A$  a family  such that $\coprod_{I}^1V_i=0$.  Then, the canonical morphism of big Abelian groups 
\[
\xymatrix{
\Phi \colon \Ext_\A^1\left(\coprod_{I}V_i,X\right)\longrightarrow \prod_{I}\Ext_\A^1(V_i,X)
}
\] 
is an isomorphism. In particular, if $V$ is any object such that $\coprod_I^1V=0$, then the canonical morphism $\Ext_\A^1\left(V^{(I)},X\right)\to\Ext_\A^1(V,X)^I$ is an isomorphism. 
\end{proposition}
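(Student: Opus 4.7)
The plan builds on the injectivity of $\Phi$ already proved in Lemma \ref{lem.Ext-V-versus-Ext-VI}, so the task reduces to surjectivity. Given an arbitrary element of $\prod_{I}\Ext^1_\A(V_i,X)$, represented by a family of short exact sequences
\[
\eta_i:\quad 0\to X\stackrel{u_i}{\longrightarrow} Y_i\stackrel{p_i}{\longrightarrow} V_i\to 0,\qquad i\in I,
\]
I would construct a preimage under $\Phi$ in two steps: first assemble the $\eta_i$ into the coproduct sequence in $\Ch(\A)$, and then push out along the fold map.

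The coproduct sequence is
\[
\epsilon:\quad 0\to X^{(I)}\stackrel{\coprod_I u_i}{\longrightarrow}\coprod_{I}Y_i\stackrel{\coprod_I p_i}{\longrightarrow}\coprod_{I}V_i\to 0,
\]
where all three coproducts exist by the standing hypothesis that $I$-coproducts exist in $\A$. This is the only place where the assumption $\coprod_{I}^{1}V_i=0$ (Notation \ref{notation.derived coproduct}, with $\mathcal S=\{0\}$) is substantively used: it says precisely that $\coprod_I u_i$ is a monomorphism, so $\epsilon$ is short exact. Next, using the fold morphism $\nabla\colon X^{(I)}\to X$ obtained from $(\id_X)_{I}$ via the universal property of the coproduct, I would push $\epsilon$ out along $\nabla$ to obtain a short exact sequence
\[
\eta:\quad 0\to X\to Y\to \coprod_{I}V_i\to 0,
\]
which is the natural candidate for a preimage.

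It remains to verify that $\Phi([\eta])=([\eta_i])_{I}$, that is, $\iota_j^*[\eta]=[\eta_j]$ for each $j\in I$, where $\iota_j\colon V_j\to \coprod_{I}V_i$ is the canonical inclusion. The inclusions $\kappa_j\colon X\to X^{(I)}$, $\lambda_j\colon Y_j\to \coprod_{I}Y_i$, together with $\iota_j$, assemble into a morphism of extensions $\eta_j\to\epsilon$, from which bifunctoriality of $\Ext^1$ yields $\iota_j^*[\epsilon]=(\kappa_j)_*[\eta_j]$ in $\Ext^1_\A(V_j,X^{(I)})$. Combining with $\nabla\circ\kappa_j=\id_X$ and the standard commutation of pullback and pushout on distinct arguments of $\Ext^1$, I obtain
\[
\iota_j^*[\eta]\;=\;\iota_j^*\nabla_*[\epsilon]\;=\;\nabla_*\iota_j^*[\epsilon]\;=\;\nabla_*(\kappa_j)_*[\eta_j]\;=\;(\nabla\circ\kappa_j)_*[\eta_j]\;=\;[\eta_j],
\]
as required. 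The final ``in particular'' statement is immediate by specializing to the constant family $V_i=V$.

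The main obstacle is really only the exactness of the coproduct sequence, which is exactly the content of the hypothesis $\coprod_I^1 V_i=0$; the remainder is routine $\Ext$-bifunctoriality. A minor bookkeeping subtlety worth double-checking is the sign convention used when identifying $\nabla_*\iota_j^*=\iota_j^*\nabla_*$, but since both operations are computed by pullback/pushout squares in $\A$ (respectively on the second and first variable of the extension), they commute on the nose.
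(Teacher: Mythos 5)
Your proposal is correct and follows essentially the same route as the paper: reduce to surjectivity via Lemma \ref{lem.Ext-V-versus-Ext-VI}, use $\coprod_I^1 V_i=0$ to see that the coproduct of the representing sequences is short exact, push out along the codiagonal $\nabla$, and check that pulling back along each $\iota_j$ recovers $[\eta_j]$. The only (cosmetic) difference is in the last step, where the paper observes directly that the juxtaposed square is cartesian because $\nabla\circ\kappa_j=\id_X$, while you phrase the same fact through the bifunctoriality identities $\iota_j^*[\epsilon]=(\kappa_j)_*[\eta_j]$ and $\nabla_*\iota_j^*=\iota_j^*\nabla_*$.
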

\begin{proof}
By Lemma \ref{lem.Ext-V-versus-Ext-VI}, we just need to prove that $\Phi$ is surjective. Let $([\epsilon_i])_{I}\in\prod_{I}\Ext_\A^1(V_i,X)$ and let the component $[\epsilon_j]$ be represented by a short exact sequence 
\[
\epsilon_j:\quad 0\rightarrow X\stackrel{u_j}{\longrightarrow}Y_j\stackrel{p_j}{\longrightarrow}V_j\rightarrow 0,
\] 
for each $j\in I$. The condition $\coprod_{I}^1V_i=0$ gives that $\coprod_I u_i\colon X^{(I)}\to\coprod_{I}Y_i$ is a monomorphism. We next consider the pushout of this monomorphism and the co-diagonal map $\nabla \colon X^{(I)}\to X$. Then, for each index $j\in I$, we get the following commutative diagram with exact rows, where the lower left square is the mentioned pushout and the three upper vertical arrows are the $j$-th inclusions into the respective coproducts:
\[
\xymatrix@C=25pt{\epsilon_j:
&0 \ar[rr] && X \ar[rr]^{u_j} \ar[d]_{\iota_j} && Y_j \ar[rr]^{p_j} \ar[d]^{\mu_j} && V_j \ar[rr] \ar[d]^{\lambda_j} && 0\\ 
&0 \ar[rr] && X^{(I)} \ar[rr]^{\coprod_I u_i} \ar[d]_{\bigtriangledown}\ar@{}[rrd]|{\text{P.O.}} && \coprod Y_i \ar[rr]^{\coprod_I p_i} \ar[d] && \coprod V_i \ar[rr] \ar@{=}[d] && 0 \\ 
\epsilon:&0 \ar[rr] && X \ar[rr] && Y \ar[rr] 
&& \coprod V_i \ar[rr] && 0
}
\]
Denote by $[\epsilon]$ the element of $\Ext_\A^1(\coprod_{I}V_i,X)$ represented by the lower row of the diagram. Since $\nabla\circ\iota_j=\id_X$, the juxtaposition of the two right-most squares is a pullback. This just says that the morphism $\lambda_j^*:=\Ext_\A^1(\lambda_j,X)\colon\Ext_\A^1(\coprod_{I}V_i,X)\to\Ext_\A^1(V_j,X)$ takes $[\epsilon]\mapsto[\epsilon_j]$. Since  $\Phi$ is  induced by the $\lambda_j^*$, with $j$ varying in $I$, we see that $\Phi ([\epsilon])=([\epsilon_i])_{I}$. Hence, $\Phi$ is surjective. 
\end{proof}

\subsection{Universal extensions}\label{unversal_ext_sec}

We start extending the following definition, which is usually given in categories of modules.

\begin{definition} \label{def.universal extension}
Let $\A$ be an Abelian category. Given $A$ and $B\in\mathcal{A}$, we say that a short exact sequence of the form
\[
0\rightarrow A\stackrel{u}{\longrightarrow}X\stackrel{p}{\longrightarrow}B^{(J)}\rightarrow 0,
\] 
for some non-empty set $J$,  is a {\bf universal extension} of $B$ by $A$ if the following equivalent (for the equivalence see the comment right below) conditions hold:
\begin{enumerate}
\item[({UE})] the map $\Ext_\mathcal{A}^1(B,u)\colon\Ext_\mathcal{A}^1(B,A)\to \Ext_\mathcal{A}^1(B,X)$ is the zero map; 
\item[({UE}')] the connecting morphism $\mathcal{A}(B,B^{(J)})\to\Ext_\mathcal{A}^1(B,A)$ is surjective; 
\item[({UE}'')]  the map $\Ext_\mathcal{A}^1(B,p)\colon\Ext_\mathcal{A}^1(B,X)\to \Ext_\mathcal{A}^1(B,B^{(J)})$ is injective.  
\end{enumerate}
An object $B$ of $\A$ is said to be (left) {\bf $\Ext^1$-universal} when a universal extension of $B$ by any other object exists in $\A$.
 \end{definition}

Note that, to prove that the conditions (UE), (UE') and (UE'') are all equivalent, it is enough to consider the following long exact sequence:
\[
\xymatrix@C=15pt{
\cdots\ar[r]& \A(B,X)\ar[r]&\A(B,B^{(J)})\ar[r]& \Ext^1_\A(B,A)\ar[r]&\Ext^1_\A(B,X)\ar[r]&\Ext^1_\A(B,B^{(J)})\ar[r]&\cdots
}
\]
In the following proposition and its corollary we give an easy characterization of $\Ext^1$-universal objects in categories that have suitable finiteness conditions on $\Hom$-sets.

\begin{proposition} \label{prop.characteriz.Ext-universal}
Let $\A$ be an Abelian category, $V\in\A$ and consider the following assertions:
 \begin{enumerate}[\rm (1)]
 \item $\Ext_\A^1(V,A)$ is finitely generated as a right $\End_\A(V)$-module, for all $A\in\A$;
 \item  $V$ is $\Ext^1$-universal.
 \end{enumerate} 
Then, the implication ``(1)$\Rightarrow$(2)'' holds true and, whenever  $\A(V,A)$ is finitely generated as a right $\End_\A(V)$\mbox{-m}odule for all $A\in\A$, also the converse implication is verified. 
\end{proposition}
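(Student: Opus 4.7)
The plan is to prove the two implications separately, in both cases exploiting that the connecting morphism $\delta$ in the long exact sequence obtained from $\A(V,-)$ is a right $\End_\A(V)$-linear map. Throughout I will use the standard right action of $R:=\End_\A(V)$ by precomposition on $\A(V,Y)$ and by pullback on $\Ext_\A^1(V,Y)$; the identity $(gr)^*=r^*g^*$ immediately gives $\delta(f\circ r)=(f\circ r)^*[\epsilon]=r^*\delta(f)=\delta(f)\cdot r$, so $\delta$ respects the $R$-module structure.

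For the implication \textbf{(1)$\Rightarrow$(2)}, I would fix $A\in\A$ and pick finitely many generators $[\epsilon_1],\dots,[\epsilon_n]$ of $\Ext_\A^1(V,A)$ as a right $R$-module. The finite coproduct $V^{(n)}$ is a biproduct, so there is a natural isomorphism
\[
\Ext_\A^1\bigl(V^{(n)},A\bigr)\xrightarrow{\ \sim\ }\Ext_\A^1(V,A)^n,\qquad [\eta]\longmapsto \bigl(\iota_j^{*}[\eta]\bigr)_{j=1}^{n},
\]
induced by the canonical inclusions $\iota_j\colon V\to V^{(n)}$. I choose $[\epsilon]\in\Ext_\A^1(V^{(n)},A)$ corresponding to $([\epsilon_1],\dots,[\epsilon_n])$ and represent it by a short exact sequence $0\to A\xrightarrow{u}X\to V^{(n)}\to 0$. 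Then $\delta(\iota_j)=\iota_j^{*}[\epsilon]=[\epsilon_j]$. Using the biproduct identity $\mathrm{id}_{V^{(n)}}=\sum_{j=1}^n\iota_j\circ\pi_j$, any morphism $f\colon V\to V^{(n)}$ decomposes as $f=\sum_j\iota_j\circ f_j$ with $f_j:=\pi_j\circ f\in R$, and by $R$-linearity of $\delta$ one obtains
\[
\delta(f)=\sum_{j=1}^{n}\delta(\iota_j)\cdot f_j=\sum_{j=1}^{n}[\epsilon_j]\cdot f_j.
\]
Since the $[\epsilon_j]$ generate $\Ext_\A^1(V,A)$ over $R$, this expresses an arbitrary element of $\Ext_\A^1(V,A)$ as $\delta(f)$ for a suitable $f$, so $\delta$ is surjective. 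This is exactly condition \textup{(UE')}, hence the constructed sequence is a universal extension of $V$ by $A$.

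For the converse \textbf{(2)$\Rightarrow$(1)} under the extra hypothesis that $\A(V,B)$ is finitely generated over $R$ for all $B\in\A$, I fix $A\in\A$ and a universal extension $0\to A\to X\to V^{(J)}\to 0$. By \textup{(UE')} the corresponding connecting morphism $\delta\colon \A(V,V^{(J)})\to\Ext_\A^1(V,A)$ is surjective, and by the initial observation it is $R$-linear. Since $\A(V,V^{(J)})$ is finitely generated over $R$ by hypothesis, so is its epimorphic image $\Ext_\A^1(V,A)$.

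The only delicate point is the \textbf{forward direction}: one must be careful to check that the $[\epsilon]$ chosen via the identification $\Ext_\A^1(V^{(n)},A)\cong \Ext_\A^1(V,A)^n$ really does satisfy $\iota_j^{*}[\epsilon]=[\epsilon_j]$ (which is built into the definition of that isomorphism), and that the biproduct decomposition $f=\sum_j\iota_j\circ f_j$ is compatible with the two actions. Once these two bookkeeping facts are in place, the argument is essentially formal, and no further hypothesis on $\A$ (such as existence of projectives or exactness of coproducts) is needed.
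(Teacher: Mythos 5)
Your proof is correct and follows essentially the same route as the paper: for (1)$\Rightarrow$(2) you have simply written out in full the standard Bongartz-type argument that the paper invokes by reference (choosing generators $[\epsilon_1],\dots,[\epsilon_n]$ over $\End_\A(V)$, assembling them into a class over the finite biproduct $V^{(n)}$, and using $R$-linearity of the connecting map to get surjectivity, i.e.\ (UE')), and your (2)$\Rightarrow$(1) is verbatim the paper's observation that (UE') exhibits $\Ext^1_\A(V,A)$ as a quotient of the finitely generated module $\A(V,V^{(J)})$. No gaps; the bookkeeping points you flag (the biproduct identification of $\Ext^1_\A(V^{(n)},A)$ with $\Ext^1_\A(V,A)^n$ and the compatibility of $\delta$ with the right $\End_\A(V)$-actions) are exactly the right things to check and they do hold.
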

\begin{proof}
(1)$\Rightarrow$(2) is an easy adaptation of a well-known argument from the Representation Theory of Artin Algebras (see, for example, the proof of \cite[Lem.\,2.1]{Bo81}). 

\smallskip\noindent
(2)$\Rightarrow$(1){\em \ under the extra hypothesis}.  Let $A\in \A$ be any object.  By  condition (UE') in the definition of universal extension, with $B=V$, we can deduce that $\Ext_\A^1(V,A)$ is  finitely generated as a right $\End_\A(V)$-module, as it is a quotient of $\A(V,V^{(J)})$.  
\end{proof}


\begin{corollary}\label{coro_ext_univ_finite}
Let $\A$ be a $\Hom$-finite Abelian category over a commutative ring $R$. Then, an object $V\in \A$ is $\Ext^1$-universal if, and only if, $\Ext_\A^1(V,A)$ is finitely generated as an $R$-module for all $A\in\A$.
\end{corollary}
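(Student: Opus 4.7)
The plan is to reduce the corollary to Proposition~\ref{prop.characteriz.Ext-universal} by passing between finite generation over the commutative ring $R$ and over the endomorphism ring $E := \End_\A(V)$, which we can do freely because $\Hom$-finiteness over $R$ forces $E$ itself to be a finitely generated $R$-module.

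First I would record the following general observation: if $M$ is an $E$-module with structure compatible with its underlying $R$-module structure (which is the case for $\A(V,A)$ and $\Ext^1_\A(V,A)$, since both are $R$-linear and carry the $E$-action by precomposition), then $M$ is finitely generated as an $R$-module if and only if it is finitely generated as an $E$-module. The ``only if'' direction is immediate, since any $R$-generating set of $M$ is automatically an $E$-generating set through the unit $R \to E$. The ``if'' direction uses that $E = \End_\A(V)$ is finitely generated as an $R$-module by $\Hom$-finiteness: if $\{m_1,\dots,m_k\}$ generates $M$ over $E$ and $\{e_1,\dots,e_\ell\}$ generates $E$ over $R$, then $\{e_j m_i\}_{i,j}$ generates $M$ over $R$.

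With this bridge in place, the corollary is essentially a restatement of Proposition~\ref{prop.characteriz.Ext-universal}. For the implication ``$R$-finiteness of $\Ext^1_\A(V,-)$ $\Rightarrow$ $V$ is $\Ext^1$-universal'', one transfers the hypothesis to $E$-finiteness via the observation and applies implication~(1)$\Rightarrow$(2) of the proposition. Conversely, assume $V$ is $\Ext^1$-universal. By $\Hom$-finiteness, $\A(V,A)$ is finitely generated over $R$, hence over $E$, for every $A \in \A$, so the extra hypothesis of implication~(2)$\Rightarrow$(1) of the proposition is satisfied; it then gives that $\Ext^1_\A(V,A)$ is finitely generated over $E$, and applying the observation once more yields finite generation over $R$.

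There is no real obstacle here: the only point that needs care is to notice that the compatibility between the $R$- and $E$-actions on $\A(V,A)$ and on $\Ext^1_\A(V,A)$ is built into the $R$-linear structure of $\A$, so the biconditional ``$R$-finitely generated $\Leftrightarrow$ $E$-finitely generated'' applies to exactly the modules that appear in the statement and the hypotheses of Proposition~\ref{prop.characteriz.Ext-universal}.
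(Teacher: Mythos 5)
Your proposal is correct and follows essentially the same route as the paper: both arguments rest on the observation that $\Hom$-finiteness makes $\End_\A(V)$ a finite $R$-algebra, so that finite generation over $R$ and over $\End_\A(V)$ coincide for the relevant modules, after which Proposition~\ref{prop.characteriz.Ext-universal} applies in both directions. The only difference is that you spell out the transfer lemma and the verification of the extra hypothesis in the converse direction, which the paper leaves implicit.
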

\begin{proof}
As $\A$ is $\Hom$-finite over the commutative ring $R$, then the $R$-algebra $\End_\A(V)=\A(V,V)$ is finite (i.e., finitely generated as an $R$-module). Hence, an $\A(V,V)$-module is finitely generated if and only if it is finitely generated as an $R$-module. Now apply Proposition \ref{prop.characteriz.Ext-universal}.
\end{proof}

Our following result shows that $\Ext^1$-universality is related with smallness of some $\Ext^1$ groups. The study of the formal derived functors of coproducts carried on in the previous subsection turns out to be essential to understand that relation.

\begin{proposition}\label{prop_ext_univ_infinite}
Let $\A$ be an (Ab.3) Abelian category. If $V\in \A$ is an $\Ext^1$-universal object then $\Ext_\A^1(V,A)$ is a set (as opposed to a proper class), for all $A\in\A$. Moreover, if $\coprod^1_IV=0$, for any set $I$, the converse implication also holds. 
\end{proposition}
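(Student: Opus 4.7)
For the forward direction, the plan is immediate: assume $V$ is $\Ext^1$-universal and fix $A \in \A$. By definition, there exists a universal extension $0 \to A \to X \to V^{(J)} \to 0$ for some set $J$. Condition (UE') of Definition \ref{def.universal extension} says that the connecting morphism $\delta\colon \A(V, V^{(J)}) \to \Ext^1_\A(V, A)$ is surjective; since the hom $\A(V, V^{(J)})$ is a set, its surjective image $\Ext^1_\A(V, A)$ is a set as well.

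For the converse, the plan is to build a universal extension by ``pasting together'' a representative for every element of $\Ext^1_\A(V, A)$, using the hypothesis $\coprod^1_I V = 0$ to guarantee such pasting is possible. Concretely, fix $A \in \A$, take $J := \Ext^1_\A(V, A)$ as indexing set (which is legitimate now that this is a set), and for each $j \in J$ let $[\epsilon_j]$ denote the corresponding element. Since $\A$ is (Ab.3), the coproduct $V^{(J)}$ exists, and the hypothesis $\coprod^1_J V = 0$ permits me to apply Proposition \ref{prop.Ext-contrav-preserves-products}, which yields an isomorphism $\Phi\colon \Ext^1_\A(V^{(J)}, A) \to \prod_J \Ext^1_\A(V, A)$. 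I then let $[\epsilon] := \Phi^{-1}(([\epsilon_j])_{j \in J})$ and represent it by a short exact sequence $0 \to A \to X \to V^{(J)} \to 0$.

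To finish, I would verify condition (UE') for this sequence. By the explicit description of $\Phi$ in the proof of Lemma \ref{lem.Ext-V-versus-Ext-VI}, its $j$-th component is precisely the pullback along the canonical inclusion $\iota_j\colon V \hookrightarrow V^{(J)}$; hence $\iota_j^*[\epsilon] = [\epsilon_j]$. Since the pullback $\iota_j^*[\epsilon]$ coincides with the image of $\iota_j$ under the connecting morphism $\delta\colon \A(V, V^{(J)}) \to \Ext^1_\A(V, A)$, we get $\delta(\iota_j) = [\epsilon_j]$, and as $j$ ranges over $J = \Ext^1_\A(V, A)$ the elements $[\epsilon_j]$ exhaust the target, so $\delta$ is surjective and $V$ is $\Ext^1$-universal. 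The main obstacle is precisely this converse step: the vanishing $\coprod^1_I V = 0$ is exactly what promotes the injection $\Phi$ of Lemma \ref{lem.Ext-V-versus-Ext-VI} to a bijection, and without it one cannot guarantee that a prescribed family of extensions of $V$ by $A$ arises as the system of pullbacks of a single extension of $V^{(J)}$ by $A$.
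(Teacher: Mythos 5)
Your proof is correct and follows essentially the same route as the paper's: both directions hinge on (UE') for the forward implication, and for the converse both index extensions by the set $E=\Ext^1_\A(V,A)$, assemble a single extension of $V^{(E)}$ by $A$ whose pullbacks along the coproduct inclusions recover every class, and conclude that the connecting morphism is surjective. The only cosmetic difference is that you obtain this extension by citing the surjectivity of $\Phi$ from Proposition \ref{prop.Ext-contrav-preserves-products}, whereas the paper redoes inline the same coproduct-then-pushout-along-the-codiagonal construction that proves that proposition.
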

\begin{proof}
Suppose that $V$ is $\Ext^1$-universal and let $A\in \A$. Then,  by condition (UE') in the definition of universal extension, the big Abelian group $\Ext_\A^1(V,A)$ is a quotient of $\A(V,A)$, and it is therefore ``small''. On the other hand, assume that $\Ext_\A^1(V,A)$ is ``small'' for all $A\in \A$ and that $\coprod_I^1V=0$ for all sets $I$. Then, for each element $[e]\in E:=\Ext_\mathcal{A}^1(V,A)$, we choose a short exact sequence 
\[
e:\quad 0\rightarrow A\longrightarrow B_e\longrightarrow V\rightarrow 0
\] representing $[e]$. By the  hypotheses on $V$, the coproduct of these sequences, with $[e]$ varying in $E$, gives a short exact sequence 
\[
\xymatrix@C=32pt{
0\ar[r]& A^{(E)}\ar[r]^(.4){u}&\coprod_{[e]\in E}B_e\ar[r]^(.55){p}&V^{(E)}\ar[r]& 0.
}
\] 
By taking the pushout of $u$ and the codiagonal $\nabla\colon A^{(E)}\to A$, and using standard properties of pushouts, we get an exact sequence 
\[
\xymatrix@C=32pt{
0\ar[r]& A\ar[r]^{v}&B\ar[r]^(.45){q}&V^{(E)}\ar[r]& 0,
}
\] 
which we claim is a universal extension of $V$ by $A$.  Indeed, for each $[e']\in E$,  we have the following commutative diagram with exact rows, 
\[
\xymatrix@R=18pt@C=45pt{0 \ar[r] & A \ar[r]^{u_{e'}} \ar[d]_{\iota_{e'}} & B_{e'} \ar[r]^{p_{e'}} \ar[d]^{\iota_{e'}} & V \ar[r] \ar[d]^{\iota_{e'}} & 0 \\ 0 \ar[r] & A^{(E)} \ar[r]^{u}  \ar[d]_{\nabla} & B^{(E)} \ar[r]^{p} \ar[d]^{\phi} & V^{(E)}  \ar[r] \ar@{=}[d] & 0 \\ 0 \ar[r] & A \ar[r]_{v} & B \ar[r]_{q} \ar@{}[ul]|{\text{\tiny P.O.}} & V^{(E)} \ar[r]  & 0}
\]
where $\iota_{e'}$ is the inclusion into the coproduct and where the lower left square is cocartesian. Note also that $\nabla\circ\iota_{e'}=\id_A$. This implies that the following square is cartesian
\[
\xymatrix{ B_{e'} \ar[r]^{p_{e'}} \ar[d]_{\phi \ \circ \ \iota_{e'}} & V  \ar[d]^{\iota_{e'}} \\ B \ar[r]_{q} & V^{(E)} \ar@{}[ul]|{\text{\tiny P.B.}}}
\]
Hence, the connecting morphism $w\colon\mathcal{A}(V,V^{(E)})\to\Ext_\mathcal{A}^1(V,A)$ takes $\iota_{e'}\mapsto [e']$, for each $[e']\in E$. Then $w$ is surjective and the universal extension of $V$ by $A$ always exists. 
\end{proof}

The following result shows that, under suitable hypotheses on $V$, the condition $\coprod^1_IV=0$ has strong consequences. 

\begin{proposition} \label{prop.derivcoprod=0-implies-projdim=1}
Let $\A$ be an Abelian category and $V\in \A$ such that  $\Ext_\mathcal{A}^1(V,V^{(I)})=0$, for all sets $I$ for which $I$-coproducts exist in $\mathcal{A}$. Then,
\begin{enumerate}[\rm (1)]
\item if $\A$ is $\Hom$-finite, the following are equivalent:
\begin{enumerate}
\item[\rm (1.1)] $V$ is $\Ext^1$-universal;
\item[\rm (1.2)] $V^{\perp_1}$ is a cogenerating subcategory of $\A$;
\end{enumerate}
\item if $\A$ is (Ab.3), the following  conditions are equivalent:
\begin{enumerate}[\rm (2.1)]
\item $\Ext_{\A}^1(V,A)$ is a set (as opposed to a proper class), for all $A\in\A$, and  $\coprod^1_IV=0$;
\item $V^{\perp_1}$ is a cogenerating subcategory of $\A$.
\end{enumerate}
If  these equivalent conditions hold, then $V$ is $\Ext^1$-universal.
\end{enumerate}
\end{proposition}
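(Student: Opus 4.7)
The plan is to attack part (2) first, since it is the more general setting, and then deduce part (1) via Proposition \ref{prop.Hom-finite-are-coherent}(3), which restricts coproducts to finite ones in the $\Hom$-finite case. The principal tools throughout are Proposition \ref{prop_ext_univ_infinite}, which characterizes $\Ext^1$-universality in (Ab.3) categories via smallness of $\Ext^1_\A(V,-)$ plus vanishing of $\coprod^1_I V$; Example \ref{ex_exact_copr}(2), which ensures $\coprod^1_I V = 0$ whenever $V^{\perp_1}$ is cogenerating; and the long exact sequence of $\Ext^1_\A(V,-)$ applied to a short exact sequence.

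For (2.1) $\Rightarrow$ (2.2), I will first invoke Proposition \ref{prop_ext_univ_infinite} to deduce that $V$ is $\Ext^1$-universal. Given any $A\in\A$, the definition produces a universal extension
\[
0\to A\to X\to V^{(J)}\to 0
\]
for some non-empty set $J$. Applying $\Ext^1_\A(V,-)$ yields an exact sequence
\[
\Ext^1_\A(V,A)\to \Ext^1_\A(V,X)\to \Ext^1_\A(V,V^{(J)}),
\]
in which the leftmost arrow vanishes by condition (UE) and the rightmost group vanishes by the standing hypothesis on $V$ (recall that $J$-coproducts exist by (Ab.3)). Hence $X\in V^{\perp_1}$, and since $A$ embeds into $X$, the class $V^{\perp_1}$ cogenerates $\A$.

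For (2.2) $\Rightarrow$ (2.1), Example \ref{ex_exact_copr}(2) gives at once $\coprod^1_I V=0$ for every set $I$ for which $I$-coproducts exist. To verify the smallness of $\Ext^1_\A(V,A)$, I embed $A$ into some $X\in V^{\perp_1}$ via a short exact sequence $0\to A\to X\to X/A\to 0$; the associated long exact sequence produces a surjection $\A(V,X/A)\twoheadrightarrow \Ext^1_\A(V,A)$, so the target is a genuine set. The final assertion of the proposition (namely, that $V$ is $\Ext^1$-universal when (2.1) or (2.2) holds) then follows by a second application of Proposition \ref{prop_ext_univ_infinite}, this time in its easy direction.

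For part (1), the key observation is that, by Proposition \ref{prop.Hom-finite-are-coherent}(3), whenever $V\neq 0$ the only coproducts $V^{(I)}$ that exist in a $\Hom$-finite $\A$ are the finite ones, so the ambient hypothesis reduces to $\Ext^1_\A(V,V^n)=0$ for all $n\in\N$. The implication (1.1) $\Rightarrow$ (1.2) then goes through verbatim as in the argument above, with $J$ now forced to be finite. For (1.2) $\Rightarrow$ (1.1), I would combine Corollary \ref{coro_ext_univ_finite} with the same embedding trick: $\Ext^1_\A(V,A)$ is a quotient of $\A(V,X/A)$, which is finitely generated over the base commutative ring $R$ by $\Hom$-finiteness, so $V$ is $\Ext^1$-universal. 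I do not expect a serious obstacle here; the only minor bookkeeping point is to keep straight which implications rely on the existence of a universal extension and which merely exploit the cogeneration property, and to dispose of the trivial case $V=0$ separately.
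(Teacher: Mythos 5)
Your proposal is correct and follows essentially the same route as the paper: one direction via Example \ref{ex_exact_copr} together with an embedding $A\hookrightarrow T_A\in V^{\perp_1}$ and the long exact sequence to get smallness (resp.\ finite generation) of $\Ext^1_\A(V,A)$, then Proposition \ref{prop_ext_univ_infinite} and Corollary \ref{coro_ext_univ_finite}; the other direction by placing the middle term of a universal extension inside $V^{\perp_1}$ using $\Ext^1_\A(V,V^{(J)})=0$. The only differences are organizational (you treat part (2) first and reduce part (1) via Proposition \ref{prop.Hom-finite-are-coherent}(3), while the paper runs the two parts in parallel), and neither affects correctness.
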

\begin{proof}
We prove simultaneously the implications ``(1.2)$\Rightarrow$(1.1)'' and ``(2.2)$\Rightarrow$(2.1)''. Indeed, {by Example \ref{ex_exact_copr}, we know that $\coprod_I^1V=0$ for all sets $I$, when  condition (2.2) holds. Hence, from Corollary \ref{coro_ext_univ_finite} and Proposition \ref{prop_ext_univ_infinite}, we  only need to prove that $\Ext_\A^1(V,A)$ is a finitely generated $R$-module in part (1), and a set (as opposed to a proper class) in part (2), for all $A\in \A$.} To see this, using that $V^{\perp_1}$ is cogenerating, we choose an exact sequence $0\rightarrow A\to T_A\to B\rightarrow 0$, where $T_A\in V^{\perp_1}$. This gives an exact sequence 
\[
\A(V,T_A)\to\A(V,B)\to\Ext_\A^1(V,A)\rightarrow\Ext_\A^1(V,T_A)=0,
\] 
and the result follows. Let us now verify the implications ``(1.1)$\Rightarrow$(1.2)'' and ``(2.1)$\Rightarrow$(2.2)''.  By Proposition \ref{prop_ext_univ_infinite}, we know that, in both situations,  the universal extension of $V$ by $A$ exists, for all objects $A\in\mathcal{A}$. Let then 

\[
0\rightarrow A\stackrel{u}{\longrightarrow}B\stackrel{p}{\longrightarrow}V^{(J)}\rightarrow 0
\] 
be  a universal extension, so that $\Ext_\mathcal{A}^1(V,p)$ is a monomorphism.  Then we have that $B\in V^{\perp_1}$ since $\Ext_\mathcal{A}^1(V,V^{(J)})=0$. \\ The final statement in part (2) follows by Proposition \ref{prop_ext_univ_infinite}.
\end{proof}

\subsection{Special preenvelopes and cotorsion pairs}\label{preenvelopes_main_subs}
In this subsection we study the relation between cotorsion pairs and semi-special preenveloping torsion classes. Let us start with the following technical lemmas: 

\begin{lemma} \label{lem.extension-in-reflective-subcats}
Let $\A$ be an Abelian category and   $\mathcal T$   a subcategory closed under extensions, such that $\B:=\Sub(\T)$ is closed under quotients in $\A$ (or, equivalently, it is an Abelian exact subcategory). Then, the following statements hold true:
\begin{enumerate}[\rm (1)]
\item for any object $X\in\gen (\T)={\Quot}(\T)$, the inclusion functor $\iota:\mathcal{B}\hookrightarrow\A$ induces an isomorphism $\Ext_\mathcal{B}^1(X,B)\cong\Ext_\A^1(X,B)$, for all $B\in\mathcal{B}$;
\item ${}^{\perp_1}\T\cap\mathcal{B}\subseteq \Ker(\Ext_\mathcal{B}^1(-,\T))$;
\item ${}^{\perp_1}\T\cap{\gen(\T)}=\Ker(\Ext_\mathcal{B}^1(-,\T))\cap{\gen(\T)}$;
\item a morphism $\mu \colon B\to T$, with $B\in\mathcal{B}$, is a special $\T$-preenvelope in $\mathcal{B}$ if, and only if, it is a semi-special $\T$-preenvelope in $\A$. 
\end{enumerate}
\end{lemma}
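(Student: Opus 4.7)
My approach treats the four parts in sequence, with (1) as the technical heart and (2)--(4) flowing from it. For (1), I plan to show that for any short exact sequence $0 \to B \to E \to X \to 0$ in $\A$ with $B \in \B$ and $X \in \Quot(\T)$, the middle term $E$ already lies in $\B$; then, since $\B$ is an Abelian exact subcategory, the Baer-sum groups computed in $\B$ and in $\A$ agree naturally. The construction is a pullback-then-pushout. Fixing $\pi\colon T_X \twoheadrightarrow X$ with $T_X \in \T$, I pull back $E \twoheadrightarrow X$ along $\pi$ to obtain $P$ fitting in $0 \to B \to P \to T_X \to 0$ together with an epimorphism $P \twoheadrightarrow E$. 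Fixing an embedding $B \hookrightarrow T_B$ with $T_B \in \T$, I then push out this exact row along $B \hookrightarrow T_B$ to obtain a monomorphism $P \hookrightarrow P'$ and a row $0 \to T_B \to P' \to T_X \to 0$. Closure of $\T$ under extensions gives $P' \in \T$, hence $P \in \Sub(\T) = \B$, and closure of $\B$ under quotients then gives $E \in \B$.

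Parts (2) and (3) will follow quickly. For (2), any short exact sequence $0 \to T \to E \to A \to 0$ in $\B$ with $T \in \T$ and $A \in {}^{\perp_1}\T \cap \B$ is also short exact in $\A$ by exactness of the inclusion, and therefore splits in $\A$; the splitting is a morphism in $\A$ between objects of $\B$, hence a morphism in $\B$ by fullness, so $\Ext^1_\B(A, T) = 0$. For (3), I note that $\gen(\T) = \Quot(\T) \subseteq \Quot(\B) = \B$, so (1) applies with $X \in \gen(\T)$ and any $T \in \T$, yielding the natural isomorphism $\Ext^1_\B(X,T) \cong \Ext^1_\A(X,T)$ and thereby the claimed equality of intersections in both directions.

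For (4), I will match the defining clauses on each side. The ``$\T$-preenvelope'' condition is identical whether read in $\B$ or in $\A$, since $\T \subseteq \B$ and $\B \hookrightarrow \A$ is full. The monomorphicity required by ``special in $\B$'' comes for free on the ``semi-special in $\A$'' side by Lemma \ref{injective_preenvelope}(1), since $B \in \B = \Sub(\T)$ embeds in an object of $\T$. Once $\mu$ is monic, $\Coker_\A(\mu) = \Coker_\B(\mu)$ is a quotient of $T \in \T$ and hence lies in $\gen(\T)$; part (3) then identifies $\Coker(\mu) \in {}^{\perp_1}\T$ with $\Coker(\mu) \in \ker(\Ext^1_\B(-, \T))$. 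The main obstacle throughout is the pullback-then-pushout in (1): once that argument is in place, the remaining claims reduce to formal manipulations together with the small but essential application of Lemma \ref{injective_preenvelope}(1) that bridges the ``monomorphism'' and ``$\T$-preenvelope'' clauses in (4).
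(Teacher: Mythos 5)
Your proposal is correct and follows essentially the same route as the paper: for (1) the paper also reduces to a pullback along an epimorphism from $\T$ followed by a pushout along an embedding into $\T$ (merely organized as the case $X\in\T$ first and then the general case), and your arguments for (2)--(4) — injectivity of $\Ext^1_\B\to\Ext^1_\A$ via splittings, deducing (3) from (1), and matching the defining clauses in (4) using the automatic monomorphicity of $\T$-preenvelopes of objects of $\Sub(\T)$ — coincide with the paper's. No gaps.
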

\begin{proof}
(1). For all $B,\, B'\in\mathcal{B}$, the canonical morphism $\Ext_\mathcal{B}^1(B',B)\to\Ext_\A^1(B',B)$ is a monomorphism. Let us show that it is also surjective when $B'=X\in\gen (\T)$. Let $0\rightarrow B\stackrel{u}{\longrightarrow}A\stackrel{p}{\longrightarrow}X\rightarrow 0$ be an exact sequence and consider first the case when $X=T\in\T$.  We fix a monomorphism $v\colon B\hookrightarrow T'$ with $T'\in\mathcal{T}$,  taking the pushout of $u$ and $v$ and, using that $\mathcal{T}$ is closed under extensions, we see that $A\in\mathcal{B}$, so that the above sequence also belongs in $\Ext_{\mathcal{B}}^1(T,B)$.
For a generic $X\in\gen (\T)={\Quot}(\T)$, we fix an epimorphism $\pi \colon\tilde{T}\to X$, with $\tilde{T}\in\T$. By taking the pullback of $p$ and $\pi$, we get an exact sequence $0\rightarrow B\to\tilde{B}\to\tilde{T}\rightarrow 0$ together with an epimorphism $\tilde{B}\twoheadrightarrow A$. By the previous paragraph we get that $\tilde{B}\in\mathcal{B}$, so that $A\in\mathcal{B}$, since $\mathcal{B}$ is closed under quotients.
 
 \smallskip\noindent
(2). It follows from the fact that  the canonical morphism $\Ext_\mathcal{B}^1(B',B)\to\Ext_\A^1(B',B)$ is a monomorphism, for all $B,\, B'\in\mathcal{B}$.
 
  
 
 \smallskip\noindent
(3). The inclusion ``$\subseteq$'' is a consequence of (2), while  ``$\supseteq$'' is a consequence of (1).
 
 \smallskip\noindent
(4). The morphism $\mu$ is a semi-special $\T$-preenvelope in $\A$  (resp., in $\mathcal{B}$) if, and only if, we have that $\Coker (\mu )\in {}^{\perp_1}\T\cap{\gen (\T)}$ (resp., $\in \Ker(\Ext_\mathcal{B}^1(-,\T))\cap{\gen (\T)}$). Now, apply part (3)  and use that each $\T$-preenvelope in $\B=\Sub(\T)$ is monomorphic.
\end{proof}

In the following example we show that there are some subtleties when taking $\Ext^1$-orthogonals in a subcategory:

\begin{example}
In the setting of Lemma \ref{lem.extension-in-reflective-subcats},  the inclusion $_{}^{\perp_1}\T\cap\mathcal{B}\subseteq \Ker(\Ext_\mathcal{B}^1(-,\T))$ may be strict.
\end{example}
\begin{proof}
Combining  \cite[Thm.\,6.2]{parra2016hearts} and its proof with Corollary 5.2 in [Op.Cit.], we obtain an example of a finite dimensional algebra $R$ with a finitely generated $R$-module $V$ such that the stalk complex $V[0]$ is a projective generator of the heart of the Happel-Reiten-Smal\o\ $t$-structure in $\Der(R)$ associated with the torsion pair $(\Gen(V),V^{\perp})$. Then, by \cite[Prop.\,3.8]{parra2020hrs}, we know that $V$ is quasi-tilting (see Section \ref{sec_tilt_pree}). Moreover, it is shown in [Op.Cit.] that the trace $\tr_V(R)=:I$ is a $2$-nilpotent ideal of $R$ that coincides with $\mathrm{ann}_R(V)$. Hence, $\mathcal{B}:=\subGen(V)=\mod{R/I}$, so that $R/I\in\Ker(\Ext_\mathcal{B}^1(-,\T))$. However, the canonical exact sequence $0\rightarrow I\hookrightarrow R\to R/I\rightarrow 0$ does not split in $\mod R$, thus proving that $R/I\not\in {}^{\perp_1}\T\cap\mathcal{B}$, since $I\in\Gen(V)=\T$. 
\end{proof}

\begin{lemma}\label{2-4implies1_lemma}
Let $\A$ be an Abelian category,  $\T$  a subcategory closed under  quotients and  {under taking coproducts in  $\B:=\Sub(\T)$},  and suppose that  $Q$ is an epi-generator of $\B$. 
If there is a short exact sequence as follows
\[
\xymatrix{
0\ar[r]&Q\ar[r]^{\lambda}&T_Q\ar[r]&V_Q\ar[r]&0,
}
\]
such that $\lambda$ is a special $\T$-preenvelope of $Q$ in $\B$, then the following statements hold true:
\begin{enumerate}[\rm (1)]
\item $\mathrlap{\text{\tiny \hspace{3.8pt}$\B$}}\coprod_I \lambda: \mathrlap{\text{\tiny \hspace{3.8pt}$\B$}}\coprod_I Q \longrightarrow \mathrlap{\text{\tiny \hspace{3.8pt}$\B$}}\coprod T_Q$ is a special $\T$-preenvelope in $\B$, for each non-empty set $I$ for which the needed coproducts exist, where $\mathrlap{\text{\tiny \hspace{3.8pt}$\B$}}\coprod$ denotes the coproduct in $\B$;
\item $V_Q$ is $\Ext^1$-universal in $\B$;
\item $\mathcal{T}=\Ker(\Ext^1_\B(V_Q,-))=\Gen_{{\mathcal{B}}}(T_Q)$.
\end{enumerate}
\end{lemma}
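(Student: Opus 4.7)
The plan is to establish the three statements in order, using (1) to construct pushout-based universal extensions for (2), and combining both with a splitting argument to prove (3). Throughout, write $\coprod_{I}^{\B}$ for coproducts computed in $\B$, which in general need not agree with coproducts in $\A$ nor be exact.

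For (1), first apply Lemma~\ref{injective_preenvelope}(2)---whose hypothesis holds because $\T$ is closed under coproducts in $\B$---to see that $\coprod_{I}^{\B}\lambda$ is a $\T$-preenvelope in $\B$. Monomorphicity then follows from Lemma~\ref{injective_preenvelope}(3.2), since $\B=\Sub(\T)$ trivially gives $\Cogen_{\B}(\T)=\B$. As the coproduct functor is a left adjoint and so preserves cokernels, one obtains the short exact sequence $0\to\coprod_{I}^{\B}Q\to\coprod_{I}^{\B}T_Q\to\coprod_{I}^{\B}V_Q\to 0$ in $\B$; the cokernel $\coprod_{I}^{\B}V_Q$ lies in ${}^{\perp_1}\T$ by closure of this orthogonal under coproducts, which is Lemma~\ref{lem.Ext-V-versus-Ext-VI}(2) applied inside $\B$, proving (1).

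For (2), given $A\in\B$, pick an epimorphism $\pi\colon\coprod_{I}^{\B}Q\twoheadrightarrow A$ (using that $Q$ is an epi-generator of $\B$), and mimic the pushout construction of Lemma~\ref{lem.special-preenvelope-generator} along $\pi$ and $\coprod_{I}^{\B}\lambda$ to produce a short exact sequence $0\to A\to X_A\to\coprod_{I}^{\B}V_Q\to 0$ in $\B$. Because $\pi$ is an epimorphism, so is the induced map $\coprod_{I}^{\B}T_Q\twoheadrightarrow X_A$, and closure of $\T$ under quotients forces $X_A\in\T$. Hence $\Ext^1_{\B}(V_Q,X_A)=0$, which is precisely condition (UE) of Definition~\ref{def.universal extension}, establishing $\Ext^1$-universality of $V_Q$ in $\B$.

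For (3), handle the two equalities separately. The inclusion $\Gen_{\B}(T_Q)\subseteq\T$ is immediate from $T_Q\in\T$ and the closure hypotheses; for the reverse, given $T\in\T$, take an epi $\pi\colon\coprod_{I}^{\B}Q\twoheadrightarrow T$ and factor it through $\coprod_{I}^{\B}\lambda$ using the preenvelope property from~(1)---the factoring map is epi since $\pi$ is, giving $T\in\Gen_{\B}(T_Q)$. The inclusion $\T\subseteq\ker(\Ext^1_{\B}(V_Q,-))$ is clear from $V_Q\in{}^{\perp_1}\T$ in $\B$; for the reverse, given $M\in\ker(\Ext^1_{\B}(V_Q,-))$, the universal extension $0\to M\to X_M\to\coprod_{I}^{\B}V_Q\to 0$ from (2) has $X_M\in\T$, and here Lemma~\ref{lem.Ext-V-versus-Ext-VI}(1) applied in $\B$ is essential: it upgrades $\Ext^1_{\B}(V_Q,M)=0$ to $\Ext^1_{\B}(\coprod_{I}^{\B}V_Q,M)=0$, forcing the sequence to split and exhibiting $M$ as a summand (hence a quotient) of $X_M\in\T$, so $M\in\T$. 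The main obstacle throughout is the possible non-exactness of coproducts in $\B$: this is what forces monomorphicity in (1) to come from the cogeneration route of Lemma~\ref{injective_preenvelope}(3.2) rather than a pointwise argument, and necessitates the bootstrap via Lemma~\ref{lem.Ext-V-versus-Ext-VI}(1) in the final step of (3).
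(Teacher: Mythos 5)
Your proof is correct and follows essentially the same route as the paper's: part (1) via Lemma~\ref{injective_preenvelope}(2), the cogeneration argument of Lemma~\ref{injective_preenvelope}(3), and Lemma~\ref{lem.Ext-V-versus-Ext-VI}(2); part (2) via the pushout of $\mathrlap{\text{\tiny \hspace{3.8pt}$\B$}}\coprod_I\lambda$ along an epimorphism from $\mathrlap{\text{\tiny \hspace{3.8pt}$\B$}}\coprod_I Q$; and part (3) via the splitting argument through Lemma~\ref{lem.Ext-V-versus-Ext-VI}(1) (the paper closes a cycle of inclusions $\T\subseteq\Ker(\Ext^1_\B(V_Q,-))\subseteq\Gen_\B(T_Q)\subseteq\T$ where you prove the two equalities separately, an immaterial difference). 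The only point worth adding is a word on why $\mathrlap{\text{\tiny \hspace{3.8pt}$\B$}}\coprod_I T_Q$ and $\mathrlap{\text{\tiny \hspace{3.8pt}$\B$}}\coprod_I V_Q$ exist once $\mathrlap{\text{\tiny \hspace{3.8pt}$\B$}}\coprod_I Q$ does, which the paper gets from $\B=\Pres_\B(Q)$ and Lemma~\ref{existence_of_coproducts_in_pres_lemma}.
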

\begin{proof}
(1) follows from Lemmas \ref{injective_preenvelope} and  \ref{lem.special-preenvelope-generator}. To prove parts (2) and (3) we will use the same construction. Indeed, given $B\in\B$,  there is a non-empty set $I$ and an epimorphism $\pi\colon \mathrlap{\text{\tiny \hspace{3.8pt}$\B$}}\coprod_I Q\twoheadrightarrow B$. We then get the following commutative diagram with exact rows:

\begin{equation}\label{universal_PO}
\xymatrix@R=15pt@C=38pt{ 
0 \ar[r] & \mathrlap{\text{\tiny \hspace{3.8pt}$\B$}}\coprod_{I}Q \ar[r]^-{\mathrlap{\text{\tiny \hspace{2.5pt}$\B$}}\coprod_I \lambda} \ar@{>>}[d]_{\pi} &  \mathrlap{\text{\tiny \hspace{3.8pt}$\B$}}\coprod T_{Q} \ar[r] \ar@{>>}[d]&  \mathrlap{\text{\tiny \hspace{3.8pt}$\B$}}\coprod V_{Q} \ar[r] \ar@{=}[d] & 0 \\ 
0 \ar[r] & B \ar[r] & T' \ar[r] \ar@{}[ul]|{\text{\tiny P.O.}} &  \mathrlap{\text{\tiny \hspace{3.8pt}$\B$}}\coprod V_{Q} \ar[r] & 0}
\end{equation}
where the left square is a pushout, and the coproducts in the first row exist since $\B$ is closed under taking subobjects, so $\B=\Gen(Q)=\Pres(Q)$  and Lemma \ref{existence_of_coproducts_in_pres_lemma} applies. Now, since $\T$ is closed under coproducts in $\mathcal{B}$ and quotients and $T'\in \T$ we get that $\Ext^1_\B(V_Q,T')=0$. This implies that the bottom row is a universal extension in $\B$, thus proving (2). It remains to verify part (3): since $\lambda$ is a special $\T$-preenvelope of $Q$, we know that $\mathcal{T}\subseteq\Ker(\Ext^1_\B(V_Q,-))$. On the other hand, let $B\in \Ker(\Ext^1_\B(V_Q,-))$ and note that, when taking the diagram in \eqref{universal_PO},  the lower row splits by the choice of $B$ and Lemma~\ref{lem.Ext-V-versus-Ext-VI}(1). In particular, $B$ is an epimorphic image of  $\mathrlap{\text{\tiny \hspace{3.8pt}$\B$}}\coprod T_Q$, so that $B\in \Gen_{\mathcal{B}}(T_Q)$. Consequently, we obtain     $\Ker(\Ext^1_\B(V_Q,-)) \subseteq \Gen_{\mathcal{B}}(T_Q)$, and so assertion (3)  follows from the following inclusions: $\T \subseteq \Ker(\Ext^1_\B(V_Q,-))\subseteq \Gen_{\mathcal{B}}(T_Q)\subseteq \T.$
\end{proof}

We are now ready for the main result of this section:

\begin{theorem} \label{thm.specialpreenveloping-versus-cotorsionpair}
Let $\mathcal{A}$ be an Abelian category,  $\mathcal{T}$  a subcategory closed under extensions {and direct summands and such that $\mathcal{B}:=\Sub(\mathcal{T})$ is closed under quotients.} Consider the following assertions:
\begin{enumerate}[\rm (1)]
\item $\mathcal{B}$ is reflective in $\mathcal{A}$ and $\mathcal{T}=\mathcal{B}\cap V^{\perp_1}$ for some {object $V$}which is $\Ext^1$-universal in $\B$ and satisfies that $\mathrlap{\text{\tiny \hspace{3.8pt}$\B$}}\coprod_I V\in V^{\perp_1}$, for all sets $I$ for which $\mathrlap{\text{\tiny \hspace{3.8pt}$\B$}}\coprod_I V$ exists;
\item $\T$ is semi-special preenveloping in $\A$;
\item $\mathcal{B}$ is reflective in $\mathcal{A}$ and $\mathcal{T}$ is special preenveloping in $\mathcal{B}$;
\item $\B$ is reflective in $\A$ and $(\Ker(\Ext^1_\B(-, \mathcal T )),\mathcal T)$ is a right complete cotorsion pair in $\B$.
\end{enumerate}
Then, the implications ``(1)$\Rightarrow$(2)$\Leftrightarrow$(3)$\Leftrightarrow$(4)'' hold true.
{When, in addition, $\T$ is closed under  quotients,  $\mathcal{B}$ has an epi-generator and $\T$ is closed under coproducts in $\mathcal{B}$,}
  all  assertions are equivalent.

\end{theorem}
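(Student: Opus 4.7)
The plan is to establish the chain (2)$\Leftrightarrow$(3)$\Leftrightarrow$(4) first, then prove (1)$\Rightarrow$(4), and finally handle (2)$\Rightarrow$(1) under the extra hypotheses. For (2)$\Leftrightarrow$(3), a semi-special preenvelope is in particular a preenvelope, so (2) forces $\B$ to be reflective by Proposition~\ref{ex.reflective-versus-productclosedness}; given $B\in\B$, a semi-special $\T$-preenvelope $B\to T$ of $B$ in $\A$ lies in $\B$ and is a special $\T$-preenvelope there by Lemma~\ref{lem.extension-in-reflective-subcats}(4). Conversely, for $A\in\A$ I would compose the $\B$-reflection $\rho_A\colon A\to B_A$ (an epimorphism by Lemma~\ref{reflector_is_epi_lemma}, since $\B$ is closed under quotients) with a special $\T$-preenvelope of $B_A$ in $\B$; the composite has the same cokernel, which lies in ${}^{\perp_1}\T$ again by Lemma~\ref{lem.extension-in-reflective-subcats}(4). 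For (3)$\Leftrightarrow$(4), set $\X:=\Ker(\Ext^1_\B(-,\T))$: the inclusion $\T\subseteq\X^{\perp_1}$ is tautological, while for the reverse any special $\T$-preenvelope $0\to B\to T\to C\to 0$ of $B\in\X^{\perp_1}$ splits (as $C\in\X$), exhibiting $B$ as a summand of $T\in\T$, hence in $\T$ by closure under summands. Right completeness in $\B$ is then just the existence of special $\T$-preenvelopes.

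For (1)$\Rightarrow$(4) I would first verify that $\B=\Sub(\T)$ is closed under extensions in $\A$: given $0\to B_1\to B\to B_2\to 0$ with $B_i\hookrightarrow T_i\in\T$, two successive pushouts along these embeddings embed $B$ into an extension of $T_2$ by $T_1$, which belongs to $\T$ by its extension-closedness. Consequently the canonical map $\Ext^1_\B(V,M)\to\Ext^1_\A(V,M)$ is an isomorphism for every $M\in\B$. For $B\in\B$ I would then take the universal extension of $V$ by $B$ in $\B$, $0\to B\to W\to \mathrlap{\text{\tiny \hspace{3.8pt}$\B$}}\coprod_J V\to 0$, and invoke the long exact sequence together with the hypothesis $\mathrlap{\text{\tiny \hspace{3.8pt}$\B$}}\coprod_J V\in V^{\perp_1}$ (which by the above identification equates $\Ext^1_\A(V,-)$ and $\Ext^1_\B(V,-)$ on this object) to conclude $\Ext^1_\B(V,W)=0$, whence $W\in\B\cap V^{\perp_1}=\T$. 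Finally, applying Lemma~\ref{lem.Ext-V-versus-Ext-VI}(1) inside $\B$ and using $\T\subseteq V^{\perp_1}$ shows that $\mathrlap{\text{\tiny \hspace{3.8pt}$\B$}}\coprod_J V\in\X$, providing the required right completion of $(\X,\T)$ in $\B$.

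For (2)$\Rightarrow$(1) under the extra hypotheses, I would fix an epi-generator $Q$ of $\B$ and, using (3), take a special $\T$-preenvelope $0\to Q\stackrel{\lambda}{\to} T_Q\to V\to 0$ in $\B$. Lemma~\ref{2-4implies1_lemma} then yields at once that $V$ is $\Ext^1$-universal in $\B$ and $\T=\Ker(\Ext^1_\B(V,-))$; combined with the extension-closedness of $\B$ shown above, this becomes $\T=\B\cap V^{\perp_1}$. As $\T$ is closed under quotients and $T_Q\in\T$, the cokernel $V$ itself lies in $\T$, and closure of $\T$ under $\B$-coproducts gives $\mathrlap{\text{\tiny \hspace{3.8pt}$\B$}}\coprod_I V\in\T\subseteq V^{\perp_1}$, settling the remaining condition in (1). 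The principal technical obstacle running through the whole argument is the careful bookkeeping of $\Ext^1$ computed in $\A$ versus in $\B$: essentially every step relies on $\B$ being closed under extensions, a property not postulated but extracted from the extension-closedness of $\T$ together with the standing hypothesis that $\Sub(\T)$ is closed under quotients.
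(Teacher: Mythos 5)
Your handling of (2)$\Leftrightarrow$(3)$\Leftrightarrow$(4) is correct and essentially the paper's argument, but the implications involving (1) rest on a claim that is false in general: $\B=\Sub(\T)$ need \emph{not} be closed under extensions in $\A$. The ``two successive pushouts'' break down at the second step: pushing $0\to B_1\to B\to B_2\to 0$ out along $B_1\hookrightarrow T_1$ does give $0\to T_1\to B'\to B_2\to 0$ with $B\hookrightarrow B'$, but $B_2$ is now the \emph{quotient} of $B'$, so there is nothing to push out along $B_2\hookrightarrow T_2$; you would need $\Ext^1_\A(T_2,T_1)\to\Ext^1_\A(B_2,T_1)$ to be surjective, which it need not be. The example given in the paper immediately after Lemma~\ref{lem.extension-in-reflective-subcats} refutes your claim under exactly the hypotheses of the theorem: there $\T=\Gen(V)$ is a quasi-tilting torsion class over a finite-dimensional algebra, $\B=\Sub(\T)=\mod{(R/I)}$ with $I=\mathrm{ann}_R(V)$ a nonzero square-zero ideal, and $0\to I\to R\to R/I\to 0$ has both outer terms in $\B$ while $R\notin\B$. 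In that example $\Ext^1_\B(R/I,I)=0$ yet $\Ext^1_\A(R/I,I)\neq 0$, so the comparison map $\Ext^1_\B(X,M)\to\Ext^1_\A(X,M)$ is injective but not an isomorphism for arbitrary $X\in\B$ --- precisely the isomorphism your arguments for (1)$\Rightarrow$(4) and for the converse lean on.

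The proof is repairable, because the isomorphism you actually use is only ever needed with $V$ (resp.\ $V_Q$) in the first variable, and those objects lie in $\T$: taking $I$ a singleton in the hypothesis of (1) gives $V\in V^{\perp_1}$, hence $V\in\B\cap V^{\perp_1}=\T$; and in the converse direction $V_Q=\Coker(\lambda)$ is a quotient of $T_Q\in\T$, hence in $\T$ once $\T$ is closed under quotients. Lemma~\ref{lem.extension-in-reflective-subcats}(1) then supplies $\Ext^1_\B(V,-)\cong\Ext^1_\A(V,-)_{\restriction\B}$ because $V\in\Quot(\T)$ --- this is exactly how the paper proceeds. With that substitution the rest of your argument goes through; as written, however, the step you yourself describe as the one ``essentially every step relies on'' is wrong.
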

\begin{proof}
(2)$\Rightarrow$(3). It follows from Lemma \ref{lem.extension-in-reflective-subcats}(4) and Proposition \ref{ex.reflective-versus-productclosedness}.

\smallskip\noindent
(3)$\Rightarrow$(4). 
To show that $(\Ker(\Ext^1_\B(-, \mathcal T )),\mathcal T)$ is a cotorsion pair we have to show that, given $B\in \B$ such that $\Ext_\mathcal{B}^1(X,B)=0$, for all $X\in \Ker(\Ext^1_\B(-, \mathcal T ))$, then $B\in\mathcal{T}$. For this, take  a special $\mathcal T$-preenvelope $\mu_{B}\colon B\to T_B$, so that $\Coker(\mu_{B})\in\Ker(\Ext^1_\B(-, \mathcal T ))$. Thus, the following sequence is split exact
\[
0\rightarrow B\stackrel{\mu_{B}}{\longrightarrow}T_{B}\longrightarrow\Coker(\mu_{B})\rightarrow 0.
\]
Hence $B$ is a summand of $T_B$ and so it belongs to $\mathcal T$. 

\smallskip\noindent
(4)$\Rightarrow$(2). Let $\tau\colon\mathcal{A}\to\mathcal{B}$ be the left adjoint to the inclusion $\iota\colon \mathcal{B}\hookrightarrow\mathcal{A}$, and denote by $\eta\colon \id_\mathcal{A}\Rightarrow \iota\circ\tau$ the unit of the adjunction. Then,  $\eta$ is an epimorphism by Lemma \ref{reflector_is_epi_lemma}.
Take now an object $T\in\mathcal{T}$ and an object $A\in \A.$ Consider the natural isomorphism
\[
\eta_A^*:=\mathcal{A}(\eta_A,\iota (T))\colon \mathcal{A}(\iota(\tau(A)),\iota(T))\tilde{\longrightarrow}\mathcal{A}(A,\iota (T)).
\] 
It shows that any morphism $f\colon A\to T$ in $\A$ factors through $\eta_A$. Thus, to prove that $A$ has a $\mathcal{T}$-preenvelope, it is enough to check that any object of $\mathcal{B}$ has a $\mathcal{T}$-preenvelope. But this is a direct consequence of the right completeness of the cotorsion pair $(\Ker(\Ext^1_\B(-, \mathcal T )),\mathcal T)$ (see Example \ref{ex_cotorsion_implies_cover}). Furthermore, for each short exact sequence
\[
0\rightarrow (\iota\circ\tau)(A)\stackrel{\lambda_A}{\longrightarrow} T_A\stackrel{\pi_A}{\longrightarrow}X_A\rightarrow 0,
\] 
with $T_A\in\mathcal{T}$ and $X_A\in \Ker(\Ext^1_\B(-, \mathcal T ))$, the map $\mu_A:=\lambda_A\circ\eta_A\colon A\to T_A$ is a $\mathcal{T}$-preenvelope such that  $\Coker(\mu_A)=\Coker(\lambda_A)\in \Ker(\Ext^1_\B(-, \mathcal T ))\cap{\gen}(\mathcal T)\subseteq {}^{\perp_1}\mathcal{T}$ (see Lemma \ref{lem.extension-in-reflective-subcats}(3)).

\smallskip\noindent
(1)$\Rightarrow$(3). 
Note that  $\mathrlap{\text{\tiny \hspace{3.8pt}$\B$}}\coprod_I V\in\mathcal{T}$, for any set $I$ for which the coproduct exists, and then, by Lemma~\ref{lem.extension-in-reflective-subcats}(1), we have a natural isomorphism  $\Ext_\mathcal{B}^1(\mathrlap{\text{\tiny \hspace{3.8pt}$\B$}}\coprod_I V,-)\cong\Ext_\mathcal{A}^1(\mathrlap{\text{\tiny \hspace{3.8pt}$\B$}}\coprod_I V,-)_{\restriction\mathcal{B}}$ of functors $\mathcal{B}\to\Ab$. Consider a universal extension:
\[
\xymatrix{
0\rightarrow B\stackrel{\lambda_B}{\longrightarrow}T_B\stackrel{p_B}{\longrightarrow}\mathrlap{\text{\tiny \hspace{3.8pt}$\B$}}\coprod_J V\rightarrow 0\qquad\text{ in $\mathcal{B}$}.}
\] 
By Definition \ref{def.universal extension} and the mentioned natural isomorphism, the map 
\[
\xymatrix{
\Ext_\mathcal{A}^1(V,p)\colon \Ext_\mathcal{A}^1(V,T_B)\hookrightarrow\Ext_\mathcal{A}^1(V,\mathrlap{\text{\tiny \hspace{3.8pt}$\B$}}\coprod_J V)=0
}
\]
is injective. Therefore $T_B\in\mathcal{B}\cap V^{\perp_1}=\mathcal{T}$. Hence, $\lambda_B$ is a special $\mathcal{T}$-preenvelope in $\mathcal{B}$ since $\Ext_\B^1(\mathrlap{\text{\tiny \hspace{3.8pt}$\B$}}\coprod_J V,-)$ vanishes on $\T$ (see Lemma \ref{lem.Ext-V-versus-Ext-VI}). 

\smallskip\noindent
The implication ``(2--4)$\Rightarrow$(1)'', under the stronger assumptions, is a consequence of Lemma~\ref{2-4implies1_lemma}, using also Lemma \ref{lem.extension-in-reflective-subcats}(1) to see that $\Ker (\Ext_\mathcal{B}^1(V_Q,-))=V_Q^{\perp_1}\cap\mathcal{B}$, since $V_Q\in\T$.
\end{proof}

\begin{remark} \label{rem.to-main-theorem}
Let $\A$ be an Abelian category with a (projective) epi-generator $G$. If $\mathcal{B}$ is a reflective subcategory of $\A$, and $\tau\colon\A\rightarrow\mathcal{B}$ is the left adjoint to the inclusion $\iota \colon\mathcal{B}\hookrightarrow\A$, then $\tau (G)$ is a (projective) epi-generator of $\mathcal{B}$. In this case, if $\T\subseteq\mathcal{B}$ is a subcategory which is closed under coproducts in $\A$, then it is also closed under coproducts in $\mathcal{B}$.

In particular, if $\A$ is an Abelian category with an epi-generator and $\T$ is a subcategory closed under coproducts, quotients and extensions (e.g., a torsion class), then $\T$ satisfies one of the assertions of the above theorem if, and only if, it satisfies all the others.
\end{remark}

%

If in the above theorem the class $\T$ is cogenerating in $\A$, that is, $\A=\Sub (\T)$ then some of the conditions get simplified and we get the following straightforward corollary:

\begin{corollary} \label{cor.special-preenveloping-arbitraryA}
Consider the  following assertions for an Abelian category $\A$ and  a subcategory $\T$:  
\begin{enumerate}[\rm (1)]
\item $\T=V^{\perp_1}$ for some object $V$ that is  $\Ext^1$-universal in $\A$ and satisfies that $V^{(I)}\in V^{\perp_1}$, for all sets $I$ for which the coproduct exists in $\A$;
\item $\T$ is special preenveloping in $\A$ and it is closed under extensions and direct summands;
\item $({}^{\perp_1}\T,\T)$ is a right complete cotorsion pair in $\A$. 
\end{enumerate}
The implications ``(1)$\Rightarrow$(2)$\Leftrightarrow$(3)'' hold true. When, in addition,   $\A$ has an epi-generator and $\T$ is closed under coproducts and quotients (e.g., when $\T$ is a torsion class), all assertions are equivalent.
\end{corollary}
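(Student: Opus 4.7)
My plan is to reduce the corollary to Theorem~\ref{thm.specialpreenveloping-versus-cotorsionpair} applied with $\mathcal{B} := \Sub(\T)$ coinciding with the ambient category $\A$. To carry out this reduction I first need to verify that each of the three conditions (1), (2), (3) independently implies that $\T$ is cogenerating in $\A$. Then, once we know $\A = \Sub(\T)$, the conditions of the corollary become specialisations of those of the theorem.

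The key observation is that each condition forces cogeneration. Indeed, (2) says special preenvelopes exist, and these are monomorphic by definition. Condition (3) provides, via right completeness of the cotorsion pair, a short exact sequence $0 \to A \to Y \to X \to 0$ with $Y \in \T$ for every $A \in \A$. Condition (1) is the only delicate case: using $\Ext^1$-universality of $V$ one obtains a universal extension $0 \to A \to T_A \to V^{(J)} \to 0$; combining condition (UE'') in Definition~\ref{def.universal extension} with the hypothesis $V^{(J)} \in V^{\perp_1}$ yields $\Ext_\A^1(V, T_A) = 0$, i.e.\ $T_A \in V^{\perp_1} = \T$, whence $A$ embeds in an object of $\T$.

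With cogeneration in hand, we set $\B = \A$ in Theorem~\ref{thm.specialpreenveloping-versus-cotorsionpair}. The hypothesis that $\Sub(\T)$ be closed under quotients becomes vacuous, the reflectivity of $\B$ in $\A$ is trivial, and $\B \cap V^{\perp_1}$ reduces to $V^{\perp_1}$, so theorem condition (1) matches corollary condition (1). By Lemma~\ref{injective_preenvelope}(1) every $\T$-preenvelope in $\Sub(\T) = \A$ is automatically monomorphic, so the \emph{semi-special} and \emph{special} preenveloping conditions coincide, identifying theorem conditions (2) and (3) with corollary condition (2). Since $\Ker(\Ext_\B^1(-,\T)) = {}^{\perp_1}\T$ when $\B = \A$, theorem condition (4) matches corollary condition (3). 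The implications $(1) \Rightarrow (2) \Leftrightarrow (3)$ transfer directly from the corresponding implications of the theorem. The concluding sentence of the corollary is handled the same way: under its additional hypotheses the strengthened part of the theorem (requiring $\B$ to have an epi-generator and $\T$ closed under coproducts in $\B$) becomes applicable after the identification $\B = \A$, yielding $(2) \Rightarrow (1)$.

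The main obstacle I anticipate is precisely the argument that (1) implies cogeneration, which is the one step genuinely using the universal extension machinery of Subsection~\ref{unversal_ext_sec} and where the hypothesis $V^{(J)} \in V^{\perp_1}$ enters in an essential way; everything else is a careful translation of the theorem's statement after the simplification $\B = \A$.
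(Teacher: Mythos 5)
Your proposal is correct and follows essentially the same route as the paper, which presents the corollary as the specialization of Theorem~\ref{thm.specialpreenveloping-versus-cotorsionpair} to the case $\B=\Sub(\T)=\A$ and leaves the details to the reader. Your explicit check that each of (1), (2), (3) forces $\T$ to be cogenerating is exactly the point the paper's phrasing leaves implicit, and your argument for it — in particular combining (UE$''$) with the hypothesis $V^{(J)}\in V^{\perp_1}$ in case (1), and invoking Lemma~\ref{injective_preenvelope}(1) to identify semi-special with special preenvelopes — is the right one.
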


Our final result in this section shows that preenveloping hereditary torsion classes are familiar:

\begin{corollary} \label{cor.special-preenveloping-hereditary}
Let $\mathcal{A}$ be an Abelian category and $\mathcal{T}$  a subcategory. The following  are equivalent:
\begin{enumerate}[\rm (1)]
\item $\mathcal{T}$ is a (semi-special) preenveloping hereditary torsion class;
\item $\mathcal{T}$ is a TTF class in $\mathcal{A}$;
\item $\mathcal{T}$ is a (semi-special) precovering cohereditary torsion-free class.
\end{enumerate}
\end{corollary}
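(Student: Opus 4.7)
The plan is to prove the equivalence ``(1)$\Leftrightarrow$(2)'' directly and then obtain ``(2)$\Leftrightarrow$(3)'' by passing to the opposite category, since being a TTF class is self-dual, hereditary torsion classes correspond dually to cohereditary torsion-free classes, and the (semi-special) preenveloping property dualises to (semi-special) precovering. I will handle both versions of (1) in one stroke: in ``(2)$\Rightarrow$(1)'' I will produce a semi-special preenveloping structure, while the converse only uses the existence of a $\mathcal{T}$-preenvelope as input.

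For ``(2)$\Rightarrow$(1)'', I would start from a TTF triple $(\mathcal{C},\mathcal{T},\mathcal{F})$. Since $(\mathcal{T},\mathcal{F})$ is a torsion pair, $\mathcal{T}$ is closed under subobjects and hence is hereditary. For each $A\in\mathcal{A}$, the defining sequence $0\to C_A\to A\stackrel{\pi_A}{\longrightarrow} T_A\to 0$ of the torsion pair $(\mathcal{C},\mathcal{T})$ yields an epimorphic $\mathcal{T}$-preenvelope: every morphism $A\to T\in\mathcal{T}$ vanishes on $C_A\in{}^{\perp}\mathcal{T}$ and so factors uniquely through $\pi_A$. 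Its cokernel is $0\in{}^{\perp_1}\mathcal{T}$, so $\pi_A$ is in fact semi-special.

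The main content is ``(1)$\Rightarrow$(2)''. I would set $\mathcal{C}:={}^{\perp}\mathcal{T}$, pick for each $A\in\mathcal{A}$ a $\mathcal{T}$-preenvelope $\mu_A\colon A\to T_A$, and take its epi-mono factorisation $\pi_A\colon A\twoheadrightarrow T'_A:=\Im(\mu_A)\stackrel{\iota}{\hookrightarrow}T_A$. Heredity of $\mathcal{T}$ guarantees $T'_A\in\mathcal{T}$, and since $\mu_A=\iota\circ\pi_A$ with $\iota$ monic, the epimorphism $\pi_A$ inherits the $\mathcal{T}$-preenveloping property. Setting $C_A:=\ker(\pi_A)$, the problem reduces to showing that $C_A\in{}^{\perp}\mathcal{T}$; this is the main obstacle.

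To handle it, given $T\in\mathcal{T}$ and a morphism $f\colon C_A\to T$, I would form the pushout of the inclusion $C_A\hookrightarrow A$ along $f$, obtaining a short exact sequence $0\to T\stackrel{j}{\to} A'\to T'_A\to 0$. Since $\mathcal{T}$ is closed under extensions, $A'\in\mathcal{T}$, so the induced map $g\colon A\to A'$ factors through $\pi_A$. The composite $j\circ f\colon C_A\to A'$ equals the restriction of $g$ to $C_A$, which vanishes; as $j$ is a monomorphism, this forces $f=0$ and hence $C_A\in\mathcal{C}$. The equality $\mathcal{T}=\mathcal{C}^{\perp}$ then follows because any $X\in\mathcal{C}^{\perp}$ gives $C_X=0$ in its decomposition, so $X\cong T'_X\in\mathcal{T}$; combined with $\mathcal{C}\cap\mathcal{T}=0$, this makes $(\mathcal{C},\mathcal{T})$ a torsion pair, turning $\mathcal{T}$ into both a torsion and a torsion-free class, i.e., a TTF class.
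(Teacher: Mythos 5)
Your proof is correct and follows essentially the same route as the paper's: the paper reduces (1)$\Leftrightarrow$(2) to showing that a reflective hereditary torsion class is TTF, and the reflection $\rho_A\colon A\to B_A$ it uses is exactly the epimorphic part of the epi--mono factorisation of a $\T$-preenvelope, so your key step (the pushout argument showing $C_A=\ker(\pi_A)\in{}^{\perp}\T$) coincides with the paper's argument for $c(A)=\Ker(\eta_A)\in{}^{\perp}\T$. One small slip: in ``(2)$\Rightarrow$(1)'' you justify the heredity of $\T$ by saying that the torsion class of $(\T,\F)$ is closed under subobjects, which is false in general; the correct reason is that $\T$ is the torsion-\emph{free} class of the pair $({}^{\perp}\T,\T)$ coming from the TTF triple.
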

\begin{proof}
We just prove the equivalence ``(1)$\Leftrightarrow$(2)'' since the equivalence ``(2)$\Leftrightarrow$(3)'' will then follow by duality.
Both (1) and (2) imply that $\T=\Sub(\T)$ and $\T$ is a hereditary torsion class, something that we assume in the sequel. According to Theorem~\ref{thm.specialpreenveloping-versus-cotorsionpair} and Proposition \ref{ex.reflective-versus-productclosedness}, the subcategory $\mathcal{T}$ is (semi-special) preenveloping if and only if it is reflective in $\mathcal{A}$. The task is hence reduced to check that, if $\mathcal{T}$ is a reflective subcategory, then it is a TTF class. Suppose then that $\mathcal{T}$ is reflective and let $\sigma\colon \mathcal{A}\to\mathcal{T}$ be the left adjoint to the inclusion $\iota\colon \mathcal{T}\hookrightarrow\mathcal{A}$. By Lemma \ref{reflector_is_epi_lemma}, the unit $\eta\colon \id_\mathcal{A}\to\iota\circ\sigma$ is an epimorphism and the induced map $\mathcal{A}(\eta_A,T)\colon\mathcal{A}(\sigma(A),T)\to\mathcal{A}(A,T)$ is an isomorphism for all $A\in \A$ and $T\in \mathcal T$. Given an object $A\in\mathcal{A}$, consider the exact sequence 
\[
0\rightarrow c(A)\stackrel{j_A}{\longrightarrow} A\stackrel{\eta_A}{\longrightarrow}(\iota\circ\sigma )(A)\rightarrow 0,
\] 
where $c(A):=\Ker(\eta_A)$. We shall prove that $c(A)\in {}^{\perp}\mathcal{T}$, from which we immediately conclude that $({}^{\perp}\mathcal{T},\mathcal{T})$ is a torsion pair in $\mathcal{A}$. Indeed, let $f\colon c(A)\to T$ be a morphism with $T\in\mathcal{T}$ and take the pushout of $j_A$ and $f$:
\[
\xymatrix@R=20pt{
0\ar[r]&c(A)\ar@{}[dr]|{\text{\tiny P.O.}}\ar[r]^{j_A}\ar[d]_{f}&A\ar[r]^(.4){\eta_A}\ar[d]^g&(\iota\circ \sigma)(A)\ar[r]\ar@{=}[d]&0\\
0\ar[r]&T\ar[r]_v&T'\ar[r]_(.4){\omega}&(\iota\circ\sigma)(A)\ar[r]&0
}
\]
Note that $T'$ is in $\mathcal{T}$ since $\mathcal T$ is closed under extensions. In particular, since $\mathcal{A}(\eta_A,T')$ is an isomorphism, there exists a morphism $\widehat {g}\colon (\iota\circ\sigma)(A)\to T'$ such that $g=\widehat {g}\circ\eta_A$. Hence, we get that \mbox{$v\circ f=g\circ j_A=\widehat {g}\circ\eta_A\circ j_A=0$} and, being $v$ a monomorphism, we conclude that $f=0$. 
\end{proof}

Let us conclude the section with the following remark:

\begin{remark}\label{remark_complete_cotorsion}
In the setting of Theorem \ref{thm.specialpreenveloping-versus-cotorsionpair}, suppose that $\mathcal{B}$ is a reflective subcategory of $\A$ such that $\mathcal{B}$ has an epi-generator. Then, the cotorsion pair $(\Ker(\Ext^1_\B(-, \mathcal T )),\mathcal T)$ in part (4) of the theorem is complete if, and only if, such an epi-generator can be chosen in $\Ker(\Ext^1_\B(-,\T))$. 
\end{remark}
\begin{proof}
Suppose first that the cotorsion pair is complete and fix an epi-generator $Q$ of $\mathcal{B}$. Then we have an exact sequence $0\rightarrow T_Q\to W_Q\to Q\rightarrow 0$ in $\mathcal{B}$, with $T_Q\in\T$ and $W_Q\in\Ker(\Ext^1_\B(-, \mathcal T ))$. It follows that $W:=W_Q$ is an epi-generator of $\mathcal{B}$ belonging to $\Ker(\Ext^1_\B(-,\T))$. Conversely, assume that $Q\in\Ker(\Ext^1_\B(-,\T))$. The same argument in the proof of assertion (5) in \cite[Thm.\,2.13]{SS} also works here since, by Lemma~\ref{lem.Ext-V-versus-Ext-VI}, we know that the class $\Ker(\Ext^1_\B(-,\T))$ is closed under taking coproducts in $\mathcal{B}$. Then the cotorsion pair $(\Ker(\Ext^1_\B(-,\T)),\mathcal{T})$ is complete.
\end{proof}

\section{(Quasi-)Tilting preenvelopes}\label{sec_tilt_pree}

In module categories, quasi-tilting torsion pairs are very much related to preenveloping torsion classes (see \cite{hugel2001tilting}). A similar relation  still holds in our general setting, but with some more subtleties. Let us start by defining (quasi-)tilting objects in Abelian categories:

\begin{definition} \label{def.quasi-tilting torsion pair}
Let $\mathcal{A}$ be an Abelian category. A torsion pair $\mathbf{t}=(\mathcal{T},\mathcal{F})$ is called \textbf{quasi-tilting} when there is an object $V$ such that the following condition holds:
\begin{enumerate}[\rm (QT)]
\item $\mathcal{T}=\Gen(V)=\Pres(V)=\overline{\Gen}(V)\cap V^{\perp_1}$.
\end{enumerate} 
In this case $V$ is said to be a {\bf quasi-tilting} object. Furthermore, the torsion pair $\mathbf{t}$ is \textbf{tilting} when 
\begin{enumerate}[\rm (T.1)]
\item there is a quasi-tilting object $V$ such that  $\mathcal{T}=\Gen(V)$;
\item $\T$ is cogenerating.
\end{enumerate}
In this case, $V$ is said to be a (1-){\bf tilting object}.
\end{definition}

Note that, for a tilting object $V$, one easily deduces from the definition that:
\[
\Gen(V)=\Pres(V)= V^{\perp_1}.
\]
and this is a torsion class. Anyway, there are some subtleties in the above definition that we discuss in the following remark:

\begin{remark} \label{rem.usual definition of quasi-tilting}
The usual way to define quasi-tilting objects in categories of modules is the following: given a ring $R$, a right $R$-module $V$ is quasi-tilting provided 
\begin{equation}\label{QT_def_in_modules}
\Gen(V)=\overline{\Gen}(V)\cap V^{\perp_1}.
\end{equation}
In that particular setting, it is then a consequence of \eqref{QT_def_in_modules} that $\Gen(V)$ is a torsion class, and that the equality $\Pres(V)=\Gen(V)$ holds. 
On the other hand, when working in an arbitrary Abelian category $\A$, if we take an object $V\in \A$ that just satisfies the equality \eqref{QT_def_in_modules}, we cannot guarantee in general that $\Gen(V)$ is a torsion class  (\textcolor{black}{see Example~\ref{tilt_not_torsion_ex} below}) nor the equality $\Gen(V)=\Pres(V)$. That is our motivation for explicitly including these facts in the above definition. However, the definition can be given as in modules
 in many Abelian categories that appear ``in nature'' (see Corollary  \ref{cor.different-definitions-tilting}).
\end{remark}

\textcolor{black}{
\begin{example}\label{tilt_not_torsion_ex}
\textcolor{black}{Let $\G$ be a locally coherent Grothendieck category, $V$ a finitely presented tilting object in $\G$, and $(\Gen(V),V^{\perp})$ the associated torsion pair. Letting $\A:=\fp(\G)$, we have that $V\in \A$ and $\Gen_\A(V)=\ker(\Ext^1_\A(V,-))$ but there are cases in which $\Gen_\A(V)$ is not a torsion class in $\A$.}
\end{example}
\begin{proof}
A situation of this kind is described in \cite[Proposition 8.19]{parra2019locally}, let us briefly recall it here. If $R$ is a left semihereditary ring that is not right coherent (see \cite[Example 8.18]{parra2019locally} for an explicit example) then the class $\mathcal F$ of flat objects in $\mod R$ is a generating torsion-free class \textcolor{black}{and the associated torsion pair $\t:=(\T:={}^\perp\mathcal F,\mathcal F)$ in $\mod R$ restricts to $\fpmod R$}. Consider now  the associated Happel-Reiten-Smal\o \ $t$-structure   $\tau_\t=(\Der^{\leq-1}(\mod R)*\T[0],\mathcal F[1]*\Der^{\geq0}(\mod R))$ in $\Der(\mod R)$, and let $\G:=\Ht=\mathcal F[1]*\T[0]$ be its heart\textcolor{black}{, which is a locally coherent Grothendieck category by  \cite[Theorem 7.3]{parra2019locally} }. Then $V:=R[1]$ is a finitely presented tilting object in $\G$ \textcolor{black}{whose associated torsion pair is $\mathbf{t}':=(\Fcal[1],\T[0])$}. If $\Gen_{\A}(V)$ were a torsion class in $\A$, then $\mathbf{t}'$ would restrict to $\A$\textcolor{black}{, which, again by \cite[Theorem 7.3]{parra2019locally}, would imply that $\mod R$ is locally coherent}.  This is false since  $R$ is not right coherent by assumption.
\end{proof}
}

\begin{proposition} \label{prop.properties of quasi-tilting}
Let $\A$ be an Abelian category, $V$ a quasi-tilting object of $\A$ and $\T=\Gen(V)$ the associated torsion class. The following assertions hold:
\begin{enumerate}[\rm (1)]
\item $\mathcal{B}:=\Sub(\T)$ is an Abelian exact subcategory of $\A$ and $V$ is a tilting object of $\B$;
\item $\Add(V)={}^{\perp_1}\T\cap\T$;
\item if $V'$  is a second quasi-tilting object in $\A$, the following are equivalent:
\begin{enumerate}[\rm (3.1)]
\item $\Gen(V)=\Gen(V')$;
\item $\Add(V)=\Add(V')$.
\end{enumerate}
\end{enumerate}
\end{proposition}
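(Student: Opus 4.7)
The plan is to exploit the quasi-tilting identity $\T=\Gen(V)=\Pres(V)=\overline{\Gen}(V)\cap V^{\perp_1}$ throughout. For (1), I would first observe that $\T$ is closed under quotients and finite coproducts (as a torsion class), so $\B=\Sub(\T)$ is an Abelian exact subcategory of $\A$ by Example~\ref{ex.Sub(X)-Gen(X) Abelian exact}. To prove that $V$ is tilting in $\B$, I would check each required identity intrinsically in $\B$: the equality $\overline{\Gen}_\B(V)=\Sub_\B(\T)=\B$ is immediate, and $\T$ cogenerates $\B$ by construction. For coproducts, any $V^{(I)}$ existing in $\B$ belongs to the torsion class $\T$ (torsion classes are closed under existing coproducts in their ambient Abelian category) and coincides with the coproduct in $\A$, since any morphism from an object of $\T$ into an arbitrary $A\in\A$ factors through the torsion part $t(A)\in\T$. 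This yields $\Gen_\B(V)=\Gen_\A(V)=\T$ and $\Pres_\B(V)=\Pres_\A(V)=\T$. Finally, Lemma~\ref{lem.extension-in-reflective-subcats}(1), applied with $X:=V\in\T=\gen(\T)$, provides $\Ext_\B^1(V,B)\cong\Ext_\A^1(V,B)$ for all $B\in\B$, so that $V^{\perp_1}$ computed in $\B$ equals $V^{\perp_1}\cap\B$ in $\A$, which by (QT) is $\T$. Assembling these identifications yields (QT) in $\B$, and (T.2) holds by the very definition of $\B$.

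For (2), the inclusion $\Add(V)\subseteq{}^{\perp_1}\T\cap\T$ reduces to $V\in\T\cap{}^{\perp_1}\T$: the first membership is trivial and the second is just $\T\subseteq V^{\perp_1}$, which is built into (QT). Closure of $\T$ under summands and coproducts (as a torsion class), combined with the analogous closures of ${}^{\perp_1}\T$ (Lemma~\ref{lem.Ext-V-versus-Ext-VI}(2) for coproducts, additivity of $\Ext^1$ for summands), then gives the inclusion. Conversely, given $W\in{}^{\perp_1}\T\cap\T={}^{\perp_1}\T\cap\Pres(V)$, I fix a presentation $V^{(J)}\xrightarrow{f}V^{(I)}\xrightarrow{g}W\to 0$ and set $K:=\ker(g)=\Im(f)$. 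Then $K$ is a quotient of $V^{(J)}$, so $K\in\Gen(V)=\T$, and the assumption $W\in{}^{\perp_1}\T$ forces the short exact sequence $0\to K\to V^{(I)}\to W\to 0$ to split, exhibiting $W$ as a summand of $V^{(I)}$, i.e., $W\in\Add(V)$.

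For (3), the implication $\Add(V)=\Add(V')\Rightarrow\Gen(V)=\Gen(V')$ is formal, since $\Gen(V)=\Gen(\Add(V))$ depends only on $\Add(V)$. Conversely, if $\Gen(V)=\Gen(V')=:\T$, then $\overline{\Gen}(V)=\Sub(\T)=\overline{\Gen}(V')$, so $V'$ is a second quasi-tilting witness for the same torsion pair, and by part~(2) both $\Add(V)$ and $\Add(V')$ coincide with ${}^{\perp_1}\T\cap\T$. The only delicate point will be in part~(1): since $\B$ is neither reflective nor coreflective in $\A$ a priori, comparing coproducts and $\Ext^1$-orthogonals computed intrinsically in $\B$ with the corresponding data inherited from $\A$ requires some care, and is precisely what Lemma~\ref{lem.extension-in-reflective-subcats}(1) together with the closure properties of the torsion class $\T$ are designed to handle; parts~(2) and~(3) then follow routinely.
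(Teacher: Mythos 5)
Your proposal is correct and follows essentially the same route as the paper: pass to $\B=\Sub(\T)$ via Example \ref{ex.Sub(X)-Gen(X) Abelian exact}, identify coproducts, $\Gen$, $\Pres$ and (via Lemma \ref{lem.extension-in-reflective-subcats}(1)) the relevant $\Ext^1$-orthogonal in $\B$ with their counterparts in $\A$, prove the reverse inclusion of (2) by splitting the presentation sequence $0\to K\to V^{(I)}\to W\to 0$, and deduce (3) from (2). The only cosmetic difference is that you verify $\Add(V)\subseteq{}^{\perp_1}\T\cap\T$ directly in $\A$ from (QT) and the closure properties of ${}^{\perp_1}\T$, where the paper routes this through $V$ being tilting in $\B$ and Lemma \ref{lem.extension-in-reflective-subcats}(3); both are fine.
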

\begin{proof}
(1). By Example \ref{ex.Sub(X)-Gen(X) Abelian exact}, we know that $\B$ is an Abelian exact subcategory. Furthermore, the pair $(\T,\T^{\perp}\cap \B)$ is a torsion pair in $\B$ and the coproducts of copies of $V$ in $\A$ and in $\B$ are the same, so that, $\Pres_{\B}(V)=\Pres(V)$. On the other hand, by Lemma \ref{lem.extension-in-reflective-subcats}(1) and condition (QT), we have that $\Ker (\Ext_\mathcal{B}^1(V,-))=\mathcal{B}\cap V^{\perp_1}=\T$. It follows that $\T=\Pres(V)=\Pres_{\B}(V)\subseteq\Gen_\B(V)=\Gen(V)=\Ker (\Ext_\mathcal{B}^1(V,-))=\T$, so that $V$ is a tilting object of $\B$. 

\smallskip\noindent
(2). Since $V$ is a tilting object of $\B$, we have that $\Add(V)\subseteq \Ker (\Ext_\mathcal{B}^1(V,-))\cap\T={}^{\perp_1}\T\cap\T$ (see  Lemma \ref{lem.extension-in-reflective-subcats}(3)). On the other hand, if $X\in {}^{\perp_1}\T\cap\T$ then, due to the equality $\Pres(V)=\T$, we have an exact sequence $0\rightarrow T\to V^{(I)}\to X\rightarrow 0$, for some set $I$ and some $T\in\Gen(V)=\T$. But this sequence splits since $X\in {}^{\perp_1}\T$, so that $X\in\Add(V)$.  

\smallskip\noindent
(3). It follows  from assertion (2).
\end{proof}

The above proposition motivates us to give the following definition:
\begin{definition}
Let $\A$ be an Abelian category. Then, two (quasi-)tilting objects $V$ and $V'$ in $\A$ are said to be {\bf equivalent} provided $\Add(V)=\Add(V')$.
\end{definition}

\subsection{Properties of tilting objects}\label{ext_universal_tilting_objects_Sec}

In view of Theorem \ref{thm.specialpreenveloping-versus-cotorsionpair} and Proposition \ref{prop.properties of quasi-tilting}(1), it is natural to ask  whether tilting objects are $\Ext^1$-universal. In fact, this is true in full generality, as we show in the following lemma:

\begin{lemma}\label{tilt_implies_ext_univ_lemma}
Let $\A$ be an Abelian category and $V\in \A$ a tilting object. Then, $V$ is $\Ext^1$-universal. Furthermore, given $A\in \A$ and a universal extension $0\to A\to T\to V^{(I)}\to 0$ of $V$ by $A$, we have that $T\in V^{\perp_1}=\T$. 
\end{lemma}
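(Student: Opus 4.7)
My plan is to construct a universal extension directly, exploiting the three tightly linked identifications that the tilting hypothesis provides, namely $\T = \Gen(V) = \Pres(V) = V^{\perp_1}$ together with $\Sub(\T) = \A$ (the last two using that $\T$ is cogenerating, as recalled just after Definition \ref{def.quasi-tilting torsion pair}). Fix $A\in\A$. Using the cogenerating property, I first embed $A$ into some $T_A\in\T$, producing a short exact sequence $0\to A\to T_A\to C\to 0$, and observe that $C\in\T$ since $\T$ is closed under quotients. Because $C\in\T=\Pres(V)$, I can choose a presentation $P'\to P\to C\to 0$ with $P,P'\in\Add(V)$, and set $N:=\ker(P\to C)$, which lies in $\Gen(V)=\T$ as a quotient of $P'$.

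Next, I form the pullback $X$ of the two epimorphisms $T_A\to C$ and $P\to C$. The standard pullback analysis yields two short exact sequences
\[
0\to A\to X\to P\to 0\qquad\text{and}\qquad 0\to N\to X\to T_A\to 0.
\]
Since both $N$ and $T_A$ lie in $\T=V^{\perp_1}$ and $V^{\perp_1}$ is closed under extensions (being a torsion class), $X\in V^{\perp_1}$. Applying $\A(V,-)$ to the first sequence yields the exact fragment
\[
\A(V,P)\longrightarrow\Ext^1_\A(V,A)\longrightarrow\Ext^1_\A(V,X)=0,
\]
so the connecting map is surjective. Finally, choosing $Q\in\Add(V)$ with $P\oplus Q\cong V^{(J)}$ for some set $J$, I add the split sequence $0\to 0\to Q\to Q\to 0$ to the first sequence above to produce
\[
0\to A\to X\oplus Q\to V^{(J)}\to 0,
\]
whose connecting map $\A(V,V^{(J)})\to\Ext^1_\A(V,A)$ factors through $\A(V,P)\to\Ext^1_\A(V,A)$ and is therefore surjective. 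By condition (UE'), this is a universal extension of $V$ by $A$, establishing the $\Ext^1$-universality.

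For the second assertion, consider any universal extension $0\to A\to T\stackrel{p}{\to}V^{(I)}\to 0$. Condition (UE'') forces $\Ext^1_\A(V,p)$ to be injective, and since $V^{(I)}\in\Gen(V)=V^{\perp_1}$ implies $\Ext^1_\A(V,V^{(I)})=0$, one concludes $\Ext^1_\A(V,T)=0$, i.e., $T\in V^{\perp_1}=\T$. I do not foresee any real obstacle in this proof; the only subtlety is making careful use of the Remark after Definition \ref{def.quasi-tilting torsion pair}, so that every appearance of $\Ext^1$ against $V^{(I)}$ or against the pullback object $X$ vanishes by closure of $V^{\perp_1}$ under extensions and coproducts.
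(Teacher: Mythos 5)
Your proposal is correct and follows essentially the same route as the paper: embed $A$ into an object of $\T$ using that $\T$ is cogenerating, present the cokernel over $V$ via $\Pres(V)=\Gen(V)$, and pull back to obtain an extension of $A$ by a coproduct of copies of $V$ whose middle term lies in $V^{\perp_1}$ because it is an extension of two objects of $\T=V^{\perp_1}$. The only (harmless) cosmetic difference is that you pass from $P\in\Add(V)$ to an honest coproduct $V^{(J)}$ by adding a split complement $Q$ at the end, whereas the paper chooses the epimorphism $V^{(I)}\twoheadrightarrow T'$ with kernel in $\Gen(V)$ from the outset; the second assertion is proved identically via the long exact sequence and (UE'').
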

\begin{proof}
Let $A$ be an object in $\A$. From the fact that $\Gen(V)$ is a cogenerating class, we obtain an exact sequence in $\A$ of the form:
\[
0\rightarrow A\stackrel{v}{\longrightarrow}T\stackrel{p}{\longrightarrow}T'\rightarrow 0,
\] 
for some $T\in \Gen(V)$. Now, since $\Gen(V)$ is a torsion class, also $T'\in \Gen(V)=\Pres(V)$, so that, there is an epimorphism $q\colon V^{(I)} \twoheadrightarrow  T'$  in $\A$ with $\Ker(q)\in \Gen(V)$, for some $I$ set. Hence, we obtain the following commutative diagram in $\A$:
\[
\xymatrix@C=50pt{ & & \Ker(q) \ar@{^(->}[d] \ar@{=}[r] & \Ker(q) \ar@{^(->}[d] \\
0 \ar[r] & A \ \ar@{=}[d] \ar@{^(->}[r] & Z \ar@{>>}[r] \ar@{}[dr]|{\text{P.B.}} \ar@{>>}[d] & V^{(I)} \ar[r] \ar@{>>}[d]^{q} & 0 \\ 
0 \ar[r] & A \  \ar@{^(->}[r]_v  & T  \ar@{>>}[r]_p  & T' \ar[r] & 0
}
\]
From the fact that $\Gen(V)$ is a torsion class in $\A$, we obtain that $Z\in \Gen(V)=V^{\perp_1}$, as it is an extension of $T$ and $\ker(q)$ and both objects belong in $\Gen(V)$. Hence, the second row in the above diagram is a universal extension of $V$ by $A$. 

For the final statement, let $A\in \A$ and consider a universal extension $0\to A\to T\to V^{(I)}\to 0$ of $V$ by $A$. Then, applying $\A(V,-)$, one gets:
\[
0\to \A(V,G)\to \A(V,T)\to \A(V,V^{(J)}) \twoheadrightarrow\Ext_\A^1(V,G)\overset{0}\longrightarrow \Ext_\A^1(V,T)\hookrightarrow \Ext_\A^1(V,V^{(J)})=0,
\]
so  $T\in V^{\perp_1}=\T$. 
\end{proof}

We can now reformulate the definition of tilting object in an Abelian category. In particular, when our ground category $\A$ has an epi-generator, we can characterize tilting objects by properties that are very similar to those usually taken as a definition in categories of modules.

\begin{proposition} \label{prop.Ext-universal tilting}
Let $\A$ be an Abelian category, $V\in \A$ and consider the following assertions:
\begin{enumerate}[\rm (1)]
\item $V$ is a tilting object;
\item the following conditions hold:
\begin{enumerate}[\rm (2.1)]
\item $\Gen(V)=V^{\perp_1}$;
\item $\Gen(V)$ is   cogenerating;
\item $\Add(V)$ is a precovering class in $\A$;
\end{enumerate}
\item the following conditions hold:
\begin{enumerate}[\rm (3.1)]
\item $\Ext_\A^1(V,V^{(I)})=0$, for all sets $I$ such that $V^{(I)}$ exists in $\A$;
\item $\pd_\A(V)\leq 1$;
\item $\Add(V)$ is a precovering class in $\A$;
\item there is an epi-generator $G$ of $\A$ and an exact sequence 
\[
0\rightarrow G\to V_0\to V_1\rightarrow 0\quad\text{such that}\quad V_0,\, V_1\in\Add(V),
\] 
and all coproducts  of this sequence that exist in $\A$ are exact.
\end{enumerate}
\end{enumerate}
Then, the equivalence ``(1)$\Leftrightarrow$(2)'' holds and, if $\A$ has an epi-generator, all assertions are equivalent.
\end{proposition}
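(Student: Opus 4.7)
I would prove $(1)\Leftrightarrow(2)$ first, in full generality, and then $(1)\Leftrightarrow(3)$ under the epi-generator hypothesis.

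For $(1)\Rightarrow(2)$: conditions $(2.1)$ and $(2.2)$ are immediate from the definition of tilting object and the remark noting $\Gen(V)=\Pres(V)=V^{\perp_1}$. For $(2.3)$, given $X\in\A$, use the torsion radical $t\colon\A\to\Gen(V)$ of the tilting torsion pair: choose an epimorphism $p\colon V^{(I)}\twoheadrightarrow t(X)$ with kernel $K\in\Gen(V)=V^{\perp_1}$ (using $t(X)\in\Pres(V)$). The composition $V^{(I)}\xrightarrow{p}t(X)\hookrightarrow X$ is the required $\Add(V)$-precover, because any morphism $W\to X$ with $W\in\Add(V)$ extends to some $V^{(J)}\to X$, factors through $t(X)$ (as $V^{(J)}\in\T$ and $X/t(X)\in\mathcal{F}$), and then lifts through $p$ using $\Ext^1(V^{(J)},K)=0$ (Lemma \ref{lem.Ext-V-versus-Ext-VI}). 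Conversely, for $(2)\Rightarrow(1)$, one first observes that $\Gen(V)=V^{\perp_1}$ is closed under quotients (from $\Gen$), coproducts (from $\Gen$), and extensions (from $\perp_1$). Given $X\in\A$, take an $\Add(V)$-precover $f\colon V^{(I)}\to X$ provided by $(2.3)$: then $\Im(f)\in\Gen(V)$ and one checks $X/\Im(f)\in\Gen(V)^{\perp}$, which produces the torsion decomposition of $X$ and shows that $(\Gen(V),\Gen(V)^{\perp})$ is a torsion pair. For $X\in\Gen(V)$, the precover is surjective and the precover property combined with Lemma \ref{lem.Ext-V-versus-Ext-VI} forces $\ker(f)\in V^{\perp_1}=\Gen(V)$, whence $\Gen(V)=\Pres(V)$. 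The identity $\Gen(V)=\overline{\Gen}(V)\cap V^{\perp_1}$ is immediate from $(2.1)$, so combined with $(2.2)$, this proves $V$ is tilting.

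Now assume $\A$ has an epi-generator $G$. For $(1)\Rightarrow(3)$: $(3.1)$ is a consequence of $V^{(I)}\in\Gen(V)=V^{\perp_1}$, $(3.3)$ is just $(2.3)$, and $(3.2)$ follows from Corollary \ref{coro_proj_dim} with $\mathcal{X}=\Gen(V)$, which is cogenerating by $(2.2)$ and closed under quotients. For $(3.4)$, take a universal extension $0\to G\to T\to V^{(I)}\to 0$ with $T\in V^{\perp_1}=\Pres(V)$ (available since $V$ is $\Ext^1$-universal by Lemma \ref{tilt_implies_ext_univ_lemma}), choose an epimorphism $V^{(J)}\twoheadrightarrow T$ with kernel in $\Gen(V)$, and form the pullback along $G\hookrightarrow T$: this produces a short exact sequence $0\to P\to V^{(J)}\to V^{(I)}\to 0$ together with an epimorphism $P\twoheadrightarrow G$, making $P$ an epi-generator, and exactness of existing coproducts of this sequence follows from Example \ref{ex_exact_copr}(2). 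For $(3)\Rightarrow(1)$, writing the sequence from $(3.4)$ as $0\to G\to V_0\to V_1\to 0$, I would proceed in four steps: $(a)$ show $\Gen(V)$ is cogenerating by pushing out an epimorphism $G^{(I)}\twoheadrightarrow X$ along $G^{(I)}\hookrightarrow V_0^{(I)}$ (monic by the exactness clause of $(3.4)$) to embed $X$ into a quotient of $V_0^{(I)}\in\Gen(V)$; $(b)$ show $\Gen(V)=V^{\perp_1}$, with $\subseteq$ from $(3.1)$--$(3.2)$ (which force $V^{\perp_1}$ to be closed under quotients) and $\supseteq$ by lifting $G\to X$ through $V_0$ when $X\in V^{\perp_1}$ (using $\Ext^1(V_1,X)=0$); $(c)$ show $\Gen(V)=\Pres(V)$ via the $\Add(V)$-precover argument from $(2)\Rightarrow(1)$; $(d)$ conclude that $V$ is tilting.

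The main obstacle will be the construction of $(3.4)$ in $(1)\Rightarrow(3)$ and verifying that existing coproducts of the resulting sequence are exact: this hinges on applying Example \ref{ex_exact_copr}(2), using that $V^{\perp_1}$ is cogenerating and that this property passes to $(V_1)^{\perp_1}\supseteq V^{\perp_1}$ for any summand $V_1$ of a coproduct of copies of $V$. An additional delicate point throughout is tracking which infinite coproducts are guaranteed to exist, given that no completeness hypothesis on $\A$ has been imposed.
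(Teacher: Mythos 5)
Your plan is correct and follows essentially the same route as the paper's proof: the same torsion-radical/trace arguments for (1)$\Leftrightarrow$(2), the same universal-extension-plus-pullback construction of the sequence in (3.4), and the same pushout along $G^{(I)}\hookrightarrow V_0^{(I)}$ for the implication (3)$\Rightarrow$(2)$\Rightarrow$(1). The only (harmless) deviations are cosmetic: the paper deduces $V^{\perp_1}\subseteq\Gen(V)$ from the splitting of the special $\T$-preenvelope $\mu_A$ (whose cokernel is $V_1^{(I)}$) rather than by lifting morphisms $G\to X$ through $V_0$, and it justifies exactness of the coproducts of the sequence in (3.4) via Lemma \ref{injective_preenvelope}(3) instead of Example \ref{ex_exact_copr}(2), which amounts to the same fact.
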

\begin{proof}
(1)$\Rightarrow$(2). Let $A\in \A$ and consider the following short exact sequence
\[
0\to T\overset{\iota}\longrightarrow A\longrightarrow A/T\to 0
\]
where $T\in \T:=\Gen(V)$ and $A/T\in\mathcal{F}$. Using that $\T=\Pres(V)$, choose an exact sequence 
\begin{equation*}
0\rightarrow T'\longrightarrow V^{(J)}\stackrel{q_T}{\longrightarrow}T\rightarrow 0,\qquad \text{where }T'\in\T.
\end{equation*}
We get an epimorphism $\A(X,V^{(J)})\stackrel{\A(X,q_T)}{\longrightarrow}\A(X,T)\to\Ext_\A^1(X,T')=0$, for all \mbox{$X\in \Add(V)\subseteq {}^{\perp_1}\T$,} which shows that $q_T$ is a $\Add(V)$-precover of $T$. Now let $q_A:=\iota \circ q_T\colon V^{(J)}\to T\hookrightarrow A$. Then, for each $X\in \Add(V)$, the map $\A(X,q_A)=\A(X,\iota)\circ \A(X,q_T)$ is surjective as $\A(X,q_T)$ is surjective and $\A(X,\iota)$ is an isomorphism. Hence, $q_A$ is the desired $\Add(V)$-precover.

\smallskip\noindent
(2)$\Rightarrow$(1). We first prove that $\T=\Gen(V)$ is a torsion class. Let $A\in\A$ and fix an $\Add(V)$-precover  $p\colon X\to A$. It easily follows that $\Im(p)=\tr_V(A)$ is the trace of $V$ in $A$. We then get a short exact sequence 
\[
0\rightarrow \tr_V(A)\stackrel{\iota}{\longrightarrow}A\stackrel{\pi}{\longrightarrow}A/\tr_V(A)\rightarrow 0.
\] 
Applying the functor $\A(V,-)$, we then get the following exact sequence 
\[
0\rightarrow\A(V,\tr_V(A))\stackrel{\cong}{\longrightarrow}\A(V,A)\stackrel{0}{\longrightarrow}\A(V,A/\tr_V(A))\longrightarrow\Ext_\A^1(V,\tr_V(A))=0,
\] 
where the last term is trivial since $\tr_V(A)\in\Gen(V)=V^{\perp_1}$. Hence, $A/\tr_V(A)\in V^\perp=\Gen(V)^\perp$,  showing that $(\Gen(V),V^\perp)$ is a torsion pair. It  remains to check that $\Gen(V)=\Pres(V)$. Indeed, let $T\in\T$, choose an (epimorphic) $\Add(V)$-precover $q\colon X\to T$, take the following exact sequence
\begin{equation}\label{presentation_wannabe_eq}
0\to \ker(q)\longrightarrow X\longrightarrow T\to0,
\end{equation}
and apply $\A(V,-)$ to get  the following induced long exact sequence:
\[
0\to \A(V,\ker(q))\to \A(V,X)\twoheadrightarrow \A(V,T)\overset{0}\longrightarrow \Ext_\A^1(V,\ker(q))\hookrightarrow \Ext_\A^1(V,X)\to \cdots,
\] 
where the map $\A(V,q)\colon \A(V,X)\twoheadrightarrow \A(V,T)$ is surjective by definition of precover, so the connecting map  $\A(V,T)\to \Ext_\A^1(V,\ker(q))$ is necessarily trivial, showing that there is an inclusion $\Ext_\A^1(V,\ker(q))\hookrightarrow \Ext_\A^1(V,X)$. But in fact $\Ext_\A^1(V,X)=0$ as $X\in \Add(V)\subseteq {}^{\perp_1}\T$, so also $\Ext_\A^1(V,\ker(q))=0$, showing that $\ker(q)\in V^{\perp_1}=\Gen(V)$. Hence, the short exact sequence in \eqref{presentation_wannabe_eq}, shows that $T\in \Pres(V)$.

\medskip
{\em We assume in the sequel that $G$ is an epi-generator of $\A$.}

\smallskip\noindent
(1,2)$\Rightarrow$(3).  Conditions (3.1) and (3.3) are clear, while (3.2) follows by Corollary \ref{coro_proj_dim}. As for (3.4), let us start with a universal extension of $V$ by $G$ (which exists by Lemma \ref{tilt_implies_ext_univ_lemma})
\[
0\rightarrow G\stackrel{u}{\longrightarrow} T\stackrel{p}{\longrightarrow} V^{(J)}\rightarrow 0,
\]
with $T\in V^{\perp_1}=\T$. Now, given an epimorphism $q\colon V^{(I)}\twoheadrightarrow T$, put $\widehat {G}:=\Ker (p\circ q)$ and we consider the exact sequence 
\begin{equation}\label{eq.(*)_tilt_main}
0\rightarrow\widehat {G}\stackrel{\widehat {u}}{\longrightarrow}V^{(I)}\longrightarrow V^{(J)}\rightarrow 0.
\end{equation}
Then, $\widehat{u}$ is a special $\T$-preenvelope since $\Ext_\A^1(V^{(J)},-)$ vanishes on $\T$.
By Lemma \ref{injective_preenvelope}(3) we know that $u^{(K)}\colon \widehat{G}^{(K)}\to (V^{(I)})^{(K)}\cong V^{(I\times K)}$ is a monomorphism, for each set $K$ for which those coproducts exist in $\A$. Hence, the sequence in \eqref{eq.(*)_tilt_main} satisfies condition (3.4) for the generator $\widehat G$.
  
\smallskip\noindent
(3)$\Rightarrow$(2). Conditions (3.1) and (3.2) imply that $\T:=\Gen(V)\subseteq V^{\perp_1}$ since $\Ext_\A^1(V,-)$ is then a right exact functor. On the other hand,  let $u\colon G\to V_0$ be the monomorphism in (3.4). Then the monomorphism $u^{(I)}\colon G^{(I)}\to V_0^{(I)}$ is a special $\T$-preenvelope since $\Ext_\A^1(V_1^{(I)},-)$ vanishes on $\T$. Given an arbitrary $A\in\A$ and  an epimorphism $p\colon G^{(I)}\twoheadrightarrow A$, for some set $I$, Lemma \ref{lem.special-preenvelope-generator} gives a special $\T$-preenvelope $\mu_A\colon A\hookrightarrow T_A$, with $\Coker(\mu_A)\cong V_1^{(I)}$. In particular $\T$ is a cogenerating subcategory. When $A\in V^{\perp_1}$ in this argument, we get that $\mu_A$ is a section since then $\Ext_\A^1(V_1^{(I)},A)=0$. This implies that $V^{\perp_1}\subseteq\T$. Therefore, $\Gen(V)=V^{\perp_1}$ is  cogenerating. 
\end{proof}

\begin{remark} \label{rem.tilting with projective epigenerator}
If, in Proposition \ref{prop.Ext-universal tilting}, $\A$ has a projective epi-generator $P$ (e.g., if $\A$ is a category of modules) and  in the proof of ``(1,2)$\Rightarrow$(3)'' we take $G=P$, then the universal extension 
\[
0\rightarrow P\stackrel{u}{\longrightarrow} T\stackrel{p}{\longrightarrow}V^{(J)}\rightarrow 0
\] 
has the property that $T\in\T\cap {}^{\perp_1}\T=\Add(V)$ and $u$ is a (special) $\T$-preenvelope. By putting $V_0=T$ and $V_1=V^{(J)}$, one then gets the sequence of condition (3.4) with $G=P$.
\end{remark}

Our next result shows that two of the classical definitions of tilting objects are equivalent, and agree with ours, for a large class of Abelian categories $\A$ appearing ``in nature''.

\begin{corollary} \label{cor.different-definitions-tilting}
Let $\A$ be an Abelian category which is either  ${\Ext}^1$-small and (Ab.4)  (e.g., it is (Ab.3*) Abelian with an injective cogenerator) or  $\Hom$- and ${\Ext}^1$-finite. Then, the following are equivalent for an object $V\in \A$:
\begin{enumerate}[\rm (1)]
\item $V$ is a tilting object;
\item ${\Gen}(V)=V^{\perp_1}$;
\item the following conditions hold: 
\begin{enumerate}[\rm ({3.}1)]
\item the projective dimension of $V$ is $\leq 1$;
\item ${\Ext}_\A^1(V,V^{(I)})=0$,  for all sets $I$ (enough ${\Ext}_\A^1(V,V)=0$ in the $\Hom$-finite case);
\item $V^{\perp_{0,1}}:=\Ker (\A(V,-))\cap\Ker (\Ext_\A^1(V,-))=0$.
\end{enumerate}
\end{enumerate}
\end{corollary}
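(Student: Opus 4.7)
The plan is to prove the cycle (1) $\Rightarrow$ (2) $\Rightarrow$ (3) $\Rightarrow$ (1). The implication (1) $\Rightarrow$ (2) is immediate, since a tilting $V$ satisfies $\Gen(V)=V^{\perp_1}$ (as noted right after Definition \ref{def.quasi-tilting torsion pair}). The unifying tool for the other two directions is Proposition \ref{prop.derivcoprod=0-implies-projdim=1}, which in both hypothesis regimes transfers the vanishing $\Ext_\A^1(V,V^{(I)})=0$ into cogeneration of $V^{\perp_1}$.

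For (2) $\Rightarrow$ (3), assume $\Gen(V)=V^{\perp_1}$. Condition (3.3) is quick: any $X\in V^{\perp_{0,1}}$ lies in $V^{\perp_1}=\Gen(V)$, hence there is an epimorphism $p\colon V^{(I)}\twoheadrightarrow X$ (with $I$ finite in the Hom-finite case, chosen via $R$-module generators of $\A(V,X)$); the assumption $\A(V,X)=0$ forces each $p\circ\iota_j=0$, so $p=0$ and $X=0$. Condition (3.2) follows because $V\in V^{\perp_1}$ and, using Lemma \ref{lem.Ext-V-versus-Ext-VI}, $V^{\perp_1}$ is closed under the coproducts that exist (arbitrary in the (Ab.4) case, only finite in the Hom-finite case by Proposition \ref{prop.Hom-finite-are-coherent}(3)). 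For (3.1), the strategy is first to show $V^{\perp_1}$ is cogenerating and then to apply Corollary \ref{coro_proj_dim}: in the (Ab.4)$\,+\,\Ext^1$-small case, Example \ref{ex_exact_copr}(1) gives $\coprod_I^1V=0$, so combined with (3.2) Proposition \ref{prop.derivcoprod=0-implies-projdim=1}(2) applies; in the Hom- and $\Ext^1$-finite case, Corollary \ref{coro_ext_univ_finite} yields $\Ext^1$-universality of $V$, and then Proposition \ref{prop.derivcoprod=0-implies-projdim=1}(1) applies.

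For (3) $\Rightarrow$ (1), I first reprove $\Gen(V)=V^{\perp_1}$ and then upgrade to the full tilting definition. The inclusion $\Gen(V)\subseteq V^{\perp_1}$ uses the long exact sequence attached to a presentation $0\to K\to V^{(I)}\to T\to 0$: (3.2) kills $\Ext^1(V,V^{(I)})$ and (3.1) kills $\Ext^2(V,K)$, forcing $\Ext^1(V,T)=0$. For $V^{\perp_1}\subseteq\Gen(V)$, take $A\in V^{\perp_1}$ and let $T:=\tr_V(A)$ be the image of the canonical map $V^{(\A(V,A))}\to A$ (replaced by $V^n\to A$ keyed to $R$-module generators of $\A(V,A)$ in the Hom-finite case). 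By construction $\A(V,T)\to\A(V,A)$ is surjective, which combined with $\Ext^1(V,T)=0$ (since $T\in\Gen(V)\subseteq V^{\perp_1}$ just shown) gives $\A(V,A/T)=0$; and $\Ext^1(V,A/T)$ sits between $\Ext^1(V,A)=0$ and $\Ext^2(V,T)=0$, so vanishes as well. Hence $A/T\in V^{\perp_{0,1}}=0$ by (3.3), i.e.\ $A\in\Gen(V)$. The remaining tilting conditions are now routine: $\Pres(V)=\Gen(V)$ holds because, for $T\in\Gen(V)$, the canonical map $V^{(\A(V,T))}\to T$ is surjective and induces a surjection on $\A(V,-)$, so the LES forces its kernel to lie in $V^{\perp_1}=\Gen(V)$; cogeneration of $\Gen(V)$ was already established in the (2) $\Rightarrow$ (3.1) step; and $\Gen(V)$ is a torsion class because the same trace construction provides the torsion decomposition for every $A\in\A$.

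The principal obstacle is to handle the Hom-finite regime uniformly with the (Ab.4) one, given that arbitrary coproducts of $V$ are unavailable there (Proposition \ref{prop.Hom-finite-are-coherent}(3)); every coproduct-based construction must be replaced by a finite one using $R$-module generators of $\A(V,-)$ and $\Ext^1(V,-)$. Proposition \ref{prop.derivcoprod=0-implies-projdim=1} together with Corollary \ref{coro_ext_univ_finite} is the key abstraction that lets both cases run in parallel, and the delicate point is ensuring that the trace construction and the LES manipulations in the proof of (3) $\Rightarrow$ (1) survive with only finite coproducts in the Hom-finite setting.
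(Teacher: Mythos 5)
Your proof is correct, and its essential engine is the same as the paper's: under either set of hypotheses, the cogenerating property of $V^{\perp_1}$ is extracted from Proposition \ref{prop.derivcoprod=0-implies-projdim=1} (via Example \ref{ex_exact_copr} and $\Ext^1$-smallness in the (Ab.4) case, via Corollary \ref{coro_ext_univ_finite} in the $\Hom$- and $\Ext^1$-finite case), and the projective dimension bound comes from Corollary \ref{coro_proj_dim}. Where you diverge is in the organization: the paper proves (2)$\Rightarrow$(1) by noting that $\Add(V)$ is precovering under both hypotheses and then simply invoking condition (2) of Proposition \ref{prop.Ext-universal tilting}, and it dispatches the equivalence with (3) by citing \cite{colpi} (``follows as for Grothendieck categories''). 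You instead run the cycle (1)$\Rightarrow$(2)$\Rightarrow$(3)$\Rightarrow$(1) and write out the trace construction explicitly; your (3)$\Rightarrow$(1) in effect re-derives the content of the implication ``(2)$\Rightarrow$(1)'' of Proposition \ref{prop.Ext-universal tilting}, with the canonical map $V^{(\A(V,A))}\to A$ (resp.\ $V^{n}\to A$ built from $R$-module generators of $\A(V,A)$) playing the role of the $\Add(V)$-precover. This buys self-containedness for precisely the part the paper outsources. One small inaccuracy: to justify (3.2) from (2) you appeal to Lemma \ref{lem.Ext-V-versus-Ext-VI} for closure of $V^{\perp_1}$ under coproducts, but that lemma concerns the \emph{left} orthogonal ${}^{\perp_1}\mathcal{X}$ (it controls $\Ext^1_\A(\coprod_I V_i,X)$, not $\Ext^1_\A(V,\coprod_I Y_i)$). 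The correct, and simpler, justification is that $V^{(I)}\in\Gen(V)=V^{\perp_1}$ outright, since $\Gen(V)$ contains every existing coproduct of copies of $V$; this does not affect the validity of the argument.
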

\begin{proof}
Under both sets of hypotheses, the subcategory $\Add(V)$ is precovering.

\smallskip\noindent
(1)$\Rightarrow$(2) is clear.

\smallskip\noindent
(2)$\Rightarrow$(1). We have already noticed that $\Add (V)$ is precovering.  Moreover, by Proposition \ref{prop.derivcoprod=0-implies-projdim=1} and Corollary \ref{coro_ext_univ_finite}, we know that $V^{\perp_1}$ is cogenerating. Then, assertion (2) of Proposition \ref{prop.Ext-universal tilting} holds.

\smallskip\noindent
(1,2)$\Leftrightarrow$(3)  follows as for Grothendieck categories (see \cite{colpi}).
\end{proof}

The following is an interesting consequence of the previous results:

 \begin{corollary} \label{prop.Ext-univ-tilting implies Ext-small}
Let $\A$ be an Abelian category and $V\in\A$ a tilting object. 
 \begin{enumerate}[\rm (1)]
 \item If $\A$ is (Ab.3) then $\A$ is $\Ext^1$-small.
 \item Suppose that $R$ is a coherent commutative ring and that $\A$ is an $R$-linear category such that $\A(A,B)$ is a finitely presented $R$-module, for all $A,\, B\in\A$ (e.g., any $\Hom$-finite category $\A$ over a commutative Noetherian ring). Then, $\Ext_\A^1(A,B)$ is a finitely presented $R$-module, for all $A,\, B\in\A$. In particular, $\A$ is  $\Ext^1$-finite.
 \end{enumerate}
 \end{corollary}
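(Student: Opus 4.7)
The plan is to bootstrap information about $\Ext^1_\A(V,A)$ and propagate it to arbitrary pairs $(X,A)$ via the universal extension machinery of Section~\ref{unversal_ext_sec}. Three preliminary facts, common to both parts, set the stage. First, $V$ is $\Ext^1$-universal by Lemma~\ref{tilt_implies_ext_univ_lemma}, and every universal extension $0\to A\to T\to V^{(J)}\to 0$ has $T\in V^{\perp_1}=\T$. Second, since $\T=\Gen(V)=V^{\perp_1}$ is a cogenerating torsion class, hence is closed under quotients, Example~\ref{ex_exact_copr}(2) gives $\coprod_I^1 V=0$ whenever $V^{(I)}$ exists, and Proposition~\ref{prop.Ext-contrav-preserves-products} then yields the canonical isomorphism $\Ext^1_\A(V^{(I)},A)\cong \Ext^1_\A(V,A)^I$. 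Third, Corollary~\ref{coro_proj_dim} applied to $\mathcal{X}=\T$ shows $\pd_\A(V^{(I)})\leq 1$, so $\Ext^2_\A(V^{(I)},-)=0$.

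For part~(1), I would execute three reductions in order. Step~A: Proposition~\ref{prop_ext_univ_infinite} combined with $V$ being $\Ext^1$-universal gives immediately that $\Ext^1_\A(V,A)$ is a set, whence $\Ext^1_\A(V^{(I)},A)$ is too. Step~B: for $T\in\T$, using that every object of $\T$ admits a surjection $V^{(I)}\twoheadrightarrow T$ with kernel in $\T$ (as established in the proof of Proposition~\ref{prop.Ext-universal tilting}), I choose an exact sequence $0\to K\to V^{(I)}\to T\to 0$ with $K\in\T$ and apply $\A(-,A)$; the long exact sequence displays $\Ext^1_\A(T,A)$ as an extension of a quotient of $\A(K,A)$ by a submodule of $\Ext^1_\A(V^{(I)},A)$, both of which are sets. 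Step~C: for an arbitrary $X\in\A$, the universal extension $0\to X\to T_X\to V^{(J)}\to 0$ with $T_X\in\T$ together with $\Ext^2_\A(V^{(J)},A)=0$ exhibits $\Ext^1_\A(X,A)$ as a quotient of the set $\Ext^1_\A(T_X,A)$.

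For part~(2), since $\A(A,B)$ is finitely presented, and in particular finitely generated, over the commutative ring $R$, the category $\A$ is $\Hom$-finite, so Proposition~\ref{prop.Hom-finite-are-coherent}(3) tells me that only finite coproducts exist in $\A$. Consequently all universal extensions and $\Pres(V)$-presentations used in part~(1) involve only finite powers of $V$. Over the commutative coherent ring $R$, the subcategory $\fpmod R\subseteq\mod R$ is abelian and is closed under kernels, cokernels, extensions, and under taking finitely generated submodules of finitely presented modules. The first step is then to verify that $\Ext^1_\A(V,A)$ is finitely presented: applying $\A(V,-)$ to the universal extension $0\to A\to T_A\to V^J\to 0$ with $J$ finite, the vanishing $\Ext^1_\A(V,T_A)=0$ (because $T_A\in\T$) identifies $\Ext^1_\A(V,A)$ with $\coker(\A(V,T_A)\to\A(V,V^J))$, the cokernel of a morphism between finitely presented $R$-modules, hence itself finitely presented. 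I would then repeat the reductions of part~(1), this time tracking finite presentation through each long exact sequence.

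The step I expect to be the main obstacle is the finite-presentation analogue of Step~B: the long exact sequence from $0\to K\to V^n\to T\to 0$ (with $K\in\T$ and $n$ finite) involves the term $\Ext^1_\A(K,A)$, whose finite presentation is the same type of statement being proved, creating an apparent circularity. I plan to resolve this by iterating the $\add(V)$-presentations available for every object of $\T$ and exploiting the vanishing $\Ext^{\geq 2}_\A(V^{(\cdot)},-)=0$ coming from $\pd_\A(V)\leq 1$; together with the coherence of $R$, which ensures that finitely generated submodules of finitely presented $R$-modules are finitely presented, this should allow the finite presentation of $\Ext^1_\A(T,A)$ to be assembled from finitely many pieces, each controlled by the structural closure properties of $\fpmod R$.
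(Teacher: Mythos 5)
Your part (1) is correct and is essentially the paper's own argument: reduce to $T\in\T$ via the universal extension of $V$ by $X$ together with $\Ext^2_\A(V^{(J)},-)=0$, then sandwich $\Ext^1_\A(T,A)$ between a quotient of $\A(K,A)$ and a submodule of $\Ext^1_\A(V^{(I)},A)$ using a $\Pres(V)$-presentation. Part (2), however, has a genuine gap at exactly the step you flag, and the fix you sketch does not close it. From $0\to K\to V^n\to T\to 0$ with $K\in\T$, the long exact sequence exhibits $\Ext^1_\A(T,A)$ as an extension of $\ker\bigl(\Ext^1_\A(V^n,A)\to\Ext^1_\A(K,A)\bigr)$ by a finitely presented module; over a coherent non-Noetherian ring a submodule of a finitely presented module need not be finitely generated, so this kernel is only controlled if you already control $\Ext^1_\A(K,A)$. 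Iterating $\add(V)$-presentations merely replaces $K$ by another object $K'\in\T$ with the same unknown term $\Ext^1_\A(K',A)$: there is no base case and no descending invariant, so the induction does not terminate. Moreover $\pd_\A(V)\leq 1$ enters the first-variable sequence only through $\Ext^2_\A(V^n,A)=0$, which governs $\Ext^2_\A(T,A)$ rather than $\Ext^1_\A(T,A)$, so it cannot rescue this step.

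The missing idea is to switch to the second variable before invoking the $\add(V)$-presentation. Having reduced to $T\in\T$, apply $\A(T,-)$ to a universal extension $0\to B\to T_B\to V^{(r)}\to 0$ with $T_B\in\T$ (Lemma \ref{tilt_implies_ext_univ_lemma}); since $\fpmod R$ is closed under kernels, cokernels and extensions in $\mod R$, this reduces the whole problem to showing that $\Ext^1_\A(T,T')$ is finitely presented for $T,T'\in\T$. For such $T'$ the presentation $0\to\widetilde T\to V^{(s)}\to T\to 0$ yields
\[
\A(V^{(s)},T')\longrightarrow\A(\widetilde T,T')\longrightarrow\Ext^1_\A(T,T')\longrightarrow\Ext^1_\A(V^{(s)},T')=0,
\]
because $T'\in\T=V^{\perp_1}$: the troublesome right-hand term now vanishes outright and $\Ext^1_\A(T,T')$ is a plain cokernel of a morphism of finitely presented modules. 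Your first-variable iteration cannot be made to work on its own; you need this second-variable reduction.
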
 
 \begin{proof}
 Let us start recalling that each tilting object is $\Ext^1$-universal (see Lemma \ref{tilt_implies_ext_univ_lemma}).
 
  \smallskip\noindent
 (1). By Proposition \ref{prop_ext_univ_infinite}, we know that $\Ext^1_\A(V,B)$ is a set, for each  $B\in\A$. Consider then $A\in\A$ and take a universal extension $0\rightarrow A\to T_A\to V^{(I)}\rightarrow 0$ with $T_A\in V^{\perp_1}=\T$. We get an exact sequence of (a priori big) Abelian groups 


\[
\Ext^1_\A(V^{(I)},B)\to\Ext^1_A(T_A,B)\to\Ext^1_A(A,B)\to\Ext^2_A(V^{(I)},B)=0
\] 
(see Corollary \ref{coro_proj_dim} for the last equality). Then our task reduces to prove that $\Ext_\A^1(T,B)$ is a set, for all $T\in\T$ and $B\in\A$. But, by hypothesis, we have that $\T=\Pres(V)$, so we can fix an exact sequence $0\rightarrow T'\to V^{(J)}\to T\rightarrow 0$, with $T'\in\T$,  leading to an exact sequence of (a priori big) Abelian groups $\A(T',B)\to\Ext_\A^1(T,B)\to\Ext_\A^1(V^{(I)},B)$, where the outer terms are sets.
 
 \smallskip\noindent
 (2). By Corollary \ref{coro_ext_univ_finite}, we know that $\Ext^1_\A(V,B)$ is a finitely generated $R$-module, for each  $B\in\A$. Proceeding as in the proof of part (1) (and using Proposition \ref{prop.Hom-finite-are-coherent} to show that the set $I$ has to be finite), we are reduced to prove that $\Ext_\A^1(T,B)$ is a finitely presented $R$-module, for all $T\in\T$ and $B\in\A$. Since $V$ is $\Ext^1$-universal, we can fix a universal extension 
 \[
 0\rightarrow B\longrightarrow T_B\stackrel{\pi}{\longrightarrow} V^{(r)}\rightarrow 0.
 \] 
 This gives an exact sequence of (a priori big) $R$-modules 
 \[
 \A(T,T_B)\longrightarrow\A (T,V^{(r)})\longrightarrow\Ext_\A^1(T,B)\longrightarrow\Ext_\A^1(T,T_B)\stackrel{\pi_*}{\longrightarrow}\Ext_\A^1(T,V^{(r)}),
 \] 
 where $\pi_*=\Ext_\A^1(T,\pi)$.   Bearing in mind  that $\fpmod R$ is closed under extensions in $\mod R$ and that $R$ is coherent, our task is further reduced to prove that $\Ext_\A^1(T,T')$ is a finitely presented $R$-module, for all $T,\, T'\in\T$. Now, use that $\T=\Pres(V)$ again, to construct an exact sequence 
 \[
 0\rightarrow\widetilde{T}\longrightarrow V^{(s)}\longrightarrow T\rightarrow 0, \quad\text{with $\widetilde T\in\T$}.
 \] 
Applying the contravariant functor $\A(-,T')\colon\A\to\mod R$, we then get an exact sequence 
\[
\A(V^{(s)},T')\longrightarrow\A(\widetilde{T},T')\longrightarrow\Ext_\A^1(T,T')\rightarrow 0,
\] 
and so $\Ext_\A^1(T,T')$ is a finitely presented as desired. 
 \end{proof}

\subsection{ (Quasi-)tilting torsion classes}\label{ext_universal_tilting_Sec}
The goal of this subsection is to identify the \mbox{(semi-)}special preenveloping torsion classes which are given by (quasi-)tilting objects.

\begin{proposition} \label{prop.lorthogonal-generates-torsion}
 Let $\A$ be an Abelian category and $\T$ a torsion class in $\A$ 
 such that $\mathcal{B}:=\Sub(\T)$ is a reflective subcategory of $\A$ that has an epi-generator (e.g., if $\A$ has an epi-generator, see Remark \ref{rem.to-main-theorem}). Then, the following assertions are equivalent:
 \begin{enumerate}[\rm (1)]
  \item  $\T=\Gen(V)$, for a quasi-tilting object $V$;
\item $\T$ is semi-special preenveloping in $\A$ and $\mathcal{B}$ contains an epi-generator that is in $\Ker(\Ext_\mathcal{B}^1(-,\T))$;
 \item $(\Ker(\Ext_\mathcal{B}^1(-,\T)),\T)$  is a complete cotorsion pair in $\mathcal{B}$;
 \item $\T$ is semi-special preenveloping in $\A$ and $_{}^{\perp_1}\T$ generates $\T$, that is, each object of $\T$ is the epimorphic image of one in  $_{}^{\perp_1}\T$.
 \end{enumerate}
 \end{proposition}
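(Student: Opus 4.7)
The plan is to establish the cycle $(1) \Rightarrow (4) \Rightarrow (2) \Leftrightarrow (3) \Rightarrow (1)$. The implication $(1) \Rightarrow (4)$ is immediate: if $V$ is quasi-tilting with $\T = \Gen(V)$, then Proposition~\ref{prop.properties of quasi-tilting} makes $V$ a tilting object of $\mathcal{B}$, hence $\Ext^1$-universal there (Lemma~\ref{tilt_implies_ext_univ_lemma}) and with $V^{(I)} \in \Ker(\Ext^1_\mathcal{B}(V,-)) = \T$, so Theorem~\ref{thm.specialpreenveloping-versus-cotorsionpair}(1)$\Rightarrow$(2) yields the semi-special preenveloping property; moreover $V \in \Add(V) = {}^{\perp_1}\T \cap \T$ and $V$ generates $\T$, so ${}^{\perp_1}\T$ generates $\T$. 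The equivalence $(2) \Leftrightarrow (3)$ then combines Theorem~\ref{thm.specialpreenveloping-versus-cotorsionpair} (turning semi-special preenveloping into right completeness of the cotorsion pair) with Remark~\ref{remark_complete_cotorsion} (identifying full completeness with the existence of an epi-generator of $\mathcal{B}$ in $\Ker(\Ext^1_\mathcal{B}(-,\T))$).

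For $(3) \Rightarrow (1)$, I would pick an epi-generator $G$ of $\mathcal{B}$ inside $\Ker(\Ext^1_\mathcal{B}(-,\T))$, form its special $\T$-preenvelope $0 \to G \to T_G \to W_G \to 0$, and note that the long exact sequence forces $T_G \in \Ker(\Ext^1_\mathcal{B}(-,\T)) \cap \T$. Lemma~\ref{2-4implies1_lemma} then gives $\T = \Gen_\mathcal{B}(T_G) = \Ker(\Ext^1_\mathcal{B}(W_G,-))$. The candidate $V := T_G \oplus W_G$ lies in $\T$, generates it, and satisfies $\Ker(\Ext^1_\mathcal{B}(V,-)) = \T$; using Lemma~\ref{lem.extension-in-reflective-subcats}(1) to transfer $\perp_1$ between $\A$ and $\mathcal{B}$, and observing that the canonical map $V^{(\mathcal{B}(V,T))} \twoheadrightarrow T$ has kernel lying in $\Ker(\Ext^1_\mathcal{B}(V,-)) = \T$, the equalities $\Gen(V) = \Pres(V) = \overline{\Gen}(V) \cap V^{\perp_1} = \T$ all follow, making $V$ quasi-tilting.

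The main obstacle is $(4) \Rightarrow (2)$: producing an epi-generator of $\mathcal{B}$ inside $\Ker(\Ext^1_\mathcal{B}(-,\T))$ from the abstract generation hypothesis. My approach rests on a transfer sub-lemma: writing $\tau\colon \A \to \mathcal{B}$ for the reflector with unit $\eta$, I would show that $P \in {}^{\perp_1}\T$ forces $\tau(P) \in \Ker(\Ext^1_\mathcal{B}(-,\T))$. Indeed, pulling back an extension $0 \to T \to X \to \tau(P) \to 0$ in $\mathcal{B}$ along $\eta_P$ yields a split sequence $0 \to T \to Y \to P \to 0$ in $\A$; a section $s\colon P \to Y$ composes to a morphism $P \to X$ lifting $\eta_P$, which by the universal property of the reflector factors through $\eta_P$ and produces a section of $X \to \tau(P)$.

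With this sub-lemma in hand, I would complete $(4) \Rightarrow (2)$ as follows: take an epi-generator $Q$ of $\mathcal{B}$ with its special $\T$-preenvelope $0 \to Q \to T_Q \to W_Q \to 0$, use (4) to choose $P \in {}^{\perp_1}\T$ with $P \twoheadrightarrow T_Q$, so that $\tau(P) \twoheadrightarrow T_Q$ lies in $\Ker(\Ext^1_\mathcal{B}(-,\T))$, and form the pullback $Y := \tau(P) \times_{T_Q} Q$ in $\mathcal{B}$. Then $Y \twoheadrightarrow Q$ makes $Y$ an epi-generator of $\mathcal{B}$, and the cokernel of $Y \hookrightarrow \tau(P)$ is $T_Q/Q = W_Q$; the long exact sequence associated with $0 \to Y \to \tau(P) \to W_Q \to 0$ thus reduces $Y \in \Ker(\Ext^1_\mathcal{B}(-,\T))$ to the vanishing $\Ext^2_\mathcal{B}(W_Q, T) = 0$ for all $T \in \T$. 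This delicate point follows from Corollary~\ref{coro_proj_dim} applied inside $\mathcal{B}$: by Lemma~\ref{2-4implies1_lemma} one has $\T \subseteq \Ker(\Ext^1_\mathcal{B}(W_Q,-))$, while $\T$ is cogenerating in $\mathcal{B} = \Sub(\T)$ and closed under quotients, so $\pd_\mathcal{B}(W_Q) \leq 1$.
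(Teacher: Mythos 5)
Your overall architecture coincides with the paper's: the cycle $(1)\Rightarrow(4)\Rightarrow(2)\Leftrightarrow(3)\Rightarrow(1)$ is exactly the one used there, and your arguments for $(1)\Rightarrow(4)$, for the transfer sub-lemma ($P\in{}^{\perp_1}\T$ forces $\tau(P)\in\Ker(\Ext^1_\B(-,\T))$), for the pullback construction of the new epi-generator in $(4)\Rightarrow(2)$, and for $(2)\Leftrightarrow(3)$ match the paper's proof step by step: the paper's $\widetilde{Q}$ is your $Y$, its $\tau(X_Q)$ is your $\tau(P)$, and it likewise invokes Corollary~\ref{coro_proj_dim} to kill the relevant $\Ext^2_\B$ on $\T$.

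The one genuine gap is in $(3)\Rightarrow(1)$, at the verification that $\Gen(V)=\Pres(V)$. You appeal to ``the canonical map $V^{(\B(V,T))}\twoheadrightarrow T$'' and assert its kernel lies in $\Ker(\Ext^1_\B(V,-))=\T$. Two problems. First, that coproduct need not exist: the paper does not assume cocompleteness (Remark~\ref{rem.about (co)products}), and in the motivating application $\A=\fpmod R$ of Section~\ref{Sec_fin_pres} the index set $\B(V,T)$ is typically infinite while infinite coproducts typically fail to exist, so the argument breaks precisely where the proposition is meant to be applied. Second, if you replace it by an arbitrary existing presentation $0\to K\to V^{(I)}\to T\to 0$ (which is all that $T\in\Gen(V)$ guarantees), the long exact sequence only identifies $\Ext^1_\B(V,K)$ with a quotient of $\Coker\bigl(\B(V,V^{(I)})\to\B(V,T)\bigr)$; that cokernel vanishes for the canonical map but not for an arbitrary presentation, so you cannot conclude $K\in\T$. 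The paper's fix is different: it applies Lemma~\ref{lem.special-preenvelope-generator} to the special preenvelope $\lambda$ of the epi-generator to produce, for every $B\in\B$, a $\T$-preenvelope $\mu_B\colon B\to T_B$ whose cokernel lies in $\Add(V)$, and then pushes the given presentation out along $\mu_K$ to obtain $0\to T_K\to Z\to T\to 0$ with $T_K\in\T$ and $Z\cong V^{(I)}\oplus\Coker(\mu_K)\in\Add(V)$, which exhibits $T\in\Pres(V)$. Everything else in your proposal is sound.
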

 \begin{proof}
 (1)$\Rightarrow$(4). By the implication ``(1)$\Rightarrow$(2)'' in Theorem \ref{thm.specialpreenveloping-versus-cotorsionpair}, to see that $\T$ is semi-special preenveloping in $\A$ it is enough to check that $\B$ is reflective (which we know by assumption), and that $\T=\B\cap V^{\perp_1}$ with $V\in \T$ an object which is $\Ext^1$-universal in $\B$ (which is also true by the definition of quasi-tilting, Proposition \ref{prop.properties of quasi-tilting}(1) and Lemma \ref{tilt_implies_ext_univ_lemma}). The fact that ${}^{\perp_1}\T$ generates $\T$ easily follows since $V\in {}^{\perp_1}\Tcal$. 
 
 \smallskip\noindent 
 (4)$\Rightarrow$(2). Let us fix an epi-generator $Q$ of $\B$. By Theorem \ref{thm.specialpreenveloping-versus-cotorsionpair}, $\T$ is special preenveloping in $\B$ so, by Lemma \ref{2-4implies1_lemma}, we have an exact sequence 
 \[
 \xymatrix{
 0 \ar[r] & Q \ar[r]^-{\lambda} & T_Q \ar[r] & V_Q \ar[r] & 0
 }
 \]
 where $\lambda$ is a special $\Tcal$-preenvelope of $Q$ in $\B$, and $\Tcal=\Gen(T_Q)=\Ker(\Ext^{1}_{\B}(V_Q,-))=\B \cap V_Q^{\perp_1}$ (see Lemma~\ref{lem.extension-in-reflective-subcats}(1) for the last equality). Take now an epimorphism $\rho\colon X_Q \twoheadrightarrow T_Q$, with $X_Q\in {}^{\perp_1}\Tcal.$ Fix also a left adjoint $\tau\colon \A \to \B$ to the inclusion $\iota\colon \B \hookrightarrow \A$, and recall that the unit  $u\colon \id_{\A} \to  \iota \circ \tau$ of this adjunction is an epimorphism (see Lemma \ref{reflector_is_epi_lemma}). Consider now the epimorphism 
 \[
 \xymatrix{
 \tau(X_Q) \ar@{>>}[r]^{\tau(\rho) \hspace{0,8 cm}}  & \tau(T_Q)\cong T_Q& \text{ in $\B$.} 
 }
 \]
 We claim that $\tau(X_Q)\in \Ker(\Ext^{1}_{\B}(-,\Tcal))$. In fact, given $T\in \Tcal$ and a short exact sequence 
 \begin{equation}\label{sequence_split_eq}
 \xymatrix{
 0 \ar[r] & T \ar[r] & B \ar[r] & \tau(X_Q) \ar[r] & 0&\text{ in $\B$,}
 }
 \end{equation}
 we get a commutative diagram with exact rows in $\A$: 
 \[
 \xymatrix{
 0 \ar[r] & \iota(T) \ar[r] \ar@{=}[d] & A \ar[r] \ar@{>>}[d] \ar@{}[dr]|{\text{P.O. \ P.B.}} & X_Q \ar[r] \ar@{>>}[d]^{u_{X_{Q}}} & 0 \\ 
 0 \ar[r] & \iota(T) \ar[r] & \iota(B) \ar[r] & (\iota \circ \tau)(X_Q) \ar[r]  & 0
 }
 \] 
 where the square on the right-hand side is then bicartesian. By the choice of $X_Q$, the upper row splits, and it is mapped by $\tau$ onto the sequence \eqref{sequence_split_eq}. Therefore, \eqref{sequence_split_eq} also splits and so \mbox{$\tau(X_Q)\in \Ker(\Ext^{1}_{\B}(-,\Tcal))$}. Consider now the following commutative diagram with exact rows in $\B$: 
 \[
 \xymatrix@C=40pt{
 0 \ar[r] & \widetilde{Q}\ar@{}[dr]|{\text{P.O. \ P.B.}} \ar[r] \ar[d]_{q} & \tau(X_Q) \ar[r] \ar@{>>}[d]^{\tau(\rho)} & V_Q \ar[r] \ar@{=}[d] & 0 \\ 
 0 \ar[r] & Q \ar[r] & T_Q \ar[r] & V_Q \ar[r] & 0
 }
 \]
where the map $q$ is an epimorphism since the left square is bicartesian. Then $\widetilde{Q}$ is an epi-generator of $\B$. On the other hand, by Corollary \ref{coro_proj_dim}, we have that the projective dimension of $V_Q$ in $\B$ is less or equal than one. We then get an exact sequence of functors $\Tcal \to \Ab$ as follow 
\[
\xymatrix{
0 =\Ext^{1}_{\B}(\tau(X_Q),-)_{\restriction_{\Tcal}} \ar[r] & \Ext^{1}_{\B}(\widetilde{Q},-)_{\restriction_{\Tcal}} \ar[r] & \Ext^{2}_{\B}(V_Q,-)_{\restriction_{\Tcal}}=0.  
}
\] 
It then follows that $\widetilde{Q}\in \Ker(\Ext^{1}_{\B}(-,\Tcal)).$
 	
\smallskip\noindent 
(2)$\Leftrightarrow$(3) is Remark \ref{remark_complete_cotorsion}.      
 	 
 \smallskip\noindent 
 (2--3)$\Rightarrow$(1). Let us fix an epi-generator $Q$ of $\B$ such that $Q\in \Ker(\Ext^{1}_{\B}(-,\Tcal))$. As in the proof of the implication ``(4)$\Rightarrow$(2)'', we consider a short exact sequence
\[
\xymatrix{
0 \ar[r] & Q \ar[r]^{\lambda} & T_Q \ar[r]^{\pi} & V_Q \ar[r] & 0,
}
\]
where $\lambda$ is a special $\T$-preenvelope in $\B$. Now, $Q$ and $V_Q$ belong in ${}^{\perp_1}\Tcal\cap \B$, so that  $T_Q\in \Tcal \cap {}^{\perp_1}\Tcal$. We then have $\Gen(T_Q)=V_{Q}^{\perp_1} \cap \B =\Tcal\subseteq T_{Q}^{\perp_1}\cap \B$. By taking $V:=T_{Q}\oplus V_Q$, we get that $\Gen(V)=\Gen(T_Q)=\Tcal$ and $\B\cap V^{\perp_1}=\B \cap T_{Q}^{\perp_1} \cap V_Q^{\perp_1}=\B \cap V_{Q}^{\perp_1}=\Tcal$. Furthermore, $\lambda$ is in particular a $\Gen(V)$-preenvelope of $Q$, whose cokernel is $V_Q\in \Add(V)$. Thus, by Lemma \ref{lem.special-preenvelope-generator}, we have that for every object $B$ in $\mathcal{B}$, there is $\mu_B\colon B \to T_B$ a $\Tcal$-preenvelope of $B$ whose cokernel is in $\Add(V)$. Let $T\in \Gen(V)$ and we consider an exact sequence in $\A$ of the form:
\[
0\rightarrow K\stackrel{u}{\longrightarrow} V^{(I)}\stackrel{p}{\longrightarrow} T\rightarrow 0.
\] 
for some $I$ set. Note that $K\in \B$, and hence, we can consider $\mu_K\colon K \to T_K$ a $\Tcal$-preenvelope of $K$ such that $\Coker(\mu_K)\in \Add(V)$. Consider the following commutative diagram with exact rows:  
\[
\xymatrix@C=50pt{
0 \ar[r] &\ar@{}[dr]|{\text{P.O.}}  K \ \ar@{^(->}[d]_{\mu_K} \ar@{^(->}[r]^{ u} & V^{(I)} \ar@{>>}[r]^{p} \ar@{^(->}[d] & T \ar[r] \ar@{=}[d] & 0 \\ 
0 \ar[r] & T_K \  \ar@{^(->}[r] \ar@{>>}[d]_{q} & Z  \ar@{>>}[r] \ar@{>>}[d] & T \ar[r] & 0
\\ & \Coker(\mu_K) \ar@{=}[r] & \Coker(\mu_K)
}
\]
From Lemma \ref{lem.Ext-V-versus-Ext-VI}, we obtain that $Z\cong V^{(I)}\oplus \Coker(\mu_K)$ and so $Z\in\Add(V)$. Therefore $T\in \Pres(V)$ since  $T_K\in \Gen(V)$. It follows that $\Pres(V)=\Gen(V)=\T$, so that $V$ is a quasi-tilting object. 
\end{proof}

As an immediate consequence, we get:
\begin{corollary} \label{cor.tilting-bijection}
Let $\A$ be an Abelian category with an epi-generator and $\mathbf{t}=(\T,\mathcal F)$  a torsion pair in $\A$. The following assertions are equivalent:
\begin{enumerate}[\rm (1)]
\item $\mathbf{t}$ is a tilting torsion pair;
\item  $({}^{\perp_1}\T,\T)$ is a right complete cotorsion pair and ${}^{\perp_1}\T$ contains an epi-generator of $\A$;
\item $({}^{\perp_1}\T,\T)$ is a complete cotorsion pair in $\A$;
\item $\T$ is special preenveloping in $\A$ and ${}^{\perp_1}\T$ generates $\T$.
\end{enumerate}
In particular, the assignment $[V]\mapsto\Gen(V)$ gives a one-to-one correspondence 
\[
\xymatrix@R=12pt@C=30pt{
{\left\{\begin{matrix}\text{Equivalence classes of }\\  \text{tilting objects in $\A$}\end{matrix}\right\}}\ar@{->}[rr]^-{1:1}&&{\left\{\begin{matrix}\text{Special preenveloping}\\ \text{torsion classes $\T$ in $\A$}\\
\text{such that ${}^{\perp_1}\T$ generates $\T$}\end{matrix}\right\}}
}
\]
\end{corollary}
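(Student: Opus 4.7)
My plan is to derive this corollary as a direct consequence of Proposition \ref{prop.lorthogonal-generates-torsion}, using the observation that a tilting torsion pair is precisely a quasi-tilting torsion pair whose torsion class is cogenerating. The point is that once $\T$ is cogenerating, $\mathcal{B}:=\Sub(\T)$ coincides with $\A$, and hence $\Ker(\Ext^1_\mathcal{B}(-,\T))={}^{\perp_1}\T$; this identification lets all statements of Proposition \ref{prop.lorthogonal-generates-torsion} be read directly inside $\A$.

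First I would establish ``(1)$\Leftrightarrow$(4)''. If (1) holds, Proposition \ref{prop.lorthogonal-generates-torsion} yields that $\T$ is semi-special preenveloping with ${}^{\perp_1}\T$ generating $\T$, and since $\T$ cogenerates, Lemma \ref{injective_preenvelope}(1) forces every $\T$-preenvelope to be monic, so ``semi-special'' is automatically ``special''. Conversely, a special $\T$-preenvelope $A\hookrightarrow T_A$ embeds every $A\in\A$ into $\T$, showing that $\T$ cogenerates and $\mathcal{B}=\A$; Proposition \ref{prop.lorthogonal-generates-torsion} then delivers a quasi-tilting object $V$ with $\Gen(V)=\T$, which combined with cogenerating is tilting.

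Next I would establish ``(4)$\Leftrightarrow$(2)''. Corollary \ref{cor.special-preenveloping-arbitraryA} identifies $\T$ being special preenveloping with $({}^{\perp_1}\T,\T)$ being a right complete cotorsion pair, so (4) and (2) differ only by the existence of an epi-generator of $\A$ inside ${}^{\perp_1}\T$. Given (4), I would invoke the implication ``(4)$\Rightarrow$(2)'' of Proposition \ref{prop.lorthogonal-generates-torsion} (applied with $\mathcal{B}=\A$), whose pullback construction furnishes exactly such a generator. Conversely, an epi-generator $G\in{}^{\perp_1}\T$ shows trivially that ${}^{\perp_1}\T$ generates $\T$: any $T\in\T$ is a quotient of $G^{(I)}$ for some set $I$, and $G^{(I)}\in{}^{\perp_1}\T$ by Lemma \ref{lem.Ext-V-versus-Ext-VI}(2). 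The equivalence ``(2)$\Leftrightarrow$(3)'' is then exactly Remark \ref{remark_complete_cotorsion}.

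Finally, the bijective correspondence falls out formally: the map $[V]\mapsto\Gen(V)$ is well-defined and injective by Proposition \ref{prop.properties of quasi-tilting}(3), and surjectivity is just the implication ``(4)$\Rightarrow$(1)''. The main obstacle I anticipate sits inside the ``(4)$\Rightarrow$(2)'' step of Proposition \ref{prop.lorthogonal-generates-torsion}: the real technical content, namely constructing an epi-generator living in ${}^{\perp_1}\T$ from the mere generation condition, takes place there via a delicate pullback argument. At the level of the corollary itself, however, all that is required is to check that the hypotheses of that proposition specialize correctly in the cogenerating case $\mathcal{B}=\A$.
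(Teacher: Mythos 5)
Your proposal is correct and follows exactly the route the paper intends: the corollary is stated as an immediate consequence of Proposition \ref{prop.lorthogonal-generates-torsion}, obtained by specializing to the cogenerating case $\mathcal{B}=\Sub(\T)=\A$ (so that $\Ker(\Ext^1_\B(-,\T))={}^{\perp_1}\T$ and, via Lemma \ref{injective_preenvelope}(1), semi-special preenvelopes become special). Your spelled-out checks of the four equivalences and of the bijection match what the paper leaves implicit.
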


\subsection{Quasi-tilting objects versus semi-special preenveloping torsion classes}\label{sub.quasi-tilting}
In this subsection we  apply Proposition \ref{prop.lorthogonal-generates-torsion} to some cases where we can guarantee that the torsion class $\T$ has the property that $\B:=\Sub(\T)$ is a reflective subcategory of $\A$.

\begin{corollary} \label{cor.qtilting bijection for Ab3}
Let $\A$ be an (Ab.3) Abelian category with a generator, (whence it is bicomplete and well-powered   by Lemma \ref{lem.AB3-are-AB3*}). The assignment $[V]\mapsto\Gen(V)$ gives a bijection between the equivalence classes of quasi-tilting objects $V$ such that $\Gen(V)$ is closed under taking products and the semi-special preenveloping torsion classes $\T$  such that $_{}^{\perp_1}\T$ generates $\T$. 
\\
When $\A$ is also (Ab.4*) (e.g., when $\A$ has a projective generator), the mentioned quasi-tilting objects are precisely the {\bf strongly finendo} ones, i.e. those for which $(V^{(J)})^I\in \Gen(V)$, for all sets $I,\, J$. 
\end{corollary}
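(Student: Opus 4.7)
The plan is to reduce the bijection claim to an application of Proposition \ref{prop.lorthogonal-generates-torsion}, whose hypotheses will be verified in both directions using the (Ab.3) plus generator assumptions; by Lemma \ref{lem.AB3-are-AB3*}, $\A$ is already known to be bicomplete and well-powered, and any generator $G$ of $\A$ is in fact an epi-generator.

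For the forward direction, given a quasi-tilting object $V$ such that $\T:=\Gen(V)$ is closed under products, I would invoke Corollary \ref{coro.reflective-versus-productclosedness} to obtain that $\B:=\Sub(\T)$ is reflective in $\A$; writing $\tau\colon\A\to\B$ for the reflector, Remark \ref{rem.to-main-theorem} gives that $\tau(G)$ is an epi-generator of $\B$. Then the implication (1)$\Rightarrow$(4) of Proposition \ref{prop.lorthogonal-generates-torsion} applies, showing that $\T$ is semi-special preenveloping in $\A$ and that ${}^{\perp_1}\T$ generates $\T$. Injectivity of $[V]\mapsto\Gen(V)$ is given directly by Proposition \ref{prop.properties of quasi-tilting}(3).

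For surjectivity I would begin with a semi-special preenveloping torsion class $\T$ in $\A$ such that ${}^{\perp_1}\T$ generates $\T$. Since $\T$ is preenveloping, Proposition \ref{ex.reflective-versus-productclosedness} yields that $\B=\Sub(\T)$ is reflective; as before, $\B$ has an epi-generator, so (4)$\Rightarrow$(1) of Proposition \ref{prop.lorthogonal-generates-torsion} provides a quasi-tilting object $V$ with $\T=\Gen(V)$. The one nontrivial step left is to confirm that $\Gen(V)=\T$ is closed under products; for this I would observe that, given a family $(T_i)_I$ in $\T$ and a $\T$-preenvelope $\mu\colon\prod_I T_i\to T$, each projection $\pi_i$ factors through $\mu$, whence the universal property of the product produces a left inverse for $\mu$. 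Hence $\prod_I T_i$ is a direct summand of $T\in\T$, and so belongs to $\T$. This is the only place where I would expect any friction, and it relies essentially on having products in $\A$, which is guaranteed by Lemma \ref{lem.AB3-are-AB3*}.

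Finally, for the strongly finendo characterization when $\A$ is also (Ab.4*), the implication that closure of $\Gen(V)$ under products forces $(V^{(J)})^I\in\Gen(V)$ is immediate. For the converse, given a family $(T_i)_{I}\subseteq\Gen(V)$, I would pick a single set $J$ (for instance $J=\bigsqcup_i J_i$ where $T_i$ is a quotient of $V^{(J_i)}$) so that each $T_i$ is a quotient of $V^{(J)}$, via some epimorphism $p_i\colon V^{(J)}\twoheadrightarrow T_i$. Using (Ab.4*), the induced map $\prod_I p_i\colon (V^{(J)})^I\twoheadrightarrow\prod_I T_i$ is still an epimorphism, and since $(V^{(J)})^I\in\Gen(V)$ by hypothesis and $\Gen(V)$ is closed under quotients, we conclude $\prod_I T_i\in\Gen(V)$, as desired.
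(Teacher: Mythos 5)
Your proof is correct and follows essentially the same route as the paper's: both directions reduce to Proposition \ref{prop.lorthogonal-generates-torsion} after establishing that $\Sub(\T)$ is reflective with an epi-generator, and the (Ab.4*)/strongly finendo argument is identical. The only (harmless) difference is that in the surjectivity direction you get reflectivity from Proposition \ref{ex.reflective-versus-productclosedness} and then check product-closure of $\T$ via the retraction argument on a $\T$-preenvelope of $\prod_I T_i$, whereas the paper asserts product-closure first and invokes Corollary \ref{coro.reflective-versus-productclosedness}; your explicit retraction argument in fact supplies a step the paper leaves implicit.
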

\begin{proof}
By Corollary \ref{coro.reflective-versus-productclosedness}, we know that if $V$ is a quasi-tilting object as indicated and $\T=\Gen(V)$, then $\B:=\Sub(\T)$ is a reflective subcategory. Moreover, we know that $\B$ is also (Ab.3) with a generator.
Then, Proposition  \ref{prop.lorthogonal-generates-torsion} gives that $\T$ is semi-special preenveloping and  $_{}^{\perp_1}\T$ generates $\T$. 
Conversely, if we have a torsion class $\T$ as the latter, then it is closed under products, which implies, again by Corollary \ref{coro.reflective-versus-productclosedness},  that $\B=\Sub(\T)$ is a reflective subcategory. By  Proposition \ref{prop.lorthogonal-generates-torsion}, we conclude that $\T=\Gen(V)$, for a quasi-tilting object $V$ such that $\Gen(V)$ is closed under products. 

For the last statement, when $\A$ is (Ab.4*), we just need to prove that, if $V$ is strongly finendo, then $\Gen(V)$ is closed under products. Indeed, let $(T_\lambda)_{\Lambda}$ be a family in $\Gen(V)$ and fix epimorphisms $p_\lambda \colon V^{(I_\lambda)}\twoheadrightarrow T_\lambda$ (with $\lambda\in\Lambda$). There is no loss of generality in assuming that $I_\lambda =I_\mu=:I$, for all $\lambda,\, \mu\in\Lambda$. The (Ab.4*) condition gives an induced epimorphism $\prod p_\lambda:(V^{(I)})^\Lambda\twoheadrightarrow \prod_{\Lambda}T_\lambda$, which implies that  $\prod_{\Lambda}T_\lambda\in\T$, since $V$ is strongly finendo. 
\end{proof}

\begin{remark} \label{rem.Colpi-Menini}
Let us start with the same setting of Corollary \ref{cor.qtilting bijection for Ab3}, that is, $\A$ is an (Ab.3) Abelian category with a generator,
and call an object $X\in \A$ {\bf finendo} when $X^I\in\Gen(X)$, for all sets $I$. If $\A$ is also (Ab.4*), $V$ is quasi-tilting and the canonical map $V^{(I)}\to V^I$ is a monomorphism for all sets $I$ (e.g., when $\A$ is (Ab.5)), then $V$ is finendo if and only if it is strongly finendo (as defined in Corollary \ref{cor.qtilting bijection for Ab3}). Indeed, in such case the canonical map $(V^{(J)})^I\to (V^J)^I\cong V^{J\times I}$ is a monomorphism which implies, by the (Ab.4*) condition,  that $\overline{\Gen}(V)$ is closed under products. This, in turn,  implies that $\Gen(V)=\overline{\Gen}(V)\cap V^{\perp_1}$ is closed under products, and hence that $V$ is strongly finendo.  Therefore, when $\A$ is an (Ab.4*) Grothendieck category, Corollary \ref{cor.qtilting bijection for Ab3} gives a bijection between equivalence classes of finendo quasi-tilting objects and semi-special preenveloping torsion classes such that ${}^{\perp_1}\T$ generate $\T$. In fact, when $\A=\mod R$ is a module category (in which case $\A$ is Grothendieck and (Ab.4$^*$)), it is well-known that $X$ is finendo exactly when it is finitely generated as a left module over $\End_{R}(X)$ (see \cite[Lemma after Prop.\,1.5]{colpi1993structure}), so Corollary \ref{cor.qtilting bijection for Ab3} extends the known results for categories of modules (see \cite[Coro.\,3.8]{hugel2015silting}, and also \cite[Thm.\,2.1]{hugel2001tilting} for the tilting case). 
\end{remark}

\begin{corollary} \label{cor.qtilting-specialpreenv-in-Homfinite}
Let $\A$ be a $\Hom$-finite  Abelian category with a generator such that all objects have finite length. Then, the assignment $[V]\mapsto\Gen(V)$ gives a bijection between equivalence classes of quasi-tilting objects and enveloping (=(semi-special) preenveloping) torsion classes $\T$ such that  $_{}^{\perp_1}\T$ generates $\T$. Furthermore, this bijection restricts to  a one-to-one correspondence between equivalence classes of tilting objects and special enveloping torsion classes $\T$ such that  ${}^{\perp_1}\T$ generates $\T$. 
\end{corollary}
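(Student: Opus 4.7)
My plan is to reduce this corollary to Proposition~\ref{prop.lorthogonal-generates-torsion} and Corollary~\ref{cor.tilting-bijection} by establishing three structural properties of the ambient category $\A$. First, since $\A$ is $\Hom$-finite and every object has finite length, Fitting's lemma shows that indecomposable objects have local endomorphism rings, so $\A$ is Krull-Schmidt. Second, since all objects of finite length are in particular Artinian, Example~\ref{example_artinian_reflective} applies: for any torsion class $\T$ (which is automatically closed under finite co/products), the subcategory $\B:=\Sub(\T)$ is reflective in $\A$. Third, by Lemma~\ref{hom-finite-generator}(1), the generator of $\A$ is a finite epi-generator, so Remark~\ref{rem.to-main-theorem} gives that $\B$ inherits an epi-generator from $\A$.

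Next, I will verify that the three notions \emph{preenveloping}, \emph{semi-special preenveloping} and \emph{enveloping} coincide when applied to torsion classes $\T$ of $\A$. Since $\T$ is closed under direct summands and $\A$ is Krull-Schmidt, Lemma~\ref{lem.precovering=covering} yields that $\T$ is preenveloping if and only if it is enveloping. The implication ``enveloping $\Rightarrow$ semi-special preenveloping'' is Corollary~\ref{cor.enveloping-implies-semispecialpreenv} (applicable because $\B=\Sub(\T)$ is reflective by Step~1), while ``semi-special preenveloping $\Rightarrow$ preenveloping'' is immediate. This establishes the parenthetical identification in the statement.

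With these three equivalent conditions in hand, Proposition~\ref{prop.lorthogonal-generates-torsion} applies to give the first bijection: the assignment $[V]\mapsto\Gen(V)$ is a bijection between equivalence classes of quasi-tilting objects $V$ and torsion classes $\T$ which are (equivalently) enveloping, preenveloping, or semi-special preenveloping, with $^{\perp_1}\T$ generating $\T$. For the second bijection, I will invoke Corollary~\ref{cor.tilting-bijection} (whose hypothesis of an epi-generator is satisfied) to correspond equivalence classes of tilting objects with special preenveloping torsion classes $\T$ such that $^{\perp_1}\T$ generates $\T$. To finish, I must observe that in the Krull-Schmidt setting ``special preenveloping'' coincides with ``special enveloping''. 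This follows by the standard minimal-factor argument: given a special preenvelope $f\colon A\to T_A$, decompose matricially $f=(f_1,0)^t\colon A\to T_1\oplus T_2$ with $f_1$ left minimal; then $f_1$ is a $\T$-envelope, is monic (as a component of a monic map), and $\coker(f_1)$ is a direct summand of $\coker(f)\in{}^{\perp_1}\T$, hence lies in $^{\perp_1}\T$. Thus $f_1$ is a special $\T$-envelope.

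The main obstacle I anticipate is not conceptual but bookkeeping: the statement packages several distinct equivalences into a single sentence, and one must be careful to check that each named class of torsion classes (``enveloping'', ``(semi-special) preenveloping'', ``special enveloping'') really is what the cited results produce, so that the two bijections align correctly. The Krull-Schmidt reduction from preenveloping to enveloping and from special preenveloping to special enveloping is the only step requiring genuine categorical argument beyond invoking prior results, and this is handled by the left-minimal decomposition used in Lemma~\ref{lem.precovering=covering}.
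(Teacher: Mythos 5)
Your proposal is correct and follows essentially the same route as the paper: Krull--Schmidt via finite length (so preenveloping $=$ enveloping by Lemma~\ref{lem.precovering=covering}), reflectivity of $\Sub(\T)$ via Example~\ref{example_artinian_reflective}, the identification of enveloping with semi-special preenveloping via Corollary~\ref{cor.enveloping-implies-semispecialpreenv}, and then Proposition~\ref{prop.lorthogonal-generates-torsion}. The only cosmetic difference is that the paper derives the tilting restriction from the ``special precisely when cogenerating'' part of Corollary~\ref{cor.enveloping-implies-semispecialpreenv} rather than from Corollary~\ref{cor.tilting-bijection} plus your (correct) left-minimal argument that special preenvelopes yield special envelopes in the Krull--Schmidt setting.
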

\begin{proof}
Bearing in mind that $\A$ is a Krull-Schmidt category, we know by Lemma \ref{lem.precovering=covering} all preenveloping classes closed under direct summands are enveloping. Furthermore, by Corollary~\ref{cor.enveloping-implies-semispecialpreenv}, we know that the enveloping torsion classes are exactly the semi-special preenveloping and that these are special precisely when they are cogenerating. Now,
by Example \ref{example_artinian_reflective}, we know that $\Sub(\X)$ is a reflective subcategory, for all torsion classes $\X$ in $\A$. The result is then a direct consequence of  Proposition~\ref{prop.lorthogonal-generates-torsion}.
\end{proof}

\begin{corollary} \label{cor.qtilting-specialpreenv.-projepigenerator}
Let $\A$ be an Abelian category with a projective epi-generator $P$. Then, the assignment $[V]\mapsto\Gen(V)$ gives a bijection between equivalence classes of quasi-tilting objects $V$ such that $P$ has an $\Add(V)$-preenvelope and semi-special preenveloping torsion classes in $\A$. This bijection restricts to one between equivalence classes of tilting objects and special preenveloping torsion classes.
Moreover, when no non-zero infinite coproduct $X^{(I)}$ exists in $\A$ (e.g., if $\A$ is $\Hom$-finite, see Proposition \ref{prop.Hom-finite-are-coherent}), those (quasi-)tilting objects are precisely the $V$ for which $\A(P,V)$ is finitely generated as a left $\End_\A(V)$-module. 
\end{corollary}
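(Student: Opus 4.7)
The plan is to deduce the stated bijection from Proposition \ref{prop.lorthogonal-generates-torsion}, using the projectivity of $P$ to make the generating condition on ${}^{\perp_1}\T$ automatic, together with Lemma \ref{example_4_lemma}, which provides the reflectivity of $\Sub(\T)$ in the forward direction.

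For the forward implication, given a quasi-tilting $V$ with $P$ admitting an $\Add(V)$-preenvelope, I would invoke Lemma \ref{example_4_lemma} to obtain that $\mathcal{B} := \Sub(\Gen(V))$ is reflective in $\A$ (and hence has an epi-generator by Remark \ref{rem.to-main-theorem}). By Proposition \ref{prop.properties of quasi-tilting}(2), $V \in \Add(V) = {}^{\perp_1}\T \cap \T$, so ${}^{\perp_1}\T$ generates $\T := \Gen(V)$; Proposition \ref{prop.lorthogonal-generates-torsion} then yields that $\T$ is semi-special preenveloping.

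For the converse, let $\T$ be a semi-special preenveloping torsion class. Then $\Sub(\T)$ is reflective by Proposition \ref{ex.reflective-versus-productclosedness}. Since $P$ is projective, $P\in {}^{\perp_1}\T$, and by Lemma \ref{lem.Ext-V-versus-Ext-VI}(2) every coproduct $P^{(I)}$ that exists in $\A$ remains in ${}^{\perp_1}\T$; as $P$ is an epi-generator, every $T\in \T$ is a quotient of some such $P^{(I)}$, so ${}^{\perp_1}\T$ generates $\T$. Proposition \ref{prop.lorthogonal-generates-torsion} then provides a quasi-tilting $V$ with $\T = \Gen(V)$. The one step requiring a direct argument is to produce an $\Add(V)$-preenvelope for $P$: I would fix a semi-special $\T$-preenvelope $\mu_P\colon P \to T_P$, pick an epimorphism $p\colon V^{(I)} \twoheadrightarrow T_P$ (which exists because $T_P \in \T = \Pres(V)$), and lift $\mu_P$ through $p$ using the projectivity of $P$ to get $\mu_P'\colon P \to V^{(I)}$. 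Any morphism $q\colon P \to W$ with $W \in \Add(V) \subseteq \T$ factors through the $\T$-preenvelope $\mu_P$ as $q = f\circ \mu_P$, hence as $q = (f\circ p)\circ \mu_P'$, showing that $\mu_P'$ is the desired $\Add(V)$-preenvelope. The bijectivity of the assignment is then a consequence of Proposition \ref{prop.properties of quasi-tilting}(3), and the restriction to tilting objects and special preenveloping torsion classes follows from the observation that a cogenerating torsion class forces every $\T$-preenvelope to be monic, together with the definition of tilting object.

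For the final characterization, when no non-zero infinite coproduct exists in $\A$, the class $\Add(V)$ consists of direct summands of the finite coproducts $V^n$. Any $\Add(V)$-preenvelope of $P$ must then be of the form $\mu = (\mu_1,\ldots,\mu_n)\colon P \to V^n$, and the preenvelope condition tested on the object $V$ reads ``every $f\in \A(P,V)$ can be written as $f = \sum_{i=1}^n e_i\circ \mu_i$ with $e_i\in E := \End_\A(V)$'' (the factorization through a general $W\in \Add(V)$ following routinely by embedding $W$ into some $V^m$). This is precisely the statement that $\A(P,V)$ is finitely generated as a left $E$-module, with $\mu_1,\ldots,\mu_n$ as generators. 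The main subtlety throughout the proof is the reverse-direction construction of the $\Add(V)$-preenvelope for $P$, where the projectivity of $P$ plays the decisive role by allowing us to lift the $\T$-preenvelope $\mu_P$ across the covering $p\colon V^{(I)}\twoheadrightarrow T_P$.
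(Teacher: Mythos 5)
Your proof is correct and follows essentially the same route as the paper: reflectivity of $\Sub(\T)$ via Lemma \ref{example_4_lemma} in one direction and Proposition \ref{ex.reflective-versus-productclosedness} in the other, both feeding into Proposition \ref{prop.lorthogonal-generates-torsion}, with the projectivity of $P$ making the generating condition on ${}^{\perp_1}\T$ automatic. Your explicit lifting construction of the $\Add(V)$-preenvelope of $P$ in the reverse direction is a detail the paper leaves implicit, and it is carried out correctly.
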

\begin{proof}
If $V$ is a quasi-tilting object such that $P$ has an $\Add(V)$-preenvelope and $\T:=\Gen(V)$, then we know by Lemma \ref{example_4_lemma} that $\B=\Sub(\T)$ is a reflective subcategory. On the other hand, if $\T$ is a (semi-)special preenveloping torsion class in $\A$, by Proposition \ref{ex.reflective-versus-productclosedness}, we have that $\B=\Sub(\T)$ is reflective. Note that, in both cases, if  $\tau\colon \A\to\B$ is the left adjoint to the inclusion, then $\tau (P)$ is a projective epi-generator of $\B$, so that, the bijection is then a direct consequence of the Proposition~\ref{prop.lorthogonal-generates-torsion} since $P\in {}^{\perp_1}\mathcal{X}$, for every class of objects $\mathcal{X}$ in $\A$.

Finally, when no non-zero infinite coproduct $X^{(I)}$ exists in $\A$, it is a standard fact that a morphism $f=(f_1,\dots,f_n )^t\colon P\to V^n$ is an $\Add(V)(=\add(V))$-preenvelope if, and only if, $\{f_1,\dots,f_n\}$ is a set of generators of $\A(P,V)$ as a left $\End_\A(V)$-module. 
\end{proof}

\subsection{A look at the duals: (quasi-)cotilting precovers}\label{sec. duals}

We leave to the reader the literal dualization of the bijections of the previous subsections between classes of (quasi-)tilting objects and classes of (semi-)special torsion classes in Abelian categories. Here we only emphasize a few of them. The following one is a particular case of the dual of Theorem \ref{thm.specialpreenveloping-versus-cotorsionpair} (see also Proposition \ref{prop.lorthogonal-generates-torsion}).
 
 \begin{theorem} \label{thm.dual 5.1}
 Let $\A$ be an $\Ext^1$-small (Ab.3*) Abelian category with a cogenerator (so $\A$ is bicomplete and well-powered), let $\mathcal F$ be a  torsion-free class in $\A$ and $\C:=\Quot(\mathcal F)$. Then, the following are equivalent:
  \begin{enumerate}[\rm (1)]
 \item $\mathcal F$ is closed under taking coproducts in $\A$ and $\mathcal F=\C\cap {}^{\perp_1}Q$, for some $Q\in\mathcal F$;
 \item $\mathcal F$ is semi-special precovering in $\A$;
 \item $\mathcal F$ is closed under coproducts and $(\mathcal F,\Ker(\Ext_\C^1(\mathcal F,-)))$ is a left complete cotorsion pair in $\C$.
 \end{enumerate}
 In this case, $\mathcal F$ is the torsion-free class of a quasi-cotilting torsion pair if and only if the cotorsion pair of assertion (3) is complete in $\mathcal C$, if and only if $\mathcal F^{\perp_1}$ cogenerates $\mathcal F$.
 \end{theorem}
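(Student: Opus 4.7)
The strategy is to reduce the statement to the duals of Theorem \ref{thm.specialpreenveloping-versus-cotorsionpair} and Proposition \ref{prop.lorthogonal-generates-torsion}. First, observe that the dual of Lemma \ref{lem.AB3-are-AB3*}(2) ensures that $\A$, being (Ab.3$^*$) with a cogenerator, is automatically well-powered and (Ab.3), hence bicomplete; the dual of Lemma \ref{lem.AB3-are-AB3*}(1) further implies that any cogenerator of $\A$ is in fact a mono-cogenerator. Since $\mathcal F$ is a torsion-free class, it is closed under subobjects, products and extensions, so Example \ref{ex.Sub(X)-Gen(X) Abelian exact} gives that $\C=\Quot(\mathcal F)$ is an Abelian exact subcategory of $\A$ closed under subobjects, quotients and finite co/products.

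Next I verify that the condition ``$\mathcal F$ closed under coproducts in $\A$'', which appears in (1) and (3), is equivalent to ``$\C$ is coreflective in $\A$''. Indeed, any $X\in\C$ is a quotient of some $F\in\mathcal F$, so $\C$ is closed under coproducts in $\A$ precisely when $\mathcal F$ is; applying Corollary \ref{cor.subX}(1) then yields the stated equivalence. Letting $E$ be a cogenerator of $\A$ and $\sigma\colon\A\to\C$ the right adjoint to the inclusion (which exists whenever $\C$ is coreflective), the dual of Remark \ref{rem.to-main-theorem} shows that $\sigma(E)$ is a mono-cogenerator of $\C$. Moreover, since $\mathcal F$ is closed under products in $\A$ and $\mathcal F\subseteq\C$, these products coincide with the products computed in $\C$ on families from $\mathcal F$, so $\mathcal F$ is closed under products inside $\C$.

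With these preliminaries in hand, the three-way equivalence (1)$\Leftrightarrow$(2)$\Leftrightarrow$(3) follows by direct dualization of Theorem \ref{thm.specialpreenveloping-versus-cotorsionpair}. The dualized ``extra'' hypotheses required for its full equivalence (namely the converse (2--4)$\Rightarrow$(1)) are: $\mathcal F$ closed under subobjects, $\C$ possessing a mono-cogenerator, and $\mathcal F$ closed under products in $\C$, all three of which were just verified. Assertion (3) of the present theorem corresponds to the dualized condition (4) of Theorem \ref{thm.specialpreenveloping-versus-cotorsionpair} (the role of ``right complete'' is played by ``left complete''), while assertion (1) corresponds to the dualized condition (1), where the existence of an object $Q\in\mathcal F$ with $\mathcal F=\C\cap{}^{\perp_1}Q$ playing the role of the $\Ext^1$-couniversal witness is provided by the dualized construction in the implication (2--4)$\Rightarrow$(1).

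Finally, the ``in this case'' addendum is obtained by invoking the dual of Proposition \ref{prop.lorthogonal-generates-torsion} inside $\C$: under the now-established coreflectiveness of $\C$ in $\A$ and the presence of a mono-cogenerator in $\C$, the torsion-free class $\mathcal F$ arises from a quasi-cotilting torsion pair if and only if $\C$ contains a mono-cogenerator lying in $\Ker(\Ext_\C^1(\mathcal F,-))$ if and only if $(\mathcal F,\Ker(\Ext_\C^1(\mathcal F,-)))$ is a \emph{complete} cotorsion pair in $\C$; the upgrade from left complete to complete is exactly the dual of Remark \ref{remark_complete_cotorsion}. The characterization via ``$\mathcal F^{\perp_1}$ cogenerates $\mathcal F$'' is then the dualized form of condition (4) in Proposition \ref{prop.lorthogonal-generates-torsion}, using Lemma \ref{lem.extension-in-reflective-subcats}(3) (dualized) to identify $\mathcal F^{\perp_1}\cap\C$ with $\Ker(\Ext_\C^1(\mathcal F,-))$ on objects of $\Quot(\mathcal F)=\C$. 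The main obstacle throughout is the careful bookkeeping of the dualization, in particular tracking that ``$\C$ has a mono-cogenerator'' is genuinely delivered by the ambient hypothesis on $\A$ via the right-adjoint construction and dualized Remark \ref{rem.to-main-theorem}.
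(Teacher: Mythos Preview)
Your overall strategy of dualization is correct, but there is a genuine gap in the direction (1)$\Rightarrow$(2,3). The dualized condition (1) of Theorem~\ref{thm.specialpreenveloping-versus-cotorsionpair} requires that the witness $Q$ be $\Ext^1$-universal in $\C^{\op}$ and that all products of copies of $Q$ in $\C$ lie in ${}^{\perp_1}Q$. Condition (1) of the present theorem only asserts $\mathcal F=\C\cap{}^{\perp_1}Q$ for some $Q\in\mathcal F$; it says nothing about $\Ext^1$-couniversality. Your sentence ``the $\Ext^1$-couniversal witness is provided by the dualized construction in the implication (2--4)$\Rightarrow$(1)'' addresses only the reverse direction: yes, Lemma~\ref{2-4implies1_lemma} (dualized) \emph{produces} such an object when starting from (2) or (3), but for (1)$\Rightarrow$(2,3) you are \emph{given} an arbitrary $Q$ and must show it already has the required property before you may invoke the dual of Theorem~\ref{thm.specialpreenveloping-versus-cotorsionpair}.

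This is exactly the step the paper singles out. Since $Q\in\mathcal F$ and $\C=\Quot(\mathcal F)$, the class $\Ker(\Ext_\C^1(-,Q))=\C\cap{}^{\perp_1}Q=\mathcal F$ is generating in $\C$; the dual of Example~\ref{ex_exact_copr}(2) then yields $\prod_I^{1}Q=0$ in $\C$. Combined with the $\Ext^1$-smallness hypothesis (so $\Ext_\C^1(A,Q)$ is a set for every $A\in\C$, as $\C$ is Abelian exact in $\A$) and the fact that $\C$ is (Ab.3$^*$), the dual of Proposition~\ref{prop_ext_univ_infinite} gives that $Q$ is $\Ext^1$-universal in $\C^{\op}$. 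Note that this is the only place in the proof where the $\Ext^1$-smallness assumption is used; your argument never invokes it, which is a sign that something is missing. Once this is established, the rest of your dualization argument goes through.
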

 \begin{proof}
By Proposition~\ref{prop.coreflective}, we know that $\C$ is a coreflective subcategory if, and only if, it is closed under taking coproducts in $\A$, a fact that is guaranteed whenever $\Fcal$ is closed under taking coproducts. So that in all the conditions, we have that $\C$ is a coreflective subcategory (for the condition (2), we use the dual of the Proposition~\ref{ex.reflective-versus-productclosedness}). Thus, the result is a direct consequence of the duals of Theorem~\ref{thm.specialpreenveloping-versus-cotorsionpair} and Proposition~\ref{prop.lorthogonal-generates-torsion}  whenever the object $Q$ of the condition (1) is  $\Ext^1$-universal in $\C^{\op}$. But, in such condition, we have that $\Ker(\Ext^1_{\C}(-,Q))=\C \cap {}^{\perp_1}Q=\Fcal$, that is, $\Ker(\Ext^1_{\C}(-,Q))$ is a generating class of $\C=\Quot(\Fcal)$. Therefore, combining the duals of Example~\ref{ex_exact_copr} and Proposition~\ref{prop_ext_univ_infinite}, we obtain that $Q$ is $\Ext^1$-universal in $\C^{\op}$. 
 \end{proof}
 
  As an immediate consequence, we get (see Lemma \ref{lem.Ext-V-versus-Ext-VI}(2)):

\begin{corollary} \label{cor-special-precovering-Ab3*}
In the same setting of Theorem \ref{thm.dual 5.1}, the following assertions are equivalent:
\begin{enumerate}[\rm (1)]
 \item $\mathcal F$ is a generating class such that $\mathcal F= {}^{\perp_1}Q$, for some $Q\in\mathcal F$ (resp., $\mathcal F={}^{\perp_1}Q$ for $Q$ cotilting);
 \item $\mathcal F$ is a special precovering  torsion-free class (such that $\mathcal F^{\perp_1}$ cogenerates $\mathcal F$);
 \item $( \mathcal F,\mathcal F^{\perp_1})$ is a left complete (resp., complete) cotorsion pair. 
\end{enumerate}
\end{corollary}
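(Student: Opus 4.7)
The plan is to derive Corollary~\ref{cor-special-precovering-Ab3*} directly from Theorem~\ref{thm.dual 5.1} by exploiting the generating hypothesis to collapse $\C=\Quot(\Fcal)$ onto $\A$, after which the two statements line up essentially term by term.

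First I would verify that ``generating'' is in fact implicit in each of the three assertions, not only in~(1). If $\Fcal$ is special precovering, then for each $A\in\A$ a special $\Fcal$-precover $F\to A$ is an epimorphism, so $A\in\Quot(\Fcal)$; if $(\Fcal,\Fcal^{\perp_1})$ is left complete, Example~\ref{ex_cotorsion_implies_cover} supplies, for each $A\in\A$, an exact sequence $0\to Y\to X\to A\to 0$ with $X\in\Fcal$, again giving $A\in\Quot(\Fcal)$. In every case therefore $\C=\A$, and Theorem~\ref{thm.dual 5.1} applies in this simplified form. Its condition~(1) reads ``$\Fcal$ is closed under coproducts and $\Fcal={}^{\perp_1}Q$ for some $Q\in\Fcal$''; coproduct closure is automatic via Lemma~\ref{lem.Ext-V-versus-Ext-VI}(2), so this is exactly condition~(1) of the corollary. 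Its condition~(3) becomes ``$(\Fcal,\Fcal^{\perp_1})$ is a left complete cotorsion pair'' since $\Ker(\Ext_\A^1(\Fcal,-))=\Fcal^{\perp_1}$, matching condition~(3). Condition~(2) of the theorem is condition~(2) of the corollary verbatim, and the equivalence of the three unparenthesised assertions follows.

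For the parenthesised ``resp.'' versions I would invoke the final sentence of Theorem~\ref{thm.dual 5.1}: completeness of the cotorsion pair (in $\C=\A$) is equivalent both to $\Fcal^{\perp_1}$ cogenerating $\Fcal$ and to $\Fcal$ being the torsion-free class of a quasi-cotilting torsion pair, i.e., to the existence of a quasi-cotilting object $Q$ with $\Fcal=\Cogen(Q)$. Because we are in the generating case, such a $Q$ is automatically cotilting by the dual of (T.2) in Definition~\ref{def.quasi-tilting torsion pair}, and the dual of the identity $\Gen(V)=V^{\perp_1}$ noted immediately after that definition then gives $\Fcal=\Cogen(Q)={}^{\perp_1}Q$. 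This matches the primed versions of~(1),~(2) and~(3). The only non-routine observation in the whole argument is that both special precovering and left completeness force $\Fcal$ to be generating; everything else is pure bookkeeping and I foresee no substantive obstacle.
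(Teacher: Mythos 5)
Your proof is correct and follows the paper's intended route: the paper states this corollary as an immediate consequence of Theorem \ref{thm.dual 5.1} (citing only Lemma \ref{lem.Ext-V-versus-Ext-VI}(2) for coproduct-closure of ${}^{\perp_1}Q$), and your reduction to the case $\C=\A$ via the observation that each of the three conditions forces $\mathcal F$ to be generating is exactly the missing bookkeeping. The one imprecision is the claim that condition (2) of the theorem matches condition (2) of the corollary ``verbatim'': the theorem says \emph{semi-special} precovering while the corollary says \emph{special}, so you must add that, since $\mathcal F$ is generating, every $\mathcal F$-precover is an epimorphism (dual of Lemma \ref{injective_preenvelope}(3.1)), whence semi-special precovers are automatically special in this situation.
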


  When  $\A$ is (Ab.3*) with an injective cogenerator, whence  even (Ab.4) (see Lemma \ref{lem.AB3-are-AB3*} and use the fact that any (Ab.3) Abelian category with an injective cogenerator is (Ab.4)), we get the following consequence of Corollaries \ref{cor.qtilting bijection for Ab3}, \ref{cor-special-precovering-Ab3*} and Theorem \ref{thm.dual 5.1}.

\begin{corollary} \label{cor.precovering- torsion-free-in-Ab3*}
Let $\A$ be an (Ab.3*) Abelian category with an injective cogenerator. The assignment $[Q]\mapsto\Cogen(Q)$ gives a one-to-one correspondence:
\[
\xymatrix@R=12pt@C=30pt{
{\left\{\begin{matrix}\text{equivalence classes of strongly cofinendo}\\ \text{ quasi-cotilting objects in $\A$} \end{matrix}\right\}}\ar@{->}[rr]^-{1:1}&&{\left\{\begin{matrix}\text{semi-special precovering}\\ \text{torsion-free classes in $\A$}\end{matrix}\right\}}.
}
\]
(where an object is strongly cofinendo if it is strongly finendo in $\A^{\op}$). This bijection restricts to the following one:
\[
\xymatrix@R=12pt@C=30pt{
{\left\{\begin{matrix}\text{equivalence classes of }\\ \text{cotilting objects in $\A$}\end{matrix}\right\}}\ar@{->}[rr]^-{1:1}&&{\left\{\begin{matrix}\text{special precovering}\\ \text{torsion-free classes in $\A$}\end{matrix}\right\}}.
}
\]
\end{corollary}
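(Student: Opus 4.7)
The plan is to apply the dual of Corollary \ref{cor.qtilting bijection for Ab3} together with the cotilting case of Corollary \ref{cor-special-precovering-Ab3*}, and observe that the presence of an injective cogenerator automatically forces the side condition ``$\mathcal F^{\perp_1}$ cogenerates $\mathcal F$'' that appears in both bijections.

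First I would verify the hypotheses needed to invoke those results. Being (Ab.3*) with a cogenerator, the dual of Lemma \ref{lem.AB3-are-AB3*} makes $\A$ bicomplete and well-powered; the parenthetical comment preceding the statement upgrades this to (Ab.4), and the injective cogenerator yields enough injectives (products of injectives are injective in (Ab.3*)), whence $\Ext^1$-smallness. Thus $\A$ meets the dual hypotheses of Corollary \ref{cor.qtilting bijection for Ab3} in its (Ab.4*)-strengthened form, as well as those of Corollary \ref{cor-special-precovering-Ab3*}. Dualizing Corollary \ref{cor.qtilting bijection for Ab3} then produces the bijection $[Q]\mapsto\Cogen(Q)$ between equivalence classes of strongly cofinendo quasi-cotilting objects in $\A$ and semi-special precovering torsion-free classes $\mathcal F$ such that $\mathcal F^{\perp_1}$ cogenerates $\mathcal F$; similarly, the cotilting case of Corollary \ref{cor-special-precovering-Ab3*} yields the analogous bijection between equivalence classes of cotilting objects and special precovering torsion-free classes $\mathcal F$ satisfying the same cogenerating requirement on $\mathcal F^{\perp_1}$.

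Both bijections then reduce to the statement of the corollary once the side condition is eliminated, and this is the key observation: since $E$ is injective one has $\Ext^1_\A(-,E)=0$, so $E\in\mathcal F^{\perp_1}$ for every torsion-free class $\mathcal F$; being a cogenerator of $\A$, it already cogenerates $\mathcal F$, so $\mathcal F^{\perp_1}$ does so as well. The only subtle piece of bookkeeping is verifying that the second bijection is indeed the restriction of the first, i.e.\ that every cotilting object is strongly cofinendo quasi-cotilting; this is where I expect the main obstacle to lie, but it is precisely the dualized form of the ``cotilting $\Rightarrow$ closure of $\Cogen(Q)$ under coproducts in an (Ab.4) category'' implication used in the dual of Remark \ref{rem.Colpi-Menini}, which is exactly where the (Ab.4) hypothesis on $\A$ gets used.
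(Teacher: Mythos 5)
Your proposal is correct and follows essentially the same route the paper takes: the corollary is obtained by dualizing Corollary \ref{cor.qtilting bijection for Ab3} (in its (Ab.4*)-strengthened form) and invoking Corollary \ref{cor-special-precovering-Ab3*}, after noting that (Ab.3*) plus an injective cogenerator gives (Ab.4), $\Ext^1$-smallness, and the automatic validity of the side condition ``$\mathcal F^{\perp_1}$ cogenerates $\mathcal F$''. The only microscopic imprecision is that the latter uses products of copies of the injective cogenerator (which lie in $\mathcal F^{\perp_1}$ because products of injectives are injective), not the cogenerator alone, but this changes nothing.
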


When $\A=\G$ is a Grothendieck category, one can say even more. We refer the reader to \cite{parra2019locally} for the definition of cosilting object and {cosilting }torsion pair in such a category.

\begin{proposition} \label{prop.}
Let $\G$ be a Grothendieck category. For a torsion pair $\mathbf{t}=(\T,\mathcal F)$, the following assertions are equivalent:
\begin{enumerate}[\rm (1)]
\item $\mathbf{t}$ is of finite type, i.e. $\mathcal F$ is closed under taking direct limits;
\item $\mathbf{t}$ is a quasi-cotilting torsion pair;
\item $\mathbf{t}$ is a cosilting torsion pair;
\item $\mathcal F$ is (semi-special) precovering;
\item $\mathcal F$ is covering.
\end{enumerate}
In particular, the assignment $[Q]\mapsto\Cogen(Q)$ induces a bijection between equivalence classes of quasi-cotilting (resp., cosilting) objects and (pre)covering  torsion-free classes in $\G$.
\end{proposition}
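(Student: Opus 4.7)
The plan is to establish the equivalence (1)--(5) via the cycle (5) $\Rightarrow$ (4) $\Rightarrow$ (2) $\Rightarrow$ (3) $\Rightarrow$ (1) $\Rightarrow$ (5); the final bijection then follows by combining this cycle with Corollary~\ref{cor.precovering- torsion-free-in-Ab3*} (applicable since every Grothendieck category is (Ab.3*) Abelian with an injective cogenerator, by Lemma~\ref{lem.AB3-are-AB3*} together with the standard existence of enough injectives in Grothendieck categories).

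For (5) $\Rightarrow$ (4), I would appeal to Wakamatsu's lemma: since $\mathcal{F}$ is closed under extensions as a torsion-free class, the kernel of a minimal $\mathcal{F}$-precover lies in $\mathcal{F}^{\perp_1}$, so any $\mathcal{F}$-cover is automatically a semi-special $\mathcal{F}$-precover. The implication (4) $\Rightarrow$ (2) is then a direct invocation of Corollary~\ref{cor.precovering- torsion-free-in-Ab3*}, which yields $\mathcal{F}=\Cogen(Q)={}^{\perp_1}Q$ for a (strongly cofinendo) quasi-cotilting object $Q$, so that $\mathbf{t}$ is quasi-cotilting.

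The main difficulty is the step (2) $\Rightarrow$ (3), which amounts to a Bazzoni-type pure-injectivity theorem for quasi-cotilting objects in an arbitrary Grothendieck category. The idea is to adapt the arguments of \cite[Thm.\,3.5]{breaz2018torsion} and \cite[Thm.\,3.5]{zhang2017cosilting} from modules to Grothendieck categories: for a given quasi-cotilting $Q$, one exhibits a pure-injective representative $C$ in its equivalence class (so that $\Cogen(C)=\Cogen(Q)$), using the existence of pure-injective envelopes in $\G$ and the definable closure of $\Prod(Q)$; such a $C$ is then cosilting by definition.

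Finally, (3) $\Rightarrow$ (1) is the known fact from \cite{parra2019locally} that cosilting torsion pairs in a Grothendieck category are of finite type, while (1) $\Rightarrow$ (5) follows from two standard observations: a torsion-free class $\mathcal{F}$ closed under direct limits in $\G$ is definable (being also closed under products, subobjects, and extensions), and hence precovering by accessibility; and a precovering class closed under direct limits is covering by the classical Enochs-type theorem on covers. The main obstacle is thus (2) $\Rightarrow$ (3), which demands a careful transfer of the pure-injectivity argument from the module setting to the Grothendieck one.
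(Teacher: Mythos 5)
Your overall architecture overlaps substantially with the paper's proof (both rely on Corollary~\ref{cor.precovering- torsion-free-in-Ab3*} to tie the quasi-cotilting condition to semi-special precovering, and both invoke the Enochs--Xu theorem to pass from precovering to covering), but there is a genuine gap at exactly the step you single out as the main difficulty, namely (2)$\Rightarrow$(3). You propose to ``adapt the arguments of \cite[Thm.\,3.5]{breaz2018torsion} and \cite[Thm.\,3.5]{zhang2017cosilting}'' so as to produce a pure-injective, hence cosilting, representative in the equivalence class of a quasi-cotilting object. That transfer is precisely the content of \cite[Thm.\,4.1]{parra2019locally}, which the paper cites to obtain the whole chain (1)$\Leftrightarrow$(2)$\Leftrightarrow$(3) in one stroke; it is a substantial Bazzoni-type pure-injectivity theorem in a general Grothendieck category, and a one-sentence promise to carry the module-theoretic argument over (pure-injective envelopes, definable closure of $\Prod(Q)$, etc.) is not a proof. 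Without that input your cycle does not close, so as written the proposal establishes only the implications that the paper also gets from Corollary~\ref{cor.precovering- torsion-free-in-Ab3*} and from Enochs--Xu.

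A secondary weak point is your route for (1)$\Rightarrow$(5). The assertion that a torsion-free class closed under direct limits is ``definable and hence precovering by accessibility'' is not justified in an arbitrary Grothendieck category: the existence of precovers for such classes outside locally finitely presented settings is itself a theorem requiring an argument. The paper avoids this entirely by deriving (4) from (1) through (1)$\Rightarrow$(2)$\Rightarrow$(4) and only then applying \cite[Thm.\,1.2]{el2006covers} to the conjunction (1)$\wedge$(4) to get (5); you should reorganize your cycle accordingly. Your use of Wakamatsu's lemma for (5)$\Rightarrow$(4) is fine (the paper simply declares that implication clear), and your derivation of the final bijection from Corollary~\ref{cor.precovering- torsion-free-in-Ab3*} matches the paper's, modulo the observation that every object of a Grothendieck category is strongly cofinendo.
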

\begin{proof}
The equivalences ``(1)$\Leftrightarrow$(2)$\Leftrightarrow$(3)'' are \cite[Thm.\,4.1]{parra2019locally} and ``(2)$\Leftrightarrow$(4)'' follows from Corollary \ref{cor.precovering- torsion-free-in-Ab3*}, bearing in mind that in a Grothendieck category all objects are strongly cofinendo. On the other hand,  ``(1,4)$\Rightarrow$(5)'' follows from a theorem of Enochs and Xu (see \cite[Thm.\,1.2]{el2006covers}). The implication ``(5)$\Rightarrow$(4)'' is clear.  
\end{proof}

\begin{remark}
In the setting of module categories, the above result was obtained independently by Breaz and {\v{Z}}emli{\v{c}}ka \cite[Thm.\,3.5]{breaz2018torsion} and Zhang and Wei \cite[Thm.\,3.5]{zhang2017cosilting} (for more on this, see also \cite[Thm.\,3.8 and Coro.\,3.9]{hugel2018abundance}).
\end{remark}

\section{Finitely presented modules over right coherent rings} \label{Sec_fin_pres}

In this last section we fix a (usually right coherent) ring $R$ and we take a deeper look at the results of the previous sections when $\A:=\fpmod R$ is the category of finitely presented (right) $R$-modules (by Lemma \ref{lem.only-finite-copr-project-epigener} all Abelian categories with a projective finite epi-generator are of this kind). 

\subsection{Basic facts and notations}
In what follows we will mostly work in two categories: the category of all right $R$-modules $\mod R$ and that of finitely presented right $R$-modules $\fpmod R$. To avoid confusion, let us fix the following notational conventions:
\begin{itemize}
\item we use the symbol $\coprod$ to denote coproducts in the category $\mod R$, while the symbol $\mathrlap{\scalebox{.5}{\text{\hspace{8.5pt}$\mathrm{fp}$}}}\coprod$ denotes coproducts in $\fpmod R$. For any non-empty set $I$, denote by $\mathrlap{\scalebox{.5}{\text{\hspace{8.5pt}$\mathrm{fp}$}}}\coprod_IV$ the coproduct of $|I|$-many copies of $V$ in $\fpmod R$, when such a coproduct exists;
\item all throughout this section, the symbols $\Sub$, $\subGen$, $\Gen$, and $\Pres$, without suffix, will be meant in the category $\mod R$. We will use the suffix ``${\fpmod R}$'' (e.g., $\Sub_{\fpmod R}(-)$) when we want them taken in $\fpmod R$.
\end{itemize}

Let us start noting  that $\fpmod R$ may have infinite coproducts.

\begin{proposition} \label{prop.infinite coproducts in fpmod}
Let $R$ be any ring and $I$ an infinite set. The following assertions are equivalent:
\begin{enumerate}[\rm (1)]
\item $\fpmod R$ has $I$-coproducts;
\item the coproduct of $|I|$-many copies of $R$ exists in $\fpmod R$;
\item $R$ is Morita equivalent to a ring $S$ such that $S$ is isomorphic to $S^I$ in $S\modl$.
\end{enumerate}
\end{proposition}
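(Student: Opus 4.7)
The plan is to establish the chain $(1)\Leftrightarrow(2)\Leftrightarrow(3)$ directly. The equivalence $(1)\Leftrightarrow(2)$ is the easier part: $(1)\Rightarrow(2)$ is trivial since $R\in\fpmod R$, while for $(2)\Rightarrow(1)$ I would adapt the argument of Lemma~\ref{existence_of_coproducts_in_pres_lemma}(3), noting that $R$ is a finite projective epi-generator of $\fpmod R$. Indeed, given any family $(M_j)_{j\in J}$ with $|J|\leq|I|$ of finitely presented modules with presentations $R^{n_j}\to R^{m_j}\to M_j\to 0$, their coproduct in $\mod R$ has presentation $R^{(\sum n_j)}\to R^{(\sum m_j)}\to\coprod_jM_j\to 0$, whose free modules are summands of $R^{(I)}$ and hence finitely presented by hypothesis. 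The argument does not need full Abelianness of $\fpmod R$ since all operations are performed in $\mod R$.

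For $(3)\Rightarrow(2)$, Morita equivalence gives $\fpmod R\cong\fpmod S$, so it suffices to show $\mathrlap{\scalebox{.5}{\text{\hspace{8.5pt}$\mathrm{fp}$}}}\coprod_I S$ exists in $\fpmod S$. The key tool is Chase's classical theorem: for any finitely presented right $S$-module $N$, the functor $N\otimes_S(-)\colon S\modl\to\Ab$ commutes with arbitrary products, so in particular $N\otimes_S S^I\cong N^I$ naturally in $N\in\fpmod S$. Combined with the hypothesized isomorphism $S\cong S^I$ in $S\modl$, this yields natural isomorphisms $N\cong N\otimes_S S\cong N\otimes_S S^I\cong N^I$. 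Hence the functor $N\mapsto N^I$ on $\fpmod S$ is naturally isomorphic to the identity functor, which is represented by $S$ itself; therefore $\mathrlap{\scalebox{.5}{\text{\hspace{8.5pt}$\mathrm{fp}$}}}\coprod_I S=S$ exists in $\fpmod S$, and transporting back via Morita produces $\mathrlap{\scalebox{.5}{\text{\hspace{8.5pt}$\mathrm{fp}$}}}\coprod_I R$ in $\fpmod R$.

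For $(2)\Rightarrow(3)$, let $P:=\mathrlap{\scalebox{.5}{\text{\hspace{8.5pt}$\mathrm{fp}$}}}\coprod_I R$. The plan is to show that $P$ is a finitely generated projective generator of $\mod R$, so that $R$ is Morita equivalent to $S:=\End_R(P)^{\op}$. The generator property is easy: the ``standard basis element'' $(\delta_{ij})_j\in R^I\cong\hom_R(P,R)$ defines, via the universal property, a retraction $\pi_i\colon P\to R$ of each coproduct inclusion $\iota_i\colon R\to P$, so $R$ is a direct summand of $P$. Projectivity of $P$ in $\mod R$ would follow from a standard argument combining the finite presentation of $P$ with the exactness of $\hom_R(P,-)|_{\fpmod R}\cong(-)^I$ (using that products are exact in $\Ab$ and that $\mod R$ is locally finitely presented). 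Once the Morita equivalence is in place, the universal property of $P$ as an $I$-coproduct in $\fpmod R$ translates, via the two-sided Morita bimodule calculus, into the isomorphism $S\cong S^I$ in $S\modl$—essentially by running the argument of $(3)\Rightarrow(2)$ in reverse. The main obstacle will be verifying projectivity of $P$ in $\mod R$ (as opposed to merely in $\fpmod R$) and carefully handling the left/right module bookkeeping required to translate a coproduct condition for right modules into the desired left-module isomorphism.
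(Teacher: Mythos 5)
The genuine gap is in your treatment of (2)$\Rightarrow$(1). You read hypothesis (2) as saying that $R^{(I)}$ --- the coproduct formed in $\mod R$ --- is finitely presented, and you then build the coproduct of a family $(M_j)_J$ by taking coproducts of presentations inside $\mod R$. But for an infinite set $I$ and $R\neq 0$ the module $R^{(I)}$ is never even finitely generated, so under that reading the proposition would be vacuous. Condition (2) asserts instead the existence of an object of $\fpmod R$ satisfying the universal property of the coproduct \emph{internally} to $\fpmod R$; in Example \ref{ex.ring isomorphic to its product} that object is $R$ itself, and it is very far from the $\mod R$-coproduct. So ``all operations are performed in $\mod R$'' is precisely what you cannot afford: the $\mod R$-coproduct of the $M_j$ need not be finitely presented, and even if it were, it would not automatically have the required universal property in $\fpmod R$. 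The correct argument (Lemma \ref{existence_of_coproducts_in_pres_lemma}(3), which the paper cites) works entirely with the internal coproducts $P$ of copies of $R$ in $\fpmod R$ (these exist for every index set of cardinality at most $|I|$ because $|I\times I|=|I|$), extends each presentation $R^{n_j}\to R^{m_j}$ to a morphism $P\to P$, takes the cokernel of the induced morphism between the $I$-fold internal coproducts (cokernels of maps between finitely presented modules are finitely presented and agree with those computed in $\mod R$), and only then verifies the universal property as in \cite[Lem.\,3.1]{parra2020hrs}. Your shortcut through $\mod R$ does not substitute for this.

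The other two implications are essentially fine. Your (3)$\Rightarrow$(2) is a genuinely different and arguably cleaner route than the paper's: the paper extracts orthogonal idempotents $(e_i)_{I}$ from $S\cong S^I$ and checks that $S$ is the coproduct of the $e_iS$ by a closure argument (the class of $X$ with $X\cong\prod_{I}Xe_i$ contains $S$ and is closed under finite coproducts and cokernels), whereas you use that $N\otimes_S(-)$ commutes with products for finitely presented $N$ to see that $N\mapsto N^I$ is naturally isomorphic to the identity functor and hence represented by $S$. Your (2)$\Rightarrow$(3) is essentially the paper's, and the projectivity ``obstacle'' dissolves at once: a surjection $R^n\twoheadrightarrow P$ exists since $P$ is finitely presented, the induced map $\hom_R(P,R^n)\cong (R^n)^I\to P^I\cong\hom_R(P,P)$ is onto because products are exact in $\Ab$, so $\id_P$ lifts and $P$ is a direct summand of $R^n$. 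Do mind the convention you flag yourself: the Morita partner is $S=\End_R(P_R)$ acting on the left of $P$ (not its opposite), and the isomorphism $S\cong\hom_R(P,P)^I$ obtained from the fact that the internal coproduct of $I$ copies of $P$ is again $P$ is one of \emph{left} $S$-modules, which is exactly what assertion (3) requires.
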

\begin{proof}
(1)$\Leftrightarrow$(2). Since $R$ is a finite epi-generator of $\fpmod R$, this is a consequence of  Lemma \ref{existence_of_coproducts_in_pres_lemma}(3).

\smallskip\noindent
(3)$\Rightarrow$(1, 2). Since assertion (1) is Morita invariant, replacing $R$ by $S$ if necessary, we can assume that $R\cong R^I$ in $R \modl$. We then get a family of orthogonal idempotents $(e_i)_{I}$ such that each $Re_i$ is isomorphic to $_RR$ (although not canonically) and the canonical  map $\rho \colon R\to \prod_{I}Re_i$, such that $r\rightarrow (re_i)_{I}$, is an isomorphism in $R \modl$. We then have  isomorphisms 
\[
e_iR\cong \hom_R({}_RRe_i,{}_RR)\cong \hom_R({}_RR,{}_RR)\cong R,\qquad\text{in $\fpmod R$.}
\] 
We claim that $R_R$, together with the inclusions $\iota_i\colon e_iR\hookrightarrow R$ (with $i\in I$), gives a coproduct of the $e_iR$ in $\fpmod R$. Take  an arbitrary object $X\in\fpmod R$ and consider the induced map 
\[
\xymatrix{
\varphi_X\colon X\cong \hom_R(R,X)\ar[r]^-{(\iota_i^*)_I}&\prod_{I}\hom_R(e_iR,X)\cong\prod_{I}Xe_i,
}
\] 
which is clearly identified with the map $x\rightarrow (xe_i)_{I}$. Let now $U\colon \fpmod R\to\Ab$ be the forgetful functor and $F\colon \fpmod R\to\Ab$ the obvious functor that acts on objects by $F(X):=\prod_{I}Xe_i$. Then, there is a natural transformation $\varphi\colon U\Rightarrow F$ where, for each $X\in \fpmod R$, $\varphi_X\colon X\to \prod_{I}Xe_i$ is defined as above.  The subcategory $\C\subseteq \fpmod R$ of all the $X$ for which $\varphi_X$ is an isomorphism  contains $R$ since $\varphi_R=\rho$, and is clearly closed under finite coproducts. Moreover, since all the $e_iR$ are projective in $\fpmod R$, and products are exact in $\Ab$,  $\C$ is also closed under taking cokernels. It then follows that $\C=\fpmod R$, so that $\varphi$ is a natural isomorphism and our claim  is settled.  

\smallskip\noindent
(2)$\Rightarrow$(3). Let $P:=\mathrlap{\scalebox{.5}{\text{\hspace{8.5pt}$\mathrm{fp}$}}}\coprod_I R$ be the coproduct of $|I|$-many copies of $R$ in $\fpmod R$. Then $P$ is projective in $\fpmod R$. As $P$ is finitely presented, there is an epimorphism $R^n\to P$ and, as $P$ is projective in $\fpmod R$, this morphism splits (both in $\fpmod R$ and in $\mod R$). Hence, $P$ is projective also in $\mod R$  and $R$ is a direct summand of $P$.  Therefore, $P$ is a progenerator of $\mod R$ and, moreover, we have that $\mathrlap{\scalebox{.5}{\text{\hspace{8.5pt}$\mathrm{fp}$}}}\coprod_I P\cong P$. Hence,
\[
\xymatrix{
S=\hom_R(P,P)\cong \hom_R(\mathrlap{\scalebox{.5}{\text{\hspace{8.5pt}$\mathrm{fp}$}}}\coprod_I P,P)\cong \hom_R(P,P)^I\cong S^I
}
\] 
in $S\modl$, where $S=\End_R(P_R)$, and this is a ring that is Morita equivalent to $R$.
\end{proof}

\begin{example} \label{ex.ring isomorphic to its product}
If $K$ is a field and $V=K^{(I)}$, for $I$ an infinite set, then the ring $R=\End_K(V)$ is a Von Neumann regular ring (whence coherent on both sides) such that 
\[
R^I\cong\Hom_K(V^{(I)},V)\cong\Hom_K(V,V)\cong R
\]
 as left $R$-modules. More generally (see \cite{Bergman}), 
if $A$ is any ring and $M$ is an $A$-module such that $M^{(I)}\cong M$, then $R^I\cong R$ in $R \modl$, when $R:=\End_A(M)$. 
\end{example}

\subsection{Tilting objects in $\fpmod R$ versus classical tilting modules}

In this subsection we start the comparison between the classical tilting modules (as defined below) and the tilting objects in the Abelian category $\A:=\fpmod R$, as in Definition \ref{def.quasi-tilting torsion pair}. Let us start recalling the classical definition of tilting in a category of modules:

\begin{definition} \label{def.classical tilting module}
Let $R$ be any ring. A {\bf classical tilting right $R$-module} is a right $R$-module $V$ satisfying the following conditions:
\begin{enumerate}[\rm (CT.1)]
\item there is a short exact sequence 
\[
0\rightarrow P_1\longrightarrow P_0\longrightarrow V\rightarrow 0, 
\]
with $P_0$ and $P_1$ finitely generated projective right $R$-modules;
\item $\Ext_{\mod R}^1(V,V)=0$ or, equivalently,  $\Ext_{\mod R}^1(V,V^{(I)})=0$ for all sets $I$;
\item there is a short exact sequence 
\[
0\rightarrow R\longrightarrow V_0\longrightarrow V_1\rightarrow 0
\] 
such that $V_0,\,V_1\in\add(V)$.
\end{enumerate}
\end{definition}

It is well-known, and easy to see, that for an arbitrary ring $R$, the right $R$-module $V$ is classical tilting if, and only if, it is finitely presented and it is a tilting object of $\mod R$ in the sense of Definition \ref{def.quasi-tilting torsion pair}. On the other hand, when $R$ is right coherent (so $\fpmod R$ is Abelian), it is not clear when a classical tilting right $R$-module $V$ is also a tilting object of the Abelian category $\fpmod R$. In this subsection we study the relation between these two concepts. Before proceeding with the following auxiliary result, let us recall that a right $R$-module $V$ is said to be {\bf finendo}, provided it is finitely generated as a left module over $\End_R(V)$ (see also Remark \ref{rem.Colpi-Menini}).

\begin{lemma} \label{lem.fp quasitilted restricting}
Let $R$ be a right coherent ring, $V$ a finitely presented quasi-tilting right $R$-module (i.e., a quasi-tilting object in $\mod R$) and $\mathbf{t}_V:=(\Gen(V),V^\perp )$ the associated torsion pair in $\mod R$. Suppose also that the torsion pair $\mathbf{t}_V$ restricts to $\fpmod R$. Then, the following assertions hold true:
\begin{enumerate}[\rm (1)]
\item $\Gen(V)\cap\fpmod R=\Gen_{\fpmod R}(V)$ and $\subGen_{\fpmod R}(V)=\overline{\Gen}(V)\cap\fpmod R$;
\item $V$ is a quasi-tilting object of $\fpmod R$;
\item if $V$ is finendo, then $\subGen_{\fpmod R}(V)$ is reflective in $\fpmod R$.
\end{enumerate}
\end{lemma}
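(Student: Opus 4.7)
The plan is to verify the three assertions in order, using part (1) as the main tool for parts (2) and (3).

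For part (1), both inclusions ``$\subseteq$'' are routine from the definitions. For the converse of the first, an $M\in\Gen(V)\cap\fpmod R$ is finitely generated, so any epi $V^{(I)}\twoheadrightarrow M$ restricts to an epi $V^F\twoheadrightarrow M$ with $F\subseteq I$ a finite subset, yielding $M\in\Gen_{\fpmod R}(V)$. For the converse of the second, given $M\in\overline{\Gen}(V)\cap\fpmod R$ with an embedding $M\hookrightarrow T$ and an epi $\pi\colon V^{(I)}\twoheadrightarrow T$, I would pull back $M\hookrightarrow T$ along $\pi$, use finite generation of $M$ to extract a finitely generated submodule $P_0\hookrightarrow V^F$ (for some finite $F$) that still surjects onto $M$, and form the pushout
\[
\xymatrix@R=12pt@C=30pt{
P_0 \ar@{^(->}[r] \ar@{->>}[d] & V^F \ar@{->>}[d] \\
M \ar@{^(->}[r] & M'
}
\]
to realize $M$ as a subobject of the finitely presented module $M'\in\Gen_{\fpmod R}(V)$ (since $M'$ is a quotient of $V^F$ by the image of $P_0$, which is finitely generated, hence finitely presented by coherence).

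For part (2), I check the three conditions of Definition \ref{def.quasi-tilting torsion pair}. That $\Gen_{\fpmod R}(V)$ is a torsion class in $\fpmod R$ follows directly from the restriction hypothesis on $\mathbf{t}_V$ combined with the identification $\Gen_{\fpmod R}(V)=\Gen(V)\cap\fpmod R$ from part (1). For the equality $\Gen_{\fpmod R}(V)=\subGen_{\fpmod R}(V)\cap V^{\perp_{1}}$ computed in $\fpmod R$, I would use that $\fpmod R$ is closed under extensions in $\mod R$, so $V$-orthogonality for $\Ext^1$ computed in either category agrees on finitely presented modules; then intersecting the quasi-tilting equality $\Gen(V)=\overline{\Gen}(V)\cap V^{\perp_1}$ with $\fpmod R$ and applying part (1) yields the claim. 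For the $\Pres$-equality, given $T\in\Gen_{\fpmod R}(V)$ I would combine the presentation $V^{(I)}\to V^{(J)}\to T\to 0$ available in $\mod R$ (from $V$ quasi-tilting there) with finite generation of $T$ and the restriction of $\mathbf{t}_V$ (which provides that, for any epi $V^n\twoheadrightarrow T$ in $\fpmod R$ with kernel $K$, the torsion part $t_V(K)$ again lies in $\fpmod R$, and hence in $\Gen_{\fpmod R}(V)$) to extract a finite presentation $V^m\to V^n\to T\to 0$ inside $\fpmod R$.

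For part (3), I would invoke Lemma \ref{example_4_lemma} applied to $\A=\fpmod R$, which has the projective finite epi-generator $R$ (by Lemma \ref{lem.only-finite-copr-project-epigener}), with torsion class $\T=\Gen_{\fpmod R}(V)$ established in part (2), and the object $V$. The finendo hypothesis gives $V=Sv_1+\cdots+Sv_n$ as a left $S=\End_R(V)$-module, so the morphism $(\phi_1,\dots,\phi_n)\colon R\to V^n$ with $\phi_i(1)=v_i$ is an $\add(V)$-preenvelope of $R$: any $\phi\colon R\to V$ corresponds to $\phi(1)=\sum s_iv_i\in V$ and hence factors through $(\phi_1,\dots,\phi_n)$ via the map $V^n\to V$ determined by $(s_1,\dots,s_n)$. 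Then Lemma \ref{example_4_lemma} immediately yields that $\subGen_{\fpmod R}(V)$ is reflective in $\fpmod R$.

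The main obstacle is the $\Pres$-equality in part (2): for an arbitrary epi $V^n\twoheadrightarrow T$ the kernel need not lie in $\Gen(V)$, and it requires careful interplay between the quasi-tilting presentation in $\mod R$ and the restriction of $\mathbf{t}_V$ to $\fpmod R$ (together with the fact, from Proposition \ref{prop.properties of quasi-tilting}(1), that $V$ is tilting in $\overline{\Gen}(V)$) to refine the presentation so that both terms are finite coproducts of copies of $V$.
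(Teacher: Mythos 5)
Your treatments of part (3) and of the second equality in part (1) are fine: part (3) is exactly the paper's argument (finendo gives an $\add(V)$-preenvelope of $R$, then Lemma \ref{example_4_lemma}), and your pullback--pushout construction embedding a finitely presented $M\in\overline{\Gen}(V)$ into a finitely presented member of $\gen(V)$ is a legitimate, more hands-on alternative to the paper's route via Crawley-Boevey's $\Gen(V)=\varinjlim(\gen(V)\cap\fpmod R)$ (modulo the slip that $M'$ is a quotient of $V^F\oplus M$, not of $V^F$, by the antidiagonal image of $P_0$). One smaller caveat in part (1): the inclusion $\Gen_{\fpmod R}(V)\subseteq\Gen(V)\cap\fpmod R$ is not purely definitional, because $\Add_{\fpmod R}(V)$ may involve infinite coproducts computed in $\fpmod R$ that are not coproducts in $\mod R$ (see Proposition \ref{prop.infinite coproducts in fpmod}); the paper gets this inclusion precisely from the hypothesis that $\Gen(V)\cap\fpmod R$ is a torsion class in $\fpmod R$, hence closed under whatever coproducts $\fpmod R$ happens to have.

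The genuine gap is the $\Pres$-equality in part (2), which you correctly single out as the main obstacle but do not actually overcome. Your proposed ingredient --- that for an epimorphism $V^n\twoheadrightarrow T$ with kernel $K$ the torsion part $t_V(K)$ is finitely presented and lies in $\Gen_{\fpmod R}(V)$ --- does not lead anywhere: what one needs is an epimorphism from a finite coproduct of copies of $V$ whose \emph{whole} kernel lies in $\Gen(V)$, and knowing that $t_V(K)$ is finitely presented gives no control on $K/t_V(K)$. Since Remark \ref{rem.usual definition of quasi-tilting} stresses that $\Gen=\Pres$ is an independent requirement of Definition \ref{def.quasi-tilting torsion pair}, this is a substantive missing step, not a formality. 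The paper closes it with a Lazard-type argument: starting from a presentation $V^{(\Lambda)}\xrightarrow{f}V^{(I)}\to T\to 0$ in $\mod R$ (available because $V$ is quasi-tilting there), one writes $T$ as the directed colimit over the poset of pairs of finite subsets $(\Lambda',I')$ with $f\circ\iota_{\Lambda'}$ factoring through $\epsilon_{I'}$ of the cokernels $T'=\Coker(f_{(\Lambda',I')})\in\pres(V)$; finite presentability of $T$ forces the canonical map $u\colon T'\to T$ to be a retraction at some stage, and a Snake Lemma comparison of $0\to T''\to V^{(I')}\to T'\to 0$ with $0\to U\to V^{(I')}\to T\to 0$ yields $0\to T''\to U\to\Ker(u)\to 0$ with both outer terms in $\T\cap\fpmod R$, whence $U\in\gen(V)$ and $T\in\pres(V)$. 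Some device of this kind is needed; your proposal as written does not supply it.
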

\begin{proof}
(1). We obviously have the inclusions $\Gen(V)\cap\fpmod R=\gen(V)\cap\fpmod R\subseteq\Gen_{\fpmod R}(V)$. On the other hand,  since $\Gen(V)\cap\fpmod R$ is a torsion class in $\fpmod R$, also the converse holds:
\begin{equation}
\Gen(V)\cap\fpmod R=\Gen_{\fpmod R}(V).
\end{equation}
By \cite[Lem.\,4.4]{Craw}, we know that then $\Gen(V)=\varinjlim (\gen(V)\cap\fpmod R)$, from which one easily deduces that $\overline{\Gen}(V)\cap\fpmod R\subseteq\subGen_{\fpmod R}(V)=\Sub_{\fpmod R}(\gen(V)\cap\fpmod R)$. But then,
\begin{equation}
\overline{\Gen}(V)\cap\fpmod R=\subGen_{\fpmod R}(V),
\end{equation}
since the converse inclusion is obvious. 

\smallskip\noindent
(2). From the equality $\Gen (V)=\overline{\Gen}(V)\cap V^{\perp_1}$ and part (1) we get that 
\begin{align*}
\Gen_{\fpmod R} (V)=\Gen (V)\cap\fpmod R&=\overline{\Gen}(V)\cap V^{\perp_1}\cap\fpmod R\\
&=\subGen_{\fpmod R}(V)\cap\ker(\Ext_{\fpmod R}^1(V,-)).
\end{align*}
To prove that $V$ is a quasi-tilting object of $\fpmod R$, we need to prove that $\Gen(V)\cap\fpmod R\subseteq\pres(V)$, for then we  have  that $\Gen_{\fpmod R}(V)=\Pres_{\fpmod R}(V)$. Indeed, given $T\in \Gen(V)=\Pres(V)$, there is a presentation 
\[
V^{(\Lambda)}\overset{f}\longrightarrow V^{(I)}\longrightarrow T\to 0.
\] 
We apply an argument due to Lazard (see \cite[Lem.\,1.11]{parra2019locally}), that we just outline here, to show that $T\in \varinjlim\,\pres(V)$. 
%
Given finite subsets $\Lambda'\subseteq \Lambda$ and $I'\subseteq I$ denote, respectively, by 
\[
\xymatrix{
\iota_{\Lambda'}\colon V^{(\Lambda')}\to V^{(\Lambda)}\quad\text{and}\quad\epsilon_{I'}\colon V^{(I')}\to V^{(I)}
}
\]
the inclusions into the coproduct, and define the following set 
\[
\Upsilon:=\{(\Lambda',I'):\Lambda'\subseteq \Lambda,\, I'\subseteq I\ \text{finite},\ \text{such that $f\circ\iota_{\Lambda'}$ factors through $\epsilon_{I'}$}\}.
\]
Endow $\Upsilon$ with the product order. 
It is  routine to check that $\Upsilon$ is directed and that 
\[
T\cong\Coker(f)\cong{\varinjlim}_\Upsilon\Coker(f_{(\Lambda',I')})\in \varinjlim\, \pres(V).
\] 
Finally, if we suppose that $T$ is also finitely presented, then there is some $(\Lambda',I')\in\Upsilon$ such that the canonical map  $u:=u_{(\Lambda',I')}:\Coker(f_{(\Lambda',I')})=:T'\to T$ is a retraction. We then get a commutative diagram in $\fpmod R$ with exact rows:
\[
\xymatrix@C=40pt@R=20pt{
0 \ar[r] & T'' \ar[r] \ar[d] & V^{(I')} \ar@{=}[d]\ar[r] & T' \ar[r] \ar[d]^{u} & 0\\ 
0 \ar[r] & U \ar[r] & V^{(I')} \ar[r] & T \ar[r] & 0  
}
\]
By the Snake Lemma, we have an exact sequence $0\rightarrow T''\to U\to\Ker (u)\rightarrow 0$, where the outer terms are in $\T\cap\fpmod R$. It follows that $U\in\T\cap\fpmod R$, and so $T\in\pres(V)$.

\smallskip\noindent
(3). Let $\T_0:=\Gen(V)\cap\fpmod R=\Gen_{\fpmod R}(V)$. Since $V$ is finendo, we can choose a finite set $\{f_1,\dots,f_n\}$ of generators of ${}_{\End_R(V)}V$. It is  routine to check that $f:=(f_1,\dots,f_n)\colon R\to V^n$ is an $\add (V)$-preenvelope. Hence, by Lemma \ref{example_4_lemma}, $\subGen_{\fpmod R}(V)=\Sub_{\fpmod R}(\T_0)$ is reflective in $\fpmod R$.  
\end{proof}

\begin{proposition} \label{prop.quasi-tilting in fpmodR}
Let $R$ be a right coherent ring and $V\in \fpmod R$. The following are equivalent:
\begin{enumerate}[\rm (1)]
\item $V$ is a quasi-tilting object of $\fpmod R$ and $\subGen_{\fpmod R}(V)$ is reflective in $\fpmod R$;
\item there is a (possibly infinite) set $J$ such that $V_J:=\mathrlap{\scalebox{.5}{\text{\hspace{8.5pt}$\mathrm{fp}$}}}\coprod_JV$ is a finitely presented finendo quasi-tilting $R$-module and the torsion pair $\t_{V_J}:=(\Gen(V_J),V_J^\perp )$ in $\mod R$ restricts to $\fpmod R$. 
\end{enumerate}
In this case, the restricted torsion pair $(\Gen(V_J)\cap\fpmod R,V_J^\perp\cap\fpmod R)$ is the torsion pair associated with $V$ in $\fpmod R$, that is, $\Gen(V_J)\cap\fpmod R=\Gen_{\fpmod R}(V)$. 
\end{proposition}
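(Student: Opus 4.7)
For $(2)\Rightarrow(1)$, the plan is to apply Lemma \ref{lem.fp quasitilted restricting} to $V_J$: by hypothesis $V_J$ is a finitely presented finendo quasi-tilting $R$-module whose torsion pair in $\mod R$ restricts to $\fpmod R$, so the lemma yields that $V_J$ is quasi-tilting in $\fpmod R$ and that $\subGen_{\fpmod R}(V_J)$ is reflective. Since $V_J = \mathrlap{\scalebox{.5}{\text{\hspace{8.5pt}$\mathrm{fp}$}}}\coprod_J V$ exists in $\fpmod R$ and $J\neq\emptyset$, $V$ is a direct summand of $V_J$ and $V_J \in \Add_{\fpmod R}(V)$, whence $\Add_{\fpmod R}(V) = \Add_{\fpmod R}(V_J)$. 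All the classes appearing in the definition of quasi-tilting in $\fpmod R$ therefore coincide for $V$ and $V_J$, so the quasi-tilting conditions transfer to $V$, and $\subGen_{\fpmod R}(V) = \subGen_{\fpmod R}(V_J)$ is reflective.

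For the harder implication $(1)\Rightarrow(2)$, the first step of the plan is to show that $V$ is finendo. By Proposition \ref{prop.lorthogonal-generates-torsion}, the torsion class $\T := \Gen_{\fpmod R}(V)$ is semi-special preenveloping in $\fpmod R$, so there exists a semi-special $\T$-preenvelope $\mu\colon R\to T_R$. Since $T_R$ is finitely presented and lies in $\T = \Pres_{\fpmod R}(V)$, there is an epimorphism $\pi\colon V^m \twoheadrightarrow T_R$ for some finite $m$, and projectivity of $R$ provides a lift $\psi = (f_1,\dots,f_m)^t\colon R \to V^m$ with $\pi\circ\psi = \mu$. As $V \in \T$, any $g\colon R\to V$ factors through $\mu$ and hence through $\psi$, so $\psi$ is an $\add(V)$-preenvelope; equivalently, the $f_i$'s generate $V = \Hom_R(R,V)$ as a left $\End_R(V)$-module, and so $V$ is finendo.

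With $V$ finendo, I would take $J$ to be a singleton (so $V_J = V$) and verify that $V$ is quasi-tilting in $\mod R$ with restricting torsion pair. Since $R$ is right coherent and $V$ is finitely presented, $V$ is FP$_\infty$ and $\Ext^1_R(V,-)$ commutes with direct limits. A standard locally-coherent argument then shows that $\T' := \varinjlim\T$ in $\mod R$ is a torsion class of finite type whose restriction to $\fpmod R$ is $\T$; moreover $\T' = \Gen(V)$, $\T' \subseteq V^{\perp_1}$, and $\Gen(V) = \Pres(V)$ (the last by taking direct limits of finitely presented presentations). The main obstacle is the remaining inclusion $\overline{\Gen}(V) \cap V^{\perp_1} \subseteq \Gen(V)$: given $X$ in the left-hand side and its torsion decomposition $0\to tX \to X \to X/tX \to 0$, one shows $X/tX \in \overline{\Gen}(V) \cap V^\perp$ and then reduces to proving $\overline{\Gen}(V)\cap V^\perp = 0$ by a compactness argument that transfers the question to finitely presented objects, where the corresponding statement $\subGen_{\fpmod R}(V)\cap\ker(\Ext^1_{\fpmod R}(V,-)) = \T$ holds by the quasi-tilting hypothesis in $\fpmod R$. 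Once $V$ is established as quasi-tilting in $\mod R$, the final ``in this case'' assertion follows from Lemma \ref{lem.fp quasitilted restricting}(1) applied to $V_J$, combined with $\Gen_{\fpmod R}(V_J) = \Gen_{\fpmod R}(V)$.
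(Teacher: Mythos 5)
Your direction ``(2)$\Rightarrow$(1)'' is correct and is essentially the paper's argument (Lemma \ref{lem.fp quasitilted restricting} applied to $V_J$, then transfer along $\Add_{\fpmod R}(V)=\Add_{\fpmod R}(V_J)$). The problem is in ``(1)$\Rightarrow$(2)'', at the very first step. You assert that, since $T_R$ is finitely presented and lies in $\T=\Pres_{\fpmod R}(V)$, there is an epimorphism $V^m\twoheadrightarrow T_R$ with $m$ \emph{finite}, and from this you conclude that $V$ itself is finendo and that $J$ may be taken to be a singleton. But $\Pres_{\fpmod R}(V)$ is built from $\Add_{\fpmod R}(V)$, i.e.\ from coproducts of copies of $V$ computed \emph{in} $\fpmod R$, and these may be genuinely infinite (Proposition \ref{prop.infinite coproducts in fpmod}, Example \ref{ex.ring isomorphic to its product}). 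Such a coproduct is itself a finitely presented $R$-module; it is not a filtered union of its finite subcoproducts, and the images of finitely many of the canonical inclusions need not generate it, so no compactness argument lets you replace it by a finite power of $V$. The conclusion that $V$ is finendo is therefore unjustified, and this is exactly why the statement is phrased with a \emph{possibly infinite} $J$: the paper's proof only produces a semi-special $\T_0$-preenvelope $R\to V_0$ with $V_0,V_1\in\Add_{\fpmod R}(V)$, chooses $J$ (possibly infinite) so that $V_0,V_1$ are summands of $V_J$, and it is $V_J$ --- not $V$ --- that comes out finendo.

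The closing step is also too thin to count as a proof. The hard inclusion $\overline{\Gen}(V)\cap V^{\perp_1}\subseteq\Gen(V)$ in $\mod R$ is dispatched by ``a compactness argument that transfers the question to finitely presented objects'', but no such transfer is exhibited: one would need to know that every object of $\overline{\Gen}(V)\cap V^{\perp}$ is a direct limit of finitely presented objects lying in the corresponding subclass of $\fpmod R$, which requires closure properties of both $\overline{\Gen}(V)$ and $V^{\perp}$ under the relevant direct limits together with a compatibility of the two presentations --- none of which is addressed. The paper instead identifies $\Sub(\varinjlim\T_0)$ with $\mod{R/\mathfrak a}$ for $\mathfrak a=\mathrm{ann}(V_J)$ (using that the preenvelope $R\to V_0$ has kernel $\mathfrak a$), notes that $R/\mathfrak a$ is right coherent, and verifies that $V_J$ is a classical tilting $R/\mathfrak a$-module via (CT.1)--(CT.3); an argument of comparable substance is needed at this point.
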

\begin{proof}
Note that $\Add_{\fpmod R}(V)=\Add_{\fpmod R}(V_J)$, whenever $V_J$ exists. Therefore $V$ is a quasi-tilting object of $\fpmod R$ if, and only if, so is $V_J$. In that case the associated torsion pairs in $\fpmod R$ coincide. 

\smallskip\noindent
(2)$\Rightarrow$(1). It follows from Lemma \ref{lem.fp quasitilted restricting}(2,3), together with the above comment. 

\smallskip\noindent
(1)$\Rightarrow$(2). Put $\T_0:=\Gen_{\fpmod R}(V)$ and $\B':=\Sub_{\fpmod R}(\T_0)$, and fix a left adjoint $L\colon\fpmod R\to\B'$ to the inclusion $\iota \colon\B'\to\fpmod R$. Then, $L(R)$ is a projective generator of $\B'$ and, by Proposition \ref{prop.properties of quasi-tilting}, we know that $V$ is tilting in $\B'$. By Proposition \ref{prop.Ext-universal tilting} and Remark \ref{rem.tilting with projective epigenerator}, we have an exact sequence 
\[
0\rightarrow L(R)\stackrel{\lambda}{\longrightarrow}V_0\longrightarrow V_1\rightarrow 0\qquad\text{in $\B'$,} 
\]
where $V_0,\,V_1\in\Add_{\fpmod R}(V)$, that is kept exact for any coproduct that might exist in $\fpmod R$. Consider now the unit $\mu\colon \id_{\fpmod R}\Rightarrow\iota\circ L$, that is an epimorphism (see Lemma \ref{reflector_is_epi_lemma}). By considering the composition $u:=\iota (\lambda)\circ\mu_R\colon R\stackrel{}{\to}(\iota\circ L)(R)\stackrel{}{\to}\iota (V_0)=V_0$, we obtain an exact sequence 
\[
R\stackrel{u}{\longrightarrow}V_0\longrightarrow V_1\to 0\qquad\text{in $\fpmod R$,} 
\]
and one easily sees that $u$ is a semi-special $\T_0$-preenvelope. We fix now a (possibly infinite) set $J$ such that $V_J:=\mathrlap{\scalebox{.5}{\text{\hspace{8.5pt}$\mathrm{fp}$}}}\coprod_JV$ exists in $\fpmod R$ and $V_0,\,V_1$ are both direct summands of $V_J$. Then, $u$ is an $\add(V_J)$-preenvelope, which implies that $V_J$ is finendo. We claim that 
\[
\T_0=\gen(V_J)\cap\fpmod R.
\] 
By the comment at the beginning of this proof, the inclusion ``$\supseteq$'' is obvious. Conversely, let $T\in\T_0$ and fix an epimorphism $p\colon R^n\to T$. Then, $p$ factors through $u^n\colon R^n\to V_0^n$ since the latter is a \mbox{$\T_0$-preenvelope}. This implies that $T$ is an epimorphic image of $V_0^n$, and so it belongs in $\gen (V_J)\cap\fpmod R$. 
Put now 
\[
\T:=\varinjlim\T_0=\varinjlim(\gen(V_J)\cap\fpmod R),
\] 
that is a torsion class in $\mod R$ by  \cite[Lem.\,4.4]{Craw}. We then have that $\T\subseteq\Gen(V_J)$ and the reverse inclusion is obvious since $V_J\in\T$ and $\T$ is closed under coproducts and quotients. On the other hand,  $V_J^{\perp_1}$ is closed under direct limits, since $V_J$ is finitely presented and $R$ right coherent. We then get that $\T=\Gen(V_J)\subseteq\overline{\Gen}(V)\cap V_J^{\perp_1}$.
We need just to verify the converse inclusion and the proof will be finished (see Remark \ref{rem.usual definition of quasi-tilting}). Now,  by Lemma \ref{lem.extension-in-reflective-subcats}(1), we have that $\overline{\Gen}(V)\cap V_J^{\perp_1}=\ker(\Ext_\B^1(V_J,-))$, where $\B:=\Sub(\T)=\overline{\Gen}(V_J)$. 
Furthermore, we have that $\B'=\B\cap\fpmod R$. In fact, the inclusion $\B'\subseteq\B\cap\fpmod R$ is clear, while the converse follows from the fact that $\T=\varinjlim\T_0$. On the other hand, 
since $u$ is an $\add(V_J)$-preenvelope, we immediately get that $\mathfrak{a}:=\ker (u)$ is the annihilator of $V_J$, i.e. 
\[
\mathfrak{a}=\{r\in R:V_J\cdot r=0\}. 
\]
In particular, ${\frak a}$ is a  two-sided ideal of $R$ such that $R/\mathfrak{a}\cong\Im(u)\in\B$. This implies that $\mod R/\mathfrak{a}\subseteq\B$, with the obvious identification. But the converse is also true, since $\mathfrak{a}$ annihilates $\T=\Gen(V)$, and hence $B\cdot\mathfrak{a}=0$, for all $B\in\B$. Hence, $\B=\mod R/\mathfrak{a}$. On the other hand, since $\mathfrak{a}$ is the kernel of a morphism in $\fpmod R$, we have that $R/\mathfrak{a}\in\fpmod R$, and so $\fpmod R/\mathfrak{a}\subseteq\fpmod R$ and $\B'=\B\cap\fpmod R=\mod R/\mathfrak{a}\cap\fpmod R=\fpmod R/\mathfrak{a}$. Moreover, we have that $R/\mathfrak{a}$ is a right coherent ring (see \cite[Coro.\,3.6]{Swan}) and, by the the last paragraph, we also have that 
\[
\overline{\Gen}(V)\cap V_J^{\perp_1}=\ker(\Ext_\B^1(V_J,-))=\ker(\Ext_{\mod R/\mathfrak{a}}^1(V_J,-)).
\] 
We will be done if we prove that $V_J$ is  tilting in $\mod R/\mathfrak{a}$. We already know that $V_J$ is a tilting object of $\B'=\fpmod R/\mathfrak{a}$, which implies that conditions (CT.1) and (CT.2) of Definition \ref{def.classical tilting module} hold for $V_J$ in $\mod {R/\mathfrak a}$. But,  by the choice of the set $J$ and the definition of $\mathfrak{a}$, we have a short exact sequence $0\rightarrow R/\mathfrak{a}\to V_0\to V_1\rightarrow 0$, so that also condition (CT.3) holds, thus ending the proof.  
\end{proof}

The results of this subsection naturally  raise the question of whether the torsion pair in $\mod R$ induced by a finitely presented finendo quasi-tilting module, in particular, by a classical tilting R-module,  restricts to $\fpmod R$. The following results will clarify this question. Let us first recall that an object $V^\bullet$ of the derived category $\Der(R):=\Der(\mod R)$ is a \textbf{classical tilting complex} when it satisfies the following three conditions:  
\begin{enumerate}[\rm (CTC.1)]
\item  the functor $\Der(R)(V^\bullet ,-)\colon\Der(R)\to\Ab$ preserves coproducts;
\item  $\Der(R)(V^\bullet ,V^\bullet [n])=0$ for all integers $n\neq 0$;  
\item  if $X^\bullet\in\Der(R)$ is a complex such that $\Der(R)(V^\bullet,X^\bullet [n])=0$, for all $n\in\mathbb{Z}$, then $X^\bullet =0$. 
\end{enumerate}
We refer the reader to \cite{BBD} for the definition of $t$-structure in a triangulated category and what it means for it to restrict to a triangulated subcategory.

\begin{proposition} \label{prop.classical tilting complex}
Let $R$ be a right coherent ring, $V^\bullet\in\Der(R)$ a classical tilting complex and take its endomorphism ring $S:=\End_{\Der(R)}(V^\bullet )$. The following assertions are equivalent:
\begin{enumerate}[\rm (1)]
\item the $t$-structure $((V^\bullet )^{\perp_{ >0}},(V^\bullet )^{\perp_{ <0}})$ generated by $V^\bullet$ in $\Der(R)$ restricts to $\Der^b(\fpmod R)$;
\item $S$ is a right coherent ring.
\end{enumerate}
\end{proposition}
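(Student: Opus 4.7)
The plan is to use Rickard's derived Morita theorem for the classical tilting complex $V^\bullet$ to transport the question to a statement about the standard $t$-structure on $\Der(\mod S)$, and then invoke Lemma~\ref{lem.only-finite-copr-project-epigener}. By Rickard's theorem, $V^\bullet$ induces a triangulated equivalence $F\colon\Der(\mod R)\to\Der(\mod S)$ with $F(V^\bullet)\cong S_S[0]$, which identifies the $V$-$t$-structure with the standard $t$-structure on $\Der(\mod S)$ and its heart $\mathcal H$ with $\mod S$; in this identification $V^\bullet$ corresponds to the projective generator $S_S$ and $S=\End_{\Der(R)}(V^\bullet)=\End_{\mathcal H}(V^\bullet)$.

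For the implication ``(2)$\Rightarrow$(1)'', assume $S$ right coherent, so that $\fpmod S$ is abelian and the standard $t$-structure on $\Der(\mod S)$ restricts to $\Der^b(\fpmod S)\simeq\K^{-,b}(\proj S)$; symmetrically $\Der^b(\fpmod R)\simeq\K^{-,b}(\proj R)$. Since $V^\bullet$ is perfect, $F\cong\RHom_R(V^\bullet,-)$ sends $\Der^b(\fpmod R)$ into $\Der^b(\fpmod S)$, using that the hyper-Ext spectral sequence $E_2^{p,q}=\Ext^p_R(V^\bullet,H^q(X))\Rightarrow\Ext^{p+q}_R(V^\bullet,X)$ has finitely presented $S$-modules as $E_2$-terms (by the right coherence of $S$); a dual argument applied to the inverse equivalence $F^{-1}$, which is realised by a classical tilting complex $W^\bullet$ in $\Der(S)$ with $\End(W^\bullet)=R$, shows the reverse inclusion, so $F$ identifies $\Der^b(\fpmod R)$ with $\Der^b(\fpmod S)$. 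The restriction of the standard $t$-structure to $\Der^b(\fpmod S)$ then pulls back to the required restriction of the $V$-$t$-structure to $\Der^b(\fpmod R)$.

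For the implication ``(1)$\Rightarrow$(2)'', set $\mathcal{H}_{\fp}:=\mathcal H\cap\Der^b(\fpmod R)$. Kernels and cokernels in $\mathcal H$ are computed as $V$-$t$-structure truncations of cones; by (1) these truncations stay in $\Der^b(\fpmod R)$ (which is a triangulated subcategory), so $\mathcal{H}_{\fp}$ is an Abelian exact subcategory of $\mathcal H$. Since $V^\bullet\in\K^b(\proj R)\cap\mathcal H\subseteq\mathcal H_{\fp}$ and is projective in $\mathcal H$, it is also projective in $\mathcal H_{\fp}$. If $V^\bullet$ can be shown to be a finite epi-generator of $\mathcal H_{\fp}$, then Lemma~\ref{lem.only-finite-copr-project-epigener} yields $\mathcal H_{\fp}\simeq\fpmod{\End_{\mathcal H_{\fp}}(V^\bullet)}=\fpmod S$, showing that $\fpmod S$ is abelian and hence that $S$ is right coherent.

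The main obstacle is establishing this finite epi-generation: for every $X\in\mathcal H_{\fp}$, the right $S$-module $F(X)=\Der(R)(V^\bullet,X)$ must be finitely generated. The plan is to represent $X$ by a bounded complex of finitely presented $R$-modules, compute $F(X)$ as $H^0$ of the bounded complex $\Hom_R^\bullet(V^\bullet,X)$ (whose terms are finitely generated abelian groups, since $V^\bullet$ is a bounded complex of finitely generated projectives), and then argue by induction on the cohomological length of $X$ using the standard truncation triangles $\tau^{\leq q-1}X\to\tau^{\leq q}X\to H^q(X)[-q]$; this reduces the claim to showing that $\Ext^i_R(V^\bullet,M)$ is finitely generated over $S$ for each $M\in\fpmod R$ and each $i$, which follows by a direct chain-level computation exploiting the perfectness of $V^\bullet$ and the right coherence of $R$.
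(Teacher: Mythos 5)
Your starting point (transporting everything through the Rickard equivalence $F=\mathbb{R}\Hom_R(V^\bullet,-)$ to the standard $t$-structure on $\Der(S)$) is the same as the paper's, but both implications as you argue them have gaps, and each gap sits exactly where the real content lies. In ``(2)$\Rightarrow$(1)'' you justify $F(\Der^b(\fpmod R))\subseteq\Der^b(\fpmod S)$ by a hyper-Ext spectral sequence whose $E_2$-terms $\Ext^p_R(V^\bullet,H^q(X))$ you assert are finitely presented over $S$ ``by the right coherence of $S$''. Coherence of $S$ only gives closure properties of $\fpmod S$; it does not tell you these particular $\Ext$-groups are finitely presented. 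That claim says $F(M[0])\in\Der^b(\fpmod S)$ for every $M\in\fpmod R$, which by d\'evissage is equivalent to the very inclusion the spectral sequence is supposed to establish, so the argument is circular. The clean route --- the one the paper takes --- is to use that $F$ restricts to an equivalence $\K^{-,b}(\proj R)\to\K^{-,b}(\proj S)$ (Rickard, \cite[Prop.\,8.1]{RicMorita}); combined with the identifications $\K^{-,b}(\proj R)\simeq\Der^b(\fpmod R)$ and $\K^{-,b}(\proj S)\simeq\Der^b(\fpmod S)$ over right coherent rings, which you already have, this finishes the implication with no spectral sequence at all.

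The gap in ``(1)$\Rightarrow$(2)'' is more serious. Your reduction ends with the claim that $\Ext^i_R(V^\bullet,M)$ is finitely generated over $S$ for every $M\in\fpmod R$ ``by a direct chain-level computation exploiting the perfectness of $V^\bullet$ and the right coherence of $R$'' --- note that hypothesis (1) is not used there. For $V^\bullet=V[0]$ a classical tilting module and $i=0$ this says $\hom_R(V,M)$ is finitely generated over $\End_R(V)$ for all finitely presented $M$, which by Corollary \ref{cor.restriction of qt torsion pair} is equivalent to the associated torsion pair restricting to $\fpmod R$, i.e.\ to condition (1) itself; whether this always holds is precisely the open Question \ref{ques.tilting invariance of coherence}. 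So no such unconditional computation can exist (and the intermediate assertion that the terms of $\Hom_R^\bullet(V^\bullet,X)$ are finitely generated abelian groups is false for general $R$). The paper sidesteps the heart entirely: transport (1) to the statement that the canonical $t$-structure restricts to $\K^{-,b}(\proj S)$, test it on the two-term complex $S^n\xrightarrow{f}S$, and read off from the truncation triangle that $(\Ker f)[0]\in\K^{-,b}(\proj S)$, hence $\Ker f$ is finitely presented. If you want to keep your heart-based strategy, the missing idea is a way to use (1) itself to produce, for each $X\in\mathcal{H}\cap\Der^b(\fpmod R)$, an epimorphism onto $X$ from a finite power of $V^\bullet$.
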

\begin{proof}
Up to replacing $V^\bullet$ by a quasi-isomorphic complex, we can assume that $V^\bullet$ is a complex of $S$-$R$-bimodules, and then we have an induced triangulated equivalence  
\[
F:=\mathbb{R}\Hom_R(V^\bullet ,-)\colon\Der(R)\tilde{\longrightarrow}\Der(S)
\] 
and,  by adapting the proof of \cite[Prop.\,8.1]{RicMorita}, we know that it restricts to an equivalence 
\[
F_{\restriction{\K^{-,b}(\proj R)}}\colon \K^{-,b}(\proj R)\tilde{\longrightarrow}\K^{-,b}(\proj S).
\] 
Furthermore,  $F$ takes the $t$-structure  $((V^\bullet )^{\perp_{ >0}},(V^\bullet )^{\perp_{ <0}})$ to the canonical $t$-structure $(\Der^{\leq 0}(S),\Der^{\geq 0}(S))$ in $\Der(S)$. Hence, it takes the pair $\tau_R:=(\K^{-,b}(\proj R)\cap (V^\bullet )^{\perp_{>0}},\K^{-,b}(\proj R)\cap (V^\bullet )^{\perp_{<0}})$ of  subcategories of $\K^{-,b}(\proj R)$ to the pair $\tau_S:=(\K^{-,b}(\proj S)\cap\Der^{\leq 0}(S),\K^{-,b}(\proj S)\cap\Der^{\leq 0}(S))$ of subcategories  $\K^{-,b}(\proj S)$. With this in mind, we can now proceed with the proof.

\smallskip\noindent
(1)$\Rightarrow$(2).  Since $R$ is right coherent, there is a triangulated equivalence $\K^{-,b}(\proj R)\cong\Der^b(\fpmod R)$. Assertion (1) is then equivalent to say that $\tau_R$ is a $t$-structure in $\K^{-,b}(\proj R)$, which is then  equivalent to say that  $\tau_S$ is a $t$-structure in $\K^{-,b}(\proj S)$. Take now a morphism $f\colon S^n\to S$ in $\mod S$ and consider it as a complex 
\[
Y_f^\bullet:\quad\cdots \longrightarrow 0\longrightarrow S^n\stackrel{f}{\longrightarrow} S\longrightarrow 0 \longrightarrow \cdots,
\] 
concentrated in degrees $0$ and $1$. The associated $\tau_S$-truncation triangle, which coincides with  the truncation triangle with respect to the canonical $t$-structure $(\Der^{\leq 0}(S),\Der^{\geq 0}(S))$, takes the form 
\[
(\ker (f))[0]\longrightarrow Y^\bullet_f\longrightarrow (\coker (f))[-1]\longrightarrow(\ker (f))[1].
\] 
It then follows that $(\ker (f))[0]\in\K^{-,b}(\proj S)$, which implies that $\ker (f)$ admits a projective resolution in $\mod S$ whose terms are finitely generated (projective) $S$-modules. In particular, $\ker (f)$ is finitely presented, and so $S$ is right coherent.

\smallskip\noindent
(2)$\Rightarrow$(1). Since $R$ and $S$ are right coherent, we have equivalences of triangulated categories 
\[
\K^{-,b}(\proj R)\cong\Der^b(\fpmod R)\quad\text{and}\quad\K^{-,b}(\proj S)\cong\Der^b(\fpmod S).
\] 
Therefore, $F$   restricts to an equivalence $\Der^b(\fpmod R)\cong\Der^b(\fpmod S)$. Since the canonical $t$-structure of $\Der(S)$ restricts to $\Der^b(\fpmod S)$, we conclude that  $((V^\bullet )^{\perp_{ >0}},(V^\bullet )^{\perp_{ <0}})$ restricts to $\Der^b(\fpmod R)$.
\end{proof}

We are now in position to determine when the torsion pair associated with a quasi-tilting finitely presented right $R$-module $V$ restricts to $\fpmod R$.

\begin{corollary} \label{cor.restriction of qt torsion pair}
Let $R$ be a right coherent ring and $V$ a finitely presented quasi-tilting right $R$-module. Consider the following assertions:
\begin{enumerate}[\rm (1)]
\item the associated torsion pair $\mathbf{t}_V:=(\Gen(V),V^\perp )$ in $\mod R$ restricts to $\fpmod R$;
\item $\add (V)$ is a precovering subcategory of $\fpmod R$;
\item $\hom_R(V,X)$ is a finitely presented right $\End_R(V_R)$-module, for all $X\in\fpmod R$;
\item $\End_R(V_R)$ is a right coherent ring.
\end{enumerate}
The implications ``(1)$\Leftrightarrow$(2)$\Leftrightarrow$(3)'' hold true and  when, in addition, $V$ is  finendo, the implication ``(3)$\Rightarrow$(4)'' also holds. When $V$ is a (classical) tilting $R$-module, then all assertions are equivalent.
\end{corollary}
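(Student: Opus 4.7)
The plan is to establish (1)$\Leftrightarrow$(2)$\Leftrightarrow$(3) first, then deduce (3)$\Rightarrow$(4) under the finendo hypothesis, and finally close the loop (4)$\Rightarrow$(1) in the classical tilting case. Throughout I write $E:=\End_R(V_R)$ and exploit two standing facts: Lemma \ref{lem.fp quasitilted restricting}, which under (1) makes $V$ quasi-tilting in $\fpmod R$ and identifies $\Gen(V)\cap\fpmod R$ with $\pres(V)$ (cokernels of morphisms between finite $V^n$'s); and the right coherence of $R$, which keeps $\fpmod R$ closed under kernels in $\mod R$. The key Yoneda observation is that $\hom_R(V,V^n)=E^n$ as right $E$-modules, so that an $\add(V)$-precover of $X$ is precisely a map $V^n\to X$ whose induced map $E^n\to\hom_R(V,X)$ is surjective.

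For (1)$\Rightarrow$(2), given $X\in\fpmod R$ its torsion part $t(X)=\tr_V(X)$ lies in $\pres(V)$; picking a presentation $V^m\overset{\alpha}\to V^n\overset{\beta}\to t(X)\to 0$ and applying $\hom_R(V,-)$, the vanishing $\Ext_R^1(V,\Im(\alpha))=0$ (as $\Im(\alpha)\in\Gen(V)\subseteq V^{\perp_1}$) yields a surjection $E^n\twoheadrightarrow\hom_R(V,t(X))=\hom_R(V,X)$, so $V^n\to t(X)\hookrightarrow X$ is an $\add(V)$-precover of $X$. Conversely, if $\phi\colon V^n\to X$ is an $\add(V)$-precover, then $\Im(\phi)$ coincides with $\tr_V(X)=t(X)$ and is finitely presented by right coherence, so $X/t(X)\in\fpmod R$ and $\mathbf{t}_V$ restricts to $\fpmod R$. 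The equivalence (2)$\Leftrightarrow$(3) is the Yoneda translation: (3)$\Rightarrow$(2) follows from finite generation of $\hom_R(V,X)$ over $E$, while (2)$\Rightarrow$(3) is obtained by iterating precovers on $\ker(\phi)\in\fpmod R$ and using $\ker(\hom_R(V,\phi))=\hom_R(V,\ker(\phi))$ to manufacture a finite presentation $E^m\to E^n\to\hom_R(V,X)\to 0$.

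For (3)$\Rightarrow$(4) under finendo, any finitely generated right ideal $I=\sum_{i=1}^n f_iE\subseteq E$ is the image of $\hom_R(V,\phi)\colon E^n\to E$ for $\phi\colon V^n\to V$ satisfying $\phi\iota_i=f_i$, and its kernel $\hom_R(V,\ker(\phi))$ is finitely presented by (3) applied to $\ker(\phi)\in\fpmod R$; thus $I$ is finitely presented and $E$ is right coherent. When $V$ is classical tilting, finendo is automatic (applying $\hom_R(-,V)$ to $0\to R\to V_0\to V_1\to 0$ and using $\Ext_R^1(V_i,V)=0$ exhibits $V$ as a quotient of a finitely generated left $E$-module), so only (4)$\Rightarrow$(1) remains; for this, Proposition \ref{prop.classical tilting complex} applied to the classical tilting complex $V[0]$ gives that (4) is equivalent to the $t$-structure $((V[0])^{\perp_{>0}},(V[0])^{\perp_{<0}})$ restricting to $\Der^b(\fpmod R)$, and since this $t$-structure is the Happel--Reiten--Smal\o\ tilt of the standard one along $\mathbf{t}_V$, such restriction holds precisely when $\mathbf{t}_V$ itself restricts to $\fpmod R$. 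The main obstacle I anticipate is this last (4)$\Rightarrow$(1) step, where the derived-categorical translation via Proposition \ref{prop.classical tilting complex} is indispensable and must be paired with the Happel--Reiten--Smal\o\ identification; the (1)$\Rightarrow$(2) direction is also somewhat delicate, as one must extract \emph{finite} $V^m, V^n$ in the presentation of $t(X)$ via Lemma \ref{lem.fp quasitilted restricting} rather than settle for an $\Add$-presentation.
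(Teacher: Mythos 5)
Your proposal is correct, and for the equivalences (1)$\Leftrightarrow$(2)$\Leftrightarrow$(3) and for (1)$\Leftrightarrow$(4) in the classical tilting case it runs along essentially the same lines as the paper: (1)$\Rightarrow$(2) via a finite $\pres(V)$-presentation of $t(X)$ and the vanishing of $\Ext^1_R(V,-)$ on $\Gen(V)$; (2)$\Rightarrow$(1) via $\Im(\phi)=\tr_V(X)$ and right coherence; (2)$\Leftrightarrow$(3) by the Yoneda/kernel bookkeeping that the paper dismisses as folklore; and (4)$\Leftrightarrow$(1) via Proposition \ref{prop.classical tilting complex} together with the identification of the $t$-structure generated by $V[0]$ with the Happel--Reiten--Smal\o\ $t$-structure of $\mathbf{t}_V$ (the paper supports the final "restriction of the $t$-structure iff restriction of the torsion pair" step with a citation, which you should likewise supply rather than assert). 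The genuinely different step is (3)$\Rightarrow$(4). The paper's route uses finendo in an essential way: it shows $\mathcal{B}=\Sub(\T)=\mod{R/\mathfrak{a}}$ with $\mathfrak{a}=\mathrm{ann}(V_R)$ a finitely generated right ideal, observes that $V$ becomes a classical tilting $R/\mathfrak{a}$-module whose torsion pair restricts, and then invokes the already-proved tilting case of (1)$\Leftrightarrow$(4) over the coherent ring $R/\mathfrak{a}$. Your argument is more direct: every finitely generated right ideal $I=\sum f_i\End_R(V)$ is the image of $\hom_R(V,\phi)$ for $\phi=(f_1,\dots,f_n)\colon V^n\to V$, whose kernel is $\hom_R(V,\ker\phi)$ with $\ker\phi\in\fpmod R$ by right coherence of $R$, hence finitely generated over $\End_R(V)$ by (3); so $I$ is finitely presented. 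This bypasses the reduction to $R/\mathfrak{a}$ entirely and, as written, never uses the finendo hypothesis (indeed it only uses that $\hom_R(V,-)$ is left exact and carries $\fpmod R$ into finitely generated $\End_R(V)$-modules), so it would in fact yield (2)$\Rightarrow$(4) unconditionally --- a mild strengthening of the stated corollary; it is worth double-checking and then saying explicitly that finendo is not needed for this implication in your approach.
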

\begin{proof}
(1)$\Rightarrow$(2). Let $\T:=\Gen(V)=\Pres (V)$. Then, as $\T_0:=\T\cap\fpmod R=\pres(V)$ (see Lemma~\ref{lem.fp quasitilted restricting}), we can conclude by  the argument in the proof of implication ``(1)$\Rightarrow$(2)'' of Proposition \ref{prop.Ext-universal tilting}, with ${J}$ a finite set in this case.

\smallskip\noindent
(2)$\Rightarrow$(1). The torsion radical in $\mod R$ associated with $\mathbf{t}_V$ is the trace of $V$, i.e., $t(M)=\mathrm{tr}_V(M)$ for all $M\in\mod R$. If now $X\in\fpmod R$, then any morphism $V\to X$ factors through any fixed $\add (V)$-precover $p\colon V^n\to X$. It  follows that $t(X)=\Im (p)$, which is a finitely presented module since it is the image of a morphism in $\fpmod R$. Therefore, $\mathbf{t}_V$ restricts to $\fpmod R$.

\smallskip\noindent
(2)$\Leftrightarrow$(3) is folklore.

\smallskip\noindent
(1)$\Leftrightarrow$(4) {\em if $V$ is a (classical) tilting $R$-module}. This is a consequence of Proposition \ref{prop.classical tilting complex}.  Indeed, each classical tilting $R$-module $V$ is a classical tilting complex, when identified with the stalk complex $V[0]$ in $\Der(R)$. Moreover, we have an obvious ring isomorphism $\End (V_R)\cong\End_{\Der(R)}(V[0])$.  In that case the $t$-structure $(V[0]^{{\perp}_{>0}}, V[0]^{{\perp}_{<0}})$ in $\Der(R)$ is precisely the so-called Happel-Reiten-Smal\o\ $t$-structure associated with the torsion pair $\mathbf{t}_V=(\Gen(V),V^\perp )$ of $\mod R$ generated by $V$ (see the proof of \cite[Prop.\,5.2]{CGM}).  Then, by  \cite[Prop.\,5.1]{Saorin:Locally}, we know that  $(V[0]^{{\perp}_{>0}}, V[0]^{{\perp}_{<0}})$ restricts to $\Der^b(\fpmod R)$ if, and only if, $\mathbf{t}_V$ restricts to $\fpmod R$.

\smallskip\noindent
(1--3)$\Rightarrow$(4) {\em assuming that $V$ is finendo}. In this case, what happens is that $\B:=\Sub(\T)$ is a reflective subcategory of $\mod R$ (see Lemma \ref{example_4_lemma} and Remark \ref{rem.Colpi-Menini}). In particular, if 
\[
\mathfrak{a}=\mathrm{ann}(V_R):=\{r\in R: V\cdot r=0\},
\] 
we have that $R/\mathfrak{a}\in\B$ since we have a monomorphism $R/\mathfrak{a}\hookrightarrow V^V$ and $V^V\in\Gen(V)=\T$. It immediately follows that $\B=\mod R/\mathfrak{a}$.   On the other hand, if $\{v_1,\dots,v_t\}$ is a finite set of generators of $V$ as a left $\End(V_R)$-module, then the morphism $\mu\colon R\to V^t$, defined by $r\mapsto (v_1r,\dots,v_tr)$ is an $\add(V)$-preenvelope of $R_R$. This  implies that $\mathfrak{a}=\ker (\mu)$, so this is a finitely generated right ideal of $R$. It follows that $R/\mathfrak{a}$ is a right coherent ring (see \cite[Coro.\,3.6]{Swan}) and that $\fpmod R/\mathfrak{a}\subset\fpmod R$, with the obvious abuse of notation. Recall that, by Proposition \ref{prop.properties of quasi-tilting}, $V$ is a tilting object in $\B$, and hence $V$ is  a classical tilting $R/\mathfrak{a}$-module whose associated torsion pair $(\T,V^\perp\cap\mod R/\mathfrak{a})$ in $\mod R/\mathfrak{a}$ restricts to $\fpmod R/\mathfrak{a}$. By the equivalence ``(1)$\Leftrightarrow$(4)'' in the tilting case, we conclude that $\End_{R/\mathfrak{a}}(V)=\End_R (V_R)$ is right coherent.
\end{proof}

\begin{remark} \label{rem. Rickard}
Since any ring which is derived equivalent to $R$ is isomorphic to the endomorphism ring of a classical tilting complex in $\Der(R)$, Proposition \ref{prop.classical tilting complex} naturally leads to the question of whether ``right coherence'' is a derived invariant property for rings. That is, if the equivalent conditions of the proposition always hold.  Jeremy Rickard, to whom we are grateful,   has shown to us a counterexample.  However, as far as we know,  the following restricted version of this question is still open.
\end{remark}

\begin{question} \label{ques.tilting invariance of coherence}
Is ``right coherence'' a property of rings which is invariant under classical ($1$)-tilting equivalences?. Equivalently (see Proposition \ref{prop.classical tilting complex}), given a right coherent ring $R$ and a classical ($1$)-tilting right $R$-module $V$, does the torsion pair $(\Gen(V),V^\perp )$ in $\mod R$  restrict always to $\fpmod R$?.
\end{question}

\begin{remark}
\textcolor{black}{The answer to the above question is negative when $\mod R$ is replaced by a locally coherent Grothendieck category (see Example \ref{tilt_not_torsion_ex})}.
\end{remark}

\subsection{{Construction of nontrivial classical tilting modules over coherent rings}}

  We want to show that, given any ring $A$, there is a canonical way of giving plenty of examples of non-trivial (i.e., not progenerators) classical tilting modules over the (upper) triangular matrix ring 
  \[
  T_n(A):=
  \left[\begin{array}{cccc}
  A & A &\cdots & A \\ 
  0 & A & \cdots& A \\
\vdots & \ddots &  \ddots & \vdots \\ 
0 & \cdots&0 & A 
\end{array}\right]
\] 
that, in case  $T_n(A)$ (or, equivalently, $A$) is right coherent (see Corollary \ref{cor.coherence-triangmatrixring}), always give rise to a torsion pair that restricts to $\lfpmod{T_n(A)}$. 

\begin{lemma} \label{lem.coherence-triangring}
Let $A$ and $B$ be rings and ${}_AM_B$ an $A\text{-}B\text{-}$bimodule that is finitely generated as a right $B$-module. Consider the ring $R:=\left[\begin{smallarray}{cc} A& M\\ 0 & B\end{smallarray}\right]\cong\left[\begin{smallarray}{cc} B& 0\\ M & A\end{smallarray}\right]$. The following assertions are equivalent:
\begin{enumerate}[\rm (1)]
\item $R$ is right coherent;
\item $A$ and $B$ are right coherent and $M$ is a coherent right $B$-module, i.e., any finitely generated $B$-submodule of $M$ is finitely presented.
\end{enumerate}
\end{lemma}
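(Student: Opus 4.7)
The strategy uses the standard description of right $R$-modules as triples $(X_A, Y_B, \phi)$, where $X$ is a right $A$-module, $Y$ is a right $B$-module, and $\phi \colon X \otimes_A M \to Y$ is right $B$-linear; under this equivalence $e_1R$, $e_2R$, and $R_R$ correspond respectively to $(A, M, \id)$, $(0, B, 0)$, and $(A, M\oplus B, \iota)$ with $\iota$ the inclusion into the first summand. Hence right ideals of $R$ correspond to subtriples $(J, N, \iota|)$ with $J\leq A$ a right ideal, $N\leq (M\oplus B)_B$ a right $B$-submodule, and $JM\oplus 0\subseteq N$; moreover, since $M$ is finitely generated over $B$, a triple $(X, Y, \phi)$ is a finitely generated right $R$-module if and only if $X_A$ and $Y_B$ are both finitely generated.

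For $(1)\Rightarrow(2)$, I would test right coherence of $R$ against three families of finitely generated right $R$-ideals. First, for a finitely generated right ideal $K\leq B$, the right ideal $(0, 0\oplus K, 0)$ is finitely generated over $R$, hence finitely presented, and applying the exact functor $W\mapsto We_2 = \Hom_R(e_2R, W)$ to a presentation yields a finite $B$-presentation of $K$; this gives right coherence of $B$. Analogously, for a finitely generated right $B$-submodule $N\leq M$, the ideal $(0, N\oplus 0, 0)$ is finitely presented, and extracting the $B$-part of a presentation yields a finite $B$-presentation of $N$, giving coherence of $M$. Finally, for a finitely generated right ideal $J\leq A$, the ideal $(J, JM\oplus 0, \iota|)$ is finitely generated (using that $M$ is finitely generated over $B$, so that $JM$ is finitely generated over $B$) and hence finitely presented, and the $A$-part of a presentation, obtained via the functor $W\mapsto We_1$, gives a finite $A$-presentation of $J$; this yields right coherence of $A$.

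For $(2)\Rightarrow(1)$, take a finitely generated right ideal $I = (J, N, \iota|)$ of $R$, so $J$ is finitely generated over $A$ and $N$ is finitely generated over $B$. By coherence of $A$, $J$ is finitely presented; to see that $N$ is finitely presented over $B$, I would exploit the short exact sequence $0\to N_M \to N \to K \to 0$, where $N_M := N\cap(M\oplus 0)\leq M$ and $K := \pi_B(N)\leq B$. Here $K$ is a finitely generated (hence, by coherence of $B$, finitely presented) right ideal of $B$, which forces $N_M$ to be finitely generated over $B$; then coherence of $M$ makes $N_M$ finitely presented, and $N$ is finitely presented as an extension of finitely presented modules. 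Now construct an epimorphism $(e_1R)^p \oplus (e_2R)^q \twoheadrightarrow I$ using generators of $J$ over $A$ and generators of $N/(JM\oplus 0)$ lifted to $N$; its kernel is the triple $(\ker f, \ker g, \iota|)$, where $f\colon A^p\to J$ has finitely generated kernel by coherence of $A$, and $g\colon M^p\oplus B^q\to N$ has finitely generated kernel because its source is finitely generated over $B$ and its target is finitely presented. Thus the kernel is finitely generated as an $R$-module, so $I$ is finitely presented.

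The main obstacle lies in controlling the finite presentation of $N$ in $(2)\Rightarrow(1)$: since $N\leq M\oplus B$ does not split as a direct sum of submodules of $M$ and of $B$, one must combine the coherence hypotheses on $M$, $B$, and $A$ through the nontrivial extension $0\to N_M\to N\to K\to 0$, in particular relying on the fact that an epimorphism from a finitely generated module onto a finitely presented one always has finitely generated kernel.
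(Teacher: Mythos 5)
Your argument takes a genuinely different route from the paper's: the paper disposes of the lemma in two lines by observing that the strictly upper-triangular part is a two-sided ideal, finitely generated as a right ideal because $M_B$ is finitely generated, with quotient $R/I\cong A\times B$, and then citing a theorem of Harris on coherence modulo such an ideal. Your proof is a self-contained verification through the triple description of right $R$-modules, which is more informative but requires checking more. The direction (2)$\Rightarrow$(1) is correct and complete: the reduction of the finite presentation of $N\leq M\oplus B$ to the extension $0\to N_M\to N\to K\to 0$, and the componentwise computation of the kernel of $(e_1R)^p\oplus(e_2R)^q\twoheadrightarrow I$, are exactly what is needed.

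There is, however, a gap in (1)$\Rightarrow$(2), in the two steps that use the functor $W\mapsto We_2\cong\Hom_R(e_2R,W)$. This functor is exact, but it sends the free module $R_R$ to $Re_2\cong M\oplus B$ as a right $B$-module, which is not free over $B$ (nor known, at that point, to be finitely presented). So applying it to a finite free $R$-presentation of $(0,0\oplus K,0)$ yields an exact sequence $(M\oplus B)^n\to(M\oplus B)^m\to K\to 0$, which is not a finite $B$-presentation, and one cannot conclude that $K$ is finitely presented over $B$ without already knowing that $M$ is finitely presented over $B$ --- which is precisely what the parallel step for $(0,N\oplus 0,0)$ is supposed to establish, by the same mechanism; as written the two deductions are circular. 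The repair is routine: the ideals $(0,Y,0)$ you consider are annihilated by $e_1$, so they admit surjections from the finitely generated projective $(e_2R)^m$; the kernel is again of the form $(0,L,0)$ and, the ideal being finitely presented over $R$, is finitely generated over $R$, and by your own finite-generation criterion this forces $L$ to be finitely generated over $B$, whence $Y$ is finitely presented over $B$. (Equivalently, apply $-\otimes_R R/(Re_1R)\cong-\otimes_R B$ to a free presentation.) The analogous step for $A$ is unaffected, since $Re_1\cong A$ is free over $A=e_1Re_1$.
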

\begin{proof}
The two-sided ideal $I:=\left[\begin{smallarray}{cc} 0 & M\\ 0 & 0 \end{smallarray}\right]$ is finitely generated as a right ideal since $M_B$ is finitely generated. Moreover $R/I$ is isomorphic to $A\times B$, so that $R/I$ is right coherent if and only if so are $A$ and $B$. Now use \cite[Thm.\,2]{Har67}.
\end{proof}

\begin{corollary} \label{cor.coherence-triangmatrixring}
Let $A$ be a ring and $n>0$ an integer. Then, the triangular matrix ring $T_n(A)$ is right (resp., left) coherent if, and only if, so is $A$. 
\end{corollary}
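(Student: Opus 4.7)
My plan is to prove the corollary by induction on $n$, using Lemma~\ref{lem.coherence-triangring} at each step, and to reduce the left-coherent case to the right-coherent one via the usual anti-isomorphism between upper and lower triangular matrix rings.

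For the right-coherent statement, I will argue by induction on $n$. The base case $n=1$ is trivial since $T_1(A)=A$. For the inductive step, I will use the ring isomorphism
\[
T_n(A)\;\cong\;\left[\begin{smallarray}{cc} T_{n-1}(A) & M\\ 0 & A\end{smallarray}\right],
\]
where $M:=A^{n-1}$ is regarded as a $T_{n-1}(A)\text{-}A\text{-}$bimodule (with $T_{n-1}(A)$ acting from the left by matrix multiplication on column vectors and $A$ acting by scalar multiplication from the right). Since $M\cong A^{n-1}$ is free of finite rank as a right $A$-module, it is in particular finitely generated, so Lemma~\ref{lem.coherence-triangring} applies with the roles $A_{\mathrm{lem}}:=T_{n-1}(A)$, $B_{\mathrm{lem}}:=A$ and $M_{\mathrm{lem}}:=M$. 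It yields that $T_n(A)$ is right coherent if, and only if, $T_{n-1}(A)$ and $A$ are right coherent and $M_A$ is a coherent right $A$-module.

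The potentially delicate point is the coherence of $M_A$, but this is in fact automatic once $A$ is right coherent: over any right coherent ring, the regular module $A_A$ is coherent, the class of coherent modules is closed under finite coproducts, and hence $M_A\cong A^{n-1}$ is coherent. Therefore the coherence of $M_A$ drops out of the right coherence of $A$, and Lemma~\ref{lem.coherence-triangring} reduces to the statement that $T_n(A)$ is right coherent if, and only if, $T_{n-1}(A)$ and $A$ are both right coherent. By the inductive hypothesis, $T_{n-1}(A)$ is right coherent exactly when $A$ is, which completes the induction.

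For the left-coherent statement, I will invoke the ring anti-isomorphism $T_n(A)^{\op}\cong T_n(A^{\op})$ given by matrix transposition: an upper triangular matrix over $A$ transposes to a lower triangular matrix over $A^{\op}$, which is then conjugated by the ``reverse order'' permutation to an upper triangular matrix over $A^{\op}$. Since a ring $R$ is left coherent if, and only if, $R^{\op}$ is right coherent, the left-coherent case of the corollary follows at once by applying the right-coherent case already proved to $A^{\op}$ in place of $A$. Thus no new inductive work is needed for the left version.
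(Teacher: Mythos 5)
Your proof is correct. The right-coherence half is exactly the ``easy induction'' the paper has in mind: you peel off the last row/column via the isomorphism $T_n(A)\cong\left[\begin{smallarray}{cc} T_{n-1}(A) & A^{n-1}\\ 0 & A\end{smallarray}\right]$, apply Lemma~\ref{lem.coherence-triangring}, and correctly observe that the coherence of $M=A^{n-1}$ as a right $A$-module is automatic from the right coherence of $A$ (since $A_A$ is then a coherent module and coherent modules are closed under finite coproducts); choosing the decomposition so that $M$ is a free right $A$-module is the right move, as it makes this verification trivial. Where you diverge from the paper is in the left-coherence half: the paper passes to opposite algebras at the level of the $2\times 2$ lemma, obtaining a dual version of Lemma~\ref{lem.coherence-triangring} (with $M$ finitely generated and coherent as a \emph{left} $A$-module), and then runs a second induction; you instead apply the opposite-ring reduction globally, via the anti-isomorphism $T_n(A)^{\op}\cong T_n(A^{\op})$ (transpose into $A^{\op}$, then conjugate by the order-reversing permutation), and simply quote the already-established right-coherence case for $A^{\op}$. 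Both are valid and both ultimately rest on the fact that $R$ is left coherent iff $R^{\op}$ is right coherent; your version avoids restating and re-inducting with a dual lemma, which is slightly more economical for this specific corollary, while the paper's dual lemma has the advantage of applying to arbitrary triangular rings $\left[\begin{smallarray}{cc} A & M\\ 0 & B\end{smallarray}\right]$ rather than only to $T_n(A)$.
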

\begin{proof}
The statement about right coherence follows by Lemma \ref{lem.coherence-triangring} and an easy induction.
On the other hand, in the situation of that lemma,   the opposite algebra of $\left[\begin{smallarray}{cc}  A& M\\ 0 & B\end{smallarray}\right]$ is isomorphic $\left[\begin{smallarray}{cc}  B^{\op}& M\\ 0 & A^{\op}\end{smallarray}\right]$, where $M$ is viewed as a $B^{\op}\text{-}A^{\op}\text{-}$bimodule in the obvious way. It  follows that, if $M$ is finitely generated as a left $A$-module, then  $\left[\begin{smallarray}{cc} A& M\\ 0 & B\end{smallarray}\right]$ is left coherent if, and only if, B and $A$ are left coherent and $M$ is a coherent left $A$-module. Applying this dual version of the lemma, again by an easy induction argument, we conclude that $T_n(A)$ is left coherent if, and only if, so is $A$.
\end{proof}


Our generic example is the following:

\begin{example} \label{ejem.classical-tilting-over-coherentring}
Let $A$ by a ring, $n>0$ an integer and consider the triangular matrix ring $T_n(A)$ and its ideal $I$ of strictly upper triangular matrices. If $e_1$ is the idempotent matrix having 1 in its \mbox{$(1,1)$-entry} and zero elsewhere, then $V=\bigoplus_{k=1}^{n}(e_1T_n(A)/e_1I^k)$ is a classical tilting right \mbox{$T_n(A)$-module} that is not a progenerator. 
 In case $A$ is right coherent, the  associated tilting torsion pair restricts to  $\fpmod T_n(A)$. 
\end{example}
\begin{proof}
The inclusion $\iota\colon T_n(A)\hookrightarrow M_n(A)$ is a ring epimorphism, where $M_n(A)$ denotes the full matrix ring of $A$ (see \cite[Prop.\,2.10]{silver}. Denote by $e_i$ ($i=1,\dots,n$) the idempotent matrix having $1$ in its $(i,i)$-entry and zero elsewhere. We have that $M_n(A)\cong (e_1T_n(A))^{(n)}$ as  right $T_n(A)$-modules and  $M_n(A)\cong (T_n(A)e_n)^{(n)}$ as left $T_n(A)$-modules, so that $M_n(A)$ is projective on both sides as a module over $T_n(A)$. It follows from \cite[Thm.\,3.5]{hugel2011sanchez} that $V':=M_n(A)\oplus (M_n(A)/T_n(A))$ is classical tilting, both as a right and as left $T_n(A)$-module.  Furthermore, we have an isomorphism $M_n(A)/T_n(A)\cong\bigoplus_{k=1}^{n-1}(e_1T_n(A)/e_1I^k)$ in $\mod {T_n(A)}$. Therefore, since $I^n=0$, we have that $V:=\bigoplus_{k=1}^{n}(e_1T_n(A)/e_1I^k)$ is a classical (1-)tilting right $T_n(A)$-module which is equivalent to $V'$. Moreover, when $k< n$ (i.e. $I^k\neq 0$) the quotient $e_1T_n(A)/e_1I^k$ cannot be projective in $\mod T_n(A)$ since this would imply that $e_1I^k$ is a direct summand of $e_1T_n(A)$ that, due to the nilpotentcy of $I$, is contained in the radical $\text{rad}(T_n(A))$. This is absurd. Then $V$ is not a progenerator of $\mod T_n(A)$. 

When $A$ is right coherent, we know by Corollary \ref{cor.coherence-triangmatrixring} that $T_n(A)$ is right coherent. Note now that then any morphism of right $T_n(A)$-modules $(e_1T_n(A)/e_1I^k)\rightarrow (e_1T_n(A)/e_1I^l)$ is induced by left multiplication by a matrix   $X=(x_{i,j})_{1\leq i,j\leq n}\in e_1T_n(A)e_1$ (that is, $x_{i,j}=0$ whenever $i+j>2$).  Given that $X\cdot I^k\not\subseteq I^{l}$, whenever $X\neq 0$ (i.e., $x_{1,1}\neq 0$) and $k<l$, and that $e_1 T_n(A)e_1\cong A$, we readily see that $\End(V_{T_n(A)})$ is isomorphic to the lower triangular matrix ring 
\[
T'_n(A):=\left[\begin{array}{cccc}
A & 0  & \cdots& 0\\ 
\vdots & \ddots   & \ddots& \vdots \\  
A & \cdots &A & 0\\
A & \cdots &A & A \end{array}\right].
\] 
It  is easy to see that  this matrix ring is isomorphic to $T_n(A)$, and hence it is a right coherent ring. 
Therefore,  $\End(V_{T_n(A)})$ is a right coherent ring and Corollary \ref{cor.restriction of qt torsion pair}  applies. 
\end{proof} 
}

\subsection{Quasi-tilting preenvelopes in $\fpmod R$}

The goal of this last subsection is to identify the quasi-tilting objects $V$ of $\fpmod R$ that give rise to semi-special preenveloping torsion classes, equivalently (see Proposition \ref{prop.lorthogonal-generates-torsion}) such that $\subGen_{\fpmod R}(V)$ is a reflective subcategory of $\fpmod R$. We start with the following result, which is a trivial translation of Proposition \ref{prop.quasi-tilting in fpmodR}, bearing in mind that ``classical'' and ``finitely presented'' are synonymous terms when referred to a tilting right $R$-module.

\begin{corollary} \label{cor.tilting-in-fpmod-versus-classicaltilting}
Let $R$ be a right coherent ring and $V\in \fpmod R$. The following are equivalent:
\begin{enumerate}[\rm (1)]
\item $V$ is a tilting object of $\fpmod R$;
\item there is a (possibly infinite) set $I$ such that $V_I:=\mathrlap{\scalebox{.5}{\text{\hspace{8.5pt}$\mathrm{fp}$}}}\coprod_I V$ exists,  it is a classical tilting module and the torsion pair $\t_{V_I}:=(\Gen(V_I),V_I^\perp )$ in $\mod R$ restricts to $\fpmod R$. 
\end{enumerate}
In this case, the restricted torsion pair $(\Gen(V_I)\cap\fpmod R,V_I^\perp\cap\fpmod R)$ is the torsion pair in $\fpmod R$ associated with $V$. 
\end{corollary}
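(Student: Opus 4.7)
The plan is to deduce this corollary from Proposition~\ref{prop.quasi-tilting in fpmodR}, by matching how the cogenerating condition that distinguishes tilting from quasi-tilting on the $\fpmod R$-side is encoded by the classical tilting property (CT.3) on the $\mod R$-side. The final ``In this case'' statement is inherited directly from the corresponding statement in Proposition~\ref{prop.quasi-tilting in fpmodR}.

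For (1)$\Rightarrow$(2), I would start by observing that $V$ tilting in $\fpmod R$ implies $V$ is quasi-tilting and that $\T_0:=\Gen_{\fpmod R}(V)$ is cogenerating in $\fpmod R$, so $\subGen_{\fpmod R}(V)=\fpmod R$ is trivially reflective. Proposition~\ref{prop.quasi-tilting in fpmodR} then yields a set $J$ for which $V_J:=\mathrlap{\scalebox{.5}{\text{\hspace{8.5pt}$\mathrm{fp}$}}}\coprod_JV$ exists in $\fpmod R$, is a finitely presented finendo quasi-tilting $R$-module, and $\t_{V_J}$ restricts to $\fpmod R$. Inspecting the proof of that implication, the $V_J$ produced is in fact a classical tilting module over the quotient ring $R/\mathfrak{a}$, where $\mathfrak{a}:=\mathrm{ann}_R(V)$. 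I will then use the cogenerating hypothesis to force $\mathfrak{a}=0$: fixing any embedding $\iota\colon R\hookrightarrow T$ with $T\in\T_0\subseteq\Gen(V)$, every element of $\Gen(V)$ is annihilated by $\mathfrak{a}$, so each $a\in\mathfrak{a}$ satisfies $\iota(a)=\iota(1)\cdot a\in T\cdot\mathfrak{a}=0$, whence $a=0$. Therefore $V_J$ is classical tilting over $R$ itself.

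For (2)$\Rightarrow$(1), I would assume $V_J$ is classical tilting with $\t_{V_J}$ restricting to $\fpmod R$. To invoke Proposition~\ref{prop.quasi-tilting in fpmodR}(2)$\Rightarrow$(1), I first need to check that $V_J$ is finendo: applying $\hom_R(-,V_J)$ to the short exact sequence $0\to R\to V_0\to V_1\to 0$ of condition (CT.3), and using $\Ext^1_R(V_1,V_J)=0$ (which follows from (CT.2) since $V_1\in\add(V_J)$ is a summand of a finite power of $V_J$), I obtain a surjection $\hom_R(V_0,V_J)\twoheadrightarrow\hom_R(R,V_J)=V_J$; but $\hom_R(V_0,V_J)$ is a direct summand of some $\End_R(V_J)^n$, hence finitely generated as a left $\End_R(V_J)$-module. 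Proposition~\ref{prop.quasi-tilting in fpmodR} then gives that $V$ is quasi-tilting in $\fpmod R$ and $\subGen_{\fpmod R}(V)$ is reflective. To upgrade $V$ to a tilting object, it remains to show that $\T_0:=\Gen_{\fpmod R}(V)$ is cogenerating in $\fpmod R$. Using the identification $\Gen(V_J)\cap\fpmod R=\T_0$ from the ``In this case'' part of Proposition~\ref{prop.quasi-tilting in fpmodR}, the sequence from (CT.3) has $V_0\in\add(V_J)\subseteq\T_0$. Given $X\in\fpmod R$, I would fix an epimorphism $R^n\twoheadrightarrow X$ and form the pushout of $R^n\hookrightarrow V_0^n$ along it; the resulting object $T$ contains $X$ as a submodule (pushouts of monomorphisms are monomorphisms in any Abelian category) and is simultaneously an epimorphic image of $V_0^n\in\T_0$, so $T\in\T_0$, as required.

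\textbf{Main obstacle.} The hard part will be (1)$\Rightarrow$(2): Proposition~\ref{prop.quasi-tilting in fpmodR}(1)$\Rightarrow$(2) naturally produces a $V_J$ that is classical tilting only over $R/\mathrm{ann}_R(V)$, and promoting this to classical tilting over $R$ itself is precisely where the cogenerating condition of $V$ (absent from the purely quasi-tilting setting) enters, via the annihilator-vanishing argument described above.
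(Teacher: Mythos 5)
Your proposal is correct and follows the same route as the paper: the paper's own proof is literally the one-line observation that the statement is a translation of Proposition~\ref{prop.quasi-tilting in fpmodR} once one recalls that ``classical tilting'' means ``finitely presented tilting'', and your argument supplies exactly the details of that translation (the finendo check and the pushout argument for cogeneration in one direction, and the passage from quasi-tilting to tilting in the other). The only stylistic remark is that your annihilator detour in (1)$\Rightarrow$(2) can be shortcut without inspecting the proof of the Proposition: since $R$ embeds in some $T\in\T_0=\Gen(V_J)\cap\fpmod R\subseteq\Gen(V_J)$, the class $\Gen(V_J)$ is cogenerating in $\mod R$, so the finitely presented finendo quasi-tilting module $V_J$ is a finitely presented tilting, hence classical tilting, $R$-module.
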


We are now ready to parametrize the (semi)special preenveloping torsion classes in $\fpmod R$.

\begin{proposition} \label{prop.bijection for right coherent rings}
Let $R$ be a right coherent ring. Then, the assignment $[V]\mapsto\pres(V)$ gives a one-to-one correspondence between:
\begin{enumerate}[\rm (1)]
 \item equivalence classes of finitely presented finendo quasi-tilting (resp., classical tilting) right $R$-modules $V$ whose associated torsion pair in $\mod R$ restricts to $\fpmod R$;
\item semi-special (resp., special) preenveloping torsion classes in $\fpmod R$.
\end{enumerate}
When $R$ is also a Krull-Schmidt ring, the semi-special (resp., special) preenveloping torsion classes in $\fpmod R$ are exactly the (pre)enveloping (resp., cogenerating (pre)enveloping) ones.
\end{proposition}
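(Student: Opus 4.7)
The plan is to apply the general correspondences of Section~\ref{sec_tilt_pree} to the Abelian category $\A := \fpmod R$ and then to translate the conditions on $V$ via Proposition~\ref{prop.quasi-tilting in fpmodR} and Corollary~\ref{cor.tilting-in-fpmod-versus-classicaltilting}.

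First I would note that, since $R$ is right coherent, $\A = \fpmod R$ is Abelian and $R$ is a projective finite epi-generator of $\A$ (see Lemma~\ref{lem.only-finite-copr-project-epigener}). Hence $R \in {}^{\perp_1}\T$ for every subcategory $\T$ of $\A$, so ${}^{\perp_1}\T$ always contains an epi-generator and therefore generates any torsion class $\T$. This shows that the hypothesis ``${}^{\perp_1}\T$ generates $\T$'' appearing in Proposition~\ref{prop.lorthogonal-generates-torsion} and Corollary~\ref{cor.tilting-bijection} is automatic in $\fpmod R$. Combined with Proposition~\ref{ex.reflective-versus-productclosedness}, which guarantees that $\Sub(\T)$ is reflective as soon as $\T$ is preenveloping, these two results yield bijections $[V] \mapsto \Gen_{\fpmod R}(V)$ between equivalence classes of quasi-tilting (resp.\ tilting) objects $V$ of $\fpmod R$ with $\subGen_{\fpmod R}(V)$ reflective and semi-special (resp.\ special) preenveloping torsion classes in $\fpmod R$.

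Next I would invoke Proposition~\ref{prop.quasi-tilting in fpmodR} (and, for the tilting case, Corollary~\ref{cor.tilting-in-fpmod-versus-classicaltilting}) to identify the left-hand sides: the quasi-tilting objects $V$ of $\fpmod R$ with $\subGen_{\fpmod R}(V)$ reflective are exactly, up to equivalence, the finitely presented finendo quasi-tilting right $R$-modules whose associated torsion pair in $\mod R$ restricts to $\fpmod R$, and the tilting case is the analogous statement with ``classical tilting'' replacing ``finendo quasi-tilting'' (the finendo condition is automatic for classical tilting modules, via the $\add(V)$-preenvelope of $R$ furnished by condition (CT.3)). Since the relations $\add(V) = \add(V')$ and $\Add_{\mod R}(V) = \Add_{\mod R}(V')$ coincide on finitely presented modules, the equivalence classes match. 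Under this translation the torsion class attached to $V$ equals $\Gen_{\fpmod R}(V) = \Gen(V) \cap \fpmod R = \pres(V)$ by Lemma~\ref{lem.fp quasitilted restricting}, so the map is indeed $[V] \mapsto \pres(V)$.

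For the Krull-Schmidt addendum, $\fpmod R$ is Krull-Schmidt when $R$ is, and any torsion class is closed under direct summands, so Lemma~\ref{lem.precovering=covering} gives that preenveloping and enveloping coincide for such classes. A semi-special preenveloping torsion class is then enveloping. Conversely, Corollary~\ref{cor.enveloping-implies-semispecialpreenv} shows that every enveloping torsion class is automatically semi-special preenveloping, and is special preenveloping precisely when it is cogenerating. The main obstacle I anticipate lies in the bookkeeping of equivalence relations during the translation step: one has to reconcile the notions of equivalence of quasi-tilting objects in $\fpmod R$ (via $\add$) and in $\mod R$ (via $\Add$), and verify that the composite assignment $[V] \mapsto \pres(V)$ is well defined on equivalence classes and correctly describes the associated torsion class in both settings.
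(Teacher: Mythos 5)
Your proposal is correct and follows essentially the same route as the paper: the bijection for quasi-tilting objects of $\fpmod R$ with $\subGen_{\fpmod R}(V)$ reflective comes from Proposition \ref{prop.lorthogonal-generates-torsion} together with Theorem \ref{thm.specialpreenveloping-versus-cotorsionpair} (and Proposition \ref{ex.reflective-versus-productclosedness}), the translation to finitely presented finendo quasi-tilting (resp.\ classical tilting) modules and to $\pres(V)$ is via Proposition \ref{prop.quasi-tilting in fpmodR} and Lemma \ref{lem.fp quasitilted restricting}, and the Krull--Schmidt addendum is Lemma \ref{lem.precovering=covering} plus Corollary \ref{cor.enveloping-implies-semispecialpreenv}. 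The only cosmetic difference is that the paper cites Miyashita for the finendo property of classical tilting modules where you derive it directly from (CT.3), which amounts to the same argument.
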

\begin{proof}
By Proposition \ref{prop.lorthogonal-generates-torsion} and Theorem \ref{thm.specialpreenveloping-versus-cotorsionpair}, the assignment $[V]\mapsto\T_V=\Gen_{\fpmod R}(V)$ establishes a bijection between the equivalence classes of quasi-tilting objects $V$ of $\fpmod R$ such that $\subGen_{\fpmod R}(V)$ is reflective and the semi-special preenveloping torsion classes in $\fpmod R$. By Proposition \ref{prop.quasi-tilting in fpmodR}, any such $[V]$ is represented by a finitely presented finendo quasi-tilting $R$-module $V$ whose associated torsion pair in $\mod R$ restricts to $\fpmod R$. Moreover, by Lemma \ref{lem.fp quasitilted restricting}, in such case $\T_V:=\pres(V)$. It is obvious that, for quasi-tilting $R$-modules $V$ and $V'$ as indicated, the equality $\pres(V)=\pres (V')$ is tantamount to their equivalence. Therefore, the general bijection is clear.

The restricted version now follows from the fact that, by \cite[Thm.\,1.5]{miyashita-tilt}, any classical (=finitely presented) tilting $R$-module is finendo.  On the other hand, when $R$ is also Krull-Schmidt, we know that ``enveloping'' (resp.,  ``cogenerating enveloping'')  and  ``(semi-special) preenveloping''  (resp.,  ``special preenveloping'') are synonymous terms in $\fpmod R$.
\end{proof}

Recall that a {\bf Noether} (resp., {\bf Artin}) {\bf algebra} is an algebra $A$ over some commutative Noetherian (resp., Artinian) ring $K$ such that $A$ is finitely generated (=finitely presented) as a $K$-module. 

\begin{corollary} \label{cor.bijection-for-coherent-fg-over-center}
Let $R$ be a ring that is finitely presented over a commutative coherent ring $K$ (e.g., a Noether or an Artin algebra). Then, the assignment  $[V]\mapsto\pres(V)$ gives a one-to-one correspondencence between the equivalence classes of finitely presented quasi-tilting (resp., tilting) right $R$-modules and the semi-special (resp., special) preenveloping torsion classes in $\text{mod-}R$. When $R$ is in addition Krull-Schmidt (see Example~\ref{Examples_KS_rings}(2,3)), the latter torsion classes are  precisely the enveloping (resp., cogenerating and enveloping) ones. 
\end{corollary}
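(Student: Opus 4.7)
The plan is to deduce this corollary from Proposition~\ref{prop.bijection for right coherent rings} by showing that, under our hypotheses on $R$, the two extra conditions appearing there (finendo-ness of $V$ and restriction of the associated torsion pair to $\fpmod R$) are automatic. My first task would be to verify that $R$ is both right and left coherent and that $\A:=\fpmod R$ is $\Hom$-finite over $K$ in the sense of Definition~\ref{def_hom_ext_fin}.

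To establish coherence, I would observe that any finitely presented right $R$-module is finitely presented also as a $K$-module: this follows from the fact that $R$ is fp over $K$ and that finitely presented $K$-modules form an Abelian exact subcategory of $\mod K$ by coherence of $K$. Then, given a finitely generated right ideal $I\subseteq R$ and an $R$-presentation $0\to L\to R^n\to I\to 0$, all three terms belong to $\fpmod K$, so $L$ is fg over $K$, hence fg over $R$; this shows $I\in\fpmod R$, and dually on the left side. For $\Hom$-finiteness, given $M,N\in\A$, I would realize $\hom_R(M,N)$ as the kernel of the $K$-linear map $\hom_K(M,N)\to\hom_K(M\otimes_K R,N)$ sending $f\mapsto [m\otimes r\mapsto f(mr)-f(m)r]$, using centrality of $K$ to check that this map is well-defined. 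Both source and target are finitely presented $K$-modules (using again the coherence of $K$, together with the observation that $M\otimes_K R$ is fp as a cokernel of a map between fp $K$-modules), so $\hom_R(M,N)$ is fp over $K$.

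Once $\Hom$-finiteness over $K$ is in hand, Proposition~\ref{prop.Hom-finite-are-coherent}(2) guarantees that $\hom_R(V,X)$ is finitely presented as a right $\End_R(V)$-module for all $V,X\in\fpmod R$. Then, via the implication ``(3)$\Rightarrow$(1)'' of Corollary~\ref{cor.restriction of qt torsion pair}, the torsion pair $(\Gen(V),V^{\perp})$ in $\mod R$ associated to any finitely presented quasi-tilting $V$ restricts to $\fpmod R$. Moreover, $V$ is automatically finendo: it is finitely generated as a $K$-module (being fp over $R$ with $R$ fp over $K$), and the central $K$-action factors through $\End_R(V)$, so any finite $K$-generating set of $V$ also generates $V$ as a left $\End_R(V)$-module. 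These two automaticity statements let me apply Proposition~\ref{prop.bijection for right coherent rings} to obtain the desired bijection, and the Krull-Schmidt addendum is then immediate from the last sentence of that proposition. I expect the main obstacle to be the careful verification of $\Hom$-finiteness, where one must trace through the coherence of $K$ to control the $R$-linearity kernel; everything else is bookkeeping.
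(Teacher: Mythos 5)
Your argument is correct and follows essentially the same route as the paper's proof: show that $\fpmod R$ is $\Hom$-finite over $K$, deduce via Proposition \ref{prop.Hom-finite-are-coherent} and Corollary \ref{cor.restriction of qt torsion pair} that the torsion pair associated with any finitely presented quasi-tilting module restricts to $\fpmod R$, note that finendo-ness is automatic because a finite $K$-generating set is an $\End_R(V)$-generating set, and then invoke Proposition \ref{prop.bijection for right coherent rings}. The only (cosmetic) difference is that you verify the coherence of $R$ and the $\Hom$-finiteness of $\fpmod R$ directly from the hypotheses, whereas the paper asserts the latter and extracts the former from Proposition \ref{prop.Hom-finite-are-coherent}(1).
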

\begin{proof}
In this situation the category $\fpmod R$ is $\Hom$-finite over $K$, which, by Proposition \ref{prop.Hom-finite-are-coherent} and Corollary \ref{cor.restriction of qt torsion pair},  implies that $R$ is left and right coherent, and that any torsion pair in $\mod R$ given by a classical tilting right $R$-module restricts to $\fpmod R$.  On the other hand, 
 each $M\in\fpmod R$ is a finitely presented $K$-module and any finite set of generators of $M$  as a $K$-modules also generates it as a left $\End(M_A)$-modules. In particular, all finitely presented right $R$-modules are finendo. The result then follows from  Proposition \ref{prop.bijection for right coherent rings} since the Krull-Schmidt part is obvious.
\end{proof}

\begin{remark}
Note that, if $R$ is right Noetherian in Proposition \ref{prop.bijection for right coherent rings}, then all torsion pairs in $\mod R$ restrict to $\fpmod R$.
\end{remark}

As a consequence of Proposition \ref{prop.bijection for right coherent rings}, we can give a new proof of a nice result, which is already known for finite dimensional algebras over a field (see \cite[Proposition 3.15]{hugel2015silting}) and, furthermore, we extend it to Artin algebras. The reader is referred to \cite[Def.\,0.1]{AIR14} for the definition of {\bf support $\tau$-tilting $\Lambda$-module} and to \cite{hugel2015silting} or \cite{hugel2018abundance} for the definition of {\bf silting module}.

\begin{corollary} \label{cor.quasi-tilting = support tau-tilting}
Let $\Lambda$ be an Artin algebra and let $V$ be a finitely generated (=finitely presented) $\Lambda$-module. The following conditions are equivalent:
\begin{enumerate}[\rm (1)]
\item $V$ is quasi-tilting;
\item $V$ is silting;
\item $V$ is support $\tau$-tilting.
\end{enumerate}
\end{corollary}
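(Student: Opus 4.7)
The plan is to combine the bijection of Corollary \ref{cor.bijection-for-coherent-fg-over-center} with two classical correspondences in the representation theory of Artin algebras, reducing the three conditions of the corollary to the single statement that $\pres(V)$ is functorially finite in $\fpmod \Lambda$. First I would verify that an Artin algebra $\Lambda$ satisfies all the hypotheses of Corollary \ref{cor.bijection-for-coherent-fg-over-center}: it is finitely presented over its commutative Artinian (hence coherent) center, it is Krull-Schmidt by Example \ref{Examples_KS_rings}(1), and, being right Noetherian, every torsion pair in $\mod \Lambda$ restricts automatically to $\fpmod \Lambda$. Consequently the assignment $V \mapsto \pres(V)$ gives a bijection between equivalence classes of finitely generated quasi-tilting right $\Lambda$-modules and enveloping torsion classes in $\fpmod \Lambda$.

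For the equivalence $(1) \Leftrightarrow (3)$, I would invoke the Adachi-Iyama-Reiten correspondence (\cite[Thm.\,2.7]{AIR14}), which provides a bijection between equivalence classes of support $\tau$-tilting $\Lambda$-modules and functorially finite torsion classes in $\fpmod \Lambda$, also realized via $V \mapsto \pres(V)=\gen(V)$. The task then reduces to showing that, for $\Lambda$ Artin, a torsion class in $\fpmod \Lambda$ is enveloping if, and only if, it is functorially finite. One direction is trivial; the other is a classical theorem of Smal\o, originally formulated for finite-dimensional algebras over a field, extending to Artin algebras via the Nakayama duality on $\fpmod \Lambda$.

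For the equivalence $(2) \Leftrightarrow (3)$, I would invoke the Angeleri-H\"ugel-Marks-Vit\'oria identification (\cite[Prop.\,3.15]{hugel2015silting}) of finitely generated silting $\Lambda$-modules with support $\tau$-tilting modules. The cited statement is for finite-dimensional algebras over a field, but its proof relies only on Hom-finiteness over the center, the Krull-Schmidt property of $\fpmod \Lambda$, and the existence of Auslander-Reiten sequences; all of these persist in the Artin algebra setting, so the argument carries over without essential change.

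The main obstacle I anticipate is the careful verification that Smal\o's theorem on covariantly finite torsion classes, and the AIR and AHMV correspondences, transfer cleanly from finite-dimensional algebras to general Artin algebras. The underlying mathematical content is unchanged, but one must either locate precise references for the generalizations in the literature, or briefly adapt the original arguments; this is essentially bookkeeping, but it is the only step where finite-dimensionality over a field is used in the sources.
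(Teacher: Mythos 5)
Your proposal is correct and follows essentially the same route as the paper: the bijection of Proposition \ref{prop.bijection for right coherent rings} (via Corollary \ref{cor.bijection-for-coherent-fg-over-center}) between finitely generated quasi-tilting modules and (pre)enveloping torsion classes, the identification of these with the functorially finite torsion classes, the Adachi--Iyama--Reiten bijection, and the known coincidence of silting and support $\tau$-tilting for finitely generated modules. Two minor points worth tightening: the equivalence ``(pre)enveloping $=$ functorially finite'' for torsion classes needs no appeal to Smal\o\ (every torsion class is automatically precovering via the trace map, and Lemma \ref{lem.precovering=covering} upgrades preenvelopes to envelopes), and one should record, as the paper does via Proposition \ref{prop.properties of quasi-tilting}(2), that the two bijections share the same inverse $\T\mapsto[{}^{\perp_1}\T\cap\T]=[\mathcal{P}(\T)]$, so that the quasi-tilting and support $\tau$-tilting modules attached to a given functorially finite torsion class are genuinely $\add$-equivalent.
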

\begin{proof}
(2)$\Leftrightarrow$(3) is  \cite[Ex.\,2.4(2)]{hugel2018abundance}.

\smallskip\noindent
(1)$\Leftrightarrow$(3). Note that any finitely generated $\Lambda$-module is finendo and $\fpmod \Lambda$ is a Krull-Schmidt category. On the other hand, a torsion class 
 in $\fpmod \Lambda$ is (pre)enveloping if, and only if, it is functorially finite in the terminology of \cite{AIR14}. Therefore, by Proposition \ref{prop.bijection for right coherent rings}, the assignment $[W]\mapsto\pres(W)=\gen(W)$ gives  a bijection between $\add$-equivalence classes of finitely generated quasi-tilting $\Lambda$-modules and functorially finite torsion classes in $\fpmod \Lambda$. Its inverse takes $\T$ to $[W]$, where, by Proposition \ref{prop.properties of quasi-tilting},  $W$ may be chosen to be the (finite) direct sum of one isomorphic copy of each indecomposable $\Lambda$-module in $_{}^{\perp_1}\T\cap\T$. This particular $W$ is denoted by $\mathcal{P}(\T)$ in \cite{AIR14} and, by \cite[Thm.\,2.7]{AIR14},  the assignment $\T\mapsto\mathcal{P}(\T)$ gives a bijection between functorially finite torsion classes in $\fpmod \Lambda$ and basic support $\tau$-tilting  $\Lambda$-modules. Therefore, $V$ is quasi-tilting if, and only if, it is $\add$-equivalent to a basic support $\tau$-tilting $\Lambda$-module. 
\end{proof}

\begin{remark}
The bijection in the tilting case of  Corollary \ref{cor.bijection-for-coherent-fg-over-center} for Artin algebras is part of the main theorem of \cite{smalo1984torsion}. Note that  Smal\o's   bijection also with covering  torsion-free classes in $\fpmod R$ in that case is valid because of the duality between $\fpmod R$ and $R\text{-}\mathrm{mod}$,  that is no longer available for a general $R$ as in  Corollary \ref{cor.bijection-for-coherent-fg-over-center}.
\end{remark}

Let us conclude with the following example:

 \begin{example} \label{rem.BHPST}
 There exist  right coherent rings $R$  and torsion classes $\T_0$ in  $\fpmod R$ such that $\T:=\varinjlim\T_0$ is a special preenveloping torsion class in $\mod R$ while $\T_0$ is not  so in $\fpmod R$  (see, e.g., \cite[Lem.\,4.4]{Craw} to see that $\T$ is a torsion class in $\mod R$). Therefore $\T_0$ is a cogenerating preenveloping torsion class that is not special preenveloping in $\fpmod R$.
Indeed, in \cite{bazzoni2017pure} the authors give an example of a pure-projective tilting module $T$ over a Noetherian ring that is not $\Add$-equivalent to a finitely presented tilting module. In such case $\T=\Gen(T)$ is special preenveloping in $\mod R$ and  $\T_0=\T\cap\fpmod R$ has the property that $\T=\varinjlim\T_0$.  However, according to Proposition~\ref{prop.bijection for right coherent rings},  $\T_0$ is not special preenveloping in $\fpmod R$.
 \end{example}

\newcommand{\etalchar}[1]{$^{#1}$}

\end{document}